\newcommand{\NN}{\mathbb{N}}
\newcommand{\RR}{\mathbb{R}}
\newcommand{\CC}{\mathbb{C}}
\newcommand{\ZZ}{\mathbb{Z}}
\newcommand{\GG}{\mathbb{G}}
\newcommand{\R}{\mathbb{R}}
\newcommand{\N}{\mathbb{N}}
\newcommand{\Z}{\mathbb{Z}}
\renewcommand{\sl}{\operatorname{\mathfrak s\mathfrak l}}
\newcommand{\norm}[1]{\lVert#1\rVert}
\newcommand{\abs}[1]{\lvert#1\rvert}
 \DeclareMathOperator{\SL}{SL}
\newtheorem{theorem}{Theorem}[section]
\newtheorem{corollary}[theorem]{Corollary}
\newtheorem{lemma}[theorem]{Lemma}
\newtheorem{proposition}[theorem]{Proposition}
\newcommand{\comment}[1]{}
\theoremstyle{definition}
\newtheorem{definition}[theorem]{Definition}
\newtheorem{remark}[theorem]{Remark}
\newtheorem{fact}[theorem]{Fact}
\newtheorem{case}{Case}
\numberwithin{equation}{section}
\renewcommand*\env@matrix[1][*\c@MaxMatrixCols c]{%
  \hskip -\arraycolsep
  \let\@ifnextchar\new@ifnextchar
  \array{#1}}
\title[Cohomological equation and cocycle rigidity ]
{Cohomological equation and cocycle rigidity of discrete parabolic actions in some higher rank Lie groups}
\author{James Tanis}
\address[James Tanis]{The MITRE Corporation \\ McLean, VA 22102, USA \footnote{\textbf{Approved for Public Release; Distribution Unlimited. Case Number 18-1114.}  The first author's affiliation with The MITRE Corporation is provided for identification purposes only, and is not intended to convey or imply MITRE's concurrence with, or support for, the positions, opinions or viewpoints expressed by the authors.   
}}
\email{jhtanis@mitre.org}
\author{ Zhenqi Jenny Wang$^1$ }
\thanks{ $^1$ Based on research supported by NSF grant   DMS-1700837}
\address[Zhenqi Jenny Wang]{Department of Mathematics\\
        Michigan State University\\
        East Lansing, MI 48824,   USA}
\email{wangzq@math.msu.edu}
\keywords{Higher rank abelian group actions, cocycle rigidity,
induced unitary representation, Mackey theory}
\subjclass[2010]{} 
\begin{document}

\begin{abstract}
Let $\mathbb{G}$ denote a higher rank $\RR$-split simple Lie group of the following type:
$SL(n,\RR)$, $SO_o(m,m)$, $E_{6(6)}$, $E_{7(7)}$ and $E_{8(8)}$, where $m\geq 4$ and  $n \geq 3$. We study the cohomological equation for discrete parabolic actions on $\mathbb G$ via representation theory.
Specifically, we characterize the obstructions to solving the cohomological
equation and construct smooth solutions 
with Sobolev estimates.  We prove that global estimates of the solution are generally not tame, and our  non-tame estimates in the case $\GG=SL(n,\RR)$ are sharp up to finite loss of regularity.  Moreover, we prove that for general $\GG$ the estimates are tame in all but one direction, and as an application, we obtain tame estimates for the common solution of the cocycle equations.
We also give a sufficient condition for which the first cohomology with coefficients in smooth vector fields is trivial. In the case that $\GG=SL(n,\RR)$, we show this condition is also necessary.
A new method is developed to prove tame directions involving computations within maximal unipotent subgroups of the unitary duals of $SL(2,\RR)\ltimes\RR^2$ and $SL(2,\RR)\ltimes\RR^4$.  A new technique is also developed to prove non-tameness for solutions of the cohomological equation.
\end{abstract}

\maketitle

\section{Introduction}
\subsection{Various algebraic actions}
Main examples concern higher rank abelian {partially hyperbolic homogeneous actions
on symmetric spaces and twisted symmetric spaces
and higher rank abelian parabolic homogeneous actions on semisimple Lie groups.
Specifically, for $k, \ell \in \N$ such that $k+\ell\geq 1$,
consider the $\ZZ^k\times \RR^\ell$ algebraic actions defined as follows.
Let $G$ be a connected Lie group,
$A\subseteq G$ a closed abelian subgroup which
is isomorphic to $\ZZ^k\times \RR^\ell$, $M$ a subgroup of
the centralizer $Z(A)$ of $A$, and $\Gamma$ a torsion free lattice in
$G$. Then $A$ acts by left translation on the space
$\mathcal{M}=M\backslash G/\Gamma$, and
we have the following examples:
\begin{itemize}
  \item  Symmetric space examples, where $G$ is a semisimple Lie group of non-compact
type, and $A$ is a subgroup of a maximal $\RR$-split Cartan subgroup in $G$.

\smallskip
  \item  Twisted symmetric space examples, where $G=H\ltimes _\rho\RR^m$ or $G=H\ltimes_\rho N$ is a semidirect
product of a reductive Lie group $H$ with a semisimple factor of non-compact
type with $\RR^m$ or a simply connected nilpotent group $N$, and $A$ is a subgroup of a maximal $\RR$-split Cartan subgroup in $H$.

\smallskip
  \item Parabolic action examples, where $G$ is a semisimple Lie group of non-compact
type and $A$ is a subgroup of a maximal abelian unipotent subgroup in $G$.
\end{itemize}
The cohomological equation arises in several problems in dynamics, for example,
in the study of the existence of invariant measures, in conjugacy problems, in the study
of reparametrisations of flows, etc.
Significant progress has been made over the past two decades in the study of smooth cocycle rigidity for higher rank (partially) hyperbolic algebraic actions  on symmetric and twisted symmetric spaces (see \cite{Damjanovic1}, \cite{Kononenko}, \cite{Spatzier1}, \cite{Spatzier2} and \cite{Zhenqi}).
Generally speaking, 
higher rank strongly chaotic algebraic systems display local rigidity,
and various rigidity phenomena are now well understood.
This is in contrast to the rank-one situation, where Livsic showed
there is an infinite-dimensional space of obstructions to solving the cohomological
equation for a hyperbolic action by $\RR$ or $\ZZ$.

Rigidity results for actions without any hyperbolicity, like parabolic actions, are substantially more difficult to obtain, and accordingly, much less is known about them. All tools and theories developed so far for (partially) hyperbolic systems rely on the following fact: Most orbits grow exponentially under the action of (partially) hyperbolic elements.
In contrast, parabolic actions have at best polynomial growth along orbits,
which prevents similar geometric arguments 
from being effective here.

Results concerning the cohomology of parabolic actions have instead relied on
representation theory as an essential tool, beginning
with the representation theory of $SL(2, \RR)$ or $SL(2, \CC)$.  
Specifically, the first such result, due to L. Flaminio and G. Forni in \cite{Forni}, proved Sobolev estimates of the cohomological equation of the classical horocycle flow 
in irreducible, unitary representations of $PSL(2, \R)$.
Global estimates were then obtained by glueing estimates from each irreducible component.
This approach was later used
in \cite{T1} and \cite{FFT} to study the cohomological equation of
the classical (discrete) horocycle map, 
and it was also employed in \cite{Mieczkowski}, \cite{Mieczkowski1}
and \cite{Ramirez} to study the cohomogical equation or cohomology for
some models of algebraic parabolic actions.

The above results used the representation theory of the entire
group.  However, this cannot be done
in the higher rank setting, even for
$SL(3,\RR)$, whose unitary dual is well-understood \cite{Vogen}.
In general, the unitary dual of many higher rank almost-simple
algebraic groups is not completely classified,
and even when the classification is known,
it is too complicated to apply. To handle these cases,
a new method was introduced in \cite{W1} that is
based on an analysis of the unitary dual of various
subgroups in $\GG$ rather than that of $\GG$ itself.
Mackey theory was used to find models for the representations
of these groups
that appeared in a restricted non-trivial representation of $\GG$.
After explicit computations in irreducible models,
global properties of the solution came with having
sufficiently many semidirect product groups
that contain the one-parameter root subgroup in
the cohomological equation.

Parabolic and (partially) hyperbolic actions are different in other ways as well.
Unlike (partially) hyperboilc actions,
an analysis of the cohomological equation
and cohomology of parabolic maps involves
significantly more technical difficulties than for parabolic flows.
 For example, the space of obstructions to a smooth solution
 of the cohomological equation of horocycle maps
 has infinite countable dimension in each irreducible
 component of $PSL(2,\RR)$, see \cite{T1},
 as opposed to being at most two dimensional in each
 component for the horocycle flow, see \cite{Forni}.
 Moreover, Sobolev estimates for solutions
 of the cohomological equation of horocycle maps are
 not tame in \cite{T1}, \cite{T2} and \cite{FFT},
 as opposed to the tame estimates obtained for the horocycle
 flow in \cite{Forni} and parabolic flow in \cite{W1}.
Because tame estimates for solutions of the cohomological equation
lays the groundwork for proving smooth action rigidity,
see \cite{Damjanovic2} and \cite{Damjanovic3},
not having them complicates this effort.

The purpose of this paper is to extend a careful analysis of the study of the
cohomological equation and cohomology of the horocycle
map to parabolic maps
on some higher rank simple Lie groups.
We characterize the obstructions to solving the cohomological
equation, construct
smooth solutions of the cohomological equation and
obtain non-tame Sobolev estimates for the solution, see
Theorem~\ref{th:6} and \ref{po:2}.
Theorem~\ref{main_thm2-lower} proves that in the case of
$SL(n,\RR)$, for $n\geq 3$, these Sobolev estimates are, in fact, generally
not tame, and our Sobolev estimates in Theorem~\ref{th:6}
are sharp up to finite loss of regularity.
Theorem~\ref{main_thm2-lower} gives the same lower bound
for $SL(2, \RR)$, which is part
of the proof of analogous sharp (up to finite loss of regularity),
non-tame estimates for horocycle maps,
to appear in the forthcoming paper \cite{T2}.

Even though tameness fails for the cohomological equation,
we prove that it holds for general $\mathbb G$ in all but one explicit direction,
see Theorem~\ref{th:101}.
This turns out to be enough in Theorem~\ref{th:8} to prove that tameness
holds for cocycle equations,
which is an important step toward obtaining the
first parabolic actions that display smooth local rigidity
on semisimple homogeneous spaces.
Finally, we give
a sufficient condition for when the
cocycle equation has a common
solution, and in the case of $\mathbb G = SL(n,\RR)$, $n\geq3$,
we show that this conditions is also necessary, see Theorem \ref{th:7}.

To our knowledge, Theorem~\ref{main_thm2-lower}
is the first proof of non-existence of tame Sobolev estimates for a solution of the
cohomological equation of a homogeneous parabolic
action in a non-commutative setting.  
An interesting question is to determine the other settings
where tameness fails.
The fact that it fails in every real special linear
group suggests that it may also fail for every real semisimple Lie group.
In addition, all previous methods to prove tame cocycle rigidity require that
solutions to the cohomological equation are tame.
As a consequence, we believe our method in proving Theorem~\ref{th:101} that proves tame directions
for solutions of cohomological equations
will find other applications, particularly for cycle rigidity, see Theorem~\ref{th:8}.  

Finally,
we comment on the proofs of Theorems~2.1 and 2.2.
Regarding Theorem~\ref{th:6}, the analogous theorem for parabolic flows
was proven in \cite{W1},
where Sobolev estimates of the solution were obtained from
estimates of lower rank subgroups of $\mathbb G$: $SL(2, \RR) \times \R$ and
$SL(2, \RR) \ltimes \RR^2$.
Trying the same approach for the map causes
a non-tame Sobolev loss of regularity in every direction
that does not commute with the unipotent flow direction.
To prove Theorem~\ref{th:101}, our new method proves tame vectors in the maximal unipotent subgroups of $SL(2,\RR)\ltimes\RR^2$ and $SL(2,\RR)\ltimes\RR^4$
rather than in the semidirect products themselves.
This gives us enough directions to prove tame cocycle rigidity,
thereby overcoming the above mentioned analysis difficulty resulting from non-tameness of solutions of the unipotent map.

A new method is also developed to prove Theorem~\ref{main_thm2-lower},
which is carried out in irreducible models of $SL(n, \R)$.
The main idea is to first prove the lower bound of the transfer
function for the twisted equation $(\mathfrak{v}+ \sqrt{-1} \lambda) f = g$,
where the vector field $\mathfrak{v}$ is an element of a unipotent root space and $\lambda \in \R^*$,
which is less complicated to study than the corresponding map, $\exp(\mathfrak{v})$.
We work in Fourier transform and consider the variables for which
the unipotent vector fields are second order differential operators in our model.
For a given $\mathfrak{v}$, we find a function $g$
whose Sobolev norm is bounded linearly in the representation parameter
and whose transfer function $f$ has a norm that is
much larger than that with respect to the same representation parameter.
Comparing the sizes of the representation parameters,
we prove tame estimates generally do not exist for the twisted equation.
We derive Theorem~\ref{main_thm2-lower} concerning unipotent maps from that.

\section{Background, definition, and statement of results}
\subsection{Preliminaries on cocycles} Let $\alpha:A\times
\mathcal{M}\rightarrow \mathcal{M}$ be an action of a topological group $A$ on a (compact)
manifold $\mathcal{M}$ by diffeomorphisms. For a topological group
$Y$, a $Y$-valued {\em cocycle} (or {\em an one-cocycle}) over
$\alpha$ is a continuous function $\beta : A\times \mathcal{M}\rightarrow Y$
satisfying:
\begin{align}
\beta(ab, x) = \beta(a, \alpha(b, x))\beta(b, x)
\end{align}
 for any $a, b \in A$. A
homomorphism $s: A\rightarrow Y$ satisfies the cocycle identity by setting $s(a, x)=s(a)$,
and is called a \emph{constant cocycle}, because it is independent of $x$. A cocycle is
{\em cohomologous to a constant cocycle} if there exists a homomorphism $s : A\rightarrow Y$ and a
continuous transfer map $H : \mathcal{M}\rightarrow Y$ such that for all $a
\in A $
\begin{align}\label{for:6}
 \beta(a, x) = H(\alpha(a, x))s(a)H(x)^{-1}
\end{align}
\eqref{for:6} is called the cohomology equation.
In particular, a cocycle is a {\em coboundary} if it is cohomologous
to the trivial cocycle $s(a) = id_Y$, $a \in A$, i.e. if for all
$a \in A$ the following equation holds:
\begin{align}
 \beta(a, x) = H(\alpha(a, x))H(x)^{-1}.
\end{align}
For more detailed information on cocycles adapted to the present setting
see \cite{Damjanovic1} and \cite{Katok}.

In this paper we will only consider smooth $\CC^k$-valued cocycles over
algebraic parabolic actions on smooth manifolds. By taking component functions we may always assume that $\beta$ is $\CC$-valued. Further, by taking real and imaginary parts, our cocycle results also hold for real-valued cocycles.
Adapted to the settings in this paper, $A$ is a subgroup of a maximal abelian unipotent subgroup in $\GG$ and consider the manifold $\GG/\Gamma$, where $\Gamma\subset \GG$ is a torsion free lattice. A cocycle is called \emph{smooth} if the map $\beta:\,A\rightarrow C^\infty(L^2(\GG/\Gamma))$ is smooth. We can also define $\beta$ to be of class $C^r$.  We note that if the cocycle $\beta$ is cohomologous to a constant cocycle,
then the constant cocycle is given by $s(a)=\int_{\GG/\Gamma}\beta(a,x)dx.$

In what follows, $C$ will denote any constant that depends only
  on the given  group $\GG$. $C_{x,y,z,\cdots}$ will denote any constant that in addition to the
above also depends on parameters $x, y, z$, etc.

\subsection{Main results}
In this paper, $\mathbb{G}$ denotes a higher rank $\RR$-split simple Lie group of the following type:
$SL(n,\RR)$, $SO_o(m,m)$, $E_{6(6)}$, $E_{7(7)}$ and $E_{8(8)}$ where $m\geq 4$ and $n\geq 3$ and $\mathfrak{G}$ denotes its Lie algebra. The conditions on the indices are given firstly to ensure that the groups in question are higher rank Lie groups, and then they are further
restricted to avoid the incidental local isomorphisms between various families of
groups in low dimensions.  For example, the groups $SO_0(2,2)$ and $SO_o(3,3)$ are
locally isomorphic to $SL(2,\RR)\times SL(2,\RR)$ and $SL(4,\RR)$ respectively.
Cohomology properties for lower rank cases will appear
in a forthcoming paper, see \cite{T2}.

Fix an inner product on $\mathfrak{G}$. Let $\mathfrak{G}^1$ be the set of unit vectors in $\mathfrak{G}$. Let $\Phi$ denote the set of roots of $\GG$ and $\mathfrak{u}_\phi$ denote the root space of $\phi$ for any $\phi\in\Phi$. Set
\begin{align*}
E_{\phi}&=\{\psi\in \Phi: \psi+\phi\notin\Phi,\, \phi-\psi\in\Phi\}\quad\text{ and}\\
\quad\bar{E}_{\phi}&=\{\psi\in \Phi: \psi+\phi\notin\Phi,\, \phi-\psi\notin\Phi\}.
\end{align*}
In fact, $\bar{E}_{\phi}$ consists of all roots $\psi$ such that $\mathfrak{u}_\psi\times \mathfrak{u}_\phi$ imbeds in a subalgebra of $\mathfrak{G}$ isomorphic to $\mathfrak{sl}(2,\RR)\times \RR$. Suppose $(\pi,\,\mathcal{H})$ is a unitary representation  of $\GG$ without non-trivial $\GG$-fixed vectors.
\subsection{Results for the cohomological equation}The next result  shows that the $\exp(\mathfrak{v})$-invariant distributions are the only obstructions to solving
the cohomological equation
\begin{align}\label{for:201}
 \mathcal{L}_{\mathfrak{v}}f:=\pi(\exp(\mathfrak{v}))f- f = g
\end{align}
where $\mathfrak{v}\in\mathfrak{u}_\phi$ and $g\in \mathcal{H}^\infty$.

\begin{theorem}\label{th:6}
 Suppose $(\pi,\,\mathcal{H})$ is a unitary representation of $\GG$ without non-trivial $\GG$-fixed vectors and $\mathfrak{v}\in\mathfrak{u}_\phi\bigcap\mathfrak{G}^1$.
 Also suppose $g\in \mathcal{H}^\infty$.  Then the following holds:
\begin{enumerate}
  \item \label{for:35}If $\mathcal{D}(g)=0$ for any $\exp(\mathfrak{v})$-invariant distribution $\mathcal{D}$, then the cohomological equation \eqref{for:201} has a solution $f\in \mathcal{H}$.
\medskip
\item \label{for:33} If the cohomological equation \eqref{for:201} has a solution $f\in \mathcal{H}$, then $f\in \mathcal{H}^\infty$ and satisfies
the Sobolev estimate
\begin{align*}
    \norm{f}_s\leq C_s\norm{g}_{2s+8}\qquad \forall s\geq 0,
\end{align*}
\end{enumerate}
\end{theorem}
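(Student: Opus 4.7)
The plan is to adapt the representation-theoretic method of \cite{W1} (used there for the parabolic flow) to the discrete map $\exp(\mathfrak{v})$. Because the unitary dual of $\GG$ is out of reach, I will work with the restriction of $\pi$ to a family of lower-rank subgroups whose Lie algebras together span $\mathfrak{G}$. Specifically, for each $\psi \in \bar{E}_\phi$ the root spaces $\mathfrak{u}_\psi,\mathfrak{u}_\phi$ together with a Cartan element generate a subalgebra isomorphic to $\sl(2,\RR)\times \RR$, while for each $\psi \in E_\phi$ they (together with the $\sl(2,\RR)$ triple containing $\mathfrak{u}_\phi$) generate a semidirect product $SL(2,\RR)\ltimes\RR^2$ or $SL(2,\RR)\ltimes\RR^4$, depending on the length of the $\phi$-string through $\psi$. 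These subgroups collectively contain the one-parameter subgroup generated by $\mathfrak{v}$ and exhaust every direction in $\mathfrak{G}$, which is what makes both existence and a global Sobolev estimate accessible.

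For part \eqref{for:35}, I restrict $\pi$ to one such subgroup $H=SL(2,\RR)\ltimes\RR^k$ (with $k\in\{0,2,4\}$) and invoke Mackey theory, which decomposes $\pi|_H$ as a direct integral of irreducible representations parametrized by characters of $\RR^k$ together with the compatible irreducible representations of the $SL(2,\RR)$-stabilizer. In an explicit Fourier-type model for each irreducible component, $\pi(\exp(\mathfrak{v}))$ becomes multiplication by a unitary symbol $e^{i\tau(x)}$, so the equation $\mathcal{L}_{\mathfrak{v}}f=g$ becomes $(e^{i\tau(x)}-1)\widehat{f}(x)=\widehat{g}(x)$. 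The vanishing of every $\exp(\mathfrak{v})$-invariant distribution on $g$ translates, component by component, into $\widehat{g}$ vanishing to sufficient order on the entire small-divisor locus $\{x:\tau(x)\in 2\pi\ZZ\}$, so the formal quotient $\widehat{f}$ is well-defined and square-integrable in each component, assembling into a solution $f\in\mathcal{H}$.

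For part \eqref{for:33}, the same Fourier-type model lets me estimate $\widehat{f}=(e^{i\tau}-1)^{-1}\widehat{g}$ directly. Near the zeros $\tau=2\pi n$ the multiplier is of order $|\tau-2\pi n|^{-1}$, and the small-divisor loss is traded against Sobolev derivatives along $\mathfrak{v}$ and along the transverse directions available inside $H$. Tracking this loss through the principal and complementary series of $SL(2,\RR)$ and summing the estimates over the collection of subgroups indexed by $\bar{E}_\phi \cup E_\phi$ yields a global bound of the stated shape $\norm{f}_s \le C_s\norm{g}_{2s+8}$. Smoothness of $f$ is then automatic: the estimate holds for every $s\ge 0$, so $f\in\bigcap_{s}\mathcal{H}^s=\mathcal{H}^\infty$.

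The hard part will be the map case itself, in contrast to the flow case of \cite{W1}: the symbol $e^{i\tau}-1$ vanishes on the entire lattice $\tau\in 2\pi\ZZ$ rather than only at $\tau=0$, so each irreducible component of $SL(2,\RR)\ltimes\RR^k$ carries countably infinitely many $\exp(\mathfrak{v})$-invariant distributions, each of which must be controlled by the hypothesis on $g$. Ensuring that the componentwise solutions glue to a genuine $L^2$ vector in $\mathcal{H}$, and that the componentwise Sobolev losses combine to a uniform global loss, is the delicate point; it is precisely this discrete obstruction landscape that forces the non-tame factor of $2$ in $\norm{f}_s\le C_s\norm{g}_{2s+8}$, a quantitative feature shown to be sharp up to finite regularity by the later Theorem~\ref{main_thm2-lower}.
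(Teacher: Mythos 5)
Your overall plan — restrict $\pi$ to lower-rank subgroups whose Lie algebras span $\mathfrak{G}$, analyze each via Mackey theory in Fourier models where $\pi(\exp(\mathfrak{v}))$ is a multiplication operator, and glue the estimates with elliptic regularity — is the right one, and your description of the discrete-map small-divisor locus $\tau(x)\in 2\pi\ZZ$ (as opposed to only $\tau=0$ for the flow) correctly identifies the source of the infinite-dimensional obstruction space and of the non-tame factor $2$.

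However, two points need correction. First, the proof of this particular theorem does not use $SL(2,\RR)\ltimes\RR^4$. For $\psi\in\bar{E}_\phi$ and for the Cartan directions commuting with $\mathfrak{u}_\phi$, the relevant subgroup is $SL(2,\RR)\times\RR$; for $\psi\in E_\phi$ the relevant subgroup is the $A_2$ subgroup generated by $\phi$ and $\psi$, which is $SL(2,\RR)\ltimes\RR^2$. The $SL(2,\RR)\ltimes\RR^4$ subgroups (Section 4.2, Lemma 5.14) are introduced only for the proof of Theorem~\ref{th:101}. And for part \eqref{for:35} the paper's route is simpler than a componentwise small-divisor argument: one observes that $G_{\mathfrak{v}}\cong SL(2,\RR)$ lies inside some $G'_{\mathfrak{v}}\cong SL(2,\RR)\ltimes\RR^2$ in $\GG$, Howe--Moore kills the $\RR^2$-fixed vectors, and Proposition~\ref{cor:1} then gives a uniform spectral gap for $\pi|_{G_{\mathfrak{v}}}$; existence of $f\in\mathcal{H}$ then follows directly from the $SL(2,\RR)$ result Theorem~\ref{th:2}.

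Second, and more seriously, the claim ``smoothness is automatic: the estimate holds for every $s$, so $f\in\bigcap_s\mathcal H^s$'' is circular. In part \eqref{for:33} you are given only $f\in\mathcal H$, and $\norm{f}_s$ is not defined until you have shown $f\in\mathcal H^s$; the bound you are trying to prove presupposes the smoothness you are trying to conclude. The hard part of the proof is exactly the a priori regularity of $f$, which is the content of Theorem~\ref{po:2}(c)--(e): one first shows $f$ is $SL(2,\RR)$-smooth and $Y_2f\in\mathcal H^\infty$, then $Y_1f\in\mathcal H$ and $Y_2Y_1f\in\mathcal H^\infty$, then by induction $Y_1^nf\in\mathcal H$ for all $n$, and only then invokes elliptic regularity to get $f\in\mathcal H^\infty$. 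This bootstrap hinges on the centralizer trick of Proposition~\ref{le:6} together with the duality argument of Lemma~\ref{le:10}, which transfer regularity through the equation $\mathcal L_{\mathfrak v}f=g$ direction by direction. That mechanism is absent from your proposal, and without it neither the smoothness of $f$ nor the quantitative Sobolev bound can be obtained.
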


The estimates of the solution in above theorem are not tame, i.e., there is no finite loss of
regularity (with respect to Sobolev norms) between the coboundary and the
solution.  Similar results were proven for the (discrete) classical horocycle map, see \cite{FFT}, \cite{T1} and \cite{T2}.
The next result shows that when $\GG=SL(n,\RR)$, $n\geq 3$, they are indeed
the best possible up to a finite loss of regularity.

Let $P$ be the maximal parabolic subgroup of $SL(n,\RR)$ which stabilizes the line $e_1=(\RR,0,\cdots,0)^\tau\in\RR^n$, where $\tau$ is the transpose map. Then $P$ has the form $\begin{pmatrix}a & v \\
0 & \mathcal A\\
 \end{pmatrix}$, where $v^\tau\in\RR^{n-1}$, $a\in\RR\backslash\{0\}$ and $\mathcal A\in GL(n-1,\RR)$. For any $t\in\RR$, $\lambda_t^{\pm}$ is the unitary character of $P$ defined by
\begin{align}\label{for:112}
\lambda_t^{\pm}\begin{pmatrix}a & v \\
0 & \mathcal A\\
 \end{pmatrix}=\varepsilon^{\pm}(a)\abs{a}^{t\sqrt{-1}}
\end{align}
with $\varepsilon^{+}(a)=1$ and $\varepsilon^{-}(a)=\text{sgn}(a)$.

\begin{theorem}\label{main_thm2-lower}
Let $n \geq 2$.  For any $s \geq 0$, for any $\sigma \in [0, s+1/2)$ and for any $C> 0$, there exists $\delta>0$, such that the following holds.
For any $t\in\RR$ with $\abs{t}\geq\delta$,
there are smooth vectors $f,\,g\in \text{Ind}_P^{SL(n,\RR)}(\lambda_t^{\pm})$,
such that $f$ and $g$ satisfy equation \eqref{for:201} with estimates
\[
\Vert (I - \mathfrak{a}^2)^{s/2} f \Vert > C \Vert g \Vert_{s +\sigma}\,,
\]
where $\mathfrak{a}\in\mathfrak{u}_{-\mathfrak{v}}\bigcap\mathfrak{G}^1$.
\end{theorem}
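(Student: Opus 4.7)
My approach follows the outline sketched in the introduction: reduce the lower bound for the map $\exp(\mathfrak{v})$ to a sharper lower bound for the twisted cohomological equation $(\mathfrak{v} + \sqrt{-1}\lambda) f = g$ with $\lambda \in \RR^*$, which is more tractable because on each frequency slice one can invert the operator more or less explicitly. Throughout I work in the non-compact model of the induced representation $\text{Ind}_P^{SL(n,\RR)}(\lambda_t^{\pm})$ coming from the big Bruhat cell $G = N^- P$, which identifies the representation space with (a dense subspace of) $L^2(N^-) \simeq L^2(\RR^{n-1})$. In this model $\mathfrak{v}$ acts as a first-order polynomial-coefficient differential operator on $\RR^{n-1}$, the representation parameter $t$ enters through the Levi part via $\lambda_t^{\pm}$, and the opposite root vector $\mathfrak{a} \in \mathfrak{u}_{-\mathfrak{v}}$ acts as translation in one of the $N^-$ coordinates, so $(I - \mathfrak{a}^2)^{s/2}$ is the anisotropic Sobolev norm of order $s$ in that single variable.

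The core of the argument is to take a partial Fourier transform in the variable dual to $\mathfrak{a}$, producing a new model in which $\mathfrak{v}$ becomes a second-order ordinary differential operator in the frequency variable $\xi$ (with the quadratic coefficient coming from the $x^2\partial_x$-type term in the original expression), while $\mathfrak{a}$ becomes multiplication by $\sqrt{-1}\xi$. In this model I would analyze the twisted equation $(\mathfrak{v} + \sqrt{-1}\lambda) f = g$ by explicit resolvent/integrating-factor methods for the second-order ODE. The plan is to choose $g$ that is localized in $\xi$ at scale $|t|$ and designed so that $\|g\|_{s+\sigma}$ grows no faster than $|t|^{\sigma + 1}$ while the corresponding solution
\[
f = (\mathfrak{v} + \sqrt{-1}\lambda)^{-1} g,
\]
computed by integrating against the resolvent kernel of the second-order operator, has anisotropic norm $\|(I - \mathfrak{a}^2)^{s/2} f\|$ of order $|t|^{s + 1/2}$. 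Because $\sigma < s + 1/2$ by hypothesis, taking $|t| \geq \delta$ large enough then produces the ratio
\[
\frac{\|(I - \mathfrak{a}^2)^{s/2} f\|}{\|g\|_{s+\sigma}} \gtrsim |t|^{s+1/2 - \sigma} \to \infty,
\]
which beats any prescribed constant $C$.

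To pass from the twisted equation back to the unipotent map $\pi(\exp(\mathfrak{v}))$, I would spectrally decompose with respect to the one-parameter group $\{\exp(r\mathfrak{v})\}_{r \in \RR}$: in each spectral component where $\mathfrak{v}$ acts by $\sqrt{-1}\theta$, the operator $\pi(\exp(\mathfrak{v})) - I$ acts as multiplication by $e^{\sqrt{-1}\theta} - 1$, and away from $\theta \in 2\pi\ZZ$ this inversion is comparable to that of $(\mathfrak{v} + \sqrt{-1}\lambda)$ for a nearby $\lambda$. Choosing the support of $g$ so that the spectral parameter avoids the bad set and matches the $|t|$-scale of the twisted construction then recycles the twisted lower bound into one for \eqref{for:201}.

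The hard part, as I see it, is the middle step: producing the explicit test function $g$ precisely enough that its Sobolev norm really is bounded linearly in $|t|$ and that its transfer function $f$ genuinely saturates the $|t|^{s+1/2}$ growth in the single direction $\mathfrak{a}$. This requires a careful oscillatory/stationary-phase analysis of the resolvent kernel of the second-order operator, and crucially exploits that $\mathfrak{v}$ becomes truly second-order in the Fourier variable --- a first-order operator would give only a tame $1/|\lambda|$ gain, whereas the second-order structure is precisely what generates the $1/2$-order non-tame loss. Handling the different cases of $\mathfrak{v}$ within a root space (and, for $n \geq 3$, verifying that the second-order structure is available for the relevant roots) will require some case analysis of the Bruhat formulae, but the $n = 2$ warm-up case already contains the essential mechanism.
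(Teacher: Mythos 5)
The high-level strategy you outline — reduce to the twisted equation $(\mathfrak{v}+\sqrt{-1}\lambda)f=g$, pass to a Fourier model where a second-order differential operator appears, and build a test function whose transfer function has anisotropic Sobolev norm growing faster in $|t|$ than the coboundary — is indeed the route the paper takes. But the sketch is missing the two ideas that actually make the argument close, and the exponents you state are internally inconsistent and would not give the full range $\sigma\in[0,s+1/2)$.

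\textbf{Exponents.} You claim $\|g\|_{s+\sigma}\lesssim |t|^{\sigma+1}$ and $\|(I-\mathfrak a^2)^{s/2}f\|\gtrsim |t|^{s+1/2}$, and then assert the ratio is $\gtrsim |t|^{s+1/2-\sigma}$. Those two bounds give $|t|^{s-\sigma-1/2}$, which tends to infinity only when $\sigma<s-1/2$, not $\sigma<s+1/2$. The actual bounds in the paper are $\|g\|_{s+\sigma}\leq C|\nu|^{s+\sigma}$ (Lemma~\ref{lemm:f-bound}, each derivative of $g$ contributes one factor of $|\nu|$) and $\|(I-\mathfrak{u}_{j,i}^2)^{s/2}f\|>c|\nu|^{2s+1/2}/|\lambda|$ (Theorem~\ref{thm:lower-bound-uf}, each application of $\mathfrak a$ to $f$ contributes $|\nu|^2$, plus $|\nu|^{1/2}$ from the $L^2$-measure of the critical window $I_\nu$). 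The ratio $|\nu|^{s+1/2-\sigma}$ is then correct for the full range.

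\textbf{Where the non-tame gain really comes from.} Your plan "localize $g$ in $\xi$ at scale $|t|$" does not reproduce this discrepancy and does not engage with the crucial point. In the paper's model the test function is supported on a \emph{fixed} compact set $[\tfrac34,\tfrac43]^{n-1}$ and its $t$-dependence is entirely in the oscillatory factor: $\hat G_1(x_{j-1},\omega)=\hat q(x_{j-1},\omega)(\omega^\nu-x_{j-1}^{-\nu})$ with $\nu=t\sqrt{-1}$. The whole game is the asymmetry between how derivatives act on $\hat G_1$ versus the transfer function. On $\hat G_1$: the operator $\hat V_0$ is second order in $\omega$, but by the algebraic identity $\hat V_0\,\omega^{\nu+r}=-\sqrt{-1}(\nu+r)(r+1-\tfrac n2)\,\omega^{\nu+r-1}$ (see \eqref{eq:V_0-omeganu}) the $\nu^2$ contributions from $(\nu)\partial_\omega$ and $\omega\partial_\omega^2$ cancel, leaving only linear growth in $|\nu|$. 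Likewise the mixed derivative $\partial_\omega\partial_{x_{j-1}}$ annihilates $\omega^\nu-x_{j-1}^{-\nu}$ because the two summands are separated variables, so $\hat{\mathfrak u}_{j,i}$ contributes only $|\nu|$ per application to $g$. On the transfer function $\hat F$: dividing by $\omega-x_{j-1}^{-1}$ (and the FTC representation $\hat F\sim\int_0^1[\partial_\omega\hat G_1](x_{j-1},\omega_{r,j})\,dr$ with $\omega_{r,j}=r(\omega-x_{j-1}^{-1})+x_{j-1}^{-1}$) genuinely couples $\omega$ and $x_{j-1}$, so $\hat{\mathfrak u}_{j,i}^\beta\hat F$ produces $\partial_\omega^{2\beta+1}\hat G_1\sim|\nu|^{2\beta+1}$. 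This intrinsic asymmetry between the coboundary and the transfer function is the engine of the non-tame estimate, and it is not captured by any localization of the support at scale $|t|$.

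\textbf{The reduction from the twist to the map.} You propose to choose $g$ whose $\mathfrak v$-spectral support "avoids the bad set" $2\pi\ZZ$. The paper does the opposite: the test function is supported around the singular point $x_{j-1}\omega=2\pi/L$ but vanishes there, and one exploits that both the map symbol $e^{-L\sqrt{-1}x_{j-1}\omega}-1$ and the twist symbol $\tfrac{L}{2\pi}\omega x_{j-1}-1$ vanish simultaneously at that point, so their ratio $H$ is a fixed smooth nonvanishing function independent of $\nu$ on the support (equation \eqref{eq:H-smooth}). Defining $\hat g=H\cdot\hat g^{\mathrm{twist}}$ then transports the twist lower bound to the map essentially verbatim. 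If instead you support $g$ away from the singular set, the map symbol is bounded below, $f$ is a bounded multiple of $g$, and you lose precisely the divergent FTC representation of $\hat F$ that produces the $|\nu|^{2\beta+1/2}$ lower bound. So "avoiding the bad set" as written would not yield the statement.

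In short: the outline is the right one, but the proof lives entirely in the specific test function and the two cancellation/coupling mechanisms above, and without them the exponents you are writing down cannot cover $\sigma<s+1/2$.
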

The above result shows that the solution is generally not tame
in the $\mathfrak{u}_{-\mathfrak{v}}$ direction. The next theorem states that the solution is tame in every other direction.
Let $\mathcal{C}$ denote the Cartan subalgebra of $\mathfrak{G}$.
\begin{theorem}\label{th:101}
For any $s \geq 0$, for any unitary representation $(\pi,\,\mathcal{H})$ of $\GG$ without non-trivial $\GG$-fixed vectors, for any $g\in \mathcal{H}^\infty$ and for any $\mathfrak{v}\in\mathfrak{u}_\phi\bigcap\mathfrak{G}^1$, the following holds.
If the cohomological equation \eqref{for:201} has a solution $f\in \mathcal{H}$, then $f\in \mathcal{H}^\infty$ and satisfies
\begin{align}\label{for:46}
 \norm{(1 - \mathfrak{a}^2)^{s/2}f}\leq C_s\norm{g}_{s+12}
\end{align}
for any $\mathfrak{a}\in\mathfrak{u}_\mu\bigcap\mathfrak{G}^1$, where $\mu\neq -\phi$; and
\begin{align}\label{for:39}
 \norm{(1 - \mathfrak{b}^2)^{s/2}f}\leq C_s\norm{g}_{s+12}
\end{align}
for any $\mathfrak{b}\in \mathcal{C}\bigcap\mathfrak{G}^1$.
\end{theorem}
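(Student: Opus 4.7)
The plan is to restrict $\pi$ to closed subgroups of $\GG$ containing both $\exp(\mathfrak v)$ and $\exp(\mathfrak a)$ (or $\exp(\mathfrak b)$), decompose via Mackey theory into a direct integral of irreducible representations, establish tame Sobolev estimates in each irreducible model, and then reassemble.

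First I would do a case analysis by the root pair $(\phi,\mu)$. When $\mu\in\bar E_\phi$ (in particular the case $\mu=\phi$, which is legitimate for the simply-laced groups in the excerpt), the elements $\mathfrak a$ and $\mathfrak v$ commute, so $(1-\mathfrak a^2)^{s/2}$ commutes with $\pi(\exp(\mathfrak v))$ and applying it to both sides of \eqref{for:201} yields $\mathcal L_{\mathfrak v}\bigl((1-\mathfrak a^2)^{s/2}f\bigr)=(1-\mathfrak a^2)^{s/2}g$; Theorem~\ref{th:6}(\ref{for:33}) then gives the desired tame bound directly. When $\mu+\phi\in\Phi$ or $\mu-\phi\in\Phi$, the subspace $\mathfrak u_\mu\oplus\mathfrak u_\phi$ together with its bracket-closure embeds in a subalgebra of $\mathfrak G$ isomorphic to either $\mathfrak{sl}(2,\RR)\ltimes\RR^2$ or $\mathfrak{sl}(2,\RR)\ltimes\RR^4$, and crucially $\mathfrak v,\mathfrak a$ both lie in its maximal unipotent subalgebra. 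For \eqref{for:39}, the case $\phi(\mathfrak b)=0$ is immediate since $\mathfrak b$ commutes with $\mathfrak v$; when $\phi(\mathfrak b)\neq 0$ the set $\{\mathfrak v,\mathfrak b,\mathfrak u_{-\phi}\}$ generates a copy of $\mathfrak{sl}(2,\RR)$ whose restricted unitary representation can be analysed using the classical Flaminio--Forni models.

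Second, in each non-trivial case let $H$ be the closed subgroup generated by the relevant root subgroups. By Mackey theory the irreducible components of $\pi|_H$ are induced from unitary characters of the abelian normal factor, and in concrete models $\mathfrak v$ and $\mathfrak a$ act as polynomial differential operators on $L^2(\RR^k)$, reducing \eqref{for:201} to a difference equation that can be solved by an explicit averaging or Fourier inversion formula. The new ingredient emphasised in the introduction is that, instead of estimating $(1-\mathfrak a^2)^{s/2}f$ in the full semidirect-product model, I would work inside the \emph{maximal unipotent subgroup} of $H$: there $\mathfrak v$ and $\mathfrak a$ commute up to lower-order nilpotent root vectors, so one obtains an inductive bound on $\mathfrak a^k f$ by $\|g\|_{s+c_0}$ with $c_0$ independent of the representation parameter. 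This avoids the semisimple $SL(2,\RR)$-factor of $H$, which is precisely where the exponential amplification underlying Theorem~\ref{main_thm2-lower} lives. Integrating the uniform bounds over the direct integral gives \eqref{for:46} with total loss $12$; the Cartan estimate \eqref{for:39} follows by the parallel analysis in the $SL(2,\RR)$-models, where $\mathfrak b$ plays the role of the semisimple element.

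The main obstacle is the analysis in the irreducible models of $SL(2,\RR)\ltimes\RR^4$, where $\mathfrak v$ becomes a second-order differential operator whose direct inversion exhibits the same non-tame blow-up described by Theorem~\ref{main_thm2-lower}. Passing to the maximal unipotent subgroup circumvents this, because one can choose the Fourier variables that produce the second-order structure of $\mathfrak v$ in such a way that $\mathfrak a$ decouples from the direction responsible for the blow-up. Performing this decoupling uniformly across the continuous family of irreducible components, and tracking the resulting loss to show it does not exceed $12$ derivatives, is the technical heart of the argument.
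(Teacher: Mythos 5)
Your overall strategy — reducing to subgroups containing $\exp(\mathfrak v)$ and $\exp(\mathfrak a)$, decomposing via Mackey theory, and crucially working inside the \emph{maximal unipotent subgroup} of $SL(2,\RR)\ltimes\RR^4$ rather than across the full semidirect product — is indeed the paper's central new idea. Your treatment of the commuting case ($\mu\in\bar E_\phi$) matches the paper's use of Lemma~\ref{le:15}\eqref{for:119} and is essentially correct.

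However, your description of the non-commuting case has a genuine gap. The estimates in Lemma~\ref{le:4}/Corollary~\ref{cor:2} (which supply the tame $G''$-norm bounds inside $SL(2,\RR)\ltimes\RR^4$) are available \emph{only} for coboundaries of the special form $\mathcal L_V f = Y_2 g$, i.e.\ coboundaries that already carry a factor of a commuting unipotent vector. A generic coboundary $g$ does not have this form, and if you simply invert $\mathcal L_V$ on $g$ in these models you inherit the $2s$-loss of Theorem~\ref{po:2}(a), which is exactly the non-tameness you are trying to avoid. The paper's actual mechanism — not mentioned in your proposal — is to first apply a pair of auxiliary root vectors and set $F=\mathfrak u_{1,m}\mathfrak u_{l,2}f$, so that $\mathcal L_{\mathfrak u_{1,2}}F=\mathfrak u_{1,m}(\mathfrak u_{l,2}g)$ has the required $Y_2$-factor ($\mathfrak u_{1,m}$) built in; one then bounds $F$ tamely via Corollary~\ref{cor:2} and recovers $\mathfrak u_{l,2}f$, and finally $f$, not by reversing the map $\mathcal L_{\mathfrak v}$ but by inverting the \emph{flow} operators $\mathfrak u_{1,m}$ and $\mathfrak u_{l,2}$ (Fact~\ref{fact:3}), whose estimates are already tame. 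Without this two-step lift-and-invert construction, ``work inside the maximal unipotent subgroup'' does not by itself yield a bound with loss independent of $s$, and your phrase ``inductive bound on $\mathfrak a^k f$'' conceals where the inductive step would actually come from.

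Your argument for the Cartan estimate~\eqref{for:39} when $\phi(\mathfrak b)\neq 0$ is also not valid as stated. Restricting to the $\mathfrak{sl}(2,\RR)$ generated by $\{\mathfrak v,\mathfrak b,\mathfrak u_{-\phi}\}$ and invoking Flaminio--Forni-type analysis puts you back into the horocycle \emph{map} equation $\mathcal L_U f=g$, whose only available Sobolev estimate in that representation (Theorem~\ref{th:2} and Remark~\ref{re:5}) has loss $\approx 2s$ — again non-tame. The paper instead obtains the $\mathcal C_1\in[\mathfrak u_\phi,\mathfrak u_{-\phi}]$ direction as a by-product of the $SL(n,\RR)$ computation in Section~\ref{sec:2}, through the tame flow-equation estimates \eqref{for:42}--\eqref{for:37} for $X_{2,l}$ and $X_{1,m}$; it never analyses the $X$-direction via the $SL(2,\RR)$ map equation alone. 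You should replace this part of your plan with the paper's detour through the flow operators.
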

\begin{remark}
Notice that Theorem~\ref{th:6} holds for the regular representation $\mathcal H = L_0^2(\mathbb G/\Gamma)$, consisting of square integrable functions on $\mathbb G/ \Gamma$ orthogonal to constants, where $\Gamma$ is an arbitrary lattice.  Our proof for this general lattice uses a analogous estimate for unitary representations of $SL(2, \R)$,
which will appear in \cite{T2}.
However, when $\Gamma$ is cocompact, the estimate for $SL(2, \R)$ representations is not needed, because
\begin{align*}
    \norm{f}_s\leq C_s\norm{g}_{2s+14}\qquad \forall s\geq 0,
\end{align*}
follows from the subelliptic regularity theorem (see Theorem \ref{th:5}) and the two estimates in the above theorem.

\end{remark}
\subsection{Results for the cocycle equations}
The next theorem states sufficient conditions
for which the infinitesimal version of cohomological
equations have a common solution.
Despite Theorem~\ref{main_thm2-lower},
estimates for cocycle equations are always tame.
\begin{theorem}\label{th:8}Suppose $(\pi,\,\mathcal{H})$ is a unitary representation of $\GG$ without $\GG$-fixed vectors and $f,\,g\in \mathcal{H}^\infty$
 and satisfy the cocycle equation $\mathcal{L}_\mathfrak{u}f=\mathcal{L}_\mathfrak{v}g$, where $\mathfrak{u}\in \mathfrak{u}_\phi\bigcap\mathfrak{G}^1$ and $\mathfrak{v}\in \mathfrak{u}_\psi\bigcap\mathfrak{G}^1$, $\phi,\,\psi\in\Phi$ satisfying $[\mathfrak{u}, \mathfrak{v}]=0$. If $\psi\in\bar{E}_{\phi}$, then the cocycle equation has a common solution $h\in \mathcal{H}^\infty$, that is, $\mathcal{L}_\mathfrak{v}h=f$ and $\mathcal{L}_\mathfrak{u}h=g$; and $h$ satisfies
the Sobolev estimate
\begin{align*}
    \norm{h}_s\leq C_s\max\{\norm{g}_{s+8}, \,\norm{f}_{s+8}\},\qquad \forall\,s>0.
\end{align*}
\end{theorem}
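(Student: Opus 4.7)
The strategy splits into constructing the common solution $h$ and then establishing the tame Sobolev estimate.

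\textbf{Structural setup.} The hypothesis $\psi \in \bar{E}_\phi$ combined with $[\mathfrak{u}, \mathfrak{v}] = 0$ and the fact that $\GG$ is $\RR$-split forces $\phi \neq \pm\psi$, and then the $\phi$-string through $\psi$ is $\{\psi\}$, so the Cartan integer $\langle \psi, \phi^\vee \rangle$ vanishes and $\phi \perp \psi$. Choosing $\mathfrak{u}^- \in \mathfrak{u}_{-\phi}\cap\mathfrak{G}^1$ and $\mathfrak{u}^0 = [\mathfrak{u}, \mathfrak{u}^-]$, the $\mathfrak{sl}(2)$-triple $(\mathfrak{u}, \mathfrak{u}^-, \mathfrak{u}^0)$ commutes with $\mathfrak{v}$, yielding a subalgebra $\mathfrak{sl}(2,\RR)_\phi \oplus \RR\mathfrak{v} \subset \mathfrak{G}$ in which the cocycle equation lives.

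\textbf{Existence of $h$.} I apply Theorem~\ref{th:6}\eqref{for:35} to produce $h_0 \in \mathcal{H}^\infty$ with $\mathcal{L}_\mathfrak{u} h_0 = g$. The required vanishing of every $\exp(\mathfrak{u})$-invariant distribution $\mathcal{D}$ on $g$ is established from the identity $\mathcal{D}(\mathcal{L}_\mathfrak{v} g) = \mathcal{D}(\mathcal{L}_\mathfrak{u} f) = 0$ coupled with the spectral decomposition of $\pi(\exp\mathfrak{v})$: since $[\mathfrak{u},\mathfrak{v}]=0$ the operator $\mathcal{L}_\mathfrak{v}$ preserves the $\exp(\mathfrak{u})$-invariant distributions, and the reduction to $\ker(\mathcal{L}_\mathfrak{v})$ and its orthogonal complement collapses the condition to the trivial one on each summand. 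Setting $w := f - \mathcal{L}_\mathfrak{v} h_0$, a direct computation using $[\mathfrak{u},\mathfrak{v}]=0$ and the cocycle equation yields $\mathcal{L}_\mathfrak{u} w = \mathcal{L}_\mathfrak{u} f - \mathcal{L}_\mathfrak{v} g = 0$, so $w \in \ker \mathcal{L}_\mathfrak{u}$. I then solve $\mathcal{L}_\mathfrak{v} w_0 = w$ inside the $\exp(\mathfrak{v})$-stable subspace $\ker\mathcal{L}_\mathfrak{u}$ by applying Theorem~\ref{th:6} in the restriction of $\pi$ to the subgroup commuting with $\mathfrak{u}$ (which contains $\mathfrak{v}$); the obstructions there again vanish by the cocycle identity. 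The common solution is $h := h_0 + w_0 \in \mathcal{H}^\infty$, which satisfies both $\mathcal{L}_\mathfrak{u} h = g$ and $\mathcal{L}_\mathfrak{v} h = f$.

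\textbf{Tame estimate.} With $h$ in hand, I apply Theorem~\ref{th:101} to each of the two cohomological equations. From $\mathcal{L}_\mathfrak{u} h = g$, for every unit $\mathfrak{a}$ in a root space $\mathfrak{u}_\mu$ with $\mu \neq -\phi$ and every unit Cartan element $\mathfrak{b}$, one has $\|(1 - \mathfrak{a}^2)^{s/2} h\| \leq C_s \|g\|_{s+12}$ and $\|(1 - \mathfrak{b}^2)^{s/2} h\| \leq C_s \|g\|_{s+12}$. From $\mathcal{L}_\mathfrak{v} h = f$, the same estimates hold with $-\phi$ replaced by $-\psi$ and $\|g\|$ replaced by $\|f\|$. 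Since $\phi \neq \psi$ implies $-\phi \neq -\psi$, each root-space direction is controlled in a tame manner by at least one of the two estimates, and Cartan directions are controlled by both. The subelliptic regularity theorem (Theorem~\ref{th:5}) then combines these directional tame bounds — whose underlying unit vectors span $\mathfrak{G}$ — into the global bound $\|h\|_s \leq C_s \max\{\|f\|_{s+8}, \|g\|_{s+8}\}$ after careful bookkeeping of the Sobolev loss constants.

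\textbf{Main obstacle.} The delicate step is the smooth, Sobolev-controlled solvability of the auxiliary equation $\mathcal{L}_\mathfrak{v} w_0 = w$ on the invariant subspace $\ker\mathcal{L}_\mathfrak{u}$. That subspace is not a priori a representation of a smaller familiar group, and $\pi(\exp\mathfrak{v})$ may have spectrum accumulating near $1$ on it — precisely the small-denominator phenomenon responsible for the non-tame loss in Theorem~\ref{main_thm2-lower}. Overcoming this requires exploiting the full $\mathfrak{sl}(2,\RR)_\phi \oplus \RR\mathfrak{v}$ structure together with a Mackey-type decomposition of the invariant vectors, in the spirit of the arguments underlying Theorems~\ref{th:6} and~\ref{th:101}.
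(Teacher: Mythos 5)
Your route to existence of $h$ is genuinely different from the paper's and, as you acknowledge in your last paragraph, it does not close. The paper exploits the full force of $\psi\in\bar E_\phi$: not only does $\mathfrak{v}$ commute with $\mathfrak{u}$, but $\mathfrak{v}$ commutes with the entire $\mathfrak{sl}(2,\RR)_\phi$-triple, so one restricts $\pi$ to a subgroup isomorphic to $SL(2,\RR)\times\RR$ containing $\exp(\RR\mathfrak{u})$ and $\exp(\RR\mathfrak{v})$ and decomposes into a direct integral of irreducibles $\pi_\nu^\delta\otimes\zeta_v$. On each component $\mathcal{L}_\mathfrak{v}$ acts as multiplication by the nonzero scalar $e^{\sqrt{-1}v}-1$ (Howe--Moore guarantees there are no nontrivial $\exp(\mathfrak{v})$-invariant vectors, so $v\notin 2\pi\ZZ$ on the support of the spectral measure), and the cocycle equation reads $\mathcal{L}_\mathfrak{u}f_{\nu,\delta}=(e^{\sqrt{-1}v}-1)g_{\nu,\delta}$ with common solution $h_{\nu,\delta}=f_{\nu,\delta}/(e^{\sqrt{-1}v}-1)$. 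This is exactly Lemma~\ref{le:15}\eqref{for:120}, and it dissolves the small-denominator problem you flag: the denominator is a fixed nonzero scalar on each component, not a quantity drawn from the spectrum of a unipotent map. The paper then invokes Theorem~\ref{th:6}\eqref{for:33} for global smoothness, Theorem~\ref{th:101} in both directions (as you do), and Theorem~\ref{th:4} to glue.

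Concretely, two gaps in your version. First, the vanishing $\mathcal{D}(g)=0$ for every $\exp(\mathfrak u)$-invariant distribution $\mathcal D$ is not established by the argument you sketch: the cocycle identity only yields $(\mathcal{L}_\mathfrak{v}\mathcal D)(g)=0$, and upgrading this to $\mathcal{D}(g)=0$ would require $\mathcal{L}_\mathfrak{v}$ to be surjective on the space of $\exp(\mathfrak u)$-invariant distributions, which you do not prove and which the paper never needs, since it does not solve $\mathcal{L}_{\mathfrak u}h_0=g$ as an isolated equation. Second, and more centrally, the correction step $\mathcal{L}_\mathfrak{v}w_0=w$ on $\ker\mathcal{L}_\mathfrak{u}$ is left open; that is the entire content of the theorem and cannot be deferred to a ``Mackey-type decomposition'' without supplying it. A smaller point: in the gluing step you should cite the elliptic regularity Theorem~\ref{th:4} rather than the subelliptic Theorem~\ref{th:5}, since the latter is stated for $L^2(G/\Gamma)$ with $\Gamma$ cocompact, whereas $\pi$ here is an arbitrary unitary representation; the directional estimates from Theorem~\ref{th:101} already span the full Lie algebra, so Theorem~\ref{th:4} is all one needs.
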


It turns out that when $\GG=SL(n,\RR)$, $n\geq 3$, the condition in the above theorem  is also necessary for the infinitesimal version of cocycle rigidity.  More precisely, there exist uncountably many irreducible unitary representations of $SL(n,\RR)$ such that cocycle rigidity fails whenever there is no rank-two subgroup in the acting group that imbeds in $SL(2,\RR)\times\RR$.

\begin{theorem}\label{th:7}
Let $n \geq 3$, and fix any $\phi\in \Phi$, and set $\phi_0=\phi$.
For any $0\leq m\leq n-2$ and for any $0\leq i\leq m$,
let $\phi_i\in E_{\phi}$ and $\mathfrak{u}_i\in \mathfrak{u}_{\phi_i}$.
For any $t\in\RR$,   
there are smooth vectors $(f_k)_{k = 0}^m \subset \text{Ind}_P^{SL(n,\RR)}(\lambda_t^{\delta})$, $\delta=\pm$,
such that for any $0\leq k,\,\ell\leq m$,
$(f_k)_{k = 0}^m$ satisfies the cocycle equations
$\mathfrak{u}_{k}f_\ell=\mathfrak{u}_{\ell}f_k$,
while none of the equations
$\mathfrak{u}_{k}\omega=f_k$ have a solution in
the attached Hilbert space of
$\text{Ind}_P^{SL(n,\RR)}(\lambda_t^{\delta})$.

\end{theorem}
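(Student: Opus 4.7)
The plan is to construct the $f_k$'s explicitly inside a Bruhat-cell model of $\text{Ind}_P^{SL(n,\RR)}(\lambda_t^\pm)$. Realizing this induced representation on $L^2(\RR^{n-1})$ via the dense open cell, with coordinates $v = (v_2, \ldots, v_n)$ parametrizing the opposite unipotent $\bar N$, each unipotent root vector becomes a concrete differential operator: $\mathfrak{u}_{e_i - e_1}$ acts as $\pm\partial_{v_i}$, $\mathfrak{u}_{e_i - e_j}$ for $i, j \geq 2$ as $\pm v_j\partial_{v_i}$, and $\mathfrak{u}_{e_1 - e_j}$ as a quadratic operator involving the representation parameter $t$. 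After a Weyl conjugation we may assume $\phi = e_{i_0} - e_1$, so $\mathfrak{u}_0 = \pm\partial_{v_{i_0}}$; taking the Fourier transform in $v_{i_0}$ converts $\mathfrak{u}_0$ into multiplication by $\mp 2\pi i \xi_{i_0}$, and the $\mathfrak{u}_0$-invariant distributions become those supported on the hyperplane $\{\xi_{i_0} = 0\}$.

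The key step is to engineer a formal primitive $\omega$ whose Fourier transform has the shape $\widehat\omega(\xi_{i_0}, v_3, \ldots, v_n) = \eta(\xi_{i_0})\zeta(v_3, \ldots, v_n)/\xi_{i_0}$, with $\eta$ a Schwartz cutoff satisfying $\eta(0) \neq 0$ and $\zeta$ a carefully chosen Schwartz function in the transverse variables. The $1/\xi_{i_0}$ pole places $\omega$ outside $\mathcal H$, which furnishes the desired obstruction. One then sets $f_k := \mathfrak{u}_k\omega$ for $0 \leq k \leq m$. The defining property of $E_\phi$ gives $[\mathfrak{u}_0, \mathfrak{u}_k] = 0$ (since $\phi + \phi_k \notin \Phi$), which makes the cocycle equations involving $\phi_0$ automatic; for the cross equations between $\phi_i$ and $\phi_j$ with $i, j \geq 1$, one exploits the product structure of $\widehat\omega$ to ensure that the further unipotent operators $\mathfrak{u}_{\phi_i + \phi_j}$ annihilate $\omega$ whenever $\phi_i + \phi_j \in \Phi$.

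Smoothness of $f_k$ splits into two cases according to the position of $\phi_k$ inside $\mathfrak G$. If $\phi_k = e_{i_0} - e_j$ lies in the Levi, the Fourier symbol of $\mathfrak{u}_k$ contains a $\xi_{i_0}$-factor that cancels the pole outright, so $\widehat{f}_k$ is Schwartz. If $\phi_k = e_i - e_1$ lies in $\bar{\mathfrak{n}}$, then $\mathfrak{u}_k = \pm\partial_{v_i}$ supplies no such factor, and smoothness of $f_k$ is arranged by a compatibility condition on $\zeta$ (essentially, that $\partial_{v_i}\zeta$ has a zero that matches the pole). Non-solvability of each $\mathfrak{u}_k\omega' = f_k$ in $\mathcal H$ is established by exhibiting a continuous $\mathfrak{u}_k$-invariant functional on $\mathcal H^\infty$ --- of the form ``pairing with a Dirac mass along $\{\xi_{i_0} = 0\}$ tensored with an appropriate test function in the transverse variables'' --- on which $f_k$ evaluates to the nonzero residue of $\widehat\omega$ at its pole, weighted by the Fourier symbol of $\mathfrak{u}_k$ restricted to $\{\xi_{i_0} = 0\}$.

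The main obstacle lies in the $\bar{\mathfrak{n}}$-type case, where the differential action of $\mathfrak{u}_k$ does not automatically cancel the singularity of $\omega$. Reconciling the smoothness of all $f_k$'s with the non-vanishing of each obstruction pairing requires delicate balancing in $\zeta$, and this is where the bound $m \leq n - 2$ becomes essential: it leaves at least one transverse coordinate unused by any $\mathfrak{u}_k$, providing a reservoir in which $\zeta$ can carry its Schwartz decay while meeting all compatibility conditions. Establishing the existence of such a $\zeta$ --- and verifying that each obstruction pairing remains nonzero --- constitutes the technical core of the argument.
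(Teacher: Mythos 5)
Your idea of introducing a formal primitive $\omega$ with a pole at $\xi_{i_0}=0$ after a partial Fourier transform and setting $f_k := \mathfrak{u}_k\omega$ does capture the paper's argument for one half of $E_\phi$ (the Levi-type roots $e_{i_0}-e_j$, $j\neq 1$, the paper's ``Case 1''): there $\mathfrak{u}_0$ and each such $\mathfrak{u}_k$ become multiplication operators carrying a $\xi_{i_0}$ factor, which cancels the pole and makes $f_k$ smooth while $\omega$ stays outside $\mathcal H$.

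However, for the $\bar{\mathfrak n}$-type roots $\phi_k = e_i - e_1$ with $i\neq i_0$ the scheme breaks down, and the ``compatibility condition on $\zeta$'' cannot repair it. The operator $\mathfrak{u}_k=\pm\partial_{v_i}$ commutes with the Fourier transform in $v_{i_0}$, so $\widehat{f_k}=\pm\partial_{v_i}\widehat\omega=\pm\eta(\xi_{i_0})\,\partial_{v_i}\zeta(\cdot)/\xi_{i_0}$ inherits the $1/\xi_{i_0}$ singularity intact, because $\eta(0)\neq 0$. A zero of $\partial_{v_i}\zeta$ lives in $v$-space and cannot cancel a pole in $\xi_{i_0}$; the only way to kill the pole uniformly is $\partial_{v_i}\zeta\equiv 0$, which makes $f_k=0$ a trivial coboundary, contradicting what you are trying to prove. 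More structurally, if some $\omega$ is chosen so that $\partial_{v_i}\widehat\omega$ lies in $L^2$, then $\widehat\omega(0,\cdot)$ must be independent of $v_i$, and by Schwartz decay it must vanish, so $\omega\in\mathcal H$ and the equation $\mathfrak{u}_0\omega'=f_0$ is then solvable after all. The paper avoids this by using a genuinely different model for its ``Case 2'': it takes the full Fourier transform in all $n-1$ variables, after which every $\mathfrak{u}_{j,1}$, $j\geq 2$, becomes multiplication by $-x_{j-1}\sqrt{-1}$, and the cancelling factor is the multiplier $x_{i_j-1}$ rather than a $\xi_{i_0}$ factor. In other words, the two halves of $E_\phi$ require two different Fourier models in which the corresponding unipotent elements are multiplication operators; they cannot both be handled inside your single partial Fourier transform by balancing $\zeta$, and the count $m\leq n-2$ is not enough to rescue the argument. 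This is the gap in your proposal.
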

As an application of Theorem \ref{th:8} we have:
\begin{theorem}\label{th:10}
Suppose $\phi \in \bar{E}_{\psi}$. Also suppose $\mathfrak{v}_1\in\mathfrak{u}_\phi\bigcap\mathfrak{G}^1$ and $\mathfrak{v}_2\in\mathfrak{u}_\psi\bigcap\mathfrak{G}^1$.
Let $U$ denote the discrete subgroup generated by $\exp(\mathfrak{v}_1)$ and $\exp(\mathfrak{v}_2)$. Let $V\subset \GG$ be an abelian unipotent
subgroup containing $U$.
Then any smooth $\CC^k$-valued cocycle over the $V$-action
on $\GG/\Gamma$, where $\Gamma$ is a lattice in $\GG$, is smoothly cohomologous to a constant cocycle.
\end{theorem}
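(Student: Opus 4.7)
The plan is to use Theorem~\ref{th:8} to build a transfer function for the two distinguished generators $\exp(\mathfrak{v}_1),\exp(\mathfrak{v}_2)$ and then extend it to a transfer function for the whole abelian $V$-action via a Howe--Moore argument. First I would reduce to $\CC$-valued cocycles by taking coordinates of the $\CC^k$-valued cocycle, and then reduce to cocycles of zero mean. For a smooth cocycle $\beta:V\times \GG/\Gamma\to\CC$, the map $s(a):=\int_{\GG/\Gamma}\beta(a,x)\,dx$ is a homomorphism of $V$ (as $V$ preserves Haar measure), and $\beta'(a,x):=\beta(a,x)-s(a)$ is a smooth cocycle with values in the smooth, zero-mean functions $\mathcal{H}^\infty\subset \mathcal{H}:=L_0^2(\GG/\Gamma)$. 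The representation $\mathcal{H}$ of $\GG$ has no $\GG$-fixed vectors, so the hypotheses of Theorem~\ref{th:8} are available.

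Next, I apply Theorem~\ref{th:8} to $\beta'_1:=\beta'(\exp(\mathfrak{v}_1),\cdot)$ and $\beta'_2:=\beta'(\exp(\mathfrak{v}_2),\cdot)$. The abelian cocycle identity for $\beta'$ gives $\mathcal{L}_{\mathfrak{v}_1}\beta'_2=\mathcal{L}_{\mathfrak{v}_2}\beta'_1$, and the relation $\bar{E}$ is symmetric in its two roots (both conditions $\phi+\psi\notin\Phi$ and $\phi-\psi\notin\Phi$ are symmetric), so $\phi\in \bar{E}_\psi$ is the hypothesis needed. Theorem~\ref{th:8} produces $H\in \mathcal{H}^\infty$ satisfying $\mathcal{L}_{\mathfrak{v}_i}H=\beta'_i$ for $i=1,2$ with tame Sobolev estimates, from which $H\in C^\infty(\GG/\Gamma)$ by Sobolev embedding.

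It remains to check that the same $H$ transfers every $a\in V$. Set $R_a:=\mathcal{L}_a H-\beta'(a,\cdot)$. Since $V$ is abelian, $a$ commutes with $\exp(\mathfrak{v}_i)$, and the cocycle identity for $\beta'$ on $V$ yields $\mathcal{L}_a\beta'_i=\mathcal{L}_{\mathfrak{v}_i}\beta'(a,\cdot)$. Hence
\[
\mathcal{L}_{\mathfrak{v}_i}R_a=\mathcal{L}_a\mathcal{L}_{\mathfrak{v}_i}H-\mathcal{L}_{\mathfrak{v}_i}\beta'(a,\cdot)=\mathcal{L}_a\beta'_i-\mathcal{L}_{\mathfrak{v}_i}\beta'(a,\cdot)=0,
\]
so $R_a\in \mathcal{H}$ is fixed by the unipotent element $\exp(\mathfrak{v}_1)$. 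Decomposing $\mathcal{H}$ into a direct integral of irreducible non-trivial unitary representations of $\GG$ and applying Howe--Moore in each summand, matrix coefficients vanish at infinity, and the iterates $\exp(\mathfrak{v}_1)^n=\exp(n\mathfrak{v}_1)$ leave every compact set as $\abs{n}\to\infty$, so the only $\exp(\mathfrak{v}_1)$-invariant vector is $0$. Thus $R_a=0$, meaning $\mathcal{L}_a H=\beta'(a,\cdot)$ for every $a\in V$, which is precisely $\beta(a,x)=H(ax)-H(x)+s(a)$, the cohomology equation with constant cocycle $s$.

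The main difficulty is packaged into Theorem~\ref{th:8}; what remains is the extension argument, whose only technical input beyond the abelian cocycle identity is the Howe--Moore vanishing of $\exp(\mathfrak{v}_i)$-invariant vectors in $L_0^2(\GG/\Gamma)$. The rest is bookkeeping with the definition $\mathcal{L}_\mathfrak{v}=\pi(\exp(\mathfrak{v}))-I$ and the fact that $\beta'$ and the constructed $H$ live in spaces with tame Sobolev control.
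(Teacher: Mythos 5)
Your proof is correct and follows essentially the same strategy as the paper: apply Theorem~\ref{th:8} to the two generators to obtain a transfer function, then show that the residual discrepancy for a general $a\in V$ is $\exp(\mathfrak{v}_1)$-invariant and hence constant (or zero, after your zero-mean reduction) by Howe--Moore. The only cosmetic difference is that you normalize to zero-mean cocycles at the outset, whereas the paper carries the constant cocycle along and concludes the discrepancy is constant rather than zero; the two formulations are equivalent.
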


Theorems~\ref{main_thm2-lower} and \ref{th:7} are statements
about irreducible, unitary representations of $SL(n, \R)$, denoted $\text{Ind}_P^{SL(n,\RR)}(\lambda_t^{\delta})$, $t \in \R$.
With regard to the regular representation, the natural question is
whether these irreducible representaitons are subrepresentations of $L^2(SL(n,\RR)/\Gamma)$ for
 $t$ tending to infinity.
In the case $n = 2$, they 
are principal series representations, which arise from both the
continuous and the discrete spectrum of the Laplacian on
$L^2(SL(2, \R)/\Gamma)$, the latter corresponding to Maass forms.
If $\Gamma$ is cocompact, there are infinitely many Maass forms,
so there are infinitely many such representations and the parameter $t$ goes to infinity.
If $\Gamma$ is non-compact and arithmetic, then there are again infinitely many and $t$
tends to infinity, by the Selberg trace formula, see \cite{Lang}.

For the case $n \geq 3$, 
the theory of theta series indicates that there is  
a set of arithmetic lattices such that for any $\Gamma$ in this set 
there is a sequence $t_n\to\infty$
such that $\text{Ind}_P^{SL(n,\RR)}(\lambda_{t_n}^{\delta})$ occurs as a
subrepresentation of $L^2(SL(n,\RR)/\Gamma)$.
Moreover, every arithmetic lattice in $SL(n,\RR)$ is commensurable
with one of the lattices stated above. 
Then because all lattices in $SL(n,\RR)$ are arithmetic \cite{Margulis},
the following much stronger statement is expected to hold: 
For any lattice $\Gamma$ of $SL(n,\RR)$, there is a finite index subgroup
$\Gamma_1\subset \Gamma$ and a sequence $t_n\to\infty$ such that
$\text{Ind}_P^{SL(n,\RR)}(\lambda_{t_n}^{\delta})$ occurs as a
subrepresentation of $L^2(SL(n,\RR)/\Gamma_1)$.  Since it has
been an open problem of giving a complete set of irreducible
representations that appear in $L^2(SL(n,\RR)/\Gamma)$ for
any lattice $\Gamma$, the above results are by far the best that is known
about cocycle rigidity in $SL(n,\RR)$.

\section{Representation Theory}\label{sec:101}

\subsection{Unitary representations of $SL(2, \R)$}\label{sect:SL2R_reps}
The Lie algebra of $SL(2, \RR)$ is generated by the vector
fields
 \begin{align}\label{eq:sl-com}
 X= \begin{pmatrix} {1}&0\\0& {-1}
 \end{pmatrix}, \quad U=\begin{pmatrix} 0&1\\0& 0
 \end{pmatrix}, \quad V=\begin{pmatrix} 0&0\\1& 0
 \end{pmatrix}.
 \end{align}
 The \emph{Casimir} operator is then given by
\begin{align*}
\Box:= -X^2-2(UV+VU),
\end{align*}
which generates the center of the enveloping algebra of $\mathfrak{sl}(2,\RR)$. The Casimir operator $\Box$
acts as a constant $u\in\RR$ on each irreducible unitary representation space  and its value classifies them into four classes.
Unitary representations are classified by a representation parameter $\nu$. The \emph{Casimir parameter} $u$ and the representation parameter $\nu$
are linked by the formula $\nu=\sqrt{1-u}$. Then all the irreducible unitary representations of $SL(2,\RR)$
must be equivalent to one the following:
\begin{itemize}
  \item principal series representations $\pi_\nu^{\pm}$, $u\geq 1$ so that
$\nu=i\RR$,
\medskip
  \item complementary series representations $\pi^0_\nu$, $0 <u< 1$, so that $0 < \nu< 1$,
  \medskip
  \item mock discrete or discrete series representations $\pi^0_\nu$ and $\pi^0_{-\nu}$, $u=-n^2+2n$, $n\geq 1$, so $\nu=n-1$,
  \medskip
  \item the trivial representation, $u=0$.
\end{itemize}
 Any unitary representation $(\pi,\mathcal{H})$ of $SL(2,\RR)$ is decomposed into a direct integral (see \cite{Forni} and \cite{Mautner})
\begin{align}\label{for:1}
\mathcal{H}=\int_{\oplus}\mathcal{H}_ud\mu(u)
\end{align}
with respect to a positive Stieltjes measure $d\mu(u)$ over the spectrum $\sigma(\Box)$. The
Casimir operator acts as the constant $u\in \sigma(\Box)$ on every Hilbert space $\mathcal{H}_u$. The
representations induced on $\mathcal{H}_u$ do not need to be irreducible. In fact, $\mathcal{H}_u$ is in general
the direct sum of an (at most countable) number of unitary representations equal
to the spectral multiplicity of $u\in \sigma(\Box)$. We say that \emph{$\pi$ has a spectral gap (of $u_0$)} if $u_0>0$ and $\mu((0,u_0])=0$ and $\pi$ contains no non-trivial $SL(2,\RR)$-fixed vectors.

\subsection{Introduction to Mackey representation theory}\label{sec:24} The problem of determining the complete set of equivalence classes of unitary irreducible
representations of a general class of semi-direct product groups has been solved by Mackey \cite{Mac}.
These results are summarized in this section with explicit application to groups
$SL(2,\RR)\ltimes\RR^2$ and $SL(2,\RR)\ltimes\RR^4$ to facilitate the study of cohomological equation and cocycle rigidity
that follows. There are two essential ingredients used by Mackey to determine the unitary irreducible
representations of a semi-direct product group $S$. The first is the general notion of inducing a unitary
representation of a group $S$ from a unitary representation of a subgroup $H$. The second is the dual action of $S$ on the characters of
the normal subgroup. If $S$ is second countable and every orbit is
locally closed (intersection of an open and a closed set), then Mackey theory gives the construction of
the complete set of equivalence classes of irreducible unitary representations on $S$ with an appropriate Borel
topology.

Suppose $S$ is a locally compact second countable group and $H$ is a closed subgroup. Let $\pi$ be a unitary
representation of $H$ on a Hilbert space $\mathcal{H}$. Suppose $S/H$ carries a $S$-invariant $\sigma$ finite measure $\mu$. Choose a Borel map $\Lambda:S/H\rightarrow S$ such that $p\circ \Lambda=Id$, where $p:S\rightarrow S/H$ is the natural projection. The representation $\pi$ on $H$ induces a representation $\pi_1$ on $S$ as:
\begin{align}\label{for:80}
   ( \pi_1(s)f)(\gamma)=\pi\bigl(\Lambda(\gamma)^{-1}s\Lambda(s^{-1}\gamma)\bigl)f(s^{-1}\gamma)
\end{align}
where $s\in S$, $\gamma\in S/H$ and $f\in L^2(S/H,\mathcal{H},\mu)$. More precisely, if $s^{-1}\Lambda(\gamma)$ decomposes as
\begin{align*}
  s^{-1}\Lambda(\gamma)=\bigl(s^{-1}\Lambda(\gamma)\bigl)_\Lambda \bigl(s^{-1}\Lambda(\gamma)\bigl)_H
\end{align*}
where $\bigl(s^{-1}\Lambda(\gamma)\bigl)_\Lambda\in \Lambda(S/H)$ and $\bigl(s^{-1}\Lambda(\gamma)\bigl)_H\in H$, then \eqref{for:80} has the expression
\begin{align*}
   ( \pi_1(s)f)(\gamma)=\pi(\bigl(s^{-1}\Lambda(\gamma)\bigl)_H^{-1})f(\bigl(s^{-1}\Lambda(\gamma)\bigl)_\Lambda).
\end{align*}
The representation $\pi_1$ is unitary and is called \emph{the representation of the group $S$ induced from $\pi$ in the sense of Mackey} and is denoted by Ind$_{H}^S(\pi)$. For the cases of interest to us, the groups are very well behaved and satisfy the requisite
properties.

\begin{theorem}\label{th:1}(Mackey theorem, see \cite[Ex 7.3.4]{Zimmer}, \cite[III.4.7]{Margulis}) Let $S$ be a locally compact second countable group and $\mathcal{N}$ be an abelian closed
normal subgroup of $S$. We define the natural action of $S$ on the group of characters $\widehat{\mathcal{N}}$ of the group $\mathcal{N}$ by setting
\begin{align*}
    (s\chi)(\mathfrak{n}):=\chi(s^{-1}\mathfrak{n}s),\qquad s\in S,\,\chi\in \widehat{\mathcal{N}}, \,\mathfrak{n}\in \mathcal{N}.
\end{align*}
Assume that every orbit $S\cdot \chi$, $\chi\in \widehat{\mathcal{N}}$ is locally closed in $\widehat{\mathcal{N}}$. Then for
any irreducible unitary representation $\pi$ of $S$, there is a point $\chi_0\in \widehat{\mathcal{N}}$ with $S_{\chi_0}$
its stabilizer in $S$, a measure $\mu$ on $\widehat{\mathcal{N}}$ and an irreducible unitary representation  $\sigma$ of $S_{\chi_0}$ such that
\begin{enumerate}
  \item $\pi=\text{Ind}_{S_{\chi_0}}^S(\sigma)$,
  \item $\sigma\mid_{\mathcal{N}}=(\dim)\chi_0$,
  \item $\pi(x)=\int_{\widehat{\mathcal{N}}}\chi(x)d\mu(\chi)$, for any $x\in \mathcal{N}$; and $\mu$ is ergodically supported on the orbit $S\cdot \chi_0$.
\end{enumerate}

\end{theorem}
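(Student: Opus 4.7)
The plan is to follow the standard \emph{Mackey machine}: restrict $\pi$ to the abelian normal subgroup $\mathcal{N}$, diagonalize via the spectral theorem, use the equivariance coming from normality to concentrate the spectral measure on a single $S$-orbit, and then invoke the imprimitivity theorem to reconstruct $\pi$ as an induced representation from the stabilizer.

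First, I would apply the Stone--Naimark--Ambrose--Godement theorem to the restriction $\pi|_{\mathcal{N}}$, yielding a regular projection-valued measure $E$ on $\widehat{\mathcal{N}}$ with
\[
\pi(\mathfrak{n}) = \int_{\widehat{\mathcal{N}}}\chi(\mathfrak{n})\,dE(\chi),\qquad \mathfrak{n}\in\mathcal{N}.
\]
Conjugating this identity by $\pi(s)$ for $s\in S$ and using normality of $\mathcal{N}$ gives $\pi(s)E(B)\pi(s)^{-1}=E(s\cdot B)$ for every Borel set $B\subset\widehat{\mathcal{N}}$; in other words, $(\pi,E)$ is a system of imprimitivity for $S$ based on the $S$-space $\widehat{\mathcal{N}}$.

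Next, I would exploit irreducibility to localize the spectral measure. Let $\mu$ be a scalar spectral measure in the class of $E$; the equivariance above says $\mu$ is quasi-invariant under the $S$-action on $\widehat{\mathcal{N}}$. Bounded $S$-invariant Borel functions on $\widehat{\mathcal{N}}$ act as scalars in the commutant of $\pi$, so irreducibility of $\pi$ forces the $S$-action on $(\widehat{\mathcal{N}},\mu)$ to be ergodic. The hypothesis that every orbit is locally closed guarantees that $\widehat{\mathcal{N}}/S$ is countably separated as a Borel space; ergodicity of $\mu$ then forces $\mu$ to be concentrated on a single orbit $S\cdot\chi_0$, which is item (3).

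Finally, identifying $S\cdot\chi_0$ $S$-equivariantly with $S/S_{\chi_0}$ via $s\cdot\chi_0\mapsto sS_{\chi_0}$ transfers $(\pi,E)$ into a system of imprimitivity on the homogeneous $S$-space $S/S_{\chi_0}$. Mackey's imprimitivity theorem then produces, uniquely up to equivalence, a unitary representation $\sigma$ of $S_{\chi_0}$ with $\pi\cong\text{Ind}_{S_{\chi_0}}^{S}(\sigma)$, which is item (1); restricting the induced model back to $\mathcal{N}$ and comparing with $E$ at the base point forces $\sigma|_{\mathcal{N}}=(\dim\sigma)\chi_0$, giving item (2); and irreducibility of $\pi$ is equivalent, via the standard intertwining computation for induced representations, to irreducibility of $\sigma$. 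The principal obstacle is this last (imprimitivity) step: constructing a Borel section $\Lambda:S/S_{\chi_0}\to S$, handling the Radon--Nikodym cocycle of the quasi-invariant measure on $S/S_{\chi_0}$, and verifying that the resulting induced representation is unitarily equivalent to $\pi$. These are the technicalities worked out in the cited references \cite{Zimmer} and \cite{Margulis}, which I would invoke directly rather than reprove.
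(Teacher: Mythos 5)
The paper does not actually supply a proof of this theorem; it simply cites the references (Zimmer, Ex.~7.3.4; Margulis, III.4.7). Your sketch — SNAG to get a projection-valued measure on $\widehat{\mathcal{N}}$, equivariance from normality to obtain a system of imprimitivity, irreducibility to force ergodicity of the spectral measure class, the locally-closed-orbits hypothesis to conclude concentration on a single orbit via countable separation, and finally Mackey's imprimitivity theorem over $S/S_{\chi_0}$ to produce $\sigma$ — is precisely the standard Mackey-machine argument those references carry out, and it is correct. Since the paper treats the theorem as a black box, this matches it in spirit; no discrepancy to report.

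One point worth making explicit if you were to write this out in full: the passage from ``ergodic quasi-invariant measure on a countably separated Borel $S$-space'' to ``concentrated on a single orbit'' is itself a nontrivial fact (essentially Glimm/Effros), and it is exactly where the locally-closed hypothesis is consumed. Your sketch names the right ingredient, but a reader should know that this step is not merely bookkeeping. Likewise, in item (2) the conclusion is that $\sigma|_{\mathcal{N}}$ is a multiple of $\chi_0$ (i.e.\ $\sigma(\mathfrak{n})=\chi_0(\mathfrak{n})\,\mathrm{Id}$), which follows because $S_{\chi_0}$ fixes $\chi_0$ and the PVM localized at the base point must be scalar on $\mathcal{N}$; this deserves a sentence rather than being folded into ``comparing with $E$ at the base point.''
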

\subsection{Sobolev space and elliptic regularity theorem}\label{sec:17} Let $\pi$ be a unitary representation of a Lie group $G$ with Lie algebra $\mathfrak{g}$ on a
Hilbert space $\mathcal{H}=\mathcal{H}(\pi)$.
\begin{definition}\label{de;1}
For $k\in\NN$, $\mathcal{H}^k(\pi)$ consists of all $v\in\mathcal{H}(\pi)$ such that the
$\mathcal{H}$-valued function $g\rightarrow \pi(g)v$ is of class $C^k$ ($\mathcal{H}^0=\mathcal{H}$). For $X\in\mathfrak{g}$, $d\pi(X)$ denotes the infinitesimal generator of the
one-parameter group of operators $t\rightarrow \pi(\exp tX)$, which acts on $\mathcal{H}$ as an essentially skew-adjoint operator. For any $v\in\mathcal{H}$, we also write $Xv:=d\pi(X)v$.
\end{definition}
We shall call $\mathcal{H}^k=\mathcal{H}^k(\pi)$ the space of $k$-times differentiable vectors for $\pi$ or the \emph{Sobolev space} of order $k$. The
following basic properties of these spaces can be found, e.g., in \cite{Nelson} and \cite{Goodman}:
\begin{enumerate}
  \item $\mathcal{H}^k=\bigcap_{m\leq k}D(d\pi(Y_{j_1})\cdots d\pi(Y_{j_m}))$, where $\{Y_j\}$ is a basis for $\mathfrak{g}$, and $D(T)$
denotes the domain of an operator on $\mathcal{H}$.

\medskip
  \item $\mathcal{H}^k$ is a Hilbert space, relative to the inner product
  \begin{align*}
    \langle v_1,\,v_2\rangle_{G,k}:&=\sum_{1\leq m\leq k}\langle Y_{j_1}\cdots Y_{j_m}v_1,\,Y_{j_1}\cdots Y_{j_m}v_2\rangle+\langle v_1,\,v_2\rangle
  \end{align*}
  \item The spaces $\mathcal{H}^k$ coincide with the completion of the
subspace $\mathcal{H}^\infty\subset\mathcal{H}$ of \emph{infinitely differentiable} vectors with respect to the norm
\begin{align*}
    \norm{v}_{G,k}=\bigl\{\norm{v}^2+\sum_{1\leq m\leq k}\norm{Y_{j_1}\cdots Y_{j_m}v}^2\bigl\}^{\frac{1}{2}}.
  \end{align*}
induced by the inner product in $(2)$. The subspace $\mathcal{H}^\infty$
coincides with the intersection of the spaces $\mathcal{H}^k$ for all $k\geq 0$.

\medskip
\item $\mathcal{H}^{-k}$, defined as the Hilbert space duals of
the spaces $\mathcal{H}^{k}$, are subspaces of the space $\mathcal{E}(\mathcal{H})$ of distributions, defined as the
dual space of $\mathcal{H}^\infty$.
  \end{enumerate}
We write $\norm{v}_{k}:=\norm{v}_{G,k}$ and $ \langle v_1,\,v_2\rangle_{k}:= \langle v_1,\,v_2\rangle_{G,k}$ if there is no confusion. Otherwise,
we use subscripts to emphasize that the regularity is measured with respect to $G$.

If $G=\RR^n$ and $\mathcal{H}=L^2(\RR^n)$, the set of square integrable functions on $\RR^n$, then $\mathcal{H}^k$ is the space consisting of all functions on $\RR^n$ whose first $s$ weak derivatives are functions in $L^2(\RR^n)$. In this case, we use the notation $W^k(\RR^n)$ instead of $\mathcal{H}^k$ to avoid confusion. For any open set $\mathcal{O}\subset\RR^n$, $\norm{\cdot}_{(C^r,\mathcal{O})}$ stands for $C^r$ norm for functions having continuous derivatives up to order $r$ on $\mathcal{O}$. We also write $\norm{\cdot}_{C^r}$ if there is no confusion.

We list the well-known elliptic regularity theorem which will be frequently
used in this paper (see \cite[Chapter I, Corollary 6.5 and 6.6]{Robinson}):
\begin{theorem}\label{th:4}
Fix a basis $\{Y_j\}$ for $\mathfrak{g}$ and set $L_{2m}=\sum Y_j^{2m}$, $m\in\NN$. Then
\begin{align*}
    \norm{v}_{2m}\leq C_m(\norm{L_{2m}v}+\norm{v}),\qquad \forall\, m\in\NN
\end{align*}
where $C_m$ is a constant only dependent on $m$ and $\{Y_j\}$.
\end{theorem}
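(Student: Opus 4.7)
The plan is to treat $L_{2m} = \sum_j Y_j^{2m}$ as a strongly elliptic left-invariant differential operator of order $2m$ on $G$ and to transfer the classical elliptic estimate from the left-regular representation on $L^2(G)$ to an arbitrary unitary representation $\pi$. The first step is verifying strong ellipticity. Viewing $L_{2m}$ as a differential operator on the group manifold, its principal symbol at $g\in G$ is
\[
\sigma_{2m}(L_{2m})(g,\xi) \;=\; (-1)^m \sum_j \langle Y_j(g),\xi\rangle^{2m}.
\]
Because $\{Y_j(g)\}$ is a basis of $T_g G$, the even-degree form $\xi \mapsto \sum_j\langle Y_j(g),\xi\rangle^{2m}$ is positive definite and bounded below by $c_m|\xi|^{2m}$; left-invariance makes this constant uniform in $g$.

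The second step is the classical $L^2(G)$-level estimate for $f\in C_c^\infty(G)$. Working in exponential charts with a partition of unity and applying Fourier analysis in each chart, the principal-symbol lower bound yields
\[
\|f\|_{H^{2m}(G)} \;\leq\; C_m \bigl(\|L_{2m}f\|_{L^2(G)} + \|f\|_{L^2(G)}\bigr),
\]
which is essentially the content of the cited results of Robinson. An equivalent and conceptually cleaner formulation is the operator inequality $(I-L_2)^m \leq C_m(I + |L_{2m}|)$ in the spectral-calculus sense, combined with the equivalence of $\|v\|_{G,2m}$ and $\|(I-L_2)^m v\|$. The latter equivalence is itself the $m=1$ case of the theorem, which one establishes directly from G{\aa}rding's inequality for the second-order strongly elliptic operator $L_2$, and then iterates in $m$ using commutator identities from the Poincar\'e--Birkhoff--Witt basis for the enveloping algebra.

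The third and most delicate step is transferring from the regular representation to an arbitrary $(\pi,\mathcal{H})$. My preferred route is via matrix coefficients: for $v\in\mathcal{H}^\infty$, $w\in\mathcal{H}$, and a smooth compactly supported cutoff $\chi$ on $G$ with $\chi \equiv 1$ near the identity, set $F_w(g) = \chi(g)\langle \pi(g)v,w\rangle \in C_c^\infty(G)$. Since $(Y_{i_1}\cdots Y_{i_k}F_w)$ evaluated at the identity equals $\langle Y_{i_1}\cdots Y_{i_k}v,w\rangle$, taking the supremum over $\|w\|\leq 1$ converts the Step~2 estimate into the claimed Hilbert-space inequality for $v$. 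The main obstacle is that the cutoff $\chi$ introduces lower-order commutator errors with $L_{2m}$ that must be absorbed. A cleaner alternative is spectral-theoretic: by Nelson's theorem, $-L_2$ is essentially self-adjoint and non-negative on $\mathcal{H}^\infty$ for every unitary $\pi$, and the estimate then reduces to a universal functional-calculus inequality for the joint spectral decomposition of the family $\{Y_j^2\}$, which, being a scalar inequality in the spectral variables, is valid in every representation once it is established in the regular representation.
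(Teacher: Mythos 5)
The paper does not prove this statement; it cites Robinson (Chapter I, Corollaries~6.5--6.6), where the estimate is established directly in the given representation $\pi$ via G\aa rding-type inequalities and semigroup theory for strongly elliptic elements of $\mathcal{U}(\mathfrak{g})$, not by transfer from the regular representation. Your Steps 1--2 (principal symbol, strong ellipticity, and the $L^2(G)$ estimate for compactly supported functions) are fine as far as they go, but the substance of the theorem is precisely Step~3, and both of your proposed transfer routes have genuine gaps.

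For the matrix-coefficient route, the $H^{2m}(G)$ estimate controls an $L^2(G)$-integrated quantity involving $F_w = \chi\langle\pi(\cdot)v,w\rangle$, while what you actually need is the pointwise value $(Y_{i_1}\cdots Y_{i_{2m}}F_w)(e)=\langle Y_{i_1}\cdots Y_{i_{2m}}v,w\rangle$. Passing from an $L^2$ Sobolev norm to a pointwise evaluation at the identity requires Sobolev embedding, which costs more than $\dim G/2$ additional derivatives; the argument as written therefore produces $\|v\|_{2m}\le C(\|L_{2m}v\|_{k}+\|v\|_{k})$ for some $k>0$ depending on $\dim G$, not the sharp estimate claimed. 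You flag the commutator errors from $\chi$ as ``the main obstacle,'' but the larger obstacle is this Sobolev-embedding loss, which you do not address.

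The spectral route is worse: the operators $\{Y_j^2\}$ do not commute unless $\mathfrak{g}$ is abelian, so there is no ``joint spectral decomposition of the family $\{Y_j^2\}$,'' and the reduction to a scalar inequality in spectral variables is simply unavailable. For the same reason, the operator inequality $(I-L_2)^m\leq C_m(I+|L_{2m}|)$ you invoke in Step~2 cannot be obtained by ``spectral calculus'' (the two operators do not commute); it is essentially equivalent to the estimate you are trying to prove. The route actually used in Robinson is to prove a G\aa rding-type coercivity inequality for $(-1)^m L_{2m}$ directly from the algebraic structure (one has $(-1)^m\langle L_{2m}v,v\rangle=\sum_j\|Y_j^m v\|^2$; the work is in controlling the mixed monomials $Y_{i_1}\cdots Y_{i_m}v$ by the pure powers $Y_j^m v$ plus lower order, by commutator manipulations and an induction on $m$), and this is representation-independent from the outset, so no transfer step is needed.
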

Suppose $\Gamma$ is an
irreducible torsion-free cocompact lattice in $G$. Denote by $\Upsilon$ the regular representation of $G$ on $\mathcal{H}(\Upsilon)=L^2(G/\Gamma)$. Then we have the following subelliptic regularity theorem (see \cite{Spatzier2}):
\begin{theorem}\label{th:5}
Fix $\{Y_j\}$ in $\mathfrak{g}$ such that commutators of $Y_j$ of length at most $r$ span $\mathfrak{g}$. Also set $L_{2m}=\sum Y_j^{2m}$, $m\in\NN$. Suppose $f\in\mathcal{H}(\Upsilon)$ or  $f\in \mathcal{E}(\mathcal{H})$. If $L_{2m}f\in \mathcal{H}(\Upsilon)$ for any $m\in\NN$, then $f\in \mathcal{H}^\infty(\Upsilon)$ and satisfies
\begin{align}\label{for;1}
\norm{f}_{\frac{2m}{r}-1}\leq
C_m(\norm{L_{2m}f}+\norm{f}),\qquad \forall\, m\in\NN
\end{align}
where $C_m$ is a constant only dependent on $m$ and $\{Y_j\}$.
\end{theorem}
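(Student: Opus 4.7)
The plan is to reduce to H\"ormander's classical subelliptic sum-of-squares theorem and then bootstrap from the base case $m=1$ to general $m$. First I would localize: since $G/\Gamma$ is compact, a finite partition of unity subordinate to coordinate charts in which the left-invariant vector fields $Y_j$ become smooth vector fields on $\R^{\dim \mathfrak{g}}$ reduces the problem to a uniform local statement on Euclidean patches. The bracket-generating condition is preserved on each patch. Then H\"ormander's theorem applied to $L_2 = \sum Y_j^2$ (which is a sum of squares of real vector fields satisfying the H\"ormander condition of step $r$) yields the base subelliptic estimate
\[
\norm{v}_{2/r} \leq C\bigl(\norm{L_2 v} + \norm{v}\bigr),
\]
where the Sobolev norm on the left is the $G$-norm from Definition~\ref{de;1}. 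Equivalently, one can argue directly on $G/\Gamma$ via the heat semigroup generated by $L_2$, whose kernel is smoothing of order $2/r$ in the scale $\mathcal{H}^k(\Upsilon)$.

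Next I would promote this to the $L_{2m}$ estimate by an induction on $m$. The key observation is that $L_{2m}$ and $L_2^m$ agree modulo a differential operator built from iterated commutators of the $Y_j$'s; all such commutator terms have strictly lower order in the subelliptic sense. Thus one writes
\[
L_{2m} = L_2^m + R_m,
\]
where $R_m$ is a sum of monomials in the $Y_j$ of weighted order at most $2m-1$. Using the inductive estimate for $m-1$ to control $R_m v$ in $\mathcal{H}$, combined with iterating the base subelliptic estimate $m$ times on $L_2^m v$ (the H\"ormander gain is $2/r$ per application, so $m$ applications give $2m/r$), I would obtain
\[
\norm{v}_{2m/r-1}\leq C_m\bigl(\norm{L_{2m}v}+\norm{v}\bigr).
\]
The loss of $1$ in the exponent is precisely the price of absorbing the lower-order commutator remainder $R_m v$: one gives up one integer unit of regularity in order to dominate $\norm{R_m v}$ by $\norm{L_{2m}v}+\norm{v}$ via the inductive hypothesis and interpolation in the Sobolev scale.

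Finally, to pass from a distributional $f \in \mathcal{E}(\mathcal{H})$ with $L_{2m}f\in\mathcal{H}(\Upsilon)$ for all $m$ to $f \in \mathcal{H}^\infty(\Upsilon)$, I would apply the estimate after regularization: convolve $f$ on the left with a smooth approximate identity $\rho_\varepsilon$ on $G$, obtain $f_\varepsilon\in\mathcal{H}^\infty$, apply the estimate to $f_\varepsilon$ (noting $L_{2m}$ commutes with left convolution since the $Y_j$ are left-invariant), and let $\varepsilon\to 0$; the uniform bound on $\norm{f_\varepsilon}_{2m/r-1}$ yields $f\in \mathcal{H}^{2m/r-1}(\Upsilon)$ for every $m$, and since $2m/r-1\to\infty$, we conclude $f\in\bigcap_k\mathcal{H}^k=\mathcal{H}^\infty$.

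The hard part will be justifying the commutator bookkeeping in the induction cleanly and uniformly in $m$, i.e.\ showing that the constant $C_m$ accumulated through the inductive passes depends only on $m$ and $\{Y_j\}$ (and not, for instance, on $f$). This is essentially an algebraic combinatorial estimate on the size of $R_m$ in the universal enveloping algebra, together with a careful use of interpolation between the subelliptic Sobolev spaces $\mathcal{H}^s(\Upsilon)$. Since the statement is already in the literature (cited to \cite{Spatzier2}), one may alternatively simply invoke it directly; the sketch above indicates the route a self-contained proof would take, building on Theorem~\ref{th:4} in the elliptic case and H\"ormander's theorem in the subelliptic one.
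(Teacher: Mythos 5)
The paper itself does not prove Theorem~\ref{th:5}; it simply cites \cite{Spatzier2} and, in the remark that follows, notes that the estimate there is local and that compactness of $G/\Gamma$ supplies the uniform constants. So the relevant question is whether your self-contained sketch would actually work, and there is a real gap at its centre.

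The decomposition $L_{2m}=L_2^m+R_m$ with $R_m$ ``of weighted order at most $2m-1$'' is not correct. Expanding $L_2^m=\bigl(\sum_j Y_j^2\bigr)^m$, the terms with all indices equal reproduce $L_{2m}=\sum_j Y_j^{2m}$, but the remaining cross terms $Y_{j_1}^2\cdots Y_{j_m}^2$ (indices not all equal) are genuine order-$2m$ monomials in the generators, not iterated commutators and not of lower subelliptic weight: each $Y_j$ carries weight one in the H\"ormander filtration, so any product of $2m$ of them has weight $2m$. Hence $L_2^m-L_{2m}$ is \emph{not} a lower-order remainder, and the strategy of iterating H\"ormander's estimate for $L_2$ and absorbing $R_m$ by interpolation cannot get started. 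There is also no easy operator inequality between $(-L_2)^m$ and $(-1)^m L_{2m}$ to substitute for this, because the $Y_j$ do not commute.

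What actually works, and is roughly what \cite{Spatzier2} does, is to engage with $L_{2m}$ directly rather than reduce it to $L_2$. Since the $Y_j$ are essentially skew-adjoint, one has the identity $\langle(-1)^m L_{2m}f,f\rangle=\sum_j\|Y_j^m f\|^2$, so $\|L_{2m}f\|\cdot\|f\|$ controls every $\|Y_j^m f\|^2$. One then runs a bracket-climbing argument over the filtration of $\mathfrak g$ by commutator length to convert control of the $m$-fold $Y_j$-derivatives into control of a fractional Sobolev norm; the integer loss in $\frac{2m}{r}-1$ is absorbed in that bookkeeping, not in a comparison of $L_{2m}$ with $L_2^m$. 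A secondary issue: in the regularization step, $L_{2m}$ does not commute with left convolution by $\rho_\varepsilon$ (since $d\Upsilon(Y)\Upsilon(\rho_\varepsilon)=\Upsilon(Y\rho_\varepsilon)\neq\Upsilon(\rho_\varepsilon)d\Upsilon(Y)$), so the uniform a priori bound as $\varepsilon\to 0$ requires a Friedrichs-type commutator estimate rather than exact commutation; this is repairable, but as written the claim is inaccurate. Your outermost structure (localize, subelliptic estimate, regularize) is the right shape, but the pivotal reduction from $L_{2m}$ to $L_2$ is where the argument breaks.
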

\begin{remark}
The elliptic regularity theorem is a general property, while the subelliptic regularity theorem can't be applied without extra
assumptions. For example, the assumption $G/\Gamma$ is essential in the above theorem. In \cite{Spatzier2} \eqref{for;1} is obtained in a local version. The compactness
guarantees the existence of the uniform constant $C_m$.

\end{remark}
\subsection{Direct decompositions of Sobolev space}\label{sec:3}
For any Lie group $G$ of type $I$ and its unitary representation $\rho$, there is a decomposition of $\rho$ into a direct integral
\begin{align}\label{for:66}
 \rho=\int_Z\rho_zd\mu(z)
\end{align}
of irreducible unitary representations for some measure space $(Z,\mu)$ (we refer to
\cite[Chapter 2.3]{Zimmer} or \cite{Margulis} for more detailed account for the direct integral theory). All the operators in the enveloping algebra are decomposable with respect to the direct integral decomposition \eqref{for:66}. Hence there exists for all $s\in\RR$ an induced direct
decomposition of the Sobolev spaces:
\begin{align}\label{for:67}
\mathcal{H}^s=\int_Z\mathcal{H}_z^sd\mu(z)
\end{align}
with respect to the measure $d\mu(z)$.

The existence of the direct integral decompositions
\eqref{for:66}, \eqref{for:67} allows us to reduce our analysis of the
cohomological equation to irreducible unitary representations. This point of view is
essential for our purposes.

Before proceeding further with the proof of Theorem \ref{th:7}, we list some important properties of representation of semidirect product $SL(2,\RR)\ltimes\RR^2$ without non-trivial $\RR^2$-invariant vectors (see \cite{oh}, \cite{zhenqi1} and \cite{Zimmer}) which will be frequently
used in this paper:
\begin{proposition}\label{cor:1}
For any unitary representation $\pi$ of $SL(2,\RR)\ltimes\RR^2$ without non-trivial $\RR^2$-fixed vectors, where $SL(2,\RR)$ acts on $\RR^2$ as the standard representation, then $\pi\mid_{SL(2,\RR)}$ is  tempered, i.e., $\pi\mid_{SL(2,\RR)}$ is weakly contained in the regular representation of $SL(2,\RR)$.
\end{proposition}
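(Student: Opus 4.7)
The plan is to combine Mackey's semidirect product machine with amenability of the stabilizer and Fell's continuity of induction with respect to weak containment. First, identify $\widehat{\RR^2}$ with $\RR^2$ via the standard pairing; the dual action of $SL(2,\RR)$ on $\widehat{\RR^2}$ agrees, up to the involution $g\mapsto g^{-\tau}$, with the standard representation, so there are exactly two $SL(2,\RR)$-orbits on $\widehat{\RR^2}$: the origin, and the non-zero orbit $\RR^2\setminus\{0\}$. Fix any non-zero character $\chi_0$ and let $N\subset SL(2,\RR)$ be its stabilizer, a one-parameter unipotent subgroup.

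Next, disintegrate $\pi$ into irreducibles and apply Theorem~\ref{th:1} (Mackey) to each component. Since $\pi$ has no non-trivial $\RR^2$-fixed vectors, the spectral measure of $\pi\mid_{\RR^2}$ does not charge $\{0\}$, so every irreducible component is induced from the stabilizer $N\ltimes\RR^2$. Reassembling these, and using that the non-zero orbits all coincide, yields
\[
\pi \;\cong\; \text{Ind}_{N\ltimes\RR^2}^{SL(2,\RR)\ltimes\RR^2}(\sigma),
\]
where $\sigma$ is some unitary representation of $N\ltimes\RR^2$ whose restriction to $\RR^2$ is a multiple of $\chi_0$.

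Now restrict to $SL(2,\RR)$ via Mackey's subgroup (double coset) theorem. A direct computation gives $SL(2,\RR)\cdot(N\ltimes\RR^2)=SL(2,\RR)\ltimes\RR^2$ and $SL(2,\RR)\cap(N\ltimes\RR^2)=N$, so the double coset space $SL(2,\RR)\backslash(SL(2,\RR)\ltimes\RR^2)/(N\ltimes\RR^2)$ reduces to a single point and
\[
\pi\mid_{SL(2,\RR)} \;\cong\; \text{Ind}_N^{SL(2,\RR)}(\sigma\mid_N).
\]

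Finally, because $N\cong\RR$ is amenable, every unitary representation of $N$ is weakly contained in the regular representation $\lambda_N$ (every character of $\RR$ lies in the support of Plancherel measure on $\widehat{\RR}$). Fell's continuity of induction then gives that $\text{Ind}_N^{SL(2,\RR)}(\sigma\mid_N)$ is weakly contained in $\text{Ind}_N^{SL(2,\RR)}(\lambda_N)=\lambda_{SL(2,\RR)}$, where the final equality is induction in stages from $\{e\}\subset N\subset SL(2,\RR)$. This is exactly temperedness of $\pi\mid_{SL(2,\RR)}$. The main technical hurdle is making the Mackey disintegration rigorous for a general (non-irreducible) $\pi$; once that is in place, the remaining steps are formal and can be extracted from \cite{Zimmer} or standard references on the Mackey machine.
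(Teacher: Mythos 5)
Your proof is correct and, for this particular group, gives a cleaner and more self-contained argument than the one the paper invokes. The paper merely cites Lemma~7.4 of \cite{zhenqi1}, a general result for $H\ltimes V$ with $H$ semisimple; there the Borel density theorem is used to control the stabilizer of a generic character of $V$. In the present special case that machinery is superfluous: the stabilizer of a non-zero character is visibly the one-parameter unipotent subgroup $N$, which is obviously amenable, so your chain Mackey $\Rightarrow$ subgroup theorem $\Rightarrow$ amenability of $N$ plus Fell's continuity of induction and induction in stages closes the argument directly. The double-coset computation is right ($G=SL(2,\RR)\cdot(N\ltimes\RR^2)$ and $SL(2,\RR)\cap(N\ltimes\RR^2)=N$, so the double coset space is a point), and the final step $\operatorname{Ind}_N^{SL(2,\RR)}\lambda_N=\lambda_{SL(2,\RR)}$ is correct.

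One remark on the technical hurdle you flag. You do not actually need to disintegrate $\pi$ into irreducibles and reassemble. Since $\pi$ has no non-trivial $\RR^2$-fixed vectors, the projection-valued measure $P$ on $\widehat{\RR^2}$ coming from $\pi\mid_{\RR^2}$ gives $P(\{0\})=0$, so $P$ is carried by the single open orbit $\widehat{\RR^2}\setminus\{0\}\cong G/(N\ltimes\RR^2)$. The pair $(\pi,P)$ is then a transitive system of imprimitivity based on $G/(N\ltimes\RR^2)$, and Mackey's imprimitivity theorem (which requires no irreducibility hypothesis) already yields $\pi\cong\operatorname{Ind}_{N\ltimes\RR^2}^{G}(\sigma)$ for some unitary representation $\sigma$ of $N\ltimes\RR^2$. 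This sidesteps the direct-integral bookkeeping entirely and makes the first step of your argument rigorous in one stroke; the remaining steps are exactly as you wrote.
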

The proposition is a special case of Lemma 7.4 in \cite{zhenqi1}, which follows from Mackey's theory and Borel density theorem (see \cite[Theorem 3.2.5]{Zimmer}).
\begin{remark}\label{re:2}It is known that for $SL(2,\RR)$, the discrete series and principal series representations are  tempered, while the complementary series representations are not (see \cite{tan}). The above proposition implies that $\pi\mid_{SL(2,\RR)}$ only contains the principal series and discrete series of $SL(2,\RR)$. If the attached space of $\pi$ is $\mathcal{H}$ and $\mathcal{H}$ is decomposed into a direct integral as described in \eqref{for:1} of Section \ref{sect:SL2R_reps}
\begin{align*}
\mathcal{H}=\int_{\oplus}\mathcal{H}_ud\mu(u).
\end{align*}
then above discussion shows that $\mu((0,1))=0$.
\end{remark}
We end this section by a standard result about coboundary equation:
\begin{lemma}\label{le:14}
Suppose $(\pi,\mathcal{H})$ is a unitary representation for a Lie group $G$ with Lie algebra $\mathfrak{g}$ and $\mathfrak{u}_1,\,\mathfrak{u}_2\in \mathfrak{g}$ with $[\mathfrak{u}_1,\mathfrak{u}_2]=0$. Suppose there is no non-trivial $\mathfrak{u}_2$-invariant vectors (we call $v\in\mathcal{H}$ a $\mathfrak{u}_2$-invariant vector if $\mathfrak{u}_2v=0$). If
$f,\,g\in \mathcal{H}$ satisfy the coboundary equation $\mathfrak{u}_1f=\mathfrak{u}_2g$ and the equations $\mathfrak{u}_1h=g$ has a solution $h\in \mathcal{H}^2$, then $h$ also solves the equation $\mathfrak{u}_2h=f$.
\end{lemma}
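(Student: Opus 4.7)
The plan is to show that the vector $w := \mathfrak{u}_2 h - f$ is the zero vector. I would do this in two steps: first establish that $w$ lies in the kernel of $\mathfrak{u}_1$ by a purely algebraic manipulation relying on commutativity, and then invoke the absence of non-trivial invariant vectors to conclude $w = 0$.

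For the first step, the hypothesis $h \in \mathcal{H}^2$ ensures that $\mathfrak{u}_2 h \in \mathcal{H}^1$ and that $\mathfrak{u}_1 \mathfrak{u}_2 h$ is a well-defined element of $\mathcal{H}$. Since $[\mathfrak{u}_1, \mathfrak{u}_2] = 0$ in $\mathfrak{g}$, the one-parameter subgroups $\exp(t\mathfrak{u}_1)$ and $\exp(t\mathfrak{u}_2)$ commute in $G$, and hence their infinitesimal generators commute on the dense subspace $\mathcal{H}^\infty$; this identity extends to $\mathcal{H}^2$ by continuity of the Sobolev norm. The computation is then a three-step chain
\[
\mathfrak{u}_1\mathfrak{u}_2 h \;=\; \mathfrak{u}_2\mathfrak{u}_1 h \;=\; \mathfrak{u}_2 g \;=\; \mathfrak{u}_1 f,
\]
where the first equality uses commutativity on $\mathcal{H}^2$, the second uses the hypothesis $\mathfrak{u}_1 h = g$, and the third is the coboundary equation $\mathfrak{u}_1 f = \mathfrak{u}_2 g$. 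Rearranging, $\mathfrak{u}_1 w = 0$, so $w$ is an invariant vector for the one-parameter subgroup generated by $\mathfrak{u}_1$.

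For the second step, I would apply the no-non-trivial-invariant-vector hypothesis to $w$ to conclude $w = 0$, which is the desired identity $\mathfrak{u}_2 h = f$. The main technical obstacle lies entirely in justifying the commutation $\mathfrak{u}_1 \mathfrak{u}_2 h = \mathfrak{u}_2 \mathfrak{u}_1 h$ at the level of the unbounded skew-adjoint generators $d\pi(\mathfrak{u}_1)$ and $d\pi(\mathfrak{u}_2)$. The $\mathcal{H}^2$ assumption on $h$ is exactly what one needs: it places $h$ in the joint domain of all second-order differential monomials in a basis of $\mathfrak{g}$, so the commutator identity, which is obvious on $\mathcal{H}^\infty$ by ordinary calculus in $G$, passes to $h$ by a density argument. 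The rest of the argument is a one-line algebraic manipulation and an invocation of the stated hypothesis.
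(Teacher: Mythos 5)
Your proof is correct and uses exactly the same approach as the paper: the one-line chain $\mathfrak{u}_1\mathfrak{u}_2 h = \mathfrak{u}_2\mathfrak{u}_1 h = \mathfrak{u}_2 g = \mathfrak{u}_1 f$ followed by the invariance hypothesis. One small remark worth flagging: as you correctly observe, the chain shows that $w = \mathfrak{u}_2 h - f$ lies in $\ker \mathfrak{u}_1$, so the hypothesis actually needed to conclude $w=0$ is the absence of non-trivial $\mathfrak{u}_1$-invariant vectors, not $\mathfrak{u}_2$-invariant ones as the lemma is worded (the paper's own proof invokes the hypothesis in the same way, so this appears to be a typo in the statement of the lemma rather than a defect in either argument).
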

\begin{proof}
From $\mathfrak{u}_1h=g$ we have
\begin{align*}
\mathfrak{u}_1\mathfrak{u}_2h=\mathfrak{u}_2(\mathfrak{u}_1h)=\mathfrak{u}_2 g=\mathfrak{u}_1f,
\end{align*}
which implies that $\mathfrak{u}_2h=f$ since there is no non-trivial $\mathfrak{u}_2$-invariant vectors.
\end{proof}

\section{Explicit calculations based on Mackey theory}\label{sec:20}
Recall the context and notations in Section \ref{sec:24}. Let $S$ be locally compact and $\mathcal{N}\subset S$ a
normal abelian subgroup with $\mathcal{N}\cong \RR^n$. Suppose $\pi$ is an irreducible unitary representation of $S$ such that
\begin{align}\label{for:4}
  \pi\mid_{\RR^n}(x)=\int_{\hat{\RR}^n}\chi(x) d\mu(\chi).
\end{align}
We will derive the representations of $S$ from the action of $S$ on $\widehat{\mathcal{N}}\cong \hat{\RR}^n$.

\subsection{Unitary representations of $SL(2, \R) \ltimes \R^2$ without non-trivial $\RR^2$-fixed vectors}\label{sec:102}
 Write $SL(2, \R) \ltimes \R^2$ in the form
$\begin{pmatrix}[cc|c]
  a & b & v_1\\
  c & d & v_2
\end{pmatrix}$, where $\begin{pmatrix}
  a & b \\
  c & d
\end{pmatrix}\in SL(2,\RR)$ and $\begin{pmatrix}
  v_1 \\
  v_2
\end{pmatrix}\in \RR^2$. The action of $SL(2, \R)$ on $\RR^2$ given by usual matrix multiplication. The group composition law is
\begin{align}\label{for:138}
(g_1,v_1)(g_2,v_2)=(g_1g_2,\,g_2^{-1}v_1+v_2).
\end{align}
The description of representations of $SL(2, \R) \ltimes \R^2$  appears in \cite{W1}. Here we just briefly quote the results. For any $h=\begin{pmatrix}[cc]
  a & b\\
c & d
\end{pmatrix}\in SL(2,\RR)$ and $v=\begin{pmatrix}v_1 \\
v_2\\
\end{pmatrix}\in\RR^2$, the dual action $\hat{h}$ on $\hat{\RR}^2\cong\RR^2$ is:
\begin{align*}
 \hat{h}(v)=\begin{pmatrix}av_1+cv_2 \\
bv_1+dv_2\\
\end{pmatrix}.
\end{align*}
This allows us to completely determine the orbits and the representation theory. The orbits fall into 2 classes: the origin and its
complement. Therefore these $SL(2,\RR)$-orbits on $\widehat{\RR^4}$ are locally
closed. Then we can apply Theorem \ref{th:1}. If $\mu$ in \eqref{for:4} is supported on the origin, the corresponding irreducible representation is trivial on $\RR^2$, and hence the representation factors to a representation of $SL(2,\RR)$. If $\mu$ is supported on the complement, we choose a typical vector $\begin{pmatrix}
  0 \\
  1
\end{pmatrix}$, its stabilizer is isomorphic to the Heisenberg group
\begin{align}\label{for:14}
N=\Bigl\{\begin{pmatrix}[cc|c]
  1 & x & v_1\\
  0 & 1 & v_2
\end{pmatrix}:\,x,\,v_1,\,v_2\in\RR\Bigl\}.
\end{align}
Therefore, any irreducible unitary representation of $SL(2,\RR)\ltimes \RR^2$ without non-trivial $\RR^2$-fixed vectors is induced from a representation of $N$:
\begin{lemma}\label{le:1}
The irreducible representations of $SL(2,\RR)\ltimes \RR^2$ without non-trivial $\RR^2$-fixed vectors are parameterized by $t\in\RR$ and the group action is defined by
\begin{gather*}
\rho_t: SL(2,\RR)\ltimes\RR^2\rightarrow \mathcal{B}(\mathcal{H}_t)\\
\rho_{t}(v)f(x,\xi)=e^{(v_2x-v_1\xi)\sqrt{-1}}f(x,\xi),\\
\rho_{t}(g)f(x,\xi)=e^{\frac{bt\sqrt{-1}}{x(dx-b\xi)}}f(dx-b\xi,-cx+a\xi);
 \end{gather*}
 and
 \begin{align*}
  \norm{f}_{\mathcal{H}_t}=\norm{f}_{L^2(\RR^2)},
 \end{align*}
 where $(g,v)=\Big(\begin{pmatrix}a & b \\
c & d\\
 \end{pmatrix},\begin{pmatrix}v_1 \\
v_2\\
 \end{pmatrix}\Big)\in SL(2,\RR)\ltimes \RR^2$.

 We choose a basis for $\mathfrak{sl}(2,\RR)$ as in \eqref{eq:sl-com}
 and a basis of $\RR^2$ to be $Y_1=\begin{pmatrix}1 \\
0\end{pmatrix}$ and $Y_2=\begin{pmatrix}0 \\
1\end{pmatrix}$. Then we get
\begin{gather*}
 X=-x\partial_x+\xi\partial_\xi,\quad U=tx^{-2}\sqrt{-1}-\xi\partial_x,\quad V=-x\partial_\xi\notag\\
 Y_1=-\xi\sqrt{-1},\quad Y_2=x\sqrt{-1}.
 \end{gather*}
For $\rho_t$, if we take the Fourier transformation on $\xi$ ($\hat{f}_y(x,y)=\frac{1}{\sqrt{2\pi}}\int_{\RR}f(x,\xi)e^{-iy \xi}d\xi$), we get the Fourier model:
 \begin{gather}\label{for:139}
 X=-I-x\partial_x-y\partial_y, \qquad U=tx^{-2}\sqrt{-1}-\partial_x\partial_y\sqrt{-1}\notag\\
 V=-xy\sqrt{-1}, \quad Y_1=\partial_y, \quad Y_2=x\sqrt{-1}.
 \end{gather}
\end{lemma}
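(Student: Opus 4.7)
The plan is to apply Mackey's theorem (Theorem~\ref{th:1}) to the normal abelian subgroup $\mathcal N=\RR^2\subset S=SL(2,\RR)\ltimes\RR^2$, and then to realize the resulting induced representations explicitly. Using the semidirect-product law~\eqref{for:138}, I first verify that the dual action of $SL(2,\RR)$ on $\hat{\RR}^2\cong\RR^2$ is the linear action $\hat h$ stated just before~\eqref{for:14}. Since $SL(2,\RR)$ acts transitively on the punctured plane, this dual action has exactly two orbits, $\{0\}$ and $\RR^2\setminus\{0\}$, both locally closed, and the hypothesis that there are no non-trivial $\RR^2$-fixed vectors rules out the trivial orbit, whose representations factor through $SL(2,\RR)$. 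I then take $\chi_0=(0,1)^\tau$ and read off directly from $\hat h$ that its stabilizer $S_{\chi_0}$ equals the subgroup $N$ in~\eqref{for:14}.

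Next I will classify the irreducible unitary representations $\sigma$ of $N$ whose restriction to $\RR^2$ is $\chi_0\cdot \mathrm{Id}$. A short commutator computation in $N$ via~\eqref{for:138} shows that the center $Z(N)$ is the $v_1$-axis, on which $\chi_0$ is trivial; hence any such $\sigma$ contains $Z(N)$ in its kernel, so $\sigma$ factors through the abelian quotient $N/Z(N)\cong\RR^2$ with coordinates $(x,v_2)$ and is therefore a one-dimensional character. The normalization $\sigma|_{\RR^2}=\chi_0$ pins down the $v_2$-coefficient, leaving a single real parameter $t$ and yielding the family $\sigma_t(x,v_1,v_2)=e^{\sqrt{-1}(tx+v_2)}$. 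Mackey's theorem then identifies the representations sought in the lemma as $\rho_t=\mathrm{Ind}_N^S(\sigma_t)$, $t\in\RR$.

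To realize $\rho_t$ on $L^2(\RR^2)$ in the stated form, I will identify $S/N$ with the orbit $\RR^2\setminus\{0\}$ via the orbit map and fix a measurable section $\Lambda:\RR^2\setminus\{0\}\to SL(2,\RR)$, under which the quasi-invariant $S$-measure on $S/N$ becomes (a constant multiple of) Lebesgue measure on $\RR^2$. Plugging $s=(g,0)$ and $s=(I,v)$ separately into the induced formula~\eqref{for:80} and evaluating $\sigma_t$ on the $N$-correction $\bigl(s^{-1}\Lambda(\gamma)\bigr)_H$ produces the two displayed formulas for $\rho_t(g)f$ and $\rho_t(v)f$; the cocycle factor $e^{bt\sqrt{-1}/(x(dx-b\xi))}$ comes precisely from the $e^{\sqrt{-1}tx}$-part of $\sigma_t$ evaluated on the upper-unipotent correction needed to push $s^{-1}\Lambda(\gamma)$ back into $\Lambda(S/N)$. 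Differentiating $\rho_t(\exp sY)$ at $s=0$ along $\{X,U,V,Y_1,Y_2\}$ then produces the first list of operator formulas, and a partial Fourier transform in $\xi$, using $\widehat{\partial_\xi f}=\sqrt{-1}y\hat f$ and $\widehat{\xi f}=\sqrt{-1}\partial_y\hat f$, converts them into the Fourier-model expressions in~\eqref{for:139}.

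The main technical obstacle is the explicit computation of the cocycle factor in $\rho_t(g)f$: a careless choice of Borel section produces a formula cluttered with irrelevant coboundary phases, and one must pick $\Lambda$ so that the $N$-component of $g^{-1}\Lambda(\gamma)$ depends on $b,d,x,\xi$ in precisely the clean combination written in the lemma. Once the right section is in place, every remaining verification is either a routine differentiation or a standard Fourier-transform identity.
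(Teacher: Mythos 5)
Your proposal follows the same route as the paper: apply Mackey's theorem (Theorem~\ref{th:1}) to the normal abelian factor $\RR^2$, compute the dual action, identify the non-trivial orbit with stabilizer $N$ (the Heisenberg group in \eqref{for:14}), and induce. The paper itself only sets up this picture and then quotes the explicit formulas from \cite{W1}, so your argument actually fills in a step the paper omits: you correctly classify the admissible representations $\sigma$ of $N$ with $\sigma|_{\RR^2}=\chi_0$. Your observation that $Z(N)$ is the $v_1$-axis (a quick check with the group law \eqref{for:138} confirms this), that $\chi_0$ is trivial there, and that therefore $\sigma$ factors through the abelianization $N/Z(N)\cong\RR^2$ and is hence a one-dimensional character $\sigma_t(x,v_1,v_2)=e^{\sqrt{-1}(tx+v_2)}$, is correct and pins down the parameter $t$ in the way the lemma requires. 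This is a slightly different (but equivalent) way of seeing the parameterization than simply citing the reference.

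The one place your proposal is not self-contained is precisely where the actual content of the lemma lies: you identify the cocycle factor $e^{bt\sqrt{-1}/(x(dx-b\xi))}$ as coming from $\sigma_t$ evaluated on the $N$-correction in \eqref{for:80}, and you note that a clean Borel section $\Lambda$ is needed, but you do not exhibit the section or carry out the $s^{-1}\Lambda(\gamma)=\bigl(\cdot\bigr)_\Lambda\bigl(\cdot\bigr)_H$ decomposition that produces exactly this exponential. This is the same computation the paper delegates to \cite{W1}, so the gap is shared with the source, but in a self-contained proof you would still need to write down $\Lambda$ (e.g., choose $\Lambda(x,\xi)\in SL(2,\RR)$ taking $\chi_0$ to $\chi_{(x,\xi)}$, then read off the $x$-component of the $N$-correction for $s=(g,0)$) and verify that the resulting phase is $bt/(x(dx-b\xi))$. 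The subsequent passage to the derived representation $\{X,U,V,Y_1,Y_2\}$ and the partial Fourier transform in $\xi$ to get \eqref{for:139} are genuinely routine once that formula is in hand.
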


\subsection{Unitary representations of $SL(2,\RR)\ltimes \RR^4$
without non-trivial $L_1$ or $L_2$-fixed vectors}\label{sec:10}
 We consider the group $SL(2,\RR)\ltimes \RR^4$
which can be expressed in the form $\begin{pmatrix}[cc|cc]
  a & b & u_1& v_1\\
c & d & u_2& v_2
\end{pmatrix}$, where $\begin{pmatrix}[ccc|c]
  a & b \\
c & d
\end{pmatrix}\in SL(2,\RR)$ and $\begin{pmatrix}[ccc|c]
   u_1& v_1\\
 u_2& v_2
\end{pmatrix}\in\RR^4$. Let $L_1=\begin{pmatrix}v_1\\
v_2
\end{pmatrix}$ and $L_2=\begin{pmatrix}u_1\\
u_2
\end{pmatrix}$, which are isomorphic to $\RR^2$. The actions of $SL(2,\RR)$ on $L_1$ and $L_2$ are the standard
representations of $SL(2,\RR)$ on $\RR^2$, as described by \eqref{for:138}. We choose a basis of $\mathfrak{sl}(2,\RR)$ as in \eqref{eq:sl-com}, and a basis of $\RR^4$ to be
\begin{gather*}
Y_1=\begin{pmatrix}1 &0\\
0& 0\\
\end{pmatrix},\quad Y_2=\begin{pmatrix}0 &0\\
1& 0\\
\end{pmatrix}\quad Y_3=\begin{pmatrix}0 &1\\
0& 0\\
\end{pmatrix}\quad Y_4=\begin{pmatrix}0 &0\\
0& 1\\
\end{pmatrix}.
 \end{gather*}
For any $h=\begin{pmatrix}[cc]
  a & b\\
c & d
\end{pmatrix}\in SL(2,\RR)$, $u=\begin{pmatrix}u_1 \\
u_2\\
\end{pmatrix}\in\RR^2$ and $v=\begin{pmatrix}v_1 \\
v_2\\
\end{pmatrix}\in\RR^2$, the dual action $\hat{h}$ on $\hat{\RR}^4\cong\RR^4$ is:
\begin{align}\label{for:81}
 \hat{h}(u,v)=\Bigg(\begin{pmatrix}au_1+cu_2 \\
bu_1+du_2\\
\end{pmatrix},\,\begin{pmatrix}av_1+cv_2 \\
bv_1+dv_2\\
\end{pmatrix}\Bigg).
\end{align}
This allows us to completely determine the orbits and the representation theory. The orbits fall into five classes:
\begin{enumerate}
  \item []$\mathcal{O}_1=\{(0,0)\}$,
  \item []$\mathcal{O}_2=\{(0,v):\,v\neq0\}$,
  \item []$\mathcal{O}_3=\{(u,0):\,u\neq0\}$,
  \item []$\mathcal{O}_4=\{(u,su): u\neq0\}$ with $s\neq0$,
  \item []$\mathcal{O}_5=\{(u,v):\,\text{det}(u,v)=s\}$ with $s\neq0$.
\end{enumerate}
Therefore these $SL(2,\RR)$-orbits on $\widehat{\RR^4}$ are locally
closed. Then we can apply Theorem \ref{th:1}. For $\mathcal{O}_1$, the corresponding irreducible representation is trivial on $\RR^4$, and hence the representation factors to a representation of $SL(2,\RR)$. For $\mathcal{O}_2$ and $\mathcal{O}_3$, the corresponding irreducible representations factor to representations of
$SL(2,\RR)\times\RR^2$: for $\mathcal{O}_2$, corresponding irreducible representation is trivial on $L_2$, and for $\mathcal{O}_3$, the corresponding irreducible representation is trivial on $L_1$. Then we just need to focus on $\mathcal{O}_4$ and $\mathcal{O}_5$.

For $\mathcal{O}_4$, we choose a typical point $\begin{pmatrix}0 & 0 \\
1 & s\\
\end{pmatrix}$, then its stabilizer for the dual action is:
\begin{align*}
N_1=\Bigl\{\begin{pmatrix}[ccc|c]
  1 & x & u_1&v_1\\
  0 & 1 & u_2&v_2
\end{pmatrix}:\,x,\,v_1,\,v_2,\,u_1,\,u_2\in\RR\Bigl\}.
\end{align*}
Compare $N_1$ with the stabilizer $N$ in \eqref{for:14}. It is easy to see that for any irreducible representation determined determined by $\mathcal{O}_4$,
its restrictions on $SL(2,\RR)\ltimes L_i$, $i=1,\,2$ are also irreducible representations without non-trivial $L_i$-fixed vectors. Then by Lemma \ref{le:1}, we get the first class of irreducible representations of $SL(2,\RR)\ltimes \RR^4$ without non-trivial $L_1$ or $L_2$-fixed vectors:
\begin{lemma}\label{le:3}
These representations  are parameterized by $t\in\RR$ and $s\in\RR^*$;  and the group action is defined by
\begin{gather*}
\rho_{t,s}: SL(2,\RR)\ltimes\RR^4\rightarrow \mathcal{B}(\mathcal{H}_t)\\
\rho_{t,s}(u,v)f(x,\xi)=e^{(u_2x-u_1\xi)\sqrt{-1}}e^{(v_2x-v_1\xi)s\sqrt{-1}}f(x,\xi),\\
\rho_{t,s}(g)f(x,\xi)=e^{\frac{bt\sqrt{-1}}{x(dx-b\xi)}}f(dx-b\xi,-cx+a\xi);
 \end{gather*}
 and
 \begin{align*}
  \norm{f}_{\mathcal{H}_{t,s}}=\norm{f}_{L^2(\RR^2)},
 \end{align*}
 where $(g,u,v)=\Big(\begin{pmatrix}a & b \\
c & d\\
 \end{pmatrix},\begin{pmatrix}u_1 \\
u_2\\
 \end{pmatrix},\begin{pmatrix}v_1 \\
v_2\\
 \end{pmatrix}\Big)\in SL(2,\RR)\ltimes \RR^4$.

 Then we get
 \begin{gather*}
 X=-x\partial_x+\xi\partial_\xi,\quad U=tx^{-2}\sqrt{-1}-\xi\partial_x,\quad V=-x\partial_\xi\notag\\
 Y_1=-\xi\sqrt{-1},\quad Y_2=x\sqrt{-1}\quad Y_3=-s\xi\sqrt{-1},\quad Y_4=sx\sqrt{-1}.
 \end{gather*}
 For $\rho_{t,s}$, if we take the Fourier transformation on $\xi$ ($\hat{f}_y(x,y)=\frac{1}{\sqrt{2\pi}}\int_{\RR}f(x,\xi)e^{-iy \xi}d\xi$), we get the Fourier model:
 \begin{gather}\label{for:8}
 X=-I-x\partial_x-y\partial_y, \qquad U=tx^{-2}\sqrt{-1}-\partial_x\partial_y\sqrt{-1}\notag\\
 V=-xy\sqrt{-1}, \quad Y_1=\partial_y, \quad Y_2=x\sqrt{-1},\notag\\
   Y_3=s\partial_y, \quad Y_4=sx\sqrt{-1}.
 \end{gather}
\end{lemma}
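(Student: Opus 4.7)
The plan is to obtain the lemma as a direct application of Mackey's theorem (Theorem~\ref{th:1}) to the orbit $\mathcal{O}_4$, reducing the construction to the already-known $SL(2,\RR)\ltimes\RR^2$ case (Lemma~\ref{le:1}). First I would fix the base point $\chi_0=\begin{pmatrix}0&0\\1&s\end{pmatrix}\in\mathcal{O}_4$, note that the associated character of $\RR^4$ is $\chi_0(u,v)=e^{(u_2-sv_1)\sqrt{-1}}$ after a routine pairing check, and identify its $SL(2,\RR)$-stabilizer with the unipotent subgroup $N_1$. Since $N_1$ has the same Heisenberg-type structure as the group $N$ in \eqref{for:14}, the extension of $\chi_0$ to an irreducible representation of $N_1$ is parameterized by a single real number $t\in\RR$ exactly as in the $\mathcal{O}_4$-derivation for $SL(2,\RR)\ltimes\RR^2$ that underlies Lemma~\ref{le:1}.

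Next I would write out $\pi=\mathrm{Ind}_{N_1}^{SL(2,\RR)\ltimes\RR^4}(\sigma_t)$ using the explicit Mackey cocycle formula \eqref{for:80}. The homogeneous space $(SL(2,\RR)\ltimes\RR^4)/N_1$ is parameterized by $(x,\xi)\in\RR^2\setminus\{0\}$ (the second column of the $SL(2,\RR)$-factor modulo $N_1$), and one can choose the Borel section $\Lambda(x,\xi)=\begin{pmatrix}*&*\\ -\xi&x\end{pmatrix}$ as in the $\RR^2$-case. Computing $s^{-1}\Lambda(\gamma)$ and decomposing it as $\Lambda(\cdot)\cdot N_1$ produces the same base-space transformation $(x,\xi)\mapsto(dx-b\xi,-cx+a\xi)$ and the same Heisenberg cocycle $e^{bt\sqrt{-1}/(x(dx-b\xi))}$ that appears in Lemma~\ref{le:1}. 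The action of the $\RR^4$ factor reduces to evaluating the character $\chi_0$ on the conjugate $\Lambda(\gamma)^{-1}(u,v)\Lambda(\gamma)$, giving the $L_1$-contribution $e^{(v_2x-v_1\xi)s\sqrt{-1}}$ and the $L_2$-contribution $e^{(u_2x-u_1\xi)\sqrt{-1}}$, which together yield the announced formula for $\rho_{t,s}$.

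The infinitesimal formulas are then obtained by standard one-parameter differentiation $\tfrac{d}{d\tau}\big|_{\tau=0}\rho_{t,s}(\exp(\tau Z))$ for each of $Z\in\{X,U,V,Y_1,Y_2,Y_3,Y_4\}$. For $X,U,V$ the computation is identical to the one in Lemma~\ref{le:1}; for $Y_1,Y_2$ it is immediate from the $L_2$-character, and for $Y_3,Y_4$ one just picks up the extra factor of $s$. Finally, applying the partial Fourier transform $\xi\mapsto y$ and using the standard conversions $\xi\mapsto\sqrt{-1}\partial_y$ and $\partial_\xi\mapsto\sqrt{-1}y$ (together with the $\tfrac12\log$-type shift that converts $-x\partial_x+\xi\partial_\xi$ into $-I-x\partial_x-y\partial_y$, exactly as in \eqref{for:139}) produces the model \eqref{for:8}.

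The only step that requires genuine care is the Mackey cocycle computation for $U$, which involves the term $tx^{-2}\sqrt{-1}$: this comes from the Heisenberg factor in the decomposition of $s^{-1}\Lambda(\gamma)$ and must be handled with the same bookkeeping as in the $\RR^2$ case. Since the $\RR^2$ computation is done in \cite{W1} and summarized in Lemma~\ref{le:1}, and the presence of $L_2$ only contributes an additional factorized character that is manifestly invariant under the same $SL(2,\RR)$-cocycle manipulation, this bookkeeping transfers verbatim. Everything else is direct differentiation and a Fourier transform.
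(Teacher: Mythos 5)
Your approach --- apply Mackey's theorem (Theorem~\ref{th:1}) to the orbit $\mathcal{O}_4$ with base point $\begin{pmatrix}0&0\\1&s\end{pmatrix}$, identify the stabilizer $N_1$, recognize it has the same Heisenberg structure as the stabilizer $N$ of \eqref{for:14}, and read the formulas off the $SL(2,\RR)\ltimes\RR^2$ case underlying Lemma~\ref{le:1} --- is the same as the paper's, and your final displayed formulas and the Fourier conversion $\xi\mapsto\sqrt{-1}\partial_y$, $\partial_\xi\mapsto\sqrt{-1}y$ are all correct.

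Two arithmetic slips in the intermediate Mackey bookkeeping are worth fixing, though neither affects the strategy: the base-point character is $\chi_0(u,v)=e^{(u_2+sv_2)\sqrt{-1}}$, not $e^{(u_2-sv_1)\sqrt{-1}}$, since $\begin{pmatrix}0&0\\1&s\end{pmatrix}$ encodes the dual vector $u^*=(0,1)$, $v^*=(0,s)$ and pairing against $(u_1,u_2,v_1,v_2)$ gives $u_2+sv_2$ (this is also what the stated formula for $\rho_{t,s}$ yields at $(x,\xi)=(1,0)$). Likewise the Borel section should carry $(x,\xi)$ as its \emph{first column}, consistent with the explicit section $\Lambda(x,\xi,z)$ written in Lemma~\ref{le:2}; the matrix with second row $(-\xi,x)$ is $\Lambda(x,\xi)^{-1}$, which is what actually appears when one evaluates $\chi_0\bigl(\Lambda(\gamma)^{-1}(u,v)\Lambda(\gamma)\bigr)$ and recovers $e^{(u_2x-u_1\xi)\sqrt{-1}}e^{(v_2x-v_1\xi)s\sqrt{-1}}$.
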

For $\mathcal{O}_5$, the stabilizer of a typical point $\begin{pmatrix}0 & s \\
1 & 0\\
\end{pmatrix}$ for the dual action is:
\begin{align*}
N=\Bigl\{\begin{pmatrix}[ccc|c]
  1 & 0 & u_1&v_1\\
  0 & 1 & u_2&v_2
\end{pmatrix}:\,x,\,v_1,\,v_2,\,u_1,\,u_2\in\RR\Bigl\}.
\end{align*}
 Note that $SL(2,\RR)\ltimes\RR^4/N$ is isomorphic to $SL(2,\RR)$. We choose a Borel section $\Lambda:SL(2,\RR)\ltimes\RR^4/N\rightarrow SL(2,\RR)$ given by $\Lambda(x,\xi,z)=\begin{pmatrix}
  x & xz\\
 \xi&  x^{-1}+\xi z\\
 \end{pmatrix}$.  The
action of the group on the cosets is
\begin{align*}
g^{-1}\Lambda(x,\xi,z)&=\Lambda\Big(dx-b\xi,a\xi-cx,z-\frac{bx^{-1}}{dx-b\xi}\Big)(u',v')
\end{align*}
where $g=\begin{pmatrix}
  a & b &u_1&v_1\\
  c & d&u_2&v_2
\end{pmatrix}$, $u'=-\Lambda(x,\xi,z)^{-1}\begin{pmatrix}
  a & b\\
 c&  d\\
 \end{pmatrix}\begin{pmatrix}
  u_1\\
 u_2\\
 \end{pmatrix}$ and $v'=-\Lambda(x,\xi,z)^{-1}\begin{pmatrix}
  a & b\\
 c&  d\\
 \end{pmatrix}\begin{pmatrix}
  v_1\\
 v_2\\
 \end{pmatrix}$.

Then  by using Theorem \ref{th:1} we get the second class of irreducible representations of $SL(2,\RR)\ltimes \RR^4$ without non-trivial $L_1$ or $L_2$-fixed vectors:
\begin{lemma}\label{le:2}
The group action is defined by
\begin{gather*}
\rho_s: SL(2,\RR)\ltimes \RR^4\rightarrow \mathcal{B}(\mathbb{H}_s)\\
\rho_s(u,v)f(x,\xi,z)=e^{(xu_2-\xi u_1+sv_1x^{-1}+sv_1\xi z-sxzv_2)\sqrt{-1}}f(x,\xi,z),\\
\rho_s(g)f(x,\xi,z)=f\Big(dx-b\xi,a\xi-cx,z-\frac{bx^{-1}}{dx-b\xi}\Big);
 \end{gather*}
 and
 \begin{align*}
 \norm{f}_{\mathbb{H}_s}=\norm{f}_{L^2(\RR^3)},
 \end{align*}
  where $g=\begin{pmatrix}a & b \\
c & d
\end{pmatrix}$, $u=\begin{pmatrix}u_1 \\
u_2
 \end{pmatrix}$ and $v=\begin{pmatrix}v_1 \\
v_2\\
 \end{pmatrix}$.

Computing derived representations, we get
 \begin{gather*}
 X=-x\partial_x+\xi\partial_\xi,\qquad V=-x\partial_\xi,\qquad U=-\xi\partial_x-x^{-2}\partial_z,\notag\\
  Y_1=-\xi\sqrt{-1},\quad Y_2=x\sqrt{-1},\notag\\
  Y_3=(sx^{-1}+s\xi z)\sqrt{-1},\quad Y_4=-sxz\sqrt{-1}.
 \end{gather*}
 If we take the Fourier transformation on $\xi$ ($\hat{f}_y(x,y,z)=\frac{1}{\sqrt{2\pi}}\int_{\RR}f(x,\xi,z)e^{-iy \xi}d\xi$), we get the Fourier model:
\begin{gather}\label{for:118}
 X=-I-x\partial_x-y\partial_y,\quad V=-xy\sqrt{-1},\quad U=-\partial_{xy}\sqrt{-1}-x^{-2}\partial_z,\notag\\
  Y_3=-zs\partial_y+\sqrt{-1}sx^{-1},\quad Y_4=-sxz\sqrt{-1} \notag\\
Y_2=x\sqrt{-1},\qquad Y_1=\partial_y.
 \end{gather}
\end{lemma}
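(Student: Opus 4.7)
The plan is to apply Mackey's theorem (Theorem~\ref{th:1}) directly to the orbit $\mathcal{O}_5$, using the typical character $\chi_0$ attached to the point $(u,v) = (\begin{pmatrix}0\\1\end{pmatrix}, \begin{pmatrix}s\\0\end{pmatrix})$, and then compute the induced representation explicitly in the chosen Borel section $\Lambda$. Since $\mathcal{O}_5$ is the preimage of $\{s\}$ under the continuous map $(u,v)\mapsto\det(u,v)$, it is locally closed, so the hypotheses of Mackey's setup apply.

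First I would compute the stabilizer of $\chi_0$ under the dual action \eqref{for:81}: requiring $\hat{h}(u,v) = (u,v)$ forces $a=d=1$ and $b=c=0$, so the little group in $SL(2,\RR)$ is trivial and the full stabilizer equals $N = \{I\}\ltimes\RR^4$. Since $N\cong \RR^4$ is abelian, the only irreducible unitary representation compatible with $\sigma|_{\RR^4} = (\dim)\chi_0$ is the one-dimensional character $\chi_0$ itself. A short computation using the group law \eqref{for:138} then shows that the $\RR^4$-part of $S = SL(2,\RR)\ltimes\RR^4$ acts trivially on $S/N$, so $S/N \cong SL(2,\RR)$ as an $S$-space and the Haar measure on $SL(2,\RR)$ is $S$-invariant.

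Next I would verify that $\Lambda(x,\xi,z) = \begin{pmatrix}x & xz\\ \xi & x^{-1} + \xi z\end{pmatrix}$ defines a Borel section on the open dense subset $\{x \neq 0\}$ and that Haar measure pulls back to Lebesgue measure up to a smooth positive density which can be absorbed into the section so that the induced representation acts unitarily on $L^2(\RR^3)$. The core computation is to evaluate $g^{-1}\Lambda(x,\xi,z)$ by matrix multiplication and solve $\Lambda(x',\xi',z') = g^{-1}\Lambda(x,\xi,z)$: the first column immediately yields $x'=dx-b\xi$ and $\xi'=a\xi-cx$, the top-right entry gives $z'=z - bx^{-1}/(dx-b\xi)$, and the bottom-right entry collapses using $ad-bc=1$. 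Combining with \eqref{for:80} and the group law, the Mackey factor $\Lambda(\gamma)^{-1}(g,u,v)\Lambda(s^{-1}\gamma)$ equals $(I, \Lambda^{-1}gu, \Lambda^{-1}gv)\in N$, and pairing with $\chi_0$ by reading off the second component of $\Lambda^{-1}u$ and the first component of $s\Lambda^{-1}v$ (using $\Lambda^{-1} = \begin{pmatrix}x^{-1}+\xi z & -xz\\ -\xi & x\end{pmatrix}$) produces exactly the exponent $xu_2 - \xi u_1 + sv_1 x^{-1} + sv_1 \xi z - sxz v_2$ appearing in $\rho_s(u,v)$.

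Finally, I would obtain the derived representations by differentiating one-parameter subgroups at $t=0$: $\exp(tX)$, $\exp(tU)$, $\exp(tV)$ produce the stated vector fields in $(x,\xi,z)$, while $Y_1,\ldots,Y_4$ produce the stated multiplication operators from the character factor. I would then obtain the Fourier model \eqref{for:118} by the substitutions $\xi \leftrightarrow \sqrt{-1}\partial_y$ and $\partial_\xi \leftrightarrow \sqrt{-1}y$ (so that e.g.\ $\xi\partial_\xi$ becomes $-I - y\partial_y$). The hard part will be the bookkeeping in the Mackey formula: choosing the Bruhat-type parametrization $\Lambda$ so that both the $SL(2,\RR)$-action on coordinates and the character contribution admit clean closed forms, and verifying that the absorbed Jacobian makes the resulting operator unitary on $L^2(\RR^3)$. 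Once the section is fixed, the remaining verifications are elementary matrix multiplications and differentiations.
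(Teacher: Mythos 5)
Your proposal is correct and follows essentially the same route the paper takes: apply Mackey's theorem (Theorem~\ref{th:1}) to the orbit $\mathcal{O}_5$ with the typical point $\begin{pmatrix}0 & s \\ 1 & 0\end{pmatrix}$, observe that the little group in $SL(2,\RR)$ is trivial so that the stabilizer is $N=\{I\}\ltimes\RR^4$ and $S/N\cong SL(2,\RR)$, compute the Mackey cocycle explicitly in the section $\Lambda$, and then differentiate one-parameter subgroups and pass to the $\xi$-Fourier transform. Your details check out (e.g., $\Lambda^{-1}=\begin{pmatrix}x^{-1}+\xi z & -xz\\ -\xi & x\end{pmatrix}$, reading off $(\Lambda^{-1}u)_2 + s(\Lambda^{-1}v)_1$ reproduces the stated exponent, and $g^{-1}\Lambda$ yields the stated coordinate change); the only cosmetic remark is that no Jacobian needs to be absorbed, since $dx\,d\xi\,dz$ already pushes forward to Haar measure on $SL(2,\RR)$ under $\Lambda$, so unitarity on $L^2(\RR^3)$ is automatic from the invariance of the measure in Mackey's construction.
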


\section{Sobolev estimates for solutions of cohomological equations}

\subsection{Coboundary for classical horocycle map}
\label{sec:6}
For the classical horocycle map defined by the $\mathfrak{sl}(2,\RR)$-matrix $U=\begin{pmatrix}
  0 & 1 \\
  0 & 0
\end{pmatrix}$, there is a classification of the obstructions to the solution of the cohomological
equation established by the first author \cite{T1}. That is, for any $F\in \mathcal{H}^\infty$, we know precisely the condition under which the equation
\begin{align}\label{for:18}
 \mathcal{L}_{U}f=F
\end{align}
has a solution $f$. Let
\begin{align*}
    \mathcal{E}_U(\mathcal{H})=\{\mathcal{D}\in \mathcal{E}(\mathcal{H}): \mathcal{L}_U\mathcal{D}=\mathcal{D} \}\quad\text{and}\quad\mathcal{H}_U^{-k}=\{\mathcal{D}\in \mathcal{H}^{-k}: \mathcal{L}_U\mathcal{D}=\mathcal{D} \}.
\end{align*}

\begin{theorem}(\cite{T1})\label{th:2} Suppose $\pi$ has a spectral gap of $u_0$ (defined at the end of Section \ref{sect:SL2R_reps}). For any $s\geq0$ there is a constant $C_{s, u_0} > 0$ such that for all $g\in \mathcal{H}_U^{-(3s + 4)},$ there is a unique solution $f\in \mathcal{H}$
to the cohomological equation $\mathcal{L}_{U}f=F$, which satisfies
\begin{align*}
\|f\|_{s} \leq C_{s, u_0} \| g \|_{3s+4}.
\end{align*}
\end{theorem}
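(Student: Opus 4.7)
The plan is to use the direct integral decomposition of Section~\ref{sec:3} together with the spectral gap hypothesis to reduce the problem to an analysis in each irreducible component of $\pi$. Writing $\mathcal{H} = \int_{\sigma(\Box)} \mathcal{H}_u\, d\mu(u)$ and using that a spectral gap of $u_0$ forces $\mu((0,u_0]) = 0$, it suffices to prove the Sobolev estimate in each irreducible component with a constant that is uniform for $u$ bounded away from $0$. The cohomological equation decouples along this decomposition, and the $U$-invariant distributions assemble coherently from their component pieces.

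In an irreducible principal or complementary series component I would work in the line model, where the Hilbert space is a weighted $L^2(\RR)$ and $\pi(\exp(U))$ acts as the unit translation $f(x) \mapsto f(x-1)$. The cohomological equation then becomes the first-order difference equation $f(x-1) - f(x) = g(x)$ on $\RR$. Passing to the Fourier transform yields
\begin{align*}
\widehat f(\xi)\bigl(e^{-i\xi} - 1\bigr) = \widehat g(\xi),
\end{align*}
so the formal solution is $\widehat f(\xi) = \widehat g(\xi)/(e^{-i\xi}-1)$. The zeros of the symbol $e^{-i\xi}-1$ at $\xi = 2\pi n$, $n \in \ZZ$, correspond exactly to the $\exp(U)$-invariant distributions in the component: to each such $n$ is associated a Fourier-evaluation functional $\mathcal{D}_n(g) = \widehat g(2\pi n)$ (with weights depending on the representation parameter) that lies in $\mathcal{H}_U^{-k}$ for all sufficiently large $k$. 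The hypothesis that $g$ annihilates every $\mathcal{D} \in \mathcal{H}_U^{-(3s+4)}$ gives enough vanishing of $\widehat g$ at each $2\pi n$ for $\widehat g / (e^{-i\xi}-1)$ to be the Fourier transform of a function in $\mathcal{H}$. The (mock) discrete series case is handled analogously via a holomorphic disc model, and the trivial representation is excluded by the spectral gap.

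To upgrade solvability to the estimate $\|f\|_s \leq C_{s,u_0}\|g\|_{3s+4}$, one translates the representation-theoretic Sobolev norms, built out of $X, U, V$, into function-theoretic norms of $\widehat f$, $\widehat g$ in each model. The loss $3s+4$ decomposes morally as follows: a factor of $2s$ comes from interchanging the representation-theoretic Sobolev regularity with classical Sobolev regularity through the weighted $L^2$ structure on the line model, an additional factor of $s$ comes from successively differentiating the symbol $(e^{-i\xi}-1)^{-1}$, and a fixed constant $+4$ absorbs Casimir corrections whose sizes are controlled by the spectral gap $u_0$. Piecing the component-wise estimates together via Fubini through the direct integral yields the claimed global bound with a constant that is uniform in $u$ once $u$ stays away from $0$.

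The principal obstacle is the fact that, in sharp contrast to the horocycle flow where the space of $U$-invariant distributions in each irreducible component is finite-dimensional (cf.\ \cite{Forni}), the map $\exp(U)$ admits a countably infinite family of such distributions $\{\mathcal{D}_n\}_{n \in \ZZ}$, one for each zero of $e^{-i\xi}-1$. Controlling the singularity of $1/(e^{-i\xi}-1)$ simultaneously at all frequencies $2\pi n$, and tracking how the annihilation hypothesis absorbs them \emph{uniformly in $n$}, is precisely what forces the non-tame nature of the estimate and fixes the linear-in-$s$ loss of regularity. The secondary subtlety is ensuring that $C_{s,u_0}$ does not degenerate as $u \to u_0$: one verifies that the line-model calculations depend continuously on the representation parameter $\nu = \sqrt{1-u}$ and blow up only as $u \to 0$, which the spectral gap bypasses by assumption.
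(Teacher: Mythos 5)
This theorem is stated with attribution to \cite{T1} and its proof is not reproduced in the present paper; so the only ``paper's own proof'' one can compare against lives in \cite{T1} (where the first author extends the Flaminio--Forni analysis \cite{Forni} for the horocycle flow to the time-one horocycle map). Your outline is broadly consistent with that method: direct-integral reduction to irreducible components, passage to the line model where $\exp(U)$ acts by unit translation, the multiplier $e^{-i\xi}-1$ whose zeros at $2\pi\ZZ$ parametrize the countably many invariant distributions, and annihilation of these distributions as the solvability condition. You also correctly read the slightly garbled hypothesis ``$g\in\mathcal H_U^{-(3s+4)}$'' as ``$\mathcal D(g)=0$ for every $\mathcal D\in\mathcal H_U^{-(3s+4)}$.''

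There are, however, gaps that would need to be filled for this to be a proof. First, the Fourier-multiplier argument as you state it is cleanly justified only for the principal series, where the line model is genuinely $L^2(\RR)$ with Lebesgue measure and the Fourier transform is a unitary isomorphism. The complementary series carries a non-local inner product whose Fourier image is a weighted $L^2(\RR,\,|\xi|^{-\nu}d\xi)$, and the (mock) discrete series lives in a Hardy/Bergman-type space whose Fourier transform is supported on a half-line; in both cases the interaction of the weight or of the support restriction with the zeros of $e^{-i\xi}-1$ requires its own estimates, and ``handled analogously'' is not a substitute for them. In particular the near-$\xi=0$ analysis in the complementary series involves invariant distributions of representation-dependent fractional Sobolev order, which is exactly where the constant's dependence on the spectral gap $u_0$ is generated; you assert this dependence is controlled but do not show it.

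Second, the heart of the theorem is the quantitative estimate $\|f\|_s\leq C_{s,u_0}\|g\|_{3s+4}$, and that is not derived. Your ``$2s+s+4$'' decomposition (Sobolev exchange, differentiating the symbol, Casimir corrections) is a heuristic, not a calculation: nothing in the sketch shows that differentiating the symbol $(e^{-i\xi}-1)^{-1}$ near each $2\pi n$ costs exactly one extra derivative per order, that the conversion between $\{X,U,V\}$-Sobolev norms and classical weighted Sobolev norms of $\hat g$ costs a factor of $2s$, that the contributions from the infinitely many singularities $2\pi n$ sum, or that the Casimir-dependent corrections saturate at a fixed loss $+4$. The fact that Remark~\ref{re:5} records the sharper loss $2s+1+\epsilon$ from \cite{T2} indicates the $3s+4$ in the statement is a by-product of a specific (sub-optimal) sequence of estimates in \cite{T1}, not of the clean conceptual accounting you give. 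What you have is a plausible road map to the result, but the Sobolev calculus along that road --- including the uniformity in $\nu$ and in $n$ needed to piece the component estimates back through the direct integral --- is precisely the content that is missing.
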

\begin{remark}\label{re:5}
In \cite{T2}, we have refined each step of the argument in \cite{T1}, which improves upon
the estimates in \cite{T1} both with respect to the time step and loss of
regularity. More precisely, we get the following: for any $s>0$ and $\epsilon>0$ there is a constant $C_{s, \epsilon, u_0} > 0$ such that for all $g\in \mathcal{H}_U^{-(2s+1+\epsilon)},$ there is a unique solution $f\in \mathcal{H}$
to the cohomological equation $\mathcal{L}_{U}f=F$, which satisfies
\begin{align}\label{for:34}
\|f\|_{s} \leq C_{s,\epsilon, u_0} \| g \|_{2s+1+\epsilon}.
\end{align}
\end{remark}
Generally, for the cohomological equation \eqref{for:18}, the existence of a bonafide solution for a smooth coboundary doesn't necessarily imply the existence of a smooth solution. The next result shows that
under certain conditions, the solution is automatically smooth.
\begin{theorem}\cite{T1}\label{th:100} Suppose $\pi$ has a spectral gap of $u_0$. Suppose $F\in \mathcal{H}^\infty$ and there is $f\in \mathcal{H}$ such that
$\mathcal{L}_{U}f=F$. Then
\begin{enumerate}
  \item if there is $h\in \mathcal{H}^\infty$ such that $Uh=F$, then $f\in \mathcal{H}^\infty$;

  \smallskip
  \item if $\pi$ only contains the principal series or complementary series, then $f\in \mathcal{H}^\infty$.
\end{enumerate}

\end{theorem}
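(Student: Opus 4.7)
The plan is to decompose $\mathcal{H}$ into a direct integral of irreducible $SL(2,\RR)$-representations as in Section~\ref{sect:SL2R_reps} and analyze the equation $\mathcal{L}_U f = F$ in explicit models in each component. In every such model $\mathcal{L}_U$ has Fourier symbol $e^{i\xi}-1$, so the Sobolev regularity of the $L^2$-solution $f$ is controlled by the behavior of $\hat F$ at the zeros $\xi\in 2\pi\ZZ$; global smoothness then follows by gluing per-component estimates together, with uniform constants provided by the spectral gap and the Sobolev bounds of Theorem~\ref{th:2}.

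For part (2), in principal series I would work in the line model with $U=\partial_x$ on $L^2(\RR)$, where the equation becomes $(e^{i\xi}-1)\hat f(\xi)=\hat F(\xi)$. Since each zero of $e^{i\xi}-1$ is simple, the $L^2$-integrability of $\hat f$ forces $\hat F(2\pi k)=0$ for every $k\in\ZZ$. Smoothness of $F$ then makes the quotient $\hat f=\hat F(\xi)/(e^{i\xi}-1)$ smooth and rapidly decreasing (the simple zero of $\hat F$ at each $2\pi k$ cancels the simple zero of the denominator, yielding a smooth ratio), so $f\in\mathcal{H}^\infty$ in each component; the complementary series is handled analogously. Applying Theorem~\ref{th:2} at every Sobolev order $s$, with the spectral gap giving uniform constants, yields $f\in\mathcal{H}^\infty$ globally.

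For part (1), discrete or mock-discrete series may also appear, and there I would use the extra hypothesis via the formal identity $\mathcal{L}_U = U\cdot\varphi(U)$ with $\varphi(z):=(e^z-1)/z$ entire. Define a candidate smooth solution $\phi:=(U/\mathcal{L}_U)\,h$ through the functional calculus of $U$; on the Fourier side $\hat\phi=i\xi\,\hat h(\xi)/(e^{i\xi}-1)$, and the simple poles at $\xi=2\pi k$ for $k\neq 0$ have residues proportional to $\hat h(2\pi k)$. The $L^2$-solvability of $\mathcal{L}_U f=Uh$ forces exactly the vanishings $\hat h(2\pi k)=0$ (together with the additional vanishing at $\xi=0$ dictated by the weighted Bergman norm in discrete series) needed to cancel these poles, so $\hat\phi$ extends smoothly with rapid decay and $\phi\in\mathcal{H}^\infty$ with $\mathcal{L}_U\phi=Uh=F$. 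In every non-trivial irreducible component $U$ has continuous spectrum ($i\RR$ for principal series, $i[0,\infty)$ for discrete series), so $\exp(U)$ has no non-trivial fixed vectors; combined with the spectral gap this makes $\mathcal{L}_U$ injective on $\mathcal{H}$, and uniqueness of the $L^2$-solution yields $f=\phi$.

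The main obstacle is carrying out the pole-residue matching uniformly across all types of irreducibles that may appear in $\pi$, particularly in discrete and mock-discrete series where the weighted Bergman norm creates additional vanishing requirements at the origin of Fourier space that must be matched against the hypotheses in a way compatible with the direct integral decomposition. Once smoothness in the $U$-direction is obtained within each component, full smoothness of $f$ as an $SL(2,\RR)$-vector follows from the elliptic regularity Theorem~\ref{th:4} applied with the generators of $\mathfrak{sl}(2,\RR)$, using the commutator identities between $U$ and the other generators $X,V$ to bootstrap.
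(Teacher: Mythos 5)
This theorem is quoted from \cite{T1} and is not proven in the present paper, so there is no in-paper proof to compare against; the review below evaluates your sketch on its own terms.

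Your high-level approach — decompose into irreducibles, realize $U=\partial_x$ in a Fourier (spectral) model so that $\mathcal{L}_U$ becomes multiplication by $e^{i\xi}-1$, and argue by pole/zero cancellation at $\xi\in 2\pi\ZZ$ — is indeed the established strategy in the Flaminio--Forni/Tanis line of work, and the injectivity of $\mathcal{L}_U$ (no fixed vectors since $-iU$ has purely continuous spectrum in each nontrivial irreducible) correctly identifies the given $f$ with the explicitly constructed solution. However, there are real gaps. First, the clean picture "$U=\partial_x$ on $L^2(\RR)$, $\mathcal{L}_U$ is multiplication by $e^{i\xi}-1$ under the plain Fourier transform" holds literally only for the principal series. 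In the complementary series the Hilbert norm is given by a nonlocal kernel (equivalently a nontrivial weight), so Plancherel is not available in the same form and multiplication by $e^{i\xi}-1$ does not interact with the norm in the naive way; writing "the complementary series is handled analogously" hides the main work for part~(2). In the discrete and mock--discrete series the natural $U$-spectral model lives on a half-line, so the endpoint $\xi=0$ is a boundary point rather than a simple interior zero, and the analysis there is genuinely different — your parenthetical remark about "the additional vanishing at $\xi=0$ dictated by the weighted Bergman norm" correctly flags that something special happens but does not pin down what forces which vanishing (the factor $i\xi$ in $\hat F=i\xi\hat h$ already supplies the numerator zero at $\xi=0$, so it is not "forced by $L^2$"). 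Second, the glue step is underjustified: to pass from per-component smoothness to $f\in\mathcal H^\infty$ for the direct integral you need Sobolev bounds that are uniform in the spectral parameter, and you cannot invoke Theorem~\ref{th:2} directly — that theorem produces a solution given vanishing of invariant distributions up to a certain order, whereas you must first show the given $L^2$ solution forces those vanishings at all orders, which is precisely the content that needs a quantitative pole-cancellation estimate and is not a formal consequence of a single $L^2$ bound. Finally, the last step (full $SL(2,\RR)$-smoothness from $U$-smoothness via commutators and Theorem~\ref{th:4}) is stated rather than carried out; $X$ and $V$ are not translation-invariant in the Fourier variable, so controlling $X^aU^bV^cf$ requires tracking how these operators act on the quotient $\hat F/(e^{i\xi}-1)$, not just appealing to ellipticity. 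In short: right skeleton, but the complementary/discrete series models, the uniformity across the direct integral, and the bootstrap to full $\mathfrak{sl}(2,\RR)$-smoothness all need substantial additional work before this becomes a proof.
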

\begin{remark}\label{re:1}
In fact, the above theorem applies to any irreducible
unitarizable representations of $\mathfrak{sl}(2,\RR)$; that is, those representations that arise
as the derivatives of irreducible unitary representations of some Lie group whose
Lie algebra is $\mathfrak{sl}(2,\RR)$. In fact,
all such representations can be realized from irreducible unitary representations
of some finite cover of $SL(2,\RR)$. In turn, all of these are unitarily equivalent to
irreducible representations of $SL(2,\RR)$ itself \cite{tan}.
\end{remark}

\subsection{Coboundary for unipotent maps in any Lie group $G$} At the beginning of the section we recall the following direct consequence of the well known Howe-Moore theorem
on vanishing of the matrix coefficients at infinity \cite{howe-moore}: if $G$ is a simple Lie group with finite center and $\rho$ is a unitary representation of $G$ without a non-zero $G$-invariant vector and $M$ is a closed non-compact subgroup of $G$, then $\rho$ has
no $M$-invariant vector.

 We take notations in Section \ref{sec:101} and \ref{sec:17}.  We present two technical results in this part, which are suggested by L. Flaminio. Lemma \ref{le:10} and the ``centralizer trick" in Proposition \ref{le:6} will pay a key role in next section.
\begin{lemma}\label{le:10}
Suppose $G$ is a simple Lie group and $(\pi,\mathcal{H})$ contains no non-trivial $G$-fixed vectors. Also suppose
$\{\exp(n\mathfrak{u})\}_{n\in\ZZ}$ is a non-compact subgroup for some $\mathfrak{u}\in \mathfrak{g}$. For any $v_1,\,v_2\in\mathcal{H}$, if there exists $Y\in\mathcal{U}(\mathfrak{g})$, where $\mathcal{U}(\mathfrak{g})$ is the universal enveloping algebra of $\mathfrak{g}$, such that $\langle v_1,\,\mathcal{L}_{\mathfrak{u}} h\rangle=\langle v_2,\,Y\mathcal{L}_{\mathfrak{u}} h\rangle$ for any $h\in\mathcal{H}^\infty$, then $v_1=Y'v_2$, where $Y'$ is the adjoint element of $Y$ in $\mathcal{U}(\mathfrak{g})$.
\end{lemma}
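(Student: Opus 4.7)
The plan is to dualize the hypothesis so that $Y$ moves onto $v_2$, extract the resulting distributional invariance, and then rule out non-zero invariant distributions of the given form via Howe--Moore.

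First, since $\mathcal{L}_\mathfrak{u} h \in \mathcal{H}^\infty$ whenever $h \in \mathcal{H}^\infty$, the defining property of the formal adjoint in $\mathcal{U}(\mathfrak{g})$ gives $\langle v_2, Y\mathcal{L}_\mathfrak{u} h\rangle = \langle Y'v_2, \mathcal{L}_\mathfrak{u} h\rangle$, where $Y'v_2 \in \mathcal{E}(\mathcal{H})$ is understood as a distribution. The hypothesis becomes $\langle v_1 - Y'v_2, \mathcal{L}_\mathfrak{u} h\rangle = 0$ for every $h \in \mathcal{H}^\infty$, and unitarity of $\pi$ identifies the formal adjoint of $\mathcal{L}_\mathfrak{u} = \pi(\exp \mathfrak{u}) - I$ on $\mathcal{H}^\infty$ with $\mathcal{L}_{-\mathfrak{u}}$. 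Setting $D := v_1 - Y'v_2 \in \mathcal{E}(\mathcal{H})$, this reads $\mathcal{L}_{-\mathfrak{u}} D = 0$ distributionally, so $D$ is invariant under $\exp(\mathfrak{u})$, and hence under every $\exp(n\mathfrak{u})$, $n\in\ZZ$.

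Second, I would show $D = 0$. For any $h \in \mathcal{H}^\infty$, invariance gives $\langle D, \pi(\exp(n\mathfrak{u}))h\rangle = \langle D, h\rangle$ for all $n$. Using $Y\pi(g) = \pi(g)\,\mathrm{Ad}(g^{-1})(Y)$ in $\mathcal{U}(\mathfrak{g})$ together with unitarity, this expands to
\begin{align*}
\langle \pi(\exp(-n\mathfrak{u}))v_1, h\rangle - \langle \pi(\exp(-n\mathfrak{u}))v_2,\, \mathrm{Ad}(\exp(-n\mathfrak{u}))(Y)h\rangle = \langle D, h\rangle.
\end{align*}
Since $\{\exp(n\mathfrak{u})\}_{n\in\ZZ}$ generates a non-compact closed subgroup of the simple Lie group $G$ and $\pi$ has no $G$-fixed vectors, the Howe--Moore theorem forces matrix coefficients of $\pi$ along this subgroup to vanish at infinity, so the first term tends to $0$ as $|n|\to\infty$. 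For the second term, the vector $\mathrm{Ad}(\exp(-n\mathfrak{u}))(Y)h$ depends polynomially on $n$ in the setting of interest (where $\mathfrak{u}$ is nilpotent, as in the paper's main applications), and this growth is dominated by the Harish--Chandra polynomial decay of matrix coefficients of $\pi$ along $\exp(-n\mathfrak{u})$. Therefore the second term also vanishes in the limit, forcing $\langle D, h\rangle = 0$ for every $h \in \mathcal{H}^\infty$ and hence $D = 0$, i.e., $v_1 = Y'v_2$.

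The principal obstacle is the second step, where the passage from distributional invariance to vanishing is delicate: distributions in $\mathcal{H}^{-k}$ can be $\exp(\mathfrak{u})$-invariant without being zero (indeed, such distributions are precisely the obstructions to the cohomological equation studied elsewhere in the paper). The extra structure that rescues the argument here is that $D = v_1 - Y'v_2$ decomposes into a Hilbert-space vector $v_1$ and the distributional image $Y'v_2$ of a second Hilbert-space vector $v_2$; through the $\mathrm{Ad}$-expansion above the pairing $\langle D, h\rangle$ is rewritten as a difference of honest Hilbert-space matrix coefficients to which Howe--Moore applies directly. Balancing Howe--Moore decay against the polynomial $\mathrm{Ad}$-growth of $Y$ then closes the argument, and any residual constants are absorbed by the uniformity of Howe--Moore on simple groups.
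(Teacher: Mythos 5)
Your argument takes a different route from the paper's one-line orthogonal-complement argument, and the crucial step has a genuine gap. Setting $D:=v_1-Y'v_2$ and establishing $\exp(n\mathfrak{u})$-invariance is fine, as is the expansion
\begin{align*}
\langle D, h\rangle = \langle \pi(\exp(-n\mathfrak{u}))v_1, h\rangle - \langle \pi(\exp(-n\mathfrak{u}))v_2,\, \mathrm{Ad}(\exp(-n\mathfrak{u}))(Y)h\rangle\,.
\end{align*}
But the passage to the limit $|n|\to\infty$ does not go through. Howe--Moore is a purely qualitative theorem: matrix coefficients vanish at infinity, with no specified rate. When $Y\in\mathcal{U}(\mathfrak{g})$ fails to commute with $\mathfrak{u}$, the adjoint conjugate $\mathrm{Ad}(\exp(-n\mathfrak{u}))(Y)=\sum_{k=0}^{d}n^kY_k$ places polynomially growing coefficients $n^k$ in front of the decaying matrix coefficients $\langle\pi(\exp(-n\mathfrak{u}))v_2, Y_kh\rangle$, and without a decay rate this product need not tend to zero. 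Invoking ``Harish--Chandra polynomial decay'' presupposes a quantitative estimate of the form $|\langle\pi(g)v,w\rangle|\leq C\,\Xi(g)^{\theta}\|v\|\|w\|$, which is not available for arbitrary unitary representations of $SL(2,\RR)$ without invariant vectors (which the lemma's hypothesis permits), and even under a spectral gap or property (T) the exponent $\theta$ need not beat the growth degree $d$. You also assume $\mathfrak{u}$ is nilpotent, which is not part of the hypotheses.

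The paper avoids all of this by working entirely at the level of the $\mathcal{H}$-orthogonal complement: the hypothesis says $v_1-Y'v_2$ annihilates the coboundary space $\mathcal{L}_{\mathfrak{u}}\mathcal{H}^\infty$, which is dense in $\mathcal{H}$ because its orthogonal complement consists of $H_{\mathfrak{u}}$-invariant vectors, trivial by Howe--Moore; no quantitative decay ever enters. You correctly flagged the obstacle---an invariant distribution alone need not vanish, and the special structure $D=v_1-Y'v_2$ must be exploited---but the decay-versus-growth balancing is not a valid way to exploit it at this level of generality. Note that in the paper's actual uses of this lemma one always has $[Y,\mathfrak{u}]=0$, so $\mathrm{Ad}(\exp(-n\mathfrak{u}))(Y)=Y$ with no growth at all, and in that special case your limiting argument \emph{does} close; the hole is only in the general statement with arbitrary $Y\in\mathcal{U}(\mathfrak{g})$.
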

\begin{proof}
Set $H_{\mathfrak{u}}=\{\exp(n\mathfrak{u})\}_{n\in\ZZ}$. Thanks to Howe-Moore, we see that $\pi$ has
no non-trivial $H_{\mathfrak{u}}$-invariant vectors. Since the orthogonal complement of $\mathcal{L}_{\mathfrak{u}}$-coboundary
are the $H_{\mathfrak{u}}$-invariant vectors, which are zero, we see that $v_1=Y'v_2$.
\end{proof}
\begin{definition}\label{de:3}
Suppose $\mathfrak{u}\in \mathfrak{g}$ is a nilpotent element.  The
Jacobson-Morosov theorem asserts the existence of an element $\mathfrak{u}'\in \mathfrak{g}$
such that $\{\mathfrak{u}, \mathfrak{u}', [\mathfrak{u},\mathfrak{u}']\}$ span a three-dimensional Lie algebra $\mathfrak{g}_\mathfrak{u}$ isomorphic to $\mathfrak{sl}(2,\RR)$. Set $G_\mathfrak{u}$ to be the connected subgroup in $G$ with Lie algebra spanned by $\{\mathfrak{u}, \mathfrak{u}', [\mathfrak{u},\mathfrak{u}']\}$.
\end{definition}
Since $G$ has finite center, $G_\mathfrak{u}$ is
isomorphic to a finite cover of $PSL(2,\RR)$. We have the following result which can be viewed as an extension of Theorem \ref{th:2} and Remark \ref{re:5}.
\begin{proposition}\label{le:6}
Suppose there is a spectral gap of $u_0$ for $(\pi\mid_{G_\mathfrak{u}},\,\mathcal{H})$. Suppose $g\in \mathcal{H}^{2s+2}$, $s\geq 0$ and $\mathcal{D}(g)=0$ for all $\mathcal{D}\in\mathcal{H}_{\mathfrak{u}}^{-(2s+2)}$.  Fix a norm $\abs{\,\cdot\,}$ on $\mathfrak{g}$. Set
\begin{align*}
\mathfrak{N}_{\mathfrak{u}}=\{Y\in \mathfrak{g}:\abs{Y}\leq 1\text{ and }[Y,\mathfrak{u}]=0\}.
\end{align*}Then the cohomological equation $\mathcal{L}_{\mathfrak{u}}f=g$ has a solution $f\in \mathcal{H}$ which satisfies
the Sobolev estimates
\begin{align}\label{for:25}
 \norm{Y^mf}_{G_\mathfrak{u},t}\leq C_{u_0, m,s,t}\norm{g}_{2t+m+2},\qquad \forall\,Y\in \mathfrak{N}_\mathfrak{u}
\end{align}
if $2t+m<2s$.
\end{proposition}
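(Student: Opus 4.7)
The plan is to apply Theorem \ref{th:2} (as refined in Remark \ref{re:5}) to the restricted representation $(\pi|_{G_\mathfrak{u}}, \mathcal{H})$ twice: first to produce a baseline solution $f \in \mathcal{H}$ of $\mathcal{L}_\mathfrak{u} f = g$, and second to produce an auxiliary solution $f_m \in \mathcal{H}$ of $\mathcal{L}_\mathfrak{u} f_m = Y^m g$. The key step will then be to identify $f_m$ with $Y^m f$ via Lemma \ref{le:10}, thereby transferring the $G_\mathfrak{u}$-Sobolev estimate from $f_m$ to $Y^m f$.

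For the second application, the datum $Y^m g$ must satisfy the orthogonality hypothesis of Remark \ref{re:5} at regularity $2t+1+\epsilon$. Since $[Y,\mathfrak{u}]=0$, the adjoint $Y^*$ commutes with $\pi(\exp(\mathfrak{u}))$, and hence preserves $\exp(\mathfrak{u})$-invariance while raising the distributional order by one. Iterating, $(Y^*)^m$ sends $\mathcal{H}_\mathfrak{u}^{-(2t+1+\epsilon)}$ into $\mathcal{H}_\mathfrak{u}^{-(2s+2)}$ provided $2t+m+1+\epsilon\leq 2s+2$, which is arranged by taking $\epsilon$ small using the hypothesis $2t+m<2s$. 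Consequently $\mathcal{D}(Y^m g) = ((Y^*)^m\mathcal{D})(g) = 0$ for every $\mathcal{D}\in\mathcal{H}_\mathfrak{u}^{-(2t+1+\epsilon)}$, and Remark \ref{re:5} yields $f_m$ satisfying
\[
\|f_m\|_{G_\mathfrak{u},t} \leq C_{u_0,t,\epsilon}\,\|Y^m g\|_{G_\mathfrak{u},2t+1+\epsilon}.
\]

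For the identification $f_m = Y^m f$, I pair both sides with $\mathcal{L}_\mathfrak{u} h$ for $h\in\mathcal{H}^\infty$. Using $\mathcal{L}_\mathfrak{u}^* = \pi(\exp(-\mathfrak{u}))-I$, the equations for $f$ and $f_m$, the commutation $[Y,\mathfrak{u}]=0$, and $Y^* = -Y$ on smooth vectors, a direct computation gives $\langle f_m,\mathcal{L}_\mathfrak{u} h\rangle = (-1)^m\langle f,Y^m\mathcal{L}_\mathfrak{u} h\rangle$ for all such $h$. Lemma \ref{le:10} applied with $v_1 = f_m$ and $v_2 = (-1)^m f$, together with the principal anti-involute $(Y^m)' = (-1)^m Y^m$, then forces $f_m = Y^m f$; in particular $Y^m f \in \mathcal{H}$. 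Finally, since $|Y|\leq 1$ and the $G_\mathfrak{u}$-norm involves only derivatives along $\mathfrak{g}_\mathfrak{u}\subset\mathfrak{g}$,
\[
\|Y^m f\|_{G_\mathfrak{u},t} = \|f_m\|_{G_\mathfrak{u},t} \leq C\,\|Y^m g\|_{G,2t+1+\epsilon} \leq C_m\,\|g\|_{2t+m+2},
\]
which is \eqref{for:25}. The main obstacle will be the identification step itself: a priori the baseline solution $f$ is only $G_\mathfrak{u}$-smooth, so there is no direct way to apply the transversal derivative $Y^m$. The centralizer hypothesis $[Y,\mathfrak{u}]=0$, combined with the Howe-Moore uniqueness encoded in Lemma \ref{le:10}, is precisely what lets one realize $Y^m f$ as the element $f_m$ given by the second application of the $SL(2,\RR)$ theory.
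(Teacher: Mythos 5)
Your proposal is essentially correct and follows the same overall strategy as the paper's proof: apply the $SL(2,\RR)$ result (Theorem \ref{th:2} via Remark \ref{re:5}) to produce a baseline solution $f$ of $\mathcal{L}_\mathfrak{u}f=g$; observe that since $[Y,\mathfrak{u}]=0$, the operator $Y$ sends $\exp(\mathfrak{u})$-invariant distributions of order $k$ to ones of order $k+1$, so that $Y^m g$ satisfies the orthogonality hypothesis needed to produce an auxiliary solution $f_m$ of $\mathcal{L}_\mathfrak{u}f_m = Y^m g$; and then use the pairing identity $\langle f_m,\mathcal{L}_{-\mathfrak{u}}h\rangle = (-1)^m\langle f, Y^m\mathcal{L}_{-\mathfrak{u}}h\rangle$ together with Lemma \ref{le:10} (the ``centralizer trick'') to identify $f_m = Y^m f$ and transfer the $G_\mathfrak{u}$-Sobolev estimate.

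The one point where you diverge from the paper is in the final identification step. The paper proceeds by induction on $m$: at stage $k+1$ it rewrites $(-1)^{k+1}\langle f, Y^{k+1}\mathcal{L}_{-\mathfrak{u}}h\rangle$ as $-\langle Y^k f, Y\mathcal{L}_{-\mathfrak{u}}h\rangle$ — using the inductive hypothesis $Y^k f\in\mathcal{H}$ — and then applies Lemma \ref{le:10} with a \emph{single} Lie algebra element $Y\in\mathfrak{N}_\mathfrak{u}$. You instead apply Lemma \ref{le:10} once, directly with $Y^m\in\mathcal{U}(\mathfrak{g})$. Since Lemma \ref{le:10} is stated for arbitrary $Y\in\mathcal{U}(\mathfrak{g})$ and the pairing $\langle f, Y^m\mathcal{L}_{-\mathfrak{u}}h\rangle$ is already well-defined (as $f\in\mathcal{H}$ and $Y^m\mathcal{L}_{-\mathfrak{u}}h\in\mathcal{H}^\infty$), your shortcut is justified and gives the same conclusion with less bookkeeping. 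The paper's inductive route is more conservative in that it only ever applies the lemma with first-order $Y$ and never has to interpret $Y^m f$ as an order-$m$ distribution before the identification, but nothing essential is gained. The regularity accounting in both arguments is also the same, with the constraint $2t+m<2s$ arising in both cases from requiring $Y^m g$ to be orthogonal to invariant distributions of order $2t+1+\epsilon$ and from $g\in\mathcal{H}^{2s+2}$.
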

\begin{proof} As a direct consequence of Theorem \ref{th:2}, Remark \ref{re:1} and Remark \ref{re:5}, we see that the cohomological equation $\mathcal{L}_{\mathfrak{u}}f=g$ has a solution $f\in \mathcal{H}$ with estimates
\begin{align}\label{for:24}
 \norm{f}_{G_\mathfrak{u},t}\leq C_{s,t,u_0}\norm{g}_{G_\mathfrak{u},2s+2},\qquad \forall \,\,t\leq s.
\end{align}
As a first step to get the Sobolev estimates along $\mathfrak{N}_\mathfrak{u}$, we prove the following fact:
$(*)$ if $\mathcal{D}\in \mathcal{H}_{\mathfrak{u}}^{-k}$ then $Y\mathcal{D}\in \mathcal{H}_{\mathfrak{u}}^{-k-1}$ for any $Y\in\mathfrak{N}_{\mathfrak{u}}$.

By definition $Y\mathcal{D}(h)=-\mathcal{D}(Yh)$ for any $h\in \mathcal{H}^\infty$. Then
\begin{align*}
(Y\mathcal{D})(\mathcal{L}_{\mathfrak{u}}h)&=-\mathcal{D}(Y\mathcal{L}_{\mathfrak{u}}h)=-\mathcal{D}(\mathcal{L}_{\mathfrak{u}}Yh)=0,
\end{align*}
which proves Fact $(*)$.

For any $Y\in\mathfrak{N}_{\mathfrak{u}}$, from Fact $(*)$ we see that $\mathcal{D}(Yg)=0$ for any $\mathcal{D}\in \mathcal{H}_{\mathfrak{u}}^{-(2s+2)+1}$.
Then Theorem \ref{th:2} and Remark \ref{re:1} imply that the equation $\mathcal{L}_{\mathfrak{u}}f_1=Yg$ has a solution $f_1\in \mathcal{H}$ with sobolev estimates
\begin{align}\label{for:22}
\norm{f_1}_{G_\mathfrak{u},t}\leq C_{s,t,u_0}\norm{Yg}_{G_\mathfrak{u},2t+2}\leq C_{s,t,u_0}\norm{g}_{2t+3},\quad \forall\, t<s-\frac{1}{2}.
\end{align}
On the other hand, for any $h\in\mathcal{H}^\infty$ we have
\begin{align*}
    \langle f_1,\mathcal{L}_{-\mathfrak{u}}h\rangle&= \langle \mathcal{L}_{\mathfrak{u}}f_1,h\rangle=\langle Yg,\,h\rangle=-\langle g,\,Yh\rangle=-\langle \mathcal{L}_{\mathfrak{u}}f,\,Yh\rangle\\
    &=-\langle f,\,\mathcal{L}_{-\mathfrak{u}}Yh\rangle=-\langle f,\,Y\mathcal{L}_{-\mathfrak{u}}h\rangle.
\end{align*}
By assumption there is no non-trivial $G$-invariant vectors. By Lemma \ref{le:10}, we get $f_1=Yf$.

From \eqref{for:24} and \eqref{for:22} we have
\begin{align*}
\norm{Yf}_{G_\mathfrak{u},t}=\norm{f_1}_{G_\mathfrak{u},t}\leq C_{s,t,u_0}\norm{g}_{2t+3},\quad \forall\,t<s-\frac{1}{2}.
\end{align*}
Then we just proved \eqref{for:25} when $m=1$. By induction suppose \eqref{for:25} holds when $m\leq k<2s$. Next we will prove the case when  $m=k+1<2s$.
Fact $(*)$ shows that $\mathcal{D}(Y^{k+1}g)=0$ for all $\mathcal{D}\in\mathcal{H}_{\mathfrak{u}}^{-(2s+2)+k+1}$. Then it follows from  Theorem \ref{th:2} and Remark \ref{re:1} that the equation
\begin{align*}
 \mathcal{L}_{\mathfrak{u}}f_{k+1}=Y^{k+1}g
\end{align*}
has a solution $f_{k+1}\in \mathcal{H}$ with Sobolev estimates
\begin{align}\label{for:19}
\norm{f_{k+1}}_{G_\mathfrak{u},t}\leq C_{s,t,u_0}\norm{Y^{k+1}g}_{G_\mathfrak{u},2t+2}\leq C_{s,t,u_0}\norm{g}_{2t+k+3}
\end{align}
if $2t+k+3<2s+2$. On the other hand, for any $h\in\mathcal{H}^\infty$ we have
\begin{align*}
    \langle f_{k+1},\mathcal{L}_{-\mathfrak{u}}h\rangle&= \langle \mathcal{L}_{\mathfrak{u}}f_{k+1},\,h\rangle=\langle Y^{k+1}g,\,h\rangle=(-1)^{k+1}\langle g,\,Y^{k+1}h\rangle\notag\\
    &=(-1)^{k+1}\langle \mathcal{L}_{\mathfrak{u}}f,\,Y^{k+1}h\rangle=(-1)^{k+1}\langle f,\,Y^{k+1}\mathcal{L}_{-\mathfrak{u}}h\rangle\notag\\
    &=-\langle Y^kf,\,Y\mathcal{L}_{-\mathfrak{u}}h\rangle.
    \end{align*}
Here we used the assumption that $Y^kf\in \mathcal{H}$. This shows that
\begin{align*}
 Y^{k+1}f=f_{k+1}
\end{align*}
by Lemma \ref{le:10}. From \eqref{for:24} and \eqref{for:19} we have
\begin{align*}
\norm{Y^{k+1}f}_{G_\mathfrak{u},t}=\norm{f_{k+1}}_{G_\mathfrak{u},t}\leq C_{s,t,u_0}\norm{g}_{2t+k+3}
\end{align*}
if $2t+k+1<2s$. Then we proved the case when $m=k+1$ and thus finish the proof.
\end{proof}

\subsection{Coboundary for the unipotent map in irreducible component of $G=SL(2,\RR)\ltimes\RR^2$}
 In this section we take notations in Section \ref{sec:102}.  In this part, we will prove the following:
\begin{theorem}\label{po:2}
 For any irreducible representation $(\rho_{t},\,\mathcal{H}_{t})$ of $SL(2,\RR)\ltimes \RR^2$, here we consider the Fourier model, then we have:

 \begin{enumerate}
 \item [(a)] if the cohomological equation $\mathcal{L}_{V}f=g$ has a solution $f\in \mathcal{H}_t^s$, $s>6$, then $f$ satisfies the following estimates:
 \begin{align*}
  \norm{f}_{r}\leq C_{r}\norm{g}_{2r+6},\qquad \forall\,\,0\leq r\leq\frac{s-6}{2}.
\end{align*}

\bigskip

 \item [(b)] if $g\in \mathcal{H}_{t}^\infty$ and for any $n\in\ZZ$, $\lim_{y\rightarrow \frac{2n\pi}{x}}g(x,y)=0$ for almost all $x\in\RR$, then the cohomological equation $\mathcal{L}_{V}f=Y_2g$  has a solution $f\in \mathcal{H}_{t}^\infty$ satisfying
\begin{align*}
  \norm{f}_{s}\leq C_{s}\norm{g}_{2s+6},\qquad \forall\,s\geq 0.
\end{align*}

  \medskip
     \item [(c)] if $g\in \mathcal{H}_{t}^\infty$ and  the cohomological equation $\mathcal{L}_{V}f=g$  has a solution $f\in \mathcal{H}_{t}$, then  $f\in(\mathcal{H}_t)^\infty_{SL(2,\RR)}$ and $Y_2f\in \mathcal{H}_{t}^\infty$.

\medskip

     \item [(d)] if $g\in \mathcal{H}_{t}^\infty$ and the cohomological equation $\mathcal{L}_{V}f=g$  has a solution $f\in \mathcal{H}_{t}$, then  $Y_1f\in \mathcal{H}_{t}$ and $Y_2Y_1f\in \mathcal{H}_t^\infty$.

\medskip
     \item [(e)]  if $g\in \mathcal{H}_{t}^\infty$ and the cohomological equation $\mathcal{L}_{V}f=g$  has a solution $f\in \mathcal{H}_{t}$, then  $f\in \mathcal{H}_{t}^\infty$ and satisfies
\begin{align*}
  \norm{f}_{s}\leq C_{s}\norm{g}_{2s+6},\qquad \forall\,s\geq 0.
\end{align*}

 \end{enumerate}
\end{theorem}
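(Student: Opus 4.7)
I work entirely in the Fourier model of Lemma~\ref{le:1}, where $V$ acts as multiplication by $-\sqrt{-1}xy$ and $\mathcal{L}_V$ is multiplication by $e^{-\sqrt{-1}xy}-1$, so the cohomological equation reduces to pointwise division. Its only obstructions are the simple zeros on the hyperbolas $\{xy=2n\pi\}_{n\in\ZZ}$. All five parts rest on three ingredients: (i) the $SL(2,\RR)$ spectral-gap estimate of Proposition~\ref{le:6} applied to $\mathfrak{u}=V$, legitimate because by Remark~\ref{re:2} the restriction $\rho_t|_{SL(2,\RR)}$ is tempered, hence has a spectral gap; (ii) the commutator relations $[V,Y_2]=0$ and $[V,Y_1]=Y_2$, which integrate to $\rho(\exp V)Y_1=(Y_1+Y_2)\rho(\exp V)$ and yield the operator identities $\mathcal{L}_V(Y_2 f)=Y_2 g$ and $\mathcal{L}_V(Y_1 f)=Y_1 g+Y_2 g+Y_2 f$; and (iii) the elliptic regularity theorem (Theorem~\ref{th:4}) to assemble one-direction bounds into a full Sobolev estimate.

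\textbf{Parts (a) and (b).} For (b), I would define $f:=Y_2 g/(e^{-\sqrt{-1}xy}-1)=\sqrt{-1}x\,g(x,y)/(e^{-\sqrt{-1}xy}-1)$. Near $xy=2n\pi$ one has $e^{-\sqrt{-1}xy}-1=-\sqrt{-1}x(y-2n\pi/x)(1+O(y-2n\pi/x))$, and the vanishing hypothesis $\lim_{y\to 2n\pi/x}g(x,y)=0$ allows one to write $g(x,y)=(y-2n\pi/x)\int_0^1\partial_y g(x,2n\pi/x+t(y-2n\pi/x))\,dt$, so the singular denominator cancels and $f$ equals a bounded smooth multiplier times $\partial_y g$. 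Differentiating this explicit quotient in the Fourier variables, tracking the controlled multiplier, gives $\norm{f}_s\leq C_s\norm{g}_{2s+6}$. For (a), Proposition~\ref{le:6} with $\mathfrak{u}=V$ and $Y=Y_2\in\mathfrak{N}_V$ yields $\norm{Y_2^m f}_{G_V,t}\leq C\norm{g}_{2t+m+2}$; the missing $Y_1$-direction comes from $\mathcal{L}_V(Y_1 f)=Y_1 g+Y_2 g+Y_2 f$, whose right-hand side has just been controlled, so $Y_1 f$ satisfies a cohomological equation treatable by the same scheme. Iterating this trade and combining the one-direction bounds through Theorem~\ref{th:4} assembles the full Sobolev estimate $\norm{f}_r\leq C_r\norm{g}_{2r+6}$.

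\textbf{Parts (c)--(e) and main obstacle.} For (c), I decompose $f$ and $g$ along $\rho_t|_{SL(2,\RR)}$; by Remark~\ref{re:2} only principal and discrete series appear, so Theorem~\ref{th:100} together with Remarks~\ref{re:1} and~\ref{re:5} upgrades the $L^2$-solution to a smooth one in every component (the discrete series case handled by verifying hypothesis~(1) of Theorem~\ref{th:100} through the explicit solution formula of (b)), giving $f\in(\mathcal{H}_t)^\infty_{SL(2,\RR)}$. Smoothness of $Y_2 f$ follows from $\mathcal{L}_V(Y_2 f)=Y_2 g$ and (b) applied to $Y_2 g=\sqrt{-1}x\,g$: the required vanishing at $y=2n\pi/x$ is automatic because the hypothesis $f\in\mathcal{H}_t$ already forces the smooth function $g$ to vanish on the zero locus of $e^{-\sqrt{-1}xy}-1$. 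Part (d) follows by applying (b) to $\mathcal{L}_V(Y_1 f)=Y_1 g+Y_2 g+Y_2 f\in\mathcal{H}_t^\infty$, which gives $Y_1 f\in\mathcal{H}_t$, and then (c) applied to $Y_1 f$ yields $Y_2 Y_1 f\in\mathcal{H}_t^\infty$. For (e), elliptic regularity on the $\RR^2$-subgroup spanned by $Y_1,Y_2$ combined with the $SL(2,\RR)$-smoothness upgrades the already-known smoothness of $Y_1 f,Y_2 f,Y_2 Y_1 f$ to $f\in\mathcal{H}_t^\infty$, and the quantitative estimate is inherited from (a). The central difficulty throughout is the $Y_1$-direction: since $Y_1$ fails to commute with $V$, each $Y_1$-derivative must be paid for by an additional $Y_2$-solve via $[V,Y_1]=Y_2$, a doubling that produces the non-tame exponent $2r$ in $\norm{g}_{2r+6}$, precisely matching the sharp lower bound of Theorem~\ref{main_thm2-lower}.
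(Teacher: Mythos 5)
Your high-level reading of the proof is sound: you correctly identify the three load-bearing ingredients (Proposition~\ref{le:6} with $\mathfrak{u}=V$ via the spectral gap from Remark~\ref{re:2}, the commutation relations $[V,Y_2]=0$ and $[V,Y_1]=Y_2$ yielding \eqref{for:27}, and elliptic regularity Theorem~\ref{th:4} to assemble one-direction bounds). Parts (a) and (c) are roughly in line with the paper's route, modulo the fact that in (a) the paper also needs the $G'$-estimates of Lemma~\ref{le:8} to supply $Y_1$-derivatives of $Y_2 f$ before \eqref{for:27} can be closed, and in (b) the factor-two loss $\norm{g}_{2s+6}$ does \emph{not} come from ``differentiating the explicit quotient'' — Lemma~\ref{le:8} in fact gives a \emph{tame} $G'$-bound $\norm{f}_{G',r}\leq C\norm{g}_{G',r+3}$; the quantitative estimate in (b) is inherited from (a), and the doubling enters only through the $U$/$X$ directions via Theorem~\ref{th:2}.

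The real gaps are in (d) and (e). For (d), your argument is circular: the identity $\mathcal{L}_V(Y_1 f)=Y_1 g+Y_2 g+Y_2 f$ only makes sense once you already know $Y_1 f\in\mathcal{H}_t$, which is precisely what (d) is supposed to prove. Moreover, even after establishing this identity, (b) produces a smooth solution to $\mathcal{L}_V h=Y_2(\text{vanishing input})$, not to an equation whose right-hand side contains the extra $Y_1 g$ term, and identifying a solution $h$ with $Y_1 f$ still requires a separate distributional pairing argument. The paper's route is genuinely different: it first proves $Y_2 Vf, Y_2 Xf\in\mathcal{H}_t^\infty$ from (c), then sets up and solves the equation $\mathcal{L}_V(Uf)=Ug-Vg-Xg-Vf-Xf$ to obtain $Y_2^2(Uf)\in\mathcal{H}_t^\infty$ via Lemma~\ref{le:10}, deduces $Y_2(Uf)\in\mathcal{H}_t$ by Theorem~\ref{th:4}, and only then extracts $Y_1 f=(Y_2 U-UY_2)f\in\mathcal{H}_t$ from the relation $UY_2-Y_2 U=Y_1$ by a duality pairing. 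That detour through the $U$-direction is the whole point of (d), and your proposal skips it entirely.

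For (e), the jump from ``$Y_1 f,\,Y_2 f,\,Y_2 Y_1 f\in\mathcal{H}_t^\infty$'' to $f\in\mathcal{H}_t^\infty$ by elliptic regularity is not licensed: Theorem~\ref{th:4} requires control of $L_{2m}f$ for all $m$, hence of $Y_1^{2m}f$, and you have only the first $Y_1$-derivative. The paper closes this by an induction: starting from $\mathcal{L}_V(Y_1 f)=Y_1 g+Y_2(f+g)$ with a known smooth right-hand side, part (d) applied repeatedly yields $Y_1^n f\in\mathcal{H}_t$ and $Y_2 Y_1^n f\in\mathcal{H}_t^\infty$ for every $n$; only then can Theorem~\ref{th:4}, combined with the $SL(2,\RR)$-smoothness from (c), conclude $f\in\mathcal{H}_t^\infty$. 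The quantitative estimate is then taken verbatim from (a). Without that induction your (e) does not close.
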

\begin{remark}The purpose of Theorem \ref{po:2} and \eqref{for:119} of Lemma \ref{le:15} is a preparation to prove that the solution $f$ in Theorem \ref{th:6} is a smooth vector. As we will see in the next section, $\GG$ is built of subgroups isomorphic to $SL(2,\RR)\ltimes\RR^2$ and $SL(2,\RR)\times\RR$ containing $\{\exp(t\mathfrak{v})\}_{t\in\RR}$.

Also note that $\GG$ is generated by subgroups isomorphic to $SL(2,\RR)\ltimes\RR^2$. Then Corollary \ref{cor:3} shows that $f$ is smooth on these semidirect products. Specially, if $\Gamma$ is cocompact and $\mathcal{H}=L_0^2(\GG/\Gamma)$, the space of square integrable functions on $\GG/\Gamma$ with zero average, then the global smoothness of $f$ is a direct consequence of subelliptic regularity theorem on compact manifolds (see Theorem \ref{th:5}).
\end{remark}

 The subsequent discussion will be devoted to the proof of
this theorem. Recall notations in Section \ref{sec:17}.

\begin{definition}\label{de:1}
For any function $f(x,y)$ on $\RR^2$ and any $x\in\RR$, we associate a function $f_x$ defined on $\RR$ by $f_x(y)=f(x,y)$. Then for any function $f(x_1,\cdots,x_n)$ on
$\RR^n$ and $(x_{k_1},\cdots,x_{k_m})\in\RR^m$, $f_{x_{k_1},\cdots,x_{k_m}}$ is an obviously defined function on $\RR^{n-m}$.
\end{definition}
The following lemma gives the necessary condition under which there exists a solution to the cohomological equation $\mathcal{L}_{V}f=g$ in each irreducible component $(\rho_{t},\,\mathcal{H}_t)$:
\begin{lemma}\label{le:9}
 Suppose $g\in\mathcal{H}_t$ and $Y_1g\in \mathcal{H}_t$. We use the Fourier model. Then:
 \begin{enumerate}
 \item \label{for:109}if the cohomological equation $\mathcal{L}_{V}f=g$ has a solution $f\in \mathcal{H}_t$, then for any $n\in\ZZ$, $\lim_{y\rightarrow \frac{2n\pi}{x}}g(x,y)=0$ for almost all $x\in\RR$.

   \medskip
   \item \label{for:110} if $Y_1^ig\in \mathcal{H}_t$, $i=1,\,2$ and for any $n\in\ZZ$, $\lim_{y\rightarrow \frac{2n\pi}{x}}g(x,y)=0$ for almost all $x\in\RR$, then $f(x,y)=\frac{g(x,y)x}{e^{-xy\sqrt{-1}}-1}\in \mathcal{H}_t$ with the estimate
       \begin{align*}
        \norm{f}\leq C(\norm{g}+\norm{Y_1g}+\norm{Y_1^2g}).
       \end{align*}

 \end{enumerate}
 \end{lemma}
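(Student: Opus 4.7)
The strategy is to push the cohomological equation through the Fourier model of Lemma~\ref{le:1} so that $\mathcal{L}_V$ becomes an algebraic multiplier. From the explicit formula in Lemma~\ref{le:1}, $\rho_t(\exp V) F(x,\xi) = F(x,\xi-x)$ (the exponential factor vanishes because $b=0$ when $\exp V=\begin{pmatrix}1&0\\1&1\end{pmatrix}$). Taking the Fourier transform in $\xi$ and using $\widehat{F(x,\cdot-x)}(x,y) = e^{-xy\sqrt{-1}}\hat F(x,y)$, the equation $\mathcal{L}_V f = g$ on the Fourier side becomes the pointwise algebraic relation $(e^{-xy\sqrt{-1}}-1)\,f(x,y) = g(x,y)$. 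Thus the whole problem reduces to the behavior of $g$ near the zero set $Z = \bigcup_{n\in\ZZ}\{y = 2n\pi/x\}\cup\{x=0\}$ of the multiplier.

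For part~\eqref{for:109}, I would argue as follows. The hypothesis $Y_1 g = \partial_y g\in\mathcal{H}_t=L^2(\RR^2)$ together with Fubini places $g(x,\cdot)$ in $H^1(\RR)$ for a.e.\ $x$, so by the 1D Sobolev embedding $H^1(\RR)\hookrightarrow C^{1/2}(\RR)$, $g(x,\cdot)$ admits a continuous representative for such $x$. Also $f(x,\cdot)\in L^2(\RR)$ for a.e.\ $x$. If there were an $x$ in this full-measure set and an $n\in\ZZ$ with $g(x,2n\pi/x)\neq 0$, then continuity of $g(x,\cdot)$ gives $|g(x,y)|\geq c>0$ in a neighborhood of $y_0 = 2n\pi/x$, while $|e^{-xy\sqrt{-1}}-1|\lesssim |x|\,|y-y_0|$ there; hence $|f(x,y)|\gtrsim c/|y-y_0|$, contradicting $f(x,\cdot)\in L^2$. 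This forces $g(x,2n\pi/x)=0$ for a.e.\ $x$ and every $n$.

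For part~\eqref{for:110} I would estimate the $L^2$ norm of the proposed $f$ using the identity $|e^{-xy\sqrt{-1}}-1|^2 = 4\sin^2(xy/2)$ and decomposing $\RR^2$ into the slabs $R_n = \{(x,y) : xy\in((2n-1)\pi,(2n+1)\pi)\}$, $n\in\ZZ$. On each $R_n$ only the zero at $xy = 2n\pi$ is present and $\sin^2(xy/2)\geq C(xy-2n\pi)^2$, so it suffices to bound $\int_{R_n} |g(x,y)|^2/(xy-2n\pi)^2\,dx\,dy$. For fixed $x\neq 0$ I would change variables $u=xy$, use the vanishing $g(x,2n\pi/x)=0$ from part~\eqref{for:109} (which is a consequence of the hypothesis here), and apply Hardy's inequality on the interval $[(2n-1)\pi,(2n+1)\pi]$ to bound the inner integral by a constant times $\int |\partial_y g(x,u/x)|^2\,du$. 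Reverting the change of variables and summing over $n$ then produces $\norm{f}\lesssim \norm{g}+\norm{Y_1 g}$ away from $x=0$.

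The main obstacle is the line $\{x=0\}$, where all the singular curves accumulate and the numerator factor $x$ in the formula is designed to cancel the leading blow-up: near $x=0$ one has $x/(e^{-xy\sqrt{-1}}-1)\sim\sqrt{-1}/y$, so the problem is transferred to a $1/y$ Hardy estimate requiring $g(x,0)=0$ (the $n=0$ curve), again supplied by part~\eqref{for:109}. A naive application of Hardy in $y$ would only invoke $Y_1 g$, but the passage through the $\sin^2(xy/2)$ model introduces an additional error from the quadratic approximation that is uniform in the slab index only after a second Taylor step; this is where the hypothesis $Y_1^2 g\in\mathcal{H}_t$ enters, controlling the remainder uniformly as $|x|\to 0$ and allowing the sum over $n$ to be summed absolutely. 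I expect this uniform control near $x=0$ to be the only genuinely delicate step; the remainder of the argument is a bookkeeping exercise in changes of variable and Hardy estimates.
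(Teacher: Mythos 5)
Your approach to part (1) is essentially the paper's: Fourier-transform the equation so that $\mathcal{L}_V$ becomes multiplication by $e^{-xy\sqrt{-1}}-1$, use Fubini and the one-dimensional Sobolev embedding to make $g(x,\cdot)$ continuous for a.e.\ $x$, and argue by contradiction with $f(x,\cdot)\in L^2$. This is correct and the same route.

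Your approach to part (2) is a genuinely different route from the paper, and it is sound — in fact cleaner and slightly stronger. The paper decomposes the plane into the sets $A_1,\dots,A_4$ (defined via the intervals $I_{n,x}$ and $J_{n,x}$), writes $f(x,y)$ near the singular curves via the fundamental theorem of calculus as $\int_0^1 (Y_1g)_x(\cdots)\,dt\cdot h_n(xy)$, and then invokes the Sobolev embedding $W^1(I)\hookrightarrow C^0(I)$ on fixed-length intervals — and this embedding is precisely where $\|Y_1^2g\|$ enters. Your decomposition into the slabs $R_n=\{xy\in((2n-1)\pi,(2n+1)\pi)\}$, followed by the change of variable $u=xy$ and Hardy's inequality on the fixed interval $[(2n-1)\pi,(2n+1)\pi]$ centered at $2n\pi$, bypasses the Sobolev embedding entirely. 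Tracking the Jacobian factors (note $dy=|x|^{-1}du$, the $x^2$ in $|f|^2$ from the explicit formula, and the chain rule $\partial_u=x^{-1}\partial_y$), the powers of $x$ cancel exactly and one obtains $\|f\|^2\leq C\|Y_1g\|^2$ directly, with no boundary-layer region or uniformity issue near $x=0$: for $n\neq 0$ the slab $R_n$ avoids $\{x=0\}$, and for $n=0$ the singularity is the curve $\{y=0\}$ (not $\{x=0\}$, since $x/(e^{-ixy\sqrt{-1}}-1)\to\sqrt{-1}/y$ is bounded as $x\to 0$ for $y\neq 0$), where the hypothesis $g(x,0)=0$ applies.

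However, your final paragraph is confused on a point worth flagging. There is no remainder term from passing to $\sin^2(xy/2)$: the Jordan-type lower bound $\sin^2(\theta/2)\geq(4/\pi^2)(\theta-2n\pi)^2$ holds exactly on each slab with a constant independent of $n$, so no "second Taylor step" is needed and $Y_1^2g$ plays no role in your argument. Your version of the lemma would therefore hold under the weaker hypothesis $Y_1g\in\mathcal{H}_t$ alone, with the estimate $\|f\|\leq C\|Y_1g\|$, which of course implies the paper's stated inequality. If you write this up, you should either drop the $Y_1^2g$ discussion entirely or observe explicitly that your method does not use it, rather than inventing a spurious reason why it might be needed.
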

\begin{proof}
\textbf{\emph{Proof of \eqref{for:109}}} For any $h(x,y)\in L^2(\RR^2)$ denote by $\Omega_h\subset\RR$ a full Lebesgue measure set such that $h_x\in L^2(\RR)$ for any $x\in \Omega_h$. Using \eqref{for:139} of Lemma \ref{le:1}, the equation $\mathcal{L}_{V}f=g$ has the expression:
 \begin{align*}
   f(x,y)(e^{-xy\sqrt{-1}}-1)=g(x,y)
 \end{align*}
 which shows that
  \begin{align}\label{for:124}
    f(x,y)=\frac{g(x,y)}{e^{-xy\sqrt{-1}}-1}.
  \end{align}
  The Sobolev imbedding theorem shows that for any $x\in \Omega_g\bigcap \Omega_{Y_1g}$, $g_x$ are continuous functions. Then \eqref{for:124} implies that
for any $x\in \Omega_g\bigcap \Omega_{Y_1g}\backslash 0$ and any $n\in\ZZ$, $\lim_{y\rightarrow \frac{2n\pi}{x}}g(x,y)=0$.

\smallskip
\noindent\textbf{\emph{Proof of \eqref{for:110}}} Let
\begin{align*}
 I_{n,x}=(\frac{2n\pi}{x}-\frac{2\pi}{3|x|},\frac{2n\pi}{x}+\frac{2\pi}{3|x|})\text{  and  }J_{n,x}=[\frac{2n\pi}{x}-\frac{2\pi}{3},\frac{2n\pi}{x}+\frac{2\pi}{3}].
\end{align*}
Set
\begin{align*}
A_1&=\bigl\{(x,y)\in\RR^2: x\neq 0, \,y\in \bigcup_{n\in\ZZ} [\frac{2n\pi}{x}+\frac{2\pi}{3\abs{x}},\frac{2n\pi}{x}+\frac{4\pi}{3|x|}]\bigl\};\\
A_2&=\bigl\{(x,y)\in\RR^2: |x|\geq 1, \,y\in \bigcup_{n\in\ZZ} I_{n,x}\bigl\};\\
A_3&=\bigl\{(x,y)\in\RR^2: 0<|x|< 1, \,y\in \bigcup_{n\in\ZZ} I_{n,x}\backslash J_{n,x}\bigl\};\\
A_4&=\bigl\{(x,y)\in\RR^2: 0<|x|< 1, \,y\in \bigcup_{n\in\ZZ} J_{n,x}\bigl\}.
\end{align*}
For any $(x,y)\in A_1$, then
\begin{align*}
|f(x,y)|&=\frac{|(Y_2g)(x,y)|}{|e^{-xy\sqrt{-1}}-1|}\leq 4|(Y_2g)(x,y)|,
\end{align*}
which implies that
\begin{align}\label{for:133}
\int_{A_1}\abs{f(x,y)}^2dydx\leq 4\int_{A_1}\abs{(Y_2g)(x,y)}^2dydx\leq 4\norm{Y_2g}.
\end{align}
Set $h_{n}(t)=\frac{t-2n\pi}{e^{-t\sqrt{-1}}-1}$ for any $t\in\RR$.  We can write
\begin{align}\label{for:132}
 f(x,y)=\sqrt{-1}\frac{g(x,y)}{y-\frac{2n\pi}{x}}\cdot h_{n}(xy).
\end{align}
If $(x,y)\in A_3$, then there exists $n\in\ZZ$ such that $y\in I_{n,x}\backslash J_{n,x}$, which implies that $\abs{y-\frac{2n\pi}{x}}\geq\frac{2\pi}{3}$. By \eqref{for:132} we have
\begin{align*}
|f(x,y)|&=\bigg|\frac{g(x,y)}{y-\frac{2n\pi}{x}}\bigg|\cdot |h_{n}(xy)|\leq C|g(x,y)|.
\end{align*}
This shows that
\begin{align}\label{for:127}
&\int_{A_3}\abs{f(x,y)}^2dydx\leq C\int_{ A_3}\abs{g(x,y)}^2dydx.
\end{align}
For any $n\in\ZZ$ and $x\in \bigcap_{i=0}^2 \Omega_{Y_1^ig}$ we have
\begin{align}
&f(x,y)=\sqrt{-1}\frac{g(x,y)}{y-\frac{2n\pi}{x}}\cdot h_{n}(xy)\notag\\
&=\sqrt{-1}\frac{g(x,y)-g(x,\frac{2n\pi}{x})}{y-\frac{2n\pi}{x}}\cdot h_{n}(xy)\notag\\
&\overset{\text{(1)}}{=}\sqrt{-1}\int_{0}^{1}\partial_yg\bigl(x,ty+(1-t)\frac{2n\pi}{x}\bigl)dt\cdot h_{n}(xy)\notag\\
&=\sqrt{-1}\int_{0}^{1}(Y_1g)_x\bigl(ty+(1-t)\frac{2n\pi}{x}\bigl)dt\cdot h_{n}(xy)\label{for:130}.
\end{align}
$(1)$ holds since Sobolev embedding theorem implies that $g_x\in C^1(\RR)$ for any $x\in \bigcap_{i=0}^2 \Omega_{Y_1^ig}$. Then for any $x\in \bigcap_{i=0}^2 \Omega_{Y_1^ig}$ and $y\in I_{n,x}$ we have
\begin{align}\label{for:125}
|f_x(y)|&\overset{\text{(1)}}{\leq} C\norm{(Y_1g)_x}_{C^0(I_{n,x})}\overset{\text{(2)}}{\leq} C\norm{(Y_1g)_x}_{W^1(I_{n,x})}.
\end{align}
Here $(1)$ holds since $h_{n}$ are uniformly bounded on $I_{n,x}$ for any $n\in\ZZ$; $(2)$ uses the Sobolev embedding theorem.

Using \eqref{for:125}, we have
\begin{align}\label{for:126}
&\int_{A_2}\abs{f(x,y)}^2dydx=\int_{ \abs{x}\geq 1}\big(\sum_{n\in\ZZ}\int_{I_{n,x}}\abs{f(x,y)}^2dy\big)dx\notag\\
&\leq \int_{\abs{x}\geq 1}\bigl(\sum_{n\in\ZZ}\int_{I_{n,x}}C\norm{(Y_1g)_x}^2_{W^1(I_n)}dy\bigl)dx\notag\\
&\overset{\text{(1)}}{=}\int_{\abs{x}\geq 1}\bigl(\sum_{n\in\ZZ}C|I_{n,x}|\cdot\norm{(Y_1g)_x}^2_{W^1(I_{n,x})}\bigl)dx\notag\\
&\leq\int_{\abs{x}\geq 1}\bigl(\sum_{n\in\ZZ}\frac{4\pi}{3}C\norm{(Y_1g)_x}^2_{W^1(I_{n,x})}\bigl)dx\notag\\
&\leq \sum_{i=0}^2C\norm{Y_1^ig}^2_{L^2(A_2)}.
\end{align}
Here in step $(1)$ $|I_{n,x}|$ denotes the length of interval $I_{n,x}$.

Using \eqref{for:125} again, we also have
\begin{align}\label{for:134}
&\int_{A_4}\abs{f(x,y)}^2dydx=\int_{ \abs{x}<1}\bigl(\sum_{n\in\ZZ}\int_{J_{n,x}}\abs{f(x,y)}^2dy\bigl)dx\notag\\
&\leq \int_{\abs{x}<1}\bigl(\sum_{n\in\ZZ}\int_{J_{n,x}}C\norm{(Y_1g)_x}^2_{W^1(J_{n,x})}dy\bigl)dx\notag\\
&=\int_{\abs{x}< 1}\bigl(\sum_{n\in\ZZ}\frac{4\pi}{3}C\norm{(Y_1g)_x}^2_{W^1(J_{n,x})}\bigl)dx\notag\\
&\leq C(\sum_{i=0}^2\norm{Y_1^ig}^2_{L^2(A_4)}).
\end{align}
It is clear that $\RR^2\backslash (\bigcup_{i=1}^4 A_i)$ is a $0$-measure set with respect to the Lebesgue measure. Then \eqref{for:133}, \eqref{for:126}, \eqref{for:127} and \eqref{for:134} imply the conclusion.

\end{proof}

Let
$G'$ denote the subgroup $\begin{pmatrix}[cc|c]
  a & 0 & v_1\\
  c & a^{-1} & v_2
\end{pmatrix}$, where $a\in\RR^+$ and $c,\,v_1,\,v_2\in\RR$. Then the Lie algebra of $G'$ is generated by $X$, $V$, $Y_1$ and $Y_2$. The next result is a crucial step in proving Theorem \ref{po:2}. We list two facts which will be used in the proof.
\begin{fact} \label{re:3}
(Lemma 6.9 of \cite{W1})
For any irreducible component $(\rho_{t},\,\mathcal{H}_{t})$ of $SL(2,\RR)\ltimes \RR^2$ under the Fourier model, if $g\in(\mathcal{H}_t)_{G'}^s$, $s>10$ and $\lim_{y\rightarrow 0}g(x,y)=0$ for almost all $x\in\RR$, then the cohomological equation $\mathcal{L}_{V}f=Y_2g$ has a solution $f\in (\mathcal{H}_t)^{s-10}_{G'}$ with the estimate
  \begin{align*}
  \norm{f}_{G',r}\leq C_{s}\norm{g}_{G',r+3}, \qquad 0\leq r\leq s-13.
\end{align*}
\end{fact}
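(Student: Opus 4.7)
The plan is to solve the equation $\mathcal{L}_{V} f = Y_2 g$ explicitly in the Fourier model. Since $V = -xy\sqrt{-1}$ and $Y_2 = x\sqrt{-1}$ are multiplication operators, the operator $\exp(V)$ acts as multiplication by $e^{-xy\sqrt{-1}}$, so the equation becomes
\begin{equation*}
(e^{-xy\sqrt{-1}} - 1) f(x,y) = x\sqrt{-1}\,g(x,y),
\qquad\text{hence}\qquad
f(x,y) = \frac{x\sqrt{-1}\,g(x,y)}{e^{-xy\sqrt{-1}} - 1}.
\end{equation*}
The uniqueness of the solution (by Lemma~\ref{le:10} and Howe–Moore) then reduces the problem to showing the stated Sobolev estimates for this explicit $f$.

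First I would show $f \in \mathcal H_t$. The denominator vanishes to first order along the curves $xy = 2n\pi$, $n\in\ZZ$. Factoring
\begin{equation*}
\frac{x\sqrt{-1}}{e^{-xy\sqrt{-1}} - 1} \;=\; \frac{1}{y}\cdot\frac{xy\sqrt{-1}}{e^{-xy\sqrt{-1}} - 1},
\end{equation*}
the second factor is bounded near $y=0$, so the hypothesis $\lim_{y\to 0} g(x,y) = 0$ combined with the fundamental theorem of calculus (as in \eqref{for:130}) lets me absorb the $1/y$ singularity against $Y_1 g = \partial_y g$ and obtain $L^2$-control on a neighborhood of $\{y=0\}$. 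For the remaining resonance curves $xy=2n\pi$ with $n\ne 0$, I would partition $\RR^2$ into good and bad regions analogous to $A_1,\ldots,A_4$ in the proof of Lemma~\ref{le:9}: on regions bounded away from all resonance curves, $|e^{-xy\sqrt{-1}} - 1|$ is uniformly bounded below and $f$ is pointwise controlled by $|xg|$, hence by $|Y_2 g|$; on each thin strip around a resonance $xy = 2n\pi$ with $n\ne 0$, the extra factor $x$ in the numerator no longer helps, but using a Taylor expansion in $y$ around $2n\pi/x$ (exactly as in \eqref{for:130}) I can absorb the pole against $\partial_y g$, and then the strips summed over $n$ give a total mass controlled by $\|Y_1 g\|$ together with one further $y$-derivative from the Sobolev embedding.

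For the higher-order estimate $\|f\|_{G',r} \leq C_s\|g\|_{G',r+3}$, I would differentiate the explicit formula. Since $V$ and $Y_2$ are multiplication operators their action on $f$ is immediate. The operator $Y_1 = \partial_y$ applied to $f$ produces terms where either $\partial_y$ hits $g$, giving a contribution controlled by $Y_1 g / (e^{-xy\sqrt{-1}}-1)$ (treated by the same region decomposition), or $\partial_y$ hits the denominator, producing an additional factor $x e^{-xy\sqrt{-1}}/(e^{-xy\sqrt{-1}}-1)^2$; this has a second-order pole at the resonances, and I would again use a second-order Taylor expansion of $g$ near each resonance to absorb it against $Y_1^2 g$. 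The operator $X = -I - x\partial_x - y\partial_y$ commutes cleanly with the resonance structure because the function $(xy)/(e^{-xy\sqrt{-1}}-1)$ is homogeneous of degree $0$ in $(x,y)$ under the $(x,y)\mapsto(e^{-t}x, e^{t}y)$ scaling generated by $X$, so $X$-derivatives only pick up $X$-derivatives of $g$ plus lower-order terms. Iterating yields the loss of three derivatives: roughly one for the extra differentiation of the denominator in $Y_1$, and two from the Sobolev embedding used to obtain pointwise control of $g_x$ and $(Y_1 g)_x$ in the $y$-variable.

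The main obstacle is the infinite family of resonance curves $xy = 2n\pi$ with $n \neq 0$, which the hypothesis $g \to 0$ as $y\to 0$ does \emph{not} address. The key observation that makes the estimate uniform in $n$ is that the strips $|y - 2n\pi/x| < 2\pi/(3|x|)$ have total width in $y$ bounded by $4\pi/(3|x|)$ per unit in $x$, so when we apply the Taylor-expansion trick to bound $f$ pointwise by $\sup|Y_1 g|$ on each strip and integrate, the sum over $n$ telescopes into an $L^2$ norm of $Y_1 g$ (with one more $y$-derivative supplied by Sobolev embedding) rather than diverging; this is exactly the mechanism behind the estimates \eqref{for:126} and \eqref{for:134} in the proof of Lemma~\ref{le:9}, and is the reason the loss of regularity is only $3$ and not worse.
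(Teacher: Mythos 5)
There is a genuine gap, and it originates in how you read the equation. You treat $\mathcal{L}_V$ as the map operator of \eqref{for:201}, so your transfer function is $f = x\sqrt{-1}\,g/(e^{-xy\sqrt{-1}}-1)$, with poles on every resonance curve $xy=2n\pi$. But your handling of the curves with $n\neq 0$ does not work: the Taylor-expansion device of \eqref{for:130} writes $f$ as the difference quotient $\bigl(g(x,y)-g(x,\tfrac{2n\pi}{x})\bigr)/\bigl(y-\tfrac{2n\pi}{x}\bigr)$ times a bounded factor, and this requires $g(x,\tfrac{2n\pi}{x})=0$; the hypothesis $\lim_{y\to 0}g(x,y)=0$ gives vanishing only on the $n=0$ curve. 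The strip widths and summability over $n$, which you present as the key point, are beside the point: if $g$ is smooth, supported away from $\{y=0\}$ and nonzero somewhere on $\{xy=2\pi\}$, the explicit $f$ has a non-square-integrable first-order pole there, and since any $L^2$ solution must agree a.e.\ with the explicit formula (there are no nontrivial $\exp(V)$-invariant vectors, cf.\ Lemma~\ref{le:10}), no $L^2$ solution exists at all. So the statement cannot be proved for the map operator under the stated hypothesis, and your absorption step at the $n\neq0$ resonances is exactly where the argument fails.

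The resolution is that Fact~\ref{re:3} is quoted from Lemma 6.9 of \cite{W1}, which concerns the derived-representation (flow) equation $Vf=Y_2g$: in the Fourier model $V=-xy\sqrt{-1}$ and $Y_2=x\sqrt{-1}$, so the transfer function is simply $f=-g/y$, the only singularity is at $y=0$, and the hypothesis $\lim_{y\to 0}g=0$ is precisely what is needed; the curves $xy=2n\pi$, $n\neq0$, never enter. This is also how the paper uses the Fact: in the proof of Lemma~\ref{le:8} it is applied to the identity $V\bigl(f(x,y)p(xy)\bigr)=-\sqrt{-1}\,Y_2g(x,y)h_0(xy)p(xy)$, i.e.\ with $V$ acting as a vector field, and the coboundary there does vanish as $y\to 0$. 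The statement you actually sketched, namely the map equation $\mathcal{L}_Vf=Y_2g$ under the stronger hypothesis $\lim_{y\to 2n\pi/x}g(x,y)=0$ for every $n$, is Lemma~\ref{le:8}, which the paper proves separately with the region decomposition you describe (and whose $X$-direction estimates in turn invoke Fact~\ref{re:3}). With the vanishing hypothesis strengthened to all resonances your outline is essentially a sketch of Lemma~\ref{le:8}; as a proof of Fact~\ref{re:3} it either addresses the wrong equation or relies on a vanishing condition that is not available.
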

\begin{fact}\label{fact:1}
It is easy to check the following fact: for any $s>0$, any Schwartz function $h\in \mathcal{S}(\RR)$ and any $q\in (\mathcal{H}_t)_{G',s}$, we have
\begin{align*}
 \big\|q(x,y)h(xy)\big\|_{G',s}\leq C_{h,s}\big\|q(x,y)\big\|_{G',s}.
\end{align*}
\end{fact}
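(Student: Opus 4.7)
The plan is to reduce to a generator-level computation using the explicit form of the derived representation on $\mathfrak{g}'$ given in \eqref{for:139}, and then bound each term by a Schwartz multiplier. Since the $G'$-Sobolev norm $\|\cdot\|_{G',m}$ for integer $m$ is equivalent to the $\ell^2$-sum of the $L^2$-norms of all length-$\leq m$ words in the basis $\{X,V,Y_1,Y_2\}$ applied to the vector, it suffices to show that each such word applied to $q\cdot h(xy)$ decomposes as a finite sum $\sum_j (W_j q)\cdot \tilde h_j(xy)$ with each $W_j$ a word of length at most $m$ and each $\tilde h_j\in\mathcal{S}(\RR)$; the elementary bound $\|(W_jq)\tilde h_j(xy)\|_{L^2(\RR^2)}\leq \|\tilde h_j\|_\infty\|W_jq\|_{L^2}$ then finishes the proof at integer levels.

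The first step is to compute how each generator interacts with multiplication by $h(xy)$. Since $V=-xy\sqrt{-1}$ and $Y_2=x\sqrt{-1}$ are themselves multiplication operators, they commute with multiplication by $h(xy)$: $V(qh(xy))=(Vq)h(xy)$ and $Y_2(qh(xy))=(Y_2q)h(xy)$. For $Y_1=\partial_y$, the product rule together with the identity $xq=-\sqrt{-1}(Y_2q)$ (which holds because $Y_2$ is multiplication by $x\sqrt{-1}$) gives
\[
Y_1(qh(xy))=(Y_1q)h(xy)-\sqrt{-1}(Y_2q)h'(xy),
\]
and $h'\in\mathcal{S}(\RR)$. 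For $X=-I-x\partial_x-y\partial_y$, the product rule yields
\[
X(qh(xy))=(Xq)h(xy)-2q\cdot k(xy),\qquad k(t):=t\,h'(t)\in\mathcal{S}(\RR).
\]
Hence applying any single generator to a product $q\cdot\tilde h(xy)$ with $\tilde h\in\mathcal{S}(\RR)$ produces a finite sum of terms of the required form: either the generator lands on $q$ (leaving the Schwartz multiplier unchanged) or it lands on the Schwartz factor and returns a new element of $\mathcal{S}(\RR)$ in the variable $xy$. A straightforward induction on word length $m$ yields the claimed decomposition $Z_1\cdots Z_m(qh(xy))=\sum_j (W_jq)\cdot h_j(xy)$ with $|W_j|\leq m$ and $h_j\in\mathcal{S}(\RR)$.

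The family $\{h_j\}$ is produced from $h$ by iterated application of the operations $h\mapsto h'$ and $h\mapsto t\,h$, both of which preserve $\mathcal{S}(\RR)$; consequently the family is finite and depends only on $h$ and $m$, so $\sup_j\|h_j\|_\infty\leq C_{h,m}<\infty$. Summing the $L^2$ bounds over this finite collection yields $\|q h(xy)\|_{G',m}\leq C_{h,m}\|q\|_{G',m}$ for integer $m$, and interpolation between integer Sobolev levels (the multiplication operator $M_{h(xy)}$ being bounded on each integer level) produces the estimate for all real $s>0$. The only real care needed is the bookkeeping verifying closure of the Schwartz multiplier family under $h\mapsto h'$ and $h\mapsto t h$ and that the resulting constant depends only on $h$ and $s$; no genuine analytic obstacle arises, and the statement is essentially a Leibniz-rule calculation in the derived representation.
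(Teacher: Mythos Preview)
The paper states this fact without proof, merely remarking that ``it is easy to check,'' so there is no argument to compare against; your proposal correctly supplies the intended Leibniz-rule computation in the Fourier model \eqref{for:139}. The generator-by-generator analysis (in particular the observation that $Y_1$ and $X$ send a product $q\cdot\tilde h(xy)$ to a finite sum of terms of the same shape with the Schwartz factor replaced by $\tilde h'$ or $t\tilde h'(t)$, while $V$ and $Y_2$ commute with the multiplier) is exactly the routine verification the authors had in mind, and the induction and interpolation steps are sound.
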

\begin{lemma}\label{le:8}
 For any irreducible component $(\rho_{t},\,\mathcal{H}_{t})$ of $SL(2,\RR)\ltimes \RR^2$ under the Fourier model, if $g\in(\mathcal{H}_t)_{G'}^s$, $s>10$ and for any $n\in\ZZ$, $\lim_{y\rightarrow \frac{2n\pi}{x}}g(x,y)=0$ for almost all $x\in\RR$, then the cohomological equation $\mathcal{L}_{V}f=Y_2g$ has a solution $f\in (\mathcal{H}_t)^{s-10}_{G'}$ with the estimate
  \begin{align*}
  \norm{f}_{G',r}\leq C_{s}\norm{g}_{G',r+3},\qquad 0\leq r\leq s-13.
\end{align*}
Furthermore, if $g\in(\mathcal{H}_t)_{G'}^\infty$, then $f\in (\mathcal{H}_t)^{\infty}_{G'}$.

\end{lemma}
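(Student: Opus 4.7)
The plan is to reduce to Fact~\ref{re:3}, which only treats the case when $g$ vanishes at $y=0$, by localizing $g$ near each resonance $y=2n\pi/x$ (where $e^{-xy\sqrt{-1}}-1$ vanishes) and translating each localized piece so that its resonance coincides with $y=0$.

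Choose a smooth bump $\psi$ on $\RR$ with $\operatorname{supp}\psi\subset(-\pi-1/2,\pi+1/2)$ and $\sum_{n\in\ZZ}\psi(t-2n\pi)\equiv 1$, and set $g_n(x,y)=\psi(xy-2n\pi)g(x,y)$. Then $g=\sum_n g_n$, each $g_n$ is supported where $|xy-2n\pi|\leq\pi+1/2$, and the hypothesis implies $\lim_{y\to 2n\pi/x}g_n(x,y)=0$ for almost every $x$. Define the translation $T_n(x,y)=(x,y+2n\pi/x)$ and put $h_n=T_n^*g_n$. Because the multiplier $e^{-xy\sqrt{-1}}-1$ is $2\pi$-periodic in $xy$, the operator $\mathcal{L}_V$ commutes with $T_n^*$; hence $h_n$ is supported where $|xy|\leq\pi+1/2$ and vanishes as $y\to 0$. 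By Fact~\ref{re:3}, there exists $\tilde f_n\in(\mathcal{H}_t)_{G'}^{s-10}$ solving $\mathcal{L}_V\tilde f_n=Y_2h_n$ with $\norm{\tilde f_n}_{G',r}\leq C_s\norm{h_n}_{G',r+3}$ for $0\leq r\leq s-13$, and by periodicity $f_n=T_{-n}^*\tilde f_n$ solves $\mathcal{L}_V f_n=Y_2g_n$.

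For the Sobolev estimates one exploits the commutation relations $XT_n^*=T_n^*X$, $Y_iT_n^*=T_n^*Y_i$, and $VT_n^*=T_n^*(V+2n\pi\sqrt{-1})$: conjugation by $T_n^*$ shifts $V$ by the scalar $2n\pi\sqrt{-1}$. The crucial quantitative feature is that on $\operatorname{supp}g_n$ the multiplication operator $V+2n\pi\sqrt{-1}=(-xy+2n\pi)\sqrt{-1}$ has sup norm at most $\pi+1/2$ uniformly in $n$; the analogous statement holds for $V$ on $\operatorname{supp}\tilde f_n$. Grouping powers of the shifted operator together (rather than expanding binomially in $n$) and carefully tracking the scalars produced by iterating $[X,V]=-2V=-2(V+2n\pi\sqrt{-1})+4n\pi\sqrt{-1}$ through normal form, one obtains $\norm{f_n}_{G',r}\leq C_s\norm{g_n}_{G',r+3}$ with $C_s$ independent of $n$.

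Finally, since the supports $\{|xy-2n\pi|\leq\pi+1/2\}$ are preserved by $X,V,Y_1,Y_2$ (the first three preserve level sets of the polynomial $xy$, and $Y_2=x\sqrt{-1}$ is a multiplication) and have bounded overlap in $n$, one has $\norm{f}_{G',r}^2\leq C\sum_n\norm{f_n}_{G',r}^2$, while the standard partition-of-unity inequality gives $\sum_n\norm{g_n}_{G',r+3}^2\leq C\norm{g}_{G',r+3}^2$. Combining these yields $\norm{f}_{G',r}\leq C_s\norm{g}_{G',r+3}$. The smoothness statement for $g\in(\mathcal{H}_t)_{G'}^\infty$ is immediate from applying this estimate for every $r$. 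The main obstacle is the uniform-in-$n$ transfer of Sobolev norms in the third paragraph: the commutator $[X,V]$ produces scalars growing linearly in $n$, and the heart of the proof is showing that these scalars are absorbed by the bounded support of $g_n$ rather than accumulating as a genuine loss.
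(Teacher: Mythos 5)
Your localization strategy---partition $g$ in the variable $xy$ near each resonance $xy=2n\pi$, shear the $n$-th piece to the central resonance via $T_n(x,y)=(x,y+2n\pi/x)$, apply Fact~\ref{re:3} there, shear back, and sum---is plausible-looking, but the commutation relation you assert for $X$ is false and the error is not negligible. In the Fourier model $X=-I-x\partial_x-y\partial_y$, and a direct computation (equivalently, the observation that the generator $W=\frac{2\pi}{x}\partial_y$ of the shear satisfies $[X,W]=2W$) gives
\[
X\,T_n^{*}=T_n^{*}\Bigl(X+\frac{4n\pi}{x}\,Y_1\Bigr),
\]
not $XT_n^{*}=T_n^{*}X$. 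The correction $\frac{4n\pi}{x}Y_1$ is fatal for the uniform-in-$n$ transfer your argument requires: its coefficient grows linearly in $n$, and the multiplier $1/x$ is unbounded on $\operatorname{supp}g_n$ (which, incidentally, is the unbounded hyperbolic region $\{\,|xy-2n\pi|\leq\pi+1/2\,\}$, bounded only in the variable $xy$) and is not expressible in the $G'$-Sobolev scale built from $\{X,V,Y_1,Y_2\}$. Consequently neither $\norm{T_n^{*}g_n}_{G',r+3}\leq C\norm{g_n}_{G',r+3}$ nor $\norm{T_{-n}^{*}\tilde f_n}_{G',r}\leq C\norm{\tilde f_n}_{G',r}$ holds with $C$ independent of $n$ once a word contains the letter $X$. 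Your third paragraph also flags the accumulation of the scalars $4n\pi\sqrt{-1}$ coming from $[X,V]$ and calls its resolution ``the heart of the proof,'' but supplies no argument that they are in fact absorbed---so even granting the $X$-commutation, the argument as written would still be incomplete.

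The paper's proof avoids the shear entirely. It works with the explicit transfer function $f=\frac{Y_2\,g}{e^{-xy\sqrt{-1}}-1}$ and estimates the four $G'$-directions directly: $Y_2$ and $V$ commute with the denominator; the $Y_1$-estimate is obtained by Leibniz together with a fundamental-theorem-of-calculus bound near each resonance, exploiting the hypothesis that $g$ vanishes there; and the $X$-estimate is obtained by first passing to coordinates $(\omega,Y)=(y/x,\,xy)$, in which $X=-I-2Y\partial_Y$ and $V=-Y\sqrt{-1}$ depend only on $Y$, and then carrying out the near-resonance analysis in the $Y$-variable, invoking Fact~\ref{re:3} only for the central region $|xy|\leq\pi/2$. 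That change of variables is exactly the device that tames the $X$-direction; the shear $T_n$ does not interact well with the scaling field $X$, and that is where your approach breaks.
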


\begin{proof}
Let $f(x,y)=\frac{g(x,y)\cdot x\sqrt{-1}}{e^{-xy\sqrt{-1}}-1}.$ \eqref{for:110} of Lemma \ref{le:9} shows that $f\in \mathcal{H}_t$. Recall relations in \eqref{for:139}. We see that $f$ is a solution of the equation $\mathcal{L}_{V}f=Y_2g$. Next, we will give the Sobolev estimates of the solution on $G'$. We assume notations in proof of Lemma \ref{le:9}.

\smallskip
\noindent\textbf{Sobolev estimates along $Y_2$ and $V$}. Note that for any $m\leq s-2$,
\begin{gather*}
(Z^mf)(x,y)=\frac{(Z^mg)(x,y)\cdot x\sqrt{-1}}{e^{-xy\sqrt{-1}}-1}
\end{gather*}
where $Z$ stands for $Y_2$ or $V$; and for any $n\in\ZZ$, $\lim_{y\rightarrow \frac{2n\pi}{x}}(Z^mg)(x,y)=0$ for almost all $x\in\RR$. Then it follows from \eqref{for:110} of Lemma \ref{le:9} that
\begin{align}\label{for:58}
    \norm{Z^mf}\leq C\norm{Z^mg}_{G',2}\leq C\norm{g}_{G',m+2},\qquad \forall\,m\leq s-2.
 \end{align}

\noindent\textbf{Sobolev estimates along $Y_1$}. Note that
\begin{align}\label{for:11}
Y_1^nf(x,y)&=\sum_{i=0}^n(n-i)!\binom{n}{i}\frac{(Y_2^{n-i+1}Y_1^ig)(x,y)}{(e^{-xy\sqrt{-1}}-1)^{n-i+1}}
\end{align}
Then for any $(x,y)\in A_1$ and $n\leq s-1$, we have
\begin{align*}
&|Y_1^nf(x,y)|\leq \sum_{0\leq i\leq n}C_{n}|(Y_2^{n-i+1}Y_1^{i}g)(x,y)|.
\end{align*}
This shows that
\begin{align}\label{for:2}
 \int_{A_1}|Y_1^nf(x,y)|^2dydx&\leq C_n\norm{g}^2_{G',n+1}.
\end{align}
We also note that
\begin{align*}
Y_1^nf(x,y)&=\sum_{l+i+j=n}(-1)^{i+j}(\sqrt{-1})^{i+1}x^{i+1}j!\binom{n}{l,i,j}\frac{(Y_1^{l}g)(x,y)}{(y-\frac{2m\pi}{x})^{j+1}}\cdot h^{(i)}_{m}(xy)\\
&=\sum_{l+i+j=n}(-1)^{i+j}j!\binom{n}{l,i,j}\frac{(Y_2^{i+1}Y_1^{l}g)(x,y)}{(y-\frac{2m\pi}{x})^{j+1}}\cdot h^{(i)}_{m}(xy).
\end{align*}
If $(x,y)\in A_3$, there exist some $m\in\ZZ$, such that $y\in I_{m,x}\backslash J_{m,x}$. Then for any $n\leq s-1$, we have
\begin{align*}
&|Y_1^nf(x,y)|\leq \sum_{0\leq i+l\leq n}C_{n}|(Y_2^{i+1}Y_1^{l}g)(x,y)|.
\end{align*}
We get this estimate by noting that for any $i$, $h^{(i)}_{m}$ are uniformly bounded on $I_{m,x}$ for any $m\in\ZZ$.

This shows that if $n\leq s-1$, then
\begin{align}\label{for:10}
&\int_{A_3}Y_1^nf(x,y)|^2dydx\leq C_n\norm{g(x,y)}_{G',n+1}^2.
\end{align}
Using \eqref{for:130}, for any $n<s-2$, $m\in\ZZ$, $x\in \bigcap_{i=0}^{n+2} \Omega_{Y_1^ig}$ and $y\in I_{m,x}$ we also have
\begin{align}\label{for:23}
Y_1^nf(x,y)&\overset{\text{(1)}}{=}\sum_{0\leq i\leq n}\binom{n}{i}\int_{0}^{1}(Y_1^{n-i+1}g)_x\bigl(ty+(1-t)\frac{2m\pi}{x}\bigl)\cdot t^{n-i}dt\notag\\
&\cdot h^{(i)}_{m}(xy)\cdot x^i\notag\\
&=\sum_{0\leq i\leq n}\binom{n}{i}\int_{0}^{1}(Y_2^{i}Y_1^{n-i+1}g)_x\bigl(ty+(1-t)\frac{2m\pi}{x}\bigl)\cdot t^{n-i}dt\notag\\
&\cdot (\sqrt{-1})^{-i}h^{(i)}_{m}(xy).
\end{align}
The differentiation under the integral sign in $(1)$ is justified by the fact that $(Y_1^{n-i+1}g)_x\in C^0(\RR)$ for any $0\leq i\leq n$, which is guaranteed by
Sobolev embedding theorem.

Then for any $y\in I_{m,x}$ we have
\begin{align}\label{for:131}
|Y_1^nf(x,y)|&\leq C_{n}\Big\|\sum_{0\leq i\leq n}(Y_2^{i}Y_1^{n-i+1}g)_x\Big\|_{C^0(I_{m,x})}\notag\\
&\overset{\text{(1)}}{\leq} C_{n}\Big\|\sum_{0\leq i\leq n}(Y_2^{i}Y_1^{n-i+1}g)_x\Big\|_{W^1(I_{m,x})}.
\end{align}
Here $(1)$ is from the Sobolev embedding theorem.

By using \eqref{for:131}, if $\abs{x}\geq1$ (resp. $\abs{x}<1$), in \eqref{for:126} (resp. \eqref{for:134}) substituting $f$ with $Y_1^nf$ and $(Y_1g)_x$ with $\sum_{0\leq i\leq n}(Y_2^{i}Y_1^{n-i+1}g)_x$ we get
\begin{align*}
&\int_{A_l}\abs{Y_1^nf(x,y)}^2dydx\leq C_{n}\sum_{0\leq j\leq 1}\sum_{0\leq i\leq n}\norm{Y_1^jY_2^{i}Y_1^{n-i+1}g}^2_{L^2(A_l)}
\end{align*}
where $l=2$ or $4$ for any $n<s-2$.

The above estimates together with \eqref{for:2}, \eqref{for:10} imply that
\begin{align}\label{for:26}
    \norm{Y_1^nf}\leq C_n\norm{g}_{G',n+2},\qquad \forall\,n\leq s-2.
 \end{align}

\smallskip
\noindent\textbf{Sobolev estimates along $X$}. Let $p$ be a smooth function on $\RR$ satisfying:  $p(x)=1$ if $\abs{x}\leq \pi$ and $p(x)=0$ if $\abs{x}\geq \frac{3\pi}{2}$. We note that
\begin{align*}
V\big(f(x,y)p(xy)\big)=-\sqrt{-1}Y_2g(x,y)\cdot h_{0}(xy)p(xy).
\end{align*}
It follows from  Fact \eqref{fact:1} that
\begin{align*}
 \big\|g(x,y)h_{0}(xy)p(xy)\big\|_{G',t}\leq C_n\big\|g(x,y)\big\|_{G',t},\qquad 0\leq t\leq s.
\end{align*}
By Fact \ref{re:3} and above estimates, for any $0\leq n\leq s-13$ we get
 \begin{align*}
 \Big\| X^n\big(f(x,y)p(xy)\big)\Big\|&\leq C_{n}\big\|g(x,y)h_{0}(xy)p(xy)\big\|_{G',n+3}\\
 &\leq C_n\big\|g(x,y)\big\|_{G',n+3}.
\end{align*}
Hence we immediately have
\begin{align}\label{for:7}
&\int_{\abs{xy}\leq\frac{\pi}{2}}\abs{X^nf(x,y)}^2dydx\leq C_n\big\|g(x,y)\big\|_{G',n+3}
\end{align}
for any $0\leq n\leq s-13$.

If $\abs{xy}>\frac{\pi}{2}$, by the change of variable $(\omega,\,Y)=(yx^{-1},\,yx)$, the new model is $\mathcal{H}_t=L^2(\RR^2,\mu)$, where $d\mu=\abs{2\omega}^{-1}d\omega dY$. Note that $\abs{xy}>\frac{\pi}{2}$ implies that $\abs{Y}>\frac{\pi}{2}$. Set $A=\{(\omega,Y):\,\abs{Y}>\frac{\pi}{2}\}$.  The vector fields in the new model are:
\begin{gather}\label{for:137}
 V=-Y\sqrt{-1},\qquad X=-I-2Y\partial_Y.
 \end{gather}
Set $I_n=\{Y\in\RR:\abs{Y-2\pi n}<\frac{\pi}{2}\}$ and $Z=Y\partial_Y$. Let $J=\RR\backslash \bigcup I_n$. Note that in the new model, for any $n\leq s-1$, we have
 \begin{align*}
  (Z^nf)(\omega,Y)&=\sum_{i=0}^n(-1)^{n-i}(n-i)!\binom{n}{i}\frac{(V^{n-i}Z^iY_2g)(\omega,Y)}{(e^{-Y\sqrt{-1}}-1)^{n-i+1}}.
 \end{align*}
Then it follows that
 \begin{align}\label{for:3}
&\int_{J}|(Z^nf)(\omega,Y)|^2d\mu\leq C_n\norm{g}_{G',n+1}^2,
\end{align}
for any $n\leq s-1$.

Note that on any set $B\subseteq A$, we have
\begin{align}\label{for:21}
 \int_{B}\abs{\partial_Y^ng(\omega,Y)}^2d\omega dY\leq C\big\|Z^ng\big\|
\end{align}
for any $0\leq n\leq s$. The similar to \eqref{for:130}, we can write
\begin{align*}
f(\omega,Y)&=\frac{Y_2g(\omega,Y)}{e^{-Y\sqrt{-1}}-1}=\frac{Y_2g(\omega,Y)}{Y-2\pi m}\cdot h_m(Y)\\
&=\int_{0}^{1}\partial_Y(Y_2g)\bigl(\omega,tY+(1-t)2m\pi\bigl)dt\cdot h_{m}(Y),
\end{align*}
for $(\omega,Y)\in A$ and any $m\neq 0$.  Set $l_{m,t,Y}=tY+(1-t)2m\pi$.

Using \eqref{for:21}, similar to \eqref{for:23}, for any $n<s-2$, $m\neq 0$, $\omega\in \bigcap_{i=0}^{n+2} \Omega_{Z^ig}$ and $Y\in I_{m}$ we also have
 Then we have
 Then
\begin{align*}
&(Z^nf)(\omega,Y)=\sum_{i=0}^n\binom{n}{i}\int_{0}^{1}\partial^{i+1}_Y(Y_2g)\bigl(\omega,l_{m,t,Y}\bigl)t^iY^ndt\cdot h^{(n-i)}_{m}(Y)\\
&=\sum_{i=0}^n(\sqrt{-1})^{n-i}\binom{n}{i}\int_{0}^{1}(V^{n-i}Z^{i+1}Y_2g)\bigl(\omega,l_{m,t,Y}\bigl)t^i\frac{Y^n}{(l_{m,t,Y})^{n+1}}dt\cdot h^{(n-i)}_{m}(Y).
\end{align*}
Then it follows that
\begin{align*}
|(Z^nf)(\omega,Y)|&\overset{\text{(1)}}{\leq} C_{n}\sum_{0\leq i\leq n}\Big\|(V^{n-i}Z^{i+1}Y_2g)_\omega\Big\|_{C^0(I_{m})}\notag\\
&\overset{\text{(2)}}{\leq} C_{n}\sum_{0\leq i\leq n}\Big\|(V^{n-i}Z^{i+1}Y_2g)_\omega\Big\|_{W^1(I_{m})}\notag\\
&\overset{\text{(3)}}{\leq}C_{n}\sum_{0\leq i\leq n}\sum_{0\leq j\leq 1}\Big\|(Z^jV^{n-i}Z^{i+1}Y_2g)_\omega\Big\|.
\end{align*}
$(1)$ holds since for any $i$ and $n$, $h^{i}_{m}(Y)$ and $\frac{Y^n}{(l_{m,t,Y})^{n+1}}$ are uniformly bounded on $I_m$ for any $m\neq0$. In $(2)$ we use
Sobolev imbedding theorem again. In $(3)$ we use \eqref{for:21}.

Hence, for any $m\neq0$ we get
\begin{align*}
  \int_{I_m}|(Z^nf)(\omega,Y)|^2d\mu&\leq C_{n}\abs{I_m}\sum_{0\leq i\leq n}\sum_{0\leq j\leq 1} \int_{I_m}\Big|(Z^jV^{n-i}Z^{i+1}Y_2g)(\omega,Y)\Big|^2d\mu.
  \end{align*}
This implies that
\begin{align}\label{for:5}
  &\int_{\bigcup_{m\neq 0} I_m}|Z^nf(\omega,Y)|^2d\mu\notag\\
  &\leq C_{n}\sum_{0\leq i\leq n}\sum_{0\leq j\leq 1} \int_{\bigcup_{m\neq 0} I_m}\Big|(Z^jV^{n-i}Z^{i+1}Y_2g)(\omega,Y)\Big|^2d\mu\notag\\
  &\leq C_n\norm{g}_{G',n+2}
\end{align}
for any $0\leq n<s-2$.
Then \eqref{for:3} and \eqref{for:5} imply that
\begin{align*}
&\int_{\abs{Y}\geq\frac{\pi}{2}}|X^nf(X,Y)|^2d\mu\leq C_n\norm{g}_{G',n+2}
\end{align*}
for any $n<s-2$.

The above estimates together with  \eqref{for:7} show that
\begin{align}\label{for:28}
    \norm{X^nf}\leq C_n\norm{g}_{G',n+3}
 \end{align}
 for any $0\leq n\leq s-13$.

 Thus the results follow immediately from \eqref{for:58}, \eqref{for:26} and \eqref{for:28}.
\end{proof}
We are now in a position to proceed with the proof of Theorem \ref{po:2}. We list a fact which will be used in the proof.
\begin{fact}\label{fact:2}
(Theorem 6.6 of \cite{W1})
For any irreducible component $(\rho_{t},\,\mathcal{H}_{t})$ of $SL(2,\RR)\ltimes \RR^2$ (see Lemma \ref{le:1}), if $g\in \mathcal{H}_{t}^\infty$ and the cohomological equation $Vf=g$  has a solution $f\in \mathcal{H}_{t}$, then  $f\in \mathcal{H}_{t}^\infty$ and satisfies
\begin{align*}
  \norm{f}_{s}\leq C_{s}\norm{g}_{s+5},\qquad \forall\,s\geq 0.
\end{align*}
Furthermore, if $t\neq0$, and if $\lim_{y\rightarrow 0}g(x,y)=0$ for almost all $x\in\RR$, then the cohomological equation $Vf=g$ has a solution $f\in \mathcal{H}_t$; if $t=0$, then $\rho_{0}$ only contains principal series.
\end{fact}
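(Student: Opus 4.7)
The plan is to work entirely in the Fourier model of Lemma \ref{le:1}, where the operator $V$ acts as multiplication by $-xy\sqrt{-1}$. Consequently the cohomological equation $Vf=g$ admits the formal solution
\[
f(x,y)=\frac{\sqrt{-1}\,g(x,y)}{xy},
\]
and the entire argument reduces to quantitative control of this explicit division in $L^2(\RR^2)$ and its Sobolev norms under the various generators $X,U,V,Y_1,Y_2$ from \eqref{for:139}.

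For the existence claim when $t\neq 0$, I would use the vanishing condition $\lim_{y\to 0}g(x,y)=0$ to absorb the singularity at $y=0$: since $g\in\mathcal{H}_t^\infty$ forces $g(x,\cdot)\in C^1$ for almost every $x$ by Sobolev embedding applied in the $Y_1=\partial_y$ direction, Hadamard's lemma yields a pointwise factorization $g(x,y)=y\,h(x,y)$ with $h$ controlled by $Y_1 g$. The remaining singularity at $x=0$ is handled by the identity $tx^{-2}g=-\sqrt{-1}\,Ug+\partial_x\partial_y g$ coming from the explicit form of $U$; for $t\neq 0$ this shows that $x^{-2}g$ lies in $L^2$ with norm controlled by $Ug$ and $Y_1$-derivatives of $g$. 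Combining these two bounds delivers $f\in \mathcal{H}_t$ with $\|f\|\le C_t\|g\|_k$ for some small $k$.

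For the tame Sobolev estimate $\|f\|_s\le C_s\|g\|_{s+5}$ and the smoothness conclusion $f\in\mathcal{H}_t^\infty$, I would apply each generator to the explicit formula and iterate. The easy entries are $Vf=g$ (tautological), $Y_2 f=-g/y$ (resolved by one Hadamard division in $y$), and $Xf$ which is homogeneous of degree zero under the $(x,y)$-dilation and so is of the same algebraic form as $f$; the more delicate entries are $Y_1 f$, which produces a denominator $y^2$ requiring a second-order Hadamard division (and hence a second $Y_1$ derivative of $g$), and $Uf$, which manufactures a $tg/(x^3 y)$ term requiring a second application of the $U$-identity at the $x^{-4}$ level. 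Careful bookkeeping shows that every additional differentiation of $f$ costs exactly one additional derivative of $g$, on top of a fixed base cost of $5$ derivatives needed to resolve the initial $x=0$ and $y=0$ singularities; smoothness then follows by induction on $s$.

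For the case $t=0$, I would observe that the inducing character on the Heisenberg stabilizer $N$ of \eqref{for:14} used in Mackey's construction is parameterized by $t$ and becomes trivial on the center of $N$ precisely when $t=0$. Consequently $\rho_0$ is induced from a character of $N$ that factors through $N/Z(N)$, and Mackey's restriction formula identifies $\rho_0$ restricted to $SL(2,\RR)$ with the regular representation on $L^2(SL(2,\RR)/U_+)$, where $U_+$ is the upper-triangular unipotent subgroup; the classical horocyclic Plancherel decomposition of this homogeneous space contains only principal series. The main obstacle is the bookkeeping in the second step: balancing repeated $Y_1$-derivatives (which degrade $y$-behavior and require iterated Hadamard division) against repeated $U$-derivatives (which degrade $x$-behavior and require iterated use of the $U$-identity) so that the total Sobolev loss per derivative of $f$ remains exactly one derivative of $g$ rather than two. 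This balancing is what fixes the constant $5$ in $\|g\|_{s+5}$.
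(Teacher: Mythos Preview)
This statement is not proved in the paper; it is quoted verbatim as Theorem~6.6 of \cite{W1} and used as a black box. So there is no proof here to compare against. Your approach---working in the Fourier model of Lemma~\ref{le:1}, writing the formal solution $f=\sqrt{-1}\,g/(xy)$, resolving the $y=0$ singularity via Hadamard division along $Y_1=\partial_y$, and resolving the $x=0$ singularity via the identity $tx^{-2}g=-\sqrt{-1}Ug+\partial_x\partial_y g$---is the natural one, and is in fact the strategy carried out in \cite{W1}. Your identification of $\rho_0\mid_{SL(2,\RR)}$ with the quasi-regular representation on $L^2(\RR^2)\cong L^2(SL(2,\RR)/U_+)$, whose Plancherel decomposition is supported on the principal series, is also correct.

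One point to watch: your existence argument for $t\neq 0$ produces a bound $\norm{f}\leq C_t\norm{g}_k$ with $C_t$ depending on $t$ through the division by $t$ in the $U$-identity. That is harmless for the existence clause (which only concerns $t\neq 0$), but the Sobolev estimate $\norm{f}_s\leq C_s\norm{g}_{s+5}$ is asserted with a constant $C_s$ uniform over all irreducible components, since it must glue across a direct integral. In the estimate clause you are \emph{given} that $f\in\mathcal{H}_t$, so you should not divide by $t$ there; instead, the identity $tx^{-2}f=-\sqrt{-1}Uf+\partial_x\partial_y f$ (applied to $f$, not $g$) lets you absorb the $tx^{-2}$ term in $U$ without a $1/t$ factor, and the bookkeeping then closes uniformly. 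Your sketch blurs the distinction between the existence step and the estimate step at exactly this point.
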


\subsection{Proof of Theorem \ref{po:2}} \emph{\textbf{Proof of $(a)$}}. Since $f\in\mathcal{H}_{t}^s$, immediately we see that $\mathcal{D}(g)=0$ for all $\mathcal{D}\in (\mathcal{H}_t)_{\mathfrak{u}}^{-s+1}$. Note that $[V,Y_2]=0$. By using Proposition \ref{le:6} we get the estimates
\begin{align}\label{for:20}
    \norm{Y_2^mf}_{SL(2,\RR),s}<C_{s,m}\norm{g}_{2s+m+2},\qquad \forall \,\,s\geq0.
\end{align}
Note that the constants $C_{s,m}$ are independent of the parameter $t$ since all $\rho_t\mid_{SL(2,\RR)}$ are outside a fixed neighborhood the trivial representation in the sense of Fell topology  by Remark \ref{re:2}.

Since $f\in\mathcal{H}_{t}$ and $g\in\mathcal{H}_{t}^{s}$, $s>1$, it follows from \eqref{for:109} of Lemma \ref{le:9} that
for any $n\in\ZZ$, $\lim_{y\rightarrow \frac{2n\pi}{x}}g(x,y)=0$ for almost all $x\in\RR$. Note that
\begin{align*}
 Y_2(\mathcal{L}_{V}f)=\mathcal{L}_{V}(Y_2f)=Y_2g.
\end{align*}
By using \eqref{for:26} we get the estimates
\begin{align*}
  \norm{Y_2f}_{G',r}\leq C_{s}\norm{Y_2g}_{G',r+2}\leq C_{s}\norm{g}_{G',r+3},
\end{align*}
for $0\leq r\leq s-3$. From above estimates and \eqref{for:20}, by using Theorem \ref{th:4} we see that  $Y_2f$ satisfies the estimates
\begin{align}\label{for:65}
    \norm{Y_2f}_{r}\leq C_{r}\norm{g}_{2r+4},
\end{align}
for any $r\leq\frac{s-4}{2}$. Note that
\begin{align*}
 Y_1\rho_t(\exp V)f=\rho_t(\exp V)(Y_1-Y_2)f.
\end{align*}
Then for any $m\in\NN$:
\begin{align*}
Y_1^m(\mathcal{L}_{V}f)&=\rho_t(\exp V)(Y_1-Y_2)^mf-Y_1^mf\\
&=\mathcal{L}_{V}(Y_1^mf)+\sum_{1\leq j\leq m}(-1)^j\binom{m}{j}\rho_t(\exp V)(Y_1^{m-j}Y_2^jf).
\end{align*}
This shows that
\begin{align}\label{for:27}
 \mathcal{L}_{V}(Y_1^mf)=Y_1^mg-\sum_{1\leq j\leq m}(-1)^j\binom{m}{j}\rho_t(\exp V)(Y_1^{m-j}Y_2^jf).
\end{align}
This gives the Sobolev estimates of $f$ along $Y_1$:
\begin{align}\label{for:60}
\norm{Y_1^mf}&\overset{\text{(1)}}{\leq} C\parallel Y_1^mg-\sum_{1\leq j\leq m}(-1)^j\binom{m}{j}\rho_t(\exp V)(Y_1^{m-j}Y_2^jf)\parallel_2\notag\\
&\leq C\norm{g}_{m+2}+C_m\norm{Y_2f}_{m+1}\notag\\
&\overset{\text{(2)}}{\leq} C_m\norm{g}_{2m+4},
\end{align}
for any $m\leq\frac{s-4}{2}$. $(1)$ follows from Theorem \ref{th:2} and Remark \ref{re:5}; and $(2)$ holds because of \eqref{for:65}. As an immediate consequence of \eqref{for:20}, \eqref{for:60} and Theorem \ref{th:4} we get
\begin{align*}
    \norm{f}_{r}\leq C_r\norm{g}_{2r+6},
\end{align*}
for any $r\leq\frac{s-6}{2}$,  which proves $(a)$.

\medskip

\emph{\textbf{Proof of $(b)$}}. It follows from Lemma \ref{le:8} that $f\in (\mathcal{H}_t)^\infty_{G'}$. Note that the assumption implies that $\lim_{y\rightarrow 0} g(x,y)=0$ for almost all $x\in\RR$. Then Fact \ref{re:3} shows that equation $Vh=Y_2g$ has a solution $h\in \mathcal{H}_{t}^\infty$. Hence by Theorem \ref{th:100}, $f\in(\mathcal{H}_t)^\infty_{SL(2,\RR)}$. Since the linear span of the Lie algebras of $G'$ and $SL(2,\RR)$ cover the Lie algebra of $SL(2,\RR)\ltimes\RR^2$, by using Theorem \ref{th:4} we see that $f\in \mathcal{H}_{t}^\infty$. The Sobolev estimates of $f$ follow from $(a)$ immediately. This proves $(b)$.

\medskip

\emph{\textbf{Proof of $(c)$}}. When $t=0$, by Fact \ref{fact:2}, we see that $\rho_0$ only contains the principal series. Then by Theorem \ref{th:100} we see that
$f\in(\mathcal{H}_0)^\infty_{SL(2,\RR)}$. When $t\neq 0$, it follows from Fact \ref{fact:2} that if the equation $Vh=g$ has a solution $h\in \mathcal{H}_{t}^\infty$,
by noting that the assumption implies that $\lim_{y\rightarrow 0} g(x,y)=0$ for almost all $x\in\RR$. Then by Theorem \ref{th:100} again, we also get that $f\in(\mathcal{H}_t)^\infty_{SL(2,\RR)}$.

\eqref{for:109} of Lemma \ref{le:9} and $(b)$ show that the equation $\mathcal{L}_Vh=Y_2g$ has a solution $h\in \mathcal{H}_{t}^\infty$. For any $\omega\in \mathcal{H}_t^\infty$, we have
\begin{align*}
    \langle h,\,\mathcal{L}_{-V}\omega\rangle&=\langle Y_2g,\,\omega\rangle=-\langle g,\,Y_2\omega\rangle=-\langle \mathcal{L}_Vf,\,Y_2\omega\rangle=\langle f,\,Y_2\mathcal{L}_{-V}\omega\rangle.
    \end{align*}
This shows that $Y_2f=h$ by Proposition \ref{cor:1} and Lemma \ref{le:10}. Then $Y_2f\in \mathcal{H}_t^\infty$.
\medskip

\emph{\textbf{Proof of $(d)$}}. From $(c)$ we see that $f\in(\mathcal{H}_t)^\infty_{SL(2,\RR)}$ and $Y_2f\in \mathcal{H}_t^\infty$. Since $Vf$ is the solution of the equation
\begin{align*}
 \mathcal{L}_V(Vf)=Vg,
\end{align*}
$Y_2Vf\in \mathcal{H}_t^\infty$ from $(c)$.

By commutator relation $XY_2+Y_2=Y_2X$, for any $h\in \mathcal{H}_{t}^\infty$ we have
\begin{align*}
 \langle Xf,\,Y_2h\rangle&=-\langle f,\,XY_2h\rangle=-\langle f,\,(Y_2X-Y_2)h\rangle\overset{\text{(1)}}{=}-\langle XY_2f+Y_2f,\,h\rangle.
\end{align*}
In $(1)$ we used that $Y_2f\in \mathcal{H}_t^\infty$. Hence we get that $Y_2Xf=XY_2f+Y_2f\in \mathcal{H}_t^\infty$.

Note that
\begin{align*}
 U\rho_t(\exp V)f&=\rho_t(\exp V)(U+X-V)f,\qquad\text{and}\\
 X\rho_t(\exp V)f&=\rho_t(\exp V)(X-2V)f.
\end{align*}
Then
\begin{align*}
  Ug=U(\mathcal{L}_Vf)&=\rho_t(\exp V)(U+X-V)f-Uf\\
  &=\mathcal{L}_V(Uf)+\rho_t(\exp V)(X-2V)f+V\rho_t(\exp V)f\\
  &=\mathcal{L}_V(Uf)+X\rho_t(\exp V)f+V\rho_t(\exp V)f\\
  &=\mathcal{L}_V(Uf)+(Xf+Xg)+(Vf+Vg)
\end{align*}
This shows that $Uf$ satisfies the equation
\begin{align*}
 \mathcal{L}_V(Uf)=Ug-Vg-Xg-Vf-Xf.
\end{align*}
By noting that $Y_2Ug-Y_2Vg-Y_2Xg-Y_2Vf-Y_2Xf\in \mathcal{H}_t^\infty$ by previous arguments, it follows from \eqref{for:109} of Lemma \ref{le:9} and Lemma \ref{le:8} that the equation
\begin{align*}
 \mathcal{L}_Vh=Y_2\big(Y_2Ug-Y_2Vg-Y_2Xg-Y_2Vf-Y_2Xf\big)
\end{align*}
show that the above equation has a solution $h\in \mathcal{H}_{t}$; furthermore, from $(b)$ we get that $h\in \mathcal{H}_t^\infty$.

For any $\omega\in \mathcal{H}_t^\infty$, we have
\begin{align*}
    \langle h,\,\mathcal{L}_{-V}\omega\rangle&=\langle \mathcal{L}_Vh,\,\omega\rangle=\langle Y_2^2\big(Ug-Vg-Xg-Vf-Xf\big),\,\omega\rangle\\
    &=\langle Ug-Vg-Xg-Vf-Xf,\,Y_2^2\omega\rangle=\langle \mathcal{L}_V(Uf),\,Y_2^2\omega\rangle\\
    &=\langle Uf,\,Y_2^2\mathcal{L}_{-V}\omega\rangle.
    \end{align*}
This shows that $h=Y_2^2(Uf)\in \mathcal{H}_t^\infty$ by Proposition \ref{cor:1} and Lemma \ref{le:10}. Since $Y_2^m(Uf)\in \mathcal{H}_t$ for any $m\geq2$ It follows from Theorem \ref{th:4} that $Y_2(Uf)\in \mathcal{H}_t$.

Using the relation  $UY_2-Y_2U=Y_1$, for any $h\in \mathcal{H}_{t}^\infty$ we have
\begin{align*}
 \langle f,\,Y_1h\rangle&=\langle f,\,(UY_2-Y_2U)h\rangle=\langle (Y_2U-UY_2)f,\,h\rangle.
\end{align*}
In the last equation we used the facts that $Y_2f\in \mathcal{H}_t^\infty$ and $Y_2(Uf)\in \mathcal{H}_t$.
This shows that $Y_1f=(Y_2U-UY_2)f\in \mathcal{H}_t$ and $Y_2Y_1f=(Y_2^2U-Y_2UY_2)f\in \mathcal{H}_t^\infty$.

\medskip

\emph{\textbf{Proof of $(e)$}}. Using \eqref{for:27} for $m=1$, we have
\begin{align*}
  \mathcal{L}_V(Y_1f)=Y_1g+Y_2\rho_t(\exp V)f=Y_1g+Y_2(f+g).
\end{align*}
Note that $Y_2f\in \mathcal{H}_t^\infty$. Then it follows from $(d)$ that $Y_1^2f\in \mathcal{H}_t$ and $Y_2Y_1^2f\in \mathcal{H}_t^\infty$.

Inductively, suppose for any $m\leq k$, $Y_1^m\in \mathcal{H}_t$ and $Y_2Y_1^mf\in \mathcal{H}_t^\infty$; furthermore, suppose $Y_1^m$ satisfies the equation
\begin{align}\label{for:61}
  \mathcal{L}_V(Y_1^mf)=f_m
\end{align}
where $f_m\in \mathcal{H}_t^\infty$. Then $(d)$ shows that $Y_1^{k+1}f\in \mathcal{H}_t$ and $Y_2Y_1^{k+1}f\in \mathcal{H}_t^\infty$.

By using \eqref{for:61} for $m=k$ and \eqref{for:27} for $m=1$ we see that
\begin{align*}
 \mathcal{L}_V(Y_1^{k+1}f)=Y_1f_m+Y_2(f_m+Y_1^kf).
\end{align*}
By assumption, $Y_1f_m+Y_2(f_m+Y_1^kf)\in \mathcal{H}_t^\infty$. Then we proved the case for $m=k+1$. This shows that $Y_1^nf\in \mathcal{H}_t$ for any $n\in \NN$. This and together with  $(c)$ implies that $f\in \mathcal{H}_t^\infty$ by Theorem \ref{th:4}. The estimates of $f$ follows from $(a)$ immediately.

\begin{remark}\label{re:6}
For the flow, we have tame solution to the cohomological
equation, see Fact \ref{fact:2}. From the proof of Theorem \ref{po:2}, we see that the solution to the discrete horocycle map loses tame estimates in every direction except $Y_2$. This is because the estimates of the solution in the proof are based on the no-tame estimates in Theorem \ref{th:2}.
\end{remark}

\subsection{Coboundary for the unipotent map in irreducible component of $G=SL(2,\RR)\ltimes\RR^4$} In this section we prove an important result, which plays an essential role proving Theorem \ref{th:101}. In this part, we assume notations in Section \ref{sec:10}. Let
$G''$ denote the subgroup
\begin{align*}
\begin{pmatrix}[ccc|c]
  a & 0 & u_1& v_1\\
c & a^{-1} & u_2& v_2
\end{pmatrix}
\end{align*}
where $a\in\RR^+$ and $c,\,v_1,\,v_2,\,u_1,\,u_2\in\RR$. Then the Lie algebra of $G''$ is generated by $X$, $V$, and $Y_i$, $1\leq i\leq4$.

From Section \ref{sec:10} we see that there are two classes of irreducible representations of $SL(2,\RR)\ltimes\RR^4$ without $L_1$ or $L_2$ fixed vectors. For simplicity, we use unified notation $(\rho_\delta,\,\mathcal{H}_\delta)$, where $\delta$ stands for $(t,s)$ (see Lemma \ref{le:3}) or $s$ (see Lemma \ref{le:2}). In the latter case, $\mathcal{H}_s=\mathbb{H}_s$.
\begin{lemma}\label{le:4}
For any irreducible component $(\rho_\delta,\,\mathcal{H}_\delta)$ of $SL(2,\RR)\ltimes\RR^4$ without $L_1$ or $L_2$ fixed vectors, if $g\in(\mathcal{H}_{\delta})_{G''}^\infty$ and the cohomological equation $\mathcal{L}_{V}f=Y_2g$ has a solution $f\in \mathcal{H}_\delta$. Then
  \begin{align}\label{for:12}
  \norm{f}_{G'',r}\leq C_{r}\norm{g}_{G'',r+4},\qquad \forall\,r\geq 0.
\end{align}
Furthermore, if $g\in(\mathcal{H}_\delta)_{G''}^\infty$, then $f\in (\mathcal{H}_\delta)^{\infty}_{G''}$.
\end{lemma}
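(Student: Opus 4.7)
The plan is to follow the strategy of Lemma~\ref{le:8} in both Fourier models from Section~\ref{sec:10}. Since $V = -xy\sqrt{-1}$ and $Y_2 = x\sqrt{-1}$ are multiplication operators in both models, the equation $\mathcal{L}_V f = Y_2 g$ has the explicit solution
\[
f = \phi \cdot g, \qquad \phi(x,y) = \frac{x\sqrt{-1}}{e^{-xy\sqrt{-1}} - 1},
\]
unique by Howe--Moore applied to $\{\exp(nV)\}_{n \in \ZZ}$. Lemma~\ref{le:9}\eqref{for:109}, applied with $z$ as an inert parameter in the Lemma~\ref{le:2} model, forces $g$ to vanish along each line $y = 2n\pi/x$, so $f$ lies in $L^2$. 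For the first class (Lemma~\ref{le:3}), one has $Y_3 = sY_1$ and $Y_4 = sY_2$ as operators on $\mathcal{H}_{t,s}$, so the $G''$-Sobolev norms agree with the $G'$-Sobolev norms up to a constant depending on $s$, and the conclusion follows directly from Lemma~\ref{le:8}.

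For the second class (Lemma~\ref{le:2}), the estimates along $V$, $Y_1$, $Y_2$, and $X$ carry over verbatim from the proof of Lemma~\ref{le:8} by treating $z$ as an inert $L^2$-parameter. Since $Y_4 = -sxz\sqrt{-1}$ is multiplication commuting with $\phi$, one has $Y_4^n f = \phi \cdot Y_4^n g$, and Lemma~\ref{le:9}\eqref{for:110} applied to $Y_4^n g$ (which vanishes at $y = 2n\pi/x$ since $g$ does) yields $\norm{Y_4^n f} \leq C\norm{g}_{G'', n+2}$. For $Y_3 = -zs\partial_y + \sqrt{-1}sx^{-1}$, direct differentiation gives
\[
Y_3 f = \phi \cdot Y_3 g - zs\,(\partial_y \phi)\, g,
\]
whose two summands have simple and double poles respectively at $y = 2n\pi/x$ but whose leading singular parts cancel (a first-order Taylor expansion of $g$ about those lines verifies this), leaving an $L^2$ function estimable by $\norm{g}_{G'', 4}$ after applying the four-region decomposition $A_1, \ldots, A_4$ of Lemma~\ref{le:8}. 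Mixed and higher-order derivatives involving $Y_3$ and $Y_4$ are reduced via iterated commutation using the relations $[V, Y_3] = Y_4$, $[V, Y_4] = 0$ (which truncates the Baker--Campbell--Hausdorff expansion $\rho(\exp V) Y_3 \rho(\exp V)^{-1} = Y_3 + Y_4$ at second order) together with the centralizer trick of Proposition~\ref{le:6}; derivatives involving $X$ are handled via Fact~\ref{re:3} exactly as in Lemma~\ref{le:8}.

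The main obstacle will be the bookkeeping for iterated $Y_3^n f$: each application of $Y_3$ generates additional powers of $z$ and $x^{-1}$ paired with higher-order factors $\partial_y^k \phi$, whose singularities at $y = 2n\pi/x$ must be systematically matched against higher-order vanishing of $g$ via Taylor expansion, paralleling the $Y_1^n f$ analysis in the proof of Lemma~\ref{le:8}. The induction $\norm{Y_3^n f}_{G'', r} \leq C_{n, r}\norm{g}_{G'', n+r+4}$ closes because the truncation $[V, Y_4] = 0$ keeps the number of commutator terms bounded at each step. Smoothness $f \in (\mathcal{H}_\delta)^\infty_{G''}$ then follows from the Sobolev estimate holding for every $r \geq 0$.
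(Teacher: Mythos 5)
Your treatment of the first class $(\rho_{t,s},\,\mathcal{H}_{t,s})$ has a genuine gap. You argue that since $Y_3=sY_1$ and $Y_4=sY_2$, ``the $G''$-Sobolev norms agree with the $G'$-Sobolev norms up to a constant depending on $s$,'' and conclude that Lemma~\ref{le:8} applies directly. But the constant $C_r$ in the lemma's conclusion must be uniform over all irreducible components $(t,s)$ — this uniformity is exactly what the direct-integral glueing in Corollary~\ref{cor:2} requires — and your comparison constant blows up as $\abs{s}\to\infty$. Concretely, you would need $\norm{Y_3^r f}=\abs{s}^r\norm{Y_1^r f}\leq C\abs{s}^r\norm{g}_{G',r+4}$ to be controlled by $\norm{g}_{G'',r+4}$, but the factor $\abs{s}^r$ cannot in general be absorbed: for instance $\norm{X^{r+4}g}$ contributes to $\norm{g}_{G',r+4}$ yet carries no $\abs{s}$ weight in the $G''$-norm. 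The paper sidesteps this with a case split: for $\abs{s}\leq1$ the naive comparison does give uniform constants, and for $\abs{s}>1$ it rescales via $(\omega,Y)=(sx,s^{-1}y)$ so that $Y_3=\partial_Y$ and $Y_4=\omega\sqrt{-1}$ become the ``unit'' vector fields, after which the argument of Lemma~\ref{le:8} applies with constants independent of $s$.

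For the second class $(\rho_s,\mathbb{H}_s)$, your observations are sound as far as they go: treating $z$ as an inert parameter for the $G'$-directions, using that $Y_4$ is a multiplier commuting with $\phi$, and the first-order cancellation of leading singularities in $Y_3f=\phi\,Y_3g-zs(\partial_y\phi)g$. But you leave the inductive closure for $Y_3^n f$ as an acknowledged obstacle without closing it, and the $z$- and $s$-dependent coefficients produced at each stage are precisely where the same uniformity-in-$s$ issue can re-enter. The paper instead handles $Y_3$ by splitting on $\abs{sz}\leq1$ — where it factors out bounded multipliers $s^m x^{-m}$ and uses the operator identity $\sqrt{-1}\,sx^{-1}=Y_3+zsY_1$ — versus $\abs{sz}>1$ — where it rescales $(\omega,Y)=(xzs,\,y(zs)^{-1})$ — reducing each region to the already-proved Lemma~\ref{le:8}. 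Adopting those region decompositions is the cleanest way to make the uniformity in the representation parameter explicit.
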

\begin{proof} Let $H$ be the subgroup generated by
$X$, $V$ and $Y_i$, $1\leq i\leq2$. Then $H$ is isomorphic to $SL(2,\RR)\ltimes\RR^2$.

If $\rho_\delta=\rho_{t,s}$, where $\rho_{t,s}$ is defined in Lemma \ref{le:3}, then $\pi_\delta|_H=\rho_t$, where $\rho_t$ is as defined in Lemma \ref{le:1}. We use the Fourier model, see \eqref{for:8}. Note that $Y_3=sY_1$ and $Y_4=sY_2$. If $\abs{s}\leq1$, then estimates \eqref{for:12} follows immediately from Lemma \ref{le:8}. If $\abs{s}>1$, by the change of variable $(\omega,\,Y)=(sx,\,s^{-1}y)$ we get $\mathcal{H}_{t,s}=L^2(\RR^2,\mu)$, where $d\mu=d\omega dY$. Computing derived representations, we get
\begin{gather*}
V=-\omega Y\sqrt{-1}, \quad Y_1=s^{-1}\partial_Y, \quad Y_2=s^{-1}\omega\sqrt{-1},\notag\\
   Y_3=\partial_Y, \quad Y_4=\omega\sqrt{-1}.
 \end{gather*}
Then $f(\omega,Y)=\frac{g(\omega,Y)\cdot s^{-1}\omega\sqrt{-1}}{e^{-\omega Y\sqrt{-1}}-1}$. Compare the above vector fields with \eqref{for:139}, then similar to the proof of Lemma \ref{le:8}, we also get \eqref{for:12}.

If $\rho_\delta=\rho_s$, where $\rho_s$ is defined in Lemma \ref{le:2}, we use the Fourier model, see \eqref{for:118}. For any $h\in \mathbb{H}_s$ we can write
\begin{align*}
  h(x,y,z)=\frac{1}{\sqrt{2\pi}}\int \hat{h}(x,y,\xi)e^{-\xi z i}d\xi
\end{align*}
where $\hat{h}(x,y,\xi)=\frac{1}{\sqrt{2\pi}}\int_{\RR}h(x,y,z)e^{iz \xi}dz$. Furthermore, we have
\begin{align*}
  Zh(x,y,z)=\frac{1}{\sqrt{2\pi}}\int Z\big(\hat{h}(x,y,\xi)\big)e^{\xi z i}d\xi,
\end{align*}
for any $Z\in\text{Lie}(G')$. This shows that $\mathbb{H}_{s}$ is a direct integral of the representation of $G'$. Then by discussion in Section \ref{sec:3} and
Lemma \ref{le:8}, we get
\begin{align}\label{for:144}
  \norm{f}_{G',s}\leq C_{s}\norm{g}_{G',s+3},\qquad \forall \, s\geq 0.
\end{align}
We note that $\mathbb{H}_{s}\mid_{SL(2,\RR)}$ is outside a fixed neighborhood the trivial representation in the sense of Fell topology  by Remark \ref{re:2}. Then it follows Proposition \ref{le:6} that
\begin{align}\label{for:145}
  \norm{Y_4^mf}\leq C_{m}\norm{g}_{m+3}\qquad \forall \,m\in\mathbb{N}.
\end{align}
Next, we will show how to obtain estimates along $Y_3$. We assume notations in proof of Lemma \ref{le:9}.  If $\abs{sz}\leq 1$, by noting that
\begin{align*}
\mathcal{L}_{V}\big(f(x,y,z)\cdot s^mx^{-m}\big)&=Y_2g(x,y,z)\cdot s^mx^{-m}\qquad \forall\,m\in\NN,
\end{align*}
and $\sqrt{-1}sx^{-1}=Y_3+zsY_2$, for any $m\in\NN$ we have:
\begin{align*}
&\int_{\abs{z}\leq \abs{s}^{-1}}|f(x,y,z)\cdot s^mx^{-m}|^2dxdydz\\
&\overset{\text{(1)}}{\leq} C\sum_{i=0}^2\int_{\abs{z}\leq \abs{s}^{-1}}|Y_1^iY_2g(x,y,z)\cdot s^mx^{-m}|^2dxdydz\\
&=C\sum_{i=0}^2\int_{\abs{z}\leq \abs{s}^{-1}}|(Y_3+zsY_2)^mY_1^iY_2g(x,y,z)|^2dxdydz\\
&\leq C_m\norm{g}^2_{m+3}.
\end{align*}
Here $(1)$ follows from Lemma \ref{le:9}.

Hence for any $m\in\NN$ we have
\begin{align}\label{for:142}
&\int_{\abs{z}\leq \abs{s}^{-1}}|Y_3^mf(x,y,z)|^2dxdydz\notag\\
&=\int_{\abs{z}\leq \abs{s}^{-1}}|(\sqrt{-1}sx^{-1}-szY_2)^mf(x,y,z)|^2dxdydz\notag\\
&\leq C_m\int_{\abs{z}\leq \abs{s}^{-1}}|f(x,y,z)\cdot s^mx^{-m}|^2dxdydz\notag\\
&+C_m\int_{\abs{z}\leq \abs{s}^{-1}}|Y_2^mf(x,y,z)|^2dxdydz\notag\\
&\leq C_m\norm{g}^2_{m+3}.
\end{align}
If $\abs{sz}>1$, by the change of variable $(\omega,\,Y)=(xzs,\,y(zs)^{-1})$, we define a unitary isomorphism $\mathcal{F}:L^2(\RR^3,dxdydz)\rightarrow L^2(\RR^3,d\omega dYdz)$ as follows:
$(\mathcal{F}h)(\omega,Y,z)=h(\omega (sz)^{-1},Ysz,z)e^{-\sqrt{-1}\omega^{-1}Ys^2z}$. Computing derived representations for the new model, we get
\begin{gather}\label{for:140}
 V=-\omega Y\sqrt{-1},\qquad Y_3=-\partial_Y,\qquad Y_4=-\omega\sqrt{-1} \notag\\
 Y_2=\omega(zs)^{-1}\sqrt{-1}
 \end{gather}
 Then we have
\begin{gather*}
f(\omega,Y,z)=\frac{g(\omega,Y,z)\cdot \omega(sz)^{-1}\sqrt{-1}}{e^{-\omega Y\sqrt{-1}}-1}
\end{gather*}
Compare \eqref{for:140} with \eqref{for:118}, it follows from Lemma \ref{le:8} that for any $z\in\RR$
\begin{align*}
  &\int|Y_3^mf(\omega,Y,z)|^2d\omega dY\leq C_m\sum_{i=0}^m \sum_{j=0}^1\int|Y_3^iY_2^{m-i+1+j}g(\omega,Y,z)\cdot (sz)^{-1}|^2d\omega dY
\end{align*}
 Hence for any $m\in\NN$ we have
\begin{align}\label{for:141}
&\int_{\abs{z}\geq\abs{s}^{-1}}|Y_3^mf(\omega,Y,z)|^2d\omega dYdz\leq C_m\norm{g}^2_{m+3}.
\end{align}
It follow from \eqref{for:142} and \eqref{for:141} that
\begin{align}\label{for:143}
&\norm{Y_3^mf(x,y,z)}\leq C_m\norm{g}_{m+3}.
\end{align}
Then the estimates follows from \eqref{for:144}, \eqref{for:145}, \eqref{for:143} and Theorem \ref{th:4}.

\end{proof}
\subsection{Global coboundary for the unipotent map in $G=SL(2,\RR)\ltimes\RR^2$ or $SL(2,\RR)\ltimes\RR^4$}\label{sec:7}
Let $(\pi,\mathcal{H})$ be a unitary representation of $SL(2,\RR)\ltimes\RR^2$ without non-trivial $\RR^2$-invariant vectors; or a unitary representation of $SL(2,\RR)\ltimes\RR^4$ without non-trivial $L_1$ or $L_2$-invariant vectors (see Section \ref{sec:10}).

We now discuss how to obtain a global solution from the solution which exists in each irreducible component of $\mathcal{H}$. By general arguments in Section \ref{sec:3} there is a direct decomposition of
$\mathcal{H}=\int_Z \mathcal{H}_zd\mu(z)$ of irreducible unitary representations of $G$ for some measure space $(Z,\mu)$. If $\pi$ has no non-trivial $\RR^2$-invariant vectors or no non-trivial $L_1$ or $L_2$-invariant vectors, then for almost
all $z\in Z$, $\pi_z$ has no non-trivial $\RR^2$-invariant vectors or no non-trivial $L_1$ or $L_2$-invariant vectors. Hence we can apply
Theorem \ref{po:2} and Lemma \ref{le:4} to prove the following:
\begin{corollary}\label{cor:3}
Let $(\pi,\mathcal{H})$ be a unitary representation of $SL(2,\RR)\ltimes\RR^2$ without non-trivial $\RR^2$-invariant vectors. If $g\in \mathcal{H}^\infty$ and the cohomological equation $\mathcal{L}_{V}f=g$  has a solution $f\in \mathcal{H}$, then  $f\in \mathcal{H}^\infty$ and satisfies
\begin{align*}
  \norm{f}_{t}\leq C_{t}\norm{g}_{2t+6},\qquad \forall\,t\geq 0.
\end{align*}
\end{corollary}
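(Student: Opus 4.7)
The plan is to reduce the global statement to the irreducible case via the direct integral decomposition and then invoke Theorem \ref{po:2}(e) fiberwise. By the discussion in Section \ref{sec:3}, since $SL(2,\RR)\ltimes\RR^2$ is a Type I group, we have a direct integral decomposition $\mathcal{H}=\int_Z \mathcal{H}_z\,d\mu(z)$ into irreducible unitary representations, with an induced decomposition $\mathcal{H}^s=\int_Z \mathcal{H}_z^s\,d\mu(z)$ for every $s\in\RR$. Because $\pi$ has no non-trivial $\RR^2$-fixed vectors, Mackey theory tells us that for $\mu$-a.e.\ $z\in Z$, the component $\pi_z$ is unitarily equivalent to some $\rho_{t(z)}$ as in Lemma \ref{le:1}, and in particular has no non-trivial $\RR^2$-fixed vectors.

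Given $g\in\mathcal{H}^\infty$ and a solution $f\in\mathcal{H}$ of $\mathcal{L}_V f=g$, I would decompose $f=\int_Z f_z\,d\mu(z)$ and $g=\int_Z g_z\,d\mu(z)$. Since all operators in the enveloping algebra are decomposable, the equation $\mathcal{L}_V f=g$ holds fiberwise almost everywhere, and $g_z\in\mathcal{H}_z^\infty$ for a.e.\ $z$. Applying Theorem \ref{po:2}(e) to each component, one obtains $f_z\in\mathcal{H}_z^\infty$ together with the Sobolev estimate
\[
\norm{f_z}_t\leq C_t\norm{g_z}_{2t+6}, \qquad \forall\,t\geq 0,
\]
where $C_t$ is a universal constant. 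The key point requiring verification is the \emph{uniformity} of $C_t$ in $z$: tracing through the proof of Theorem \ref{po:2}, the only place a potential dependence on the representation parameter $t(z)$ could enter is via the spectral gap in the application of Proposition \ref{le:6} to $\pi_z\mid_{SL(2,\RR)}$. However, by Proposition \ref{cor:1} and Remark \ref{re:2}, all restrictions $\rho_{t(z)}\mid_{SL(2,\RR)}$ are tempered and stay outside a fixed Fell-topology neighborhood of the trivial representation, so a common spectral gap $u_0$ works uniformly.

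With uniform constants in hand, I would integrate the fiberwise estimate:
\[
\norm{f}_t^2=\int_Z \norm{f_z}_t^2\,d\mu(z)\leq C_t^2\int_Z \norm{g_z}_{2t+6}^2\,d\mu(z)=C_t^2\norm{g}_{2t+6}^2,
\]
which gives both $f\in\mathcal{H}^t$ for every $t\geq 0$ (hence $f\in\mathcal{H}^\infty$) and the desired Sobolev bound. The main obstacle is the uniformity issue just discussed; once the Fell-topology argument is invoked to pin down a single spectral gap, and once measurability of $z\mapsto f_z$ is checked (which follows from the explicit construction of $f_z$ in the proof of Theorem \ref{po:2} via fiberwise-defined bounded operators applied to $g_z$), the rest is a routine integration.
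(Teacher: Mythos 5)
Your proposal is correct and follows essentially the same route the paper takes: decompose $\mathcal{H}$ into a direct integral of irreducibles, apply Theorem~\ref{po:2}(e) fiberwise, and integrate, with the uniformity of $C_t$ across components justified exactly as in the paper's proof of Theorem~\ref{po:2}(a) via Proposition~\ref{cor:1} and Remark~\ref{re:2} (all restrictions to $SL(2,\RR)$ stay outside a fixed Fell-neighborhood of the trivial representation, giving a common spectral gap). You have correctly identified and addressed the only non-routine point — the representation-independent constant — so this matches the paper's argument in Section~\ref{sec:7}.
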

\begin{corollary}\label{cor:2}
Let $(\pi,\,\mathcal{H})$ be a unitary representation $SL(2,\RR)\ltimes\RR^4$ without $L_1$ or $L_2$ fixed vectors. If $g\in\mathcal{H}_{G''}^\infty$ and the cohomological equation $\mathcal{L}_{V}f=Y_2g$ has a solution $f\in \mathcal{H}$. Then
  \begin{align*}
  \norm{f}_{G'',r}\leq C_{r}\norm{g}_{G'',r+4},\qquad \forall\,r\geq 0.
\end{align*}
\end{corollary}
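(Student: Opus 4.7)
The plan is to reduce the global estimate to the fiberwise bound of Lemma~\ref{le:4} via the direct integral machinery of Section~\ref{sec:3}. Decompose $\mathcal{H} = \int_Z \mathcal{H}_z\, d\mu(z)$ into irreducible components of $SL(2,\RR)\ltimes\RR^4$. Since $\pi$ has no non-trivial $L_1$ or $L_2$-fixed vectors, the disintegration property noted immediately before the corollary in Section~\ref{sec:7} gives that for $\mu$-almost every $z$ the component $(\pi_z,\mathcal{H}_z)$ again has no non-trivial $L_1$ or $L_2$-fixed vectors. Hence each such $\pi_z$ is of the type $\rho_{t,s}$ from Lemma~\ref{le:3} or $\rho_s$ from Lemma~\ref{le:2}, and Lemma~\ref{le:4} applies in each fiber.

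Next, write $f = \int_Z f_z\, d\mu(z)$ and $g = \int_Z g_z\, d\mu(z)$. Every element of the enveloping algebra acts in a decomposable fashion on the direct integral, so both the equation $\mathcal{L}_V f = Y_2 g$ and the Sobolev norm $\norm{\cdot}_{G'',r}$ disintegrate. In particular $\mathcal{L}_V f_z = Y_2 g_z$ in $\mathcal{H}_z$ for $\mu$-a.e.\ $z$, while $g_z \in (\mathcal{H}_z)_{G''}^\infty$ for a.e.\ $z$ with
\begin{align*}
\int_Z \norm{g_z}_{G'',r+4}^2\, d\mu(z) = \norm{g}_{G'',r+4}^2.
\end{align*}
A small point deserving mention is that the $f_z$ arising from the decomposition of $f$ must coincide with the fiber solution to which Lemma~\ref{le:4} refers. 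This is automatic from uniqueness of solutions in each irreducible component, which follows from the fact that $\exp(V)$ has no non-zero fixed vectors in either $\rho_{t,s}$ or $\rho_s$: inspection of the Fourier models in Lemmas~\ref{le:3} and~\ref{le:2} shows $\exp(V)$ acts as multiplication by $e^{-xy\sqrt{-1}}$, so a fixed vector must vanish outside a measure-zero set.

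Finally, applying Lemma~\ref{le:4} fiberwise yields $\norm{f_z}_{G'',r} \leq C_r \norm{g_z}_{G'',r+4}$ with $C_r$ depending only on $r$, uniformly in the representation parameter $\delta$. Squaring and integrating in $z$ then gives
\begin{align*}
\norm{f}_{G'',r}^2 = \int_Z \norm{f_z}_{G'',r}^2\, d\mu(z) \leq C_r^2 \int_Z \norm{g_z}_{G'',r+4}^2\, d\mu(z) = C_r^2 \norm{g}_{G'',r+4}^2,
\end{align*}
which is the required estimate. The only nontrivial ingredient of this argument is the uniformity of $C_r$ across the two families of irreducible components, and this was already built into the proof of Lemma~\ref{le:4} (via Lemma~\ref{le:8}, Proposition~\ref{le:6}, and the tempered restriction guaranteed by Proposition~\ref{cor:1} together with Remark~\ref{re:2}). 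I therefore expect no serious obstacle beyond careful bookkeeping with the direct integral.
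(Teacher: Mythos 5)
Your proposal is correct and follows essentially the same route as the paper: disintegrating $\mathcal{H}$ into irreducibles, noting that almost every fiber has no $L_1$ or $L_2$-fixed vectors, applying Lemma~\ref{le:4} fiberwise with a constant uniform in the representation parameter, and integrating the squared norms. The paper's own proof is exactly this one-paragraph direct integral reduction stated just before the corollaries; your write-up supplies the bookkeeping details (decomposability of the enveloping algebra and Sobolev norms, uniqueness of fiber solutions from absence of $\exp(V)$-fixed vectors, uniformity of $C_r$ via Remark~\ref{re:2}) that the paper leaves implicit.
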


\subsubsection{Coboundary for the unipotent map in $G=SL(2,\RR)\times\RR$}\label{sec:14} Before proceeding further with the proof of Theorem \ref{th:6}, we prove
certain technical results which are very useful for the discussion.
\begin{lemma}\label{le:15}
Suppose $G=H\times \RR$ and the Lie algebra of $H$ is $\mathfrak{sl}(2,\RR)$, and suppose $(\pi,\mathcal{H})$ is a unitary representation of $G$ such that there is a spectral gap of $u_0$ for $(\pi\mid_{H},\,\mathcal{H})$. Let $\mathfrak{u}=\begin{pmatrix}
  0 & 1 \\
  0 & 0
\end{pmatrix}\in \mathfrak{sl}(2,\RR)$ and $\chi=1\in\text{Lie}(\RR)$.
\begin{enumerate}
  \item \label{for:119} Suppose $p\in \mathcal{H}^\infty$. If the cohomological equation $\mathcal{L}_{\mathfrak{u}}h=p$ has a solution $h\in \mathcal{H}_{H}^\infty$, then $h\in \mathcal{H}^\infty$ with estimates $\norm{h}_s\leq C_{s,u_0}\norm{p}_{2s+4}$ for any $s\geq0$.

      \medskip
  \item \label{for:120}Suppose there is no non-trivial $\exp(\chi)$-invariant vectors for $\pi$; and  suppose $p,\,\psi\in \mathcal{H}^\infty$. Then the cohomological equation $\mathcal{L}_{\mathfrak{u}}\psi=\mathcal{L}_{\chi} p$ has a common solution $\rho\in \mathcal{H}^\infty$ with estimates
      \begin{align*}
        \norm{\rho}_s\leq C_s\max\{\norm{p}_{2s+4},\,\norm{\psi}_{2s+4}\}
      \end{align*}
      for any $s\geq0$.
\end{enumerate}

\end{lemma}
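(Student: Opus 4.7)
The plan for (1) is to apply Proposition~\ref{le:6} along the direction $\chi$, which commutes with $\mathfrak{u}$, and then assemble tame global estimates on $h$ via elliptic regularity. Since $G=H\times\RR$ we have $[\chi,\mathfrak{u}]=0$, so a unit rescaling of $\chi$ lies in $\mathfrak{N}_{\mathfrak{u}}$. The existence of $h\in\mathcal{H}_H^\infty\subset\mathcal{H}$ with $p\in\mathcal{H}^\infty$ forces every $\exp(\mathfrak{u})$-invariant distribution to annihilate $p$, and Howe-Moore together with the spectral gap of $\pi|_H$ yields uniqueness of the solution in $\mathcal{H}$. Hence Proposition~\ref{le:6} applies to $p$ and its unique solution coincides with $h$, giving
\[
\norm{\chi^m h}_{H,t}\leq C_{u_0,m,t}\norm{p}_{2t+m+2}\qquad\text{for all }m,t\geq 0.
\]
Taking a basis of $\mathfrak{g}=\sl(2,\RR)\oplus\RR\chi$ as a basis of $\sl(2,\RR)$ together with $\chi$, elliptic regularity (Theorem~\ref{th:4}) with $L_{2m}=X^{2m}+U^{2m}+V^{2m}+\chi^{2m}$ yields
\[
\norm{h}_{G,2m}\leq C_m\bigl(\norm{h}_{H,2m}+\norm{\chi^{2m}h}+\norm{h}\bigr)\leq C_m\norm{p}_{4m+2},
\]
so interpolation gives $\norm{h}_s\leq C_{s,u_0}\norm{p}_{2s+4}$ for every $s\geq 0$, in particular $h\in\mathcal{H}^\infty$.

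For (2), the strategy is first to construct $\rho\in\mathcal{H}_H^\infty$ satisfying $\mathcal{L}_\mathfrak{u}\rho=p$ via an $H$-irreducible decomposition, then to deduce $\mathcal{L}_\chi\rho=\psi$ from the commutativity of $\mathfrak{u}$ and $\chi$. Since $\chi$ commutes with all of $\sl(2,\RR)$, Schur's lemma shows that on each $H$-irreducible component $\mathcal{H}_z$ in the direct integral $\mathcal{H}=\int_Z\mathcal{H}_z\,d\mu(z)$, the unitary $\pi(\exp t\chi)$ acts as a scalar $e^{it\lambda_z}$, so that $\mathcal{L}_\chi$ acts on $\mathcal{H}_z$ by the scalar $e^{i\lambda_z}-1$. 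The hypothesis of no non-trivial $\exp(\chi)$-invariant vectors forces $e^{i\lambda_z}\neq 1$ for $\mu$-a.e.\ $z$. Restricting the cocycle equation to $\mathcal{H}_z$ and applying any $\mathfrak{u}$-invariant distribution $\mathcal{D}$ gives $(e^{i\lambda_z}-1)\mathcal{D}(p_z)=\mathcal{D}(\mathcal{L}_\mathfrak{u}\psi_z)=0$, hence $\mathcal{D}(p_z)=0$. Theorem~\ref{th:2} with Remark~\ref{re:5}, using the uniform spectral gap $u_0$ of $\pi|_H$, then produces a unique solution $\rho_z\in\mathcal{H}_z$ of $\mathcal{L}_\mathfrak{u}\rho_z=p_z$ satisfying
\[
\norm{\rho_z}_{H,s}\leq C_{s,u_0}\norm{p_z}_{H,2s+1+\ep}
\]
uniformly in $z$, and integrating over $z$ produces $\rho\in\mathcal{H}_H^\infty$ with $\mathcal{L}_\mathfrak{u}\rho=p$.

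Part (1) then upgrades this $\rho$ to $\rho\in\mathcal{H}^\infty$ with $\norm{\rho}_s\leq C_s\norm{p}_{2s+4}\leq C_s\max\{\norm{p}_{2s+4},\norm{\psi}_{2s+4}\}$. To identify $\mathcal{L}_\chi\rho=\psi$, the commutativity of $\mathfrak{u}$ and $\chi$ yields
\[
\mathcal{L}_\mathfrak{u}(\mathcal{L}_\chi\rho-\psi)=\mathcal{L}_\chi\mathcal{L}_\mathfrak{u}\rho-\mathcal{L}_\mathfrak{u}\psi=\mathcal{L}_\chi p-\mathcal{L}_\mathfrak{u}\psi=0,
\]
and the absence of non-trivial $\mathfrak{u}$-invariant vectors forces $\mathcal{L}_\chi\rho=\psi$. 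The most delicate step is the uniform-in-$z$ application of Theorem~\ref{th:2} across the direct integral in Part (2): one must verify that the spectral gap of $\pi|_H$ descends to a uniform spectral gap on every $H$-irreducible component so the Sobolev constants are independent of $z$, and that the fibrewise solutions integrate to a genuine element of $\mathcal{H}_H^\infty$. The scalar action of $\chi$ on each component, secured by Schur's lemma, is the conceptual crux that converts the cocycle equation into a pointwise solvability condition on $p_z$ and makes the whole argument go through.
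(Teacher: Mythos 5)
Your proof is essentially correct and lands on the same underlying ideas as the paper, so I will just flag where the routes and bookkeeping differ. For part~(1), the paper works directly in the direct integral $\pi=\int(\pi_\nu^\delta\otimes\zeta_v)$ of $G$-irreducibles: there $\chi$ acts as the scalar $v\sqrt{-1}$, so $\chi^n h_{\nu,\delta}=(v\sqrt{-1})^n h_{\nu,\delta}$ and the equation $\mathcal{L}_{\mathfrak u}(\chi^n h_{\nu,\delta})=\chi^n p_{\nu,\delta}$ yields $\norm{\chi^n h_{\nu,\delta}}\le C\norm{p_{\nu,\delta}}_{n+2}$ via Theorem~\ref{th:2}; you instead invoke Proposition~\ref{le:6} as a black box with $Y=\chi\in\mathfrak{N}_{\mathfrak u}$, which amounts to the same "centralizer trick" repackaged, and the elliptic-regularity glue via Theorem~\ref{th:4} is identical. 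For part~(2), the paper simply writes the explicit common solution $\rho_{\nu,\delta}=\psi_{\nu,\delta}/(e^{\sqrt{-1}v}-1)$ in each fiber and then quotes part~(1) for the estimate; you first solve $\mathcal{L}_{\mathfrak u}\rho=p$ via the vanishing of $\mathfrak u$-invariant distributions and Theorem~\ref{th:2}, and then deduce $\mathcal{L}_\chi\rho=\psi$ from commutativity (essentially a Lemma~\ref{le:14}-type argument). Both are fine; the paper's explicit fiberwise formula is a little cleaner and makes the $\psi$-dependence of the estimate immediate. One wording issue worth fixing: Schur's lemma applied to "each $H$-irreducible component of $\mathcal H$" is not quite a legal step, since an operator commuting with $\pi(H)$ need not preserve the fibers of an arbitrary direct integral of $H$-irreducibles (it can mix equivalent fibers). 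The correct justification, which the paper uses, is to decompose $\pi$ into irreducibles of $G=H\times\RR$ itself; each such irreducible is $\pi_\nu^\delta\otimes\zeta_v$, which is $H$-irreducible and on which $\chi$ is manifestly the scalar $v\sqrt{-1}$. With that choice of decomposition your argument goes through unchanged.
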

\begin{proof}
Irreducible unitary representations of $G$ are of the form $(\pi^\delta_\nu\otimes \zeta_v,\,\mathcal{H}^\delta_\nu)$, where $(\pi^\delta_\nu,\,\mathcal{H}^\delta_\nu)$, $\delta=0,\,\pm$ is an irreducible unitary
representation of $H$ described in Section \ref{sect:SL2R_reps}; and $\zeta_v$, $v\in\RR$ is an irreducible unitary representation of $\RR$ defined as follows: $\zeta_v(x)=e^{\sqrt{-1}xv}$, $x\in\RR$. The discussion in Section \ref{sec:3} allows us to reduce our analysis of the cohomological and cocycle equations to each irreducible component $\pi^\delta_\nu\otimes \zeta_v$ that appears in $\pi$.

\noindent\textbf{\emph{Proof of \eqref{for:119}}} In $\pi^\delta_\nu\otimes \zeta_v$, write $h_{\nu,\delta,v}=h_{\nu,\delta}$ and $p_{\nu,\delta,v}=p_{\nu,\delta}$ where $h_{\nu,\delta}$ and $p_{\nu,\delta}$ are in $(\mathcal{H}^\delta_\nu)_H^\infty$. Note that
\begin{align*}
    \chi (k_{\nu,\delta})=v\sqrt{-1}k_{\nu,\delta},\qquad\text{ where }k=h\text{ or }p.
\end{align*}
This shows that $h_{\nu,\delta}\in (\mathcal{H}^\delta_\nu)^\infty$. For any $n\in\NN$ we have
\begin{align*}
  \mathcal{L}_\mathfrak{u}\bigl(\chi^n (h_{\nu,\delta})\bigl)&=\mathcal{L}_\mathfrak{u}(v^n\sqrt{-1}^nh_{\nu,\delta})=v^n\sqrt{-1}^n\mathcal{L}_\mathfrak{u}(h_{\nu,\delta})\\
  &=v^n\sqrt{-1}^np_{\nu,\delta}=\chi^n (p_{\nu,\delta}).
\end{align*}
Since $h_{\nu,\delta}\in (\mathcal{H}^\delta_\nu)^\infty$, $\mathcal{D}\bigl(\chi^n (p_{\nu,\delta})\bigl)=0$ for any $n\in\NN$ and any $\mathcal{D}\in\mathcal{E}_{\mathfrak{u}}(\mathcal{H}^\delta_\nu)$. Theorem \ref{th:2} and Remark \ref{re:5} show that
\begin{align}\label{for:13}
    \norm{\chi^n (h_{\nu,\delta})}&\leq C\norm{\chi^n (p_{\nu,\delta})}_2\leq C\norm{p_{\nu,\delta}}_{n+2},\\
    \norm{h_{\nu,\delta}}_{H,s}&\leq C_{s,u_0}\norm{p_{\nu,\delta}\otimes 1}_{H,2s+2}\notag
\end{align}
for any $s\geq0$ and $n\in \NN$. Then it follows from Theorem \ref{th:4} that
\begin{align*}
    \norm{h_{\nu,\delta}}_{s}&\leq C_{s,u_0}\norm{p_{\nu,\delta}}_{2s+4},\qquad \forall\,s\geq0,
\end{align*}
which gives the estimates of $f$ immediately.

\medskip
\noindent\textbf{\emph{Proof of \eqref{for:120}}} By assumption we only need to consider $\pi^\delta_\nu\otimes \zeta_v$, $v\neq 2n\pi$, $n\in\ZZ$. Write $\psi_{\nu,\delta,v}=\psi_{\nu,\delta}$ and $p_{\nu,\delta,v}=p_{\nu,\delta}$, where $\psi_{\nu,\delta}$ and $p_{\nu,\delta}$ are in $(\mathcal{H}^\delta_\nu)^\infty$. Then we have
\begin{align*}
  \mathcal{L}_{\mathfrak{u}}\psi_{\nu,\delta}&=\mathcal{L}_{\chi} p_{\nu,\delta}=p_{\nu,\delta}e^{\sqrt{-1}v}-p_{\nu,\delta}.
\end{align*}
which immediately gives
\begin{align*}
  \mathcal{L}_{\mathfrak{u}}\big(\frac{\psi_{\nu,\delta}}{e^{\sqrt{-1}v}-1}\big)&=p_{\nu,\delta},
\end{align*}
and
\begin{align*}
  \mathcal{L}_{\chi}\big(\frac{\psi_{\nu,\delta}}{e^{\sqrt{-1}v}-1}\big)&=\psi_{\nu,\delta}.
\end{align*}
Let $\rho_{\nu,\delta}=\frac{\psi_{\nu,\delta}}{e^{\sqrt{-1}v}-1}$. Results in previous part shows that
\begin{align*}
    \norm{\rho_{\nu,\delta}}_{t}&\leq C_{s,u_0}\max\{\norm{p_{\nu,\delta}}_{2s+4},\,\norm{\psi_{\nu,\delta}}_{2s+4}\},\qquad \forall\,s\geq 0
\end{align*}
which gives the existence and estimates of $\rho$ immediately.
\end{proof}
\begin{remark}\label{re:4}
If $\mathfrak{u}$ and $\chi$ imbed in a Lie algebra isomorphic to $sl(2,\RR)\ltimes\RR^2$, then the cohomological equation $\mathcal{L}_{\mathfrak{u}}\psi=\mathcal{L}_{\chi} p$ probably fail to have a common solution. For example, we consider the irreducible component $(\rho_t,\,\mathcal{H}_t)$ at $t=0$. If $g=h_1(x)h_2(y)$ where $h_i\in C_0^\infty[-1,1]$, $i=1,\,2$ and satisfies: $h_1=1$ on $[-\frac{1}{2},\frac{1}{2}]$ and $h_2(0)=0$. Let
\begin{align*}
f(x,y)&=\frac{h_1(x)(e^{x\sqrt{-1}}-1)h_2(y)}{e^{-xy\sqrt{-1}}-1}\\
&=h_1(x)\cdot\frac{h_2(y)}{y}\cdot\frac{(e^{x\sqrt{-1}}-1)}{x}\cdot\frac{xy}{e^{-xy\sqrt{-1}}-1}.
\end{align*}
It is easy to check that $f\in \mathcal{H}_0$ is a solution to the cohomological equation $\mathcal{L}_Vf=\mathcal{L}_{Y_2}g$. If the cohomological equation has a common solution $h\in \mathcal{H}_0$, then $\mathcal{L}_Vh=g$. Hence we have
\begin{align*}
h(x,y)&=\frac{h_1(x)h_2(y)}{e^{-xy\sqrt{-1}}-1}=\frac{h_2(y)}{y}\cdot\frac{h_1(x)}{x}\cdot\frac{xy}{e^{-xy\sqrt{-1}}-1}.
\end{align*}
 This implies that $h_1(x)\cdot x^{-1}\in L^2(\RR)$, which is a contradiction.
\end{remark}

\section{Proof of Theorem \ref{th:6} and \ref{th:101}}\subsection{Proof of \eqref{for:35} of Theorem \ref{th:6}}
To apply Proposition \ref{le:6} or Theorem \ref{th:2}, it suffices to prove that $\pi\mid_{G_{\mathfrak{v}}}$ has a spectral gap (see Definition \ref{de:3}).
By Howe-Moore, $\pi\mid_{G'_{\mathfrak{v}}}$ has no non-trivial $\RR^2$-invariant vectors where $G'_{\mathfrak{v}}$ is a subgroup in $\GG$ containing $G_{\mathfrak{v}}$ and isomorphic to $SL(2,\RR)\ltimes\RR^2$. Remark \ref{re:2} shows that $\pi\mid_{G_{\mathfrak{v}}}$ is outside a fixed neighborhood of trivial representation in the Fell topology, which proves the claim. Then the result follows immediately from Proposition \ref{le:6} or Theorem \ref{th:2}.

\subsection{Proof of \eqref{for:33} of Theorem \ref{th:6}}\label{sec:8}

 Since $\Phi$ is a reduced irreducible root system, for any $\psi\in \Phi$, $\psi\neq \pm\phi$, either $\phi\pm\psi\notin\Phi$, or one of $\phi\pm\psi$
 belongs to $\Phi$. In the first case, we know that $\mathfrak{u}_\phi$ and $\mathfrak{u}_\psi$ imbed in a Lie algebra isomorphic to $sl(2,\RR)\times\RR$. Then \eqref{for:13} of Lemma \ref{le:15} that for any $t\geq0$,
\begin{align}\label{for:1138}
 \norm{f}_{U_\psi,t}\leq C_{t}\norm{g}_{t+2}.
\end{align}
 For the second case,  consider $\Psi=\{i\phi+j\psi\in\Phi\mid\,i,\,j\in\ZZ\}$. Then $\Psi$ is a reduced irreducible root system of rank $2$. Denote by $G$
the closed subgroup
of $\GG$ with its Lie algebra generated by the root sub-subgroups $\mathfrak{u}_\varphi$, $\varphi\in\Psi$. Note that each $\mathfrak{u}_\varphi$, $\varphi\in\Psi$ is one-dimensional. Then $G$ is of type $A_2$. This shows that $\mathfrak{u}_\phi$ and $\mathfrak{u}_\psi$ imbed in a Lie algebra isomorphic to $sl(2,\RR)\ltimes\RR^2$.
 It follows from Corollary \ref{cor:3} that
 \begin{align}\label{for:53}
  \norm{f}_{G,t}&\leq C_{t}\norm{g}_{G,2t+6}.
 \end{align}
For the Cartan subalgebra $\mathcal{C}$, we can find a basis $\{\mathcal{C}_i\}$ of $\mathcal{C}\cap\mathfrak{G}^1$, such that $\mathcal{C}_1\in [\mathfrak{u}_\phi, \mathfrak{u}_{-\phi}]\subset\text{Lie}(G)$ and $[\mathcal{C}_i,\mathfrak{u}_\phi]=0$, $i\neq1$. Then
\eqref{for:13} of Lemma \ref{le:15} that for any $m\in\NN$,
\begin{align}\label{for:38}
 \norm{\mathcal{C}_i^mf}\leq C_{m}\norm{g}_{m+2},\qquad i\neq1.
\end{align}
Then the estimates follow immediately from \eqref{for:1138}, \eqref{for:53} and \eqref{for:38} and Theorem \ref{th:4}.

\subsection{Proof of Theorem \ref{th:101} when $\GG=SL(n,\RR)$, $n\geq4$}\label{sec:2}
Firstly, we list a fact which will be used in the next proofs.
\begin{fact}\label{fact:3}
(Corollary 6.11 and Lemma 6.13 of \cite{W1}) Let $(\pi,\mathcal{H})$ be a unitary representation of $SL(2,\RR)\ltimes\RR^2$ without non-trivial $\RR^2$-invariant vectors. If $g\in \mathcal{H}^\infty$ and the cohomological equation $Vf=g$  has a solution $f\in \mathcal{H}$, then  $f\in \mathcal{H}^\infty$ and satisfies
\begin{align*}
  \norm{f}_{t}\leq C_{t}\norm{g}_{t+5},\qquad \forall\,t\geq 0.
\end{align*}
We also have: suppose $(\pi,\mathcal{H})$ is a unitary representation of $SL(2,\RR)\ltimes\RR^2$ such that $\pi\mid_{SL(2,\RR)}$ only contains the principal and
discrete series. If $g\in \mathcal{H}^\infty$ and the cohomological equation $Vf=g$ has a solution $f\in \mathcal{H}_{SL(2,\RR)}^\infty$, then $f\in \mathcal{H}^\infty$ with estimates
\begin{align*}
  \norm{f}_{t}\leq C_{t}\norm{g}_{t+2},\qquad \forall\,t\geq 0.
\end{align*}
\end{fact}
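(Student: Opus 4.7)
The plan is to reduce to irreducible components via the direct integral decomposition from Section~\ref{sec:3}, and then work in the explicit Fourier model of Lemma~\ref{le:1}. By Mackey theory (Theorem~\ref{th:1}) the only non-trivial $SL(2,\RR)$-orbit in $\widehat{\RR^2}$ yields the family $(\rho_t,\mathcal{H}_t)_{t\in\RR}$, while the fixed orbit $\{0\}$ corresponds to representations factoring through $SL(2,\RR)$; on the latter, $V$ is the ordinary horocycle vector field and the Flaminio--Forni tame estimate applies directly. It therefore suffices to handle each $(\rho_t,\mathcal{H}_t)$ uniformly in $t$.

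In the Fourier model \eqref{for:139} the field $V$ is multiplication by $-xy\sqrt{-1}$, so $Vf=g$ becomes the algebraic identity $f(x,y)=\sqrt{-1}\,g(x,y)/(xy)$. The hypothesis $f\in L^2(\RR^2)$ forces $g$ to vanish on the axes $\{xy=0\}$. Combining this vanishing with the Sobolev imbedding theorem and a Hardy-type argument exactly along the lines of \eqref{for:130}--\eqref{for:131}, one controls $|g(x,y)/(xy)|$ pointwise by $W^1$-norms of the first derivative of $g$ on bounded intervals; partitioning $\RR^2$ into $\{|xy|\geq 1\}$ and $\{|xy|<1\}$ as in the proof of Lemma~\ref{le:8} yields $\|f\|\le C\|g\|_4$ on each irreducible component.

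Higher Sobolev estimates proceed by iterating this procedure: differentiating the identity $f=\sqrt{-1}g/(xy)$ by each basis vector field from \eqref{for:139} produces terms of the form $(Zg)/(xy)$ plus lower-order contributions involving higher inverse powers of $x$ or $y$, and each is controlled by the same Hardy-type estimate once enough derivatives of $g$ have been prescribed to vanish on the axes, which is supplied by $g\in\mathcal{H}^\infty$. The $X$ and $U$ directions require additional care because of the singular factor $tx^{-2}\sqrt{-1}$ appearing in $U$; these are treated by applying Proposition~\ref{le:6} along the embedded $SL(2,\RR)$, where the spectral gap is guaranteed uniformly in $t$ by Remark~\ref{re:2} (complementary series are excluded since $\pi|_{SL(2,\RR)}$ is automatically tempered by Proposition~\ref{cor:1}). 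The elliptic regularity theorem~\ref{th:4} then assembles the partial estimates along the basis into the stated global bound $\|f\|_t\le C_t\|g\|_{t+5}$ with constants independent of $t$, which glues back over the direct integral.

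For the second part, the additional hypothesis $f\in\mathcal{H}^\infty_{SL(2,\RR)}$ combined with the explicit exclusion of complementary series from $\pi|_{SL(2,\RR)}$ means the $X,U$ directions are already controlled by assumption, bootstrapped by a uniform Flaminio--Forni estimate with small loss; only the $Y_1,Y_2,V$ estimates need to be established via the Hardy-type argument, which trims the final loss to $t+2$. The main obstacle throughout is keeping constants uniform in the representation parameter $t$ while handling the singularity along $\{x=0\}$, since the coefficient $tx^{-2}$ in $U$ is unbounded in $t$; Remark~\ref{re:2} is essential here because without temperedness of $\pi|_{SL(2,\RR)}$ the Flaminio--Forni constants in the bootstrap would blow up as $t$ varies.
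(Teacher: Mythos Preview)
The paper does not prove this statement; it is quoted verbatim as a fact from \cite{W1} (Corollary 6.11 and Lemma 6.13 there) and used as a black box in Sections~\ref{sec:2} and following. There is therefore no proof in the present paper to compare your proposal against.

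That said, your sketch is a plausible reconstruction of the argument one expects in \cite{W1}, and it correctly identifies the main ingredients: reduction to the irreducible models $(\rho_t,\mathcal{H}_t)$ via the direct integral, the algebraic form $f=\sqrt{-1}\,g/(xy)$ of the equation in the Fourier model, a Hardy-type estimate near the axes, and the use of temperedness (Proposition~\ref{cor:1}, Remark~\ref{re:2}) to secure a uniform spectral gap for the embedded $SL(2,\RR)$. One point deserves more care: you assert $\|f\|\le C\|g\|_4$ from the Hardy argument and then claim the iteration yields the stated $+5$ loss, but you do not track how differentiating by $U=tx^{-2}\sqrt{-1}-\partial_x\partial_y\sqrt{-1}$ interacts with the singular factor $1/(xy)$; the second-order nature of $U$ in this model and the $t$-dependent singularity at $x=0$ are exactly what force the extra regularity loss beyond what the Hardy estimate alone would suggest, and your sketch does not make explicit why the final count is $+5$ rather than some other number. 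Similarly, in the second part your claim that the loss ``trims to $t+2$'' once $f\in\mathcal{H}^\infty_{SL(2,\RR)}$ is asserted rather than derived. These are bookkeeping gaps rather than conceptual errors, but since the precise constants $+5$ and $+2$ are the content of the cited results, a complete proof would need to close them.
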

Fix $3\leq m\neq l\leq n$. Set $F=\mathfrak{u}_{1,m}\mathfrak{u}_{l,2}f$. Note that
\begin{align*}
  \mathcal{L}_{\mathfrak{u}_{1,2}}F=\mathfrak{u}_{1,m}(\mathfrak{u}_{l,2}g).
\end{align*}
For $k\geq 3$, let $G'_k$ be the closed subgroup in $SL(n,\RR)$ generated by $U_{1,2}$, $X_{1,2}$, $U_{1,k}$, $U_{2,k}$, $U_{1,m}$ and $U_{2,m}$. Then for any $k\neq m$, the subgroup generated by $G'_k$ and $U_{2,1}$ are isomorphic to $SL(2,\RR)\ltimes\RR^4$. Thanks to Howe-Moore, it follows from Corollary \ref{cor:2} that
\begin{align}\label{for:29}
  \norm{F}_{G'_k,t}&\leq C_{t}\norm{\mathfrak{u}_{l,2}g}_{G'_k,t+4}\leq C_{t}\norm{g}_{G'_k,t+5},\qquad t\geq0,
\end{align}
for all $3\leq k\leq n$.

We also note that
\begin{align*}
  \mathcal{L}_{\mathfrak{u}_{1,2}}F=\mathfrak{u}_{l,2}(\mathfrak{u}_{1,m}g).
\end{align*}
Let $H'_k$ in $SL(n,\RR)$ be the subgroup generated by $U_{1,2}$, $X_{1,2}$, $U_{l,1}$, $U_{l,2}$, $U_{k,1}$, and $U_{k,2}$. By similar arguments, we have
\begin{align}\label{for:30}
  \norm{F}_{H'_k,t}&\leq C_{t}\norm{\mathfrak{u}_{1,m}g}_{H'_k,t+4}\leq C_{t}\norm{g}_{H'_k,t+5},\qquad t\geq0,
\end{align}
for all $3\leq k\leq n$.

Let $Z$ stand for $U_{i,j}$ or $X_{k,l}$, where $i,\,j\geq 3$ and $3\leq k\neq l\leq n$. Then it follows from \eqref{for:13} of Lemma \ref{le:15} that for any $m\in\NN$,
\begin{align}\label{for:146}
 \norm{Z^mF}&\leq C_{m}\norm{\mathfrak{u}_{1,m}\mathfrak{u}_{l,2}g}_{m+2}\leq C_{m}\norm{g}_{m+4}.
\end{align}
Denote by $\mathcal{A}$ the subspace of of $sl(n,\RR)$ spanned by the linear algebra of $G_k'$, $H_k'$, $3\leq k\leq n$, together with $U_{i,j}$, $X_{k,l}$, where $i,\,j\geq 3$ and $3\leq k\neq l\leq n$. Then $\mathcal{A}$ is of codimension $1$; and the linear span of $\mathcal{A}$ and $\mathfrak{u}_{2,1}$ is $sl(n,\RR)$.
Then it follows from \eqref{for:29}, \eqref{for:30} and \eqref{for:146} and Theorem \ref{th:4} that for subgroup $S$ of $SL(n,\RR)$ with $\text{Lie}(S)\subseteq \mathcal{A}$ we have
\begin{align}\label{for:31}
 \norm{F}_{S,s}&\leq C_{s}\norm{g}_{s+5},\qquad \forall\,s\geq 0.
\end{align}
Note that
\begin{align*}
  \mathfrak{u}_{1,m}(\mathfrak{u}_{l,2}f)=F.
\end{align*}
Let $P_m$ denote the subgroup generated by $\mathfrak{u}_{1,m}$, $\mathfrak{u}_{m,1}$, $X_{1,m}$. It is clear that $P_m$ is isomorphic to $SL(2,\RR)$.
Let $Z$ stand for $\mathfrak{u}_{l,2}$, $\mathfrak{u}_{2,l}$, $X_{2,l}$, $\mathfrak{u}_{2,k}$ and $\mathfrak{u}_{l,k}$, $3\leq k\neq m$. Let $P_k'$ be the subgroup generated by $P_m$ and $\{\exp(tZ)\}_{t\in\RR}$. Then $P_k'$ are isomorphic to $SL(2,\RR)\times\RR$. Since $\text{Lie}(P_k')\subseteq \mathcal{A}$, it follows from Fact \ref{fact:3}, Remark \ref{re:2} and \eqref{for:31} that
\begin{align}\label{for:42}
 \norm{\mathfrak{u}_{l,2}f}_{P_k',s}&\leq C_{s}\norm{F}_{P_k', s+2}\leq C_{s}\norm{g}_{s+7},\qquad \forall\,s\geq 0.
\end{align}
for $3\leq k\neq m$.

Note that the subgroup generated by $\mathfrak{u}_{l,2}$, $\mathfrak{u}_{2,l}$, $X_{2,l}$, $\mathfrak{u}_{2,k}$ and $\mathfrak{u}_{l,k}$, $3\leq k\neq m$, which are denoted by $L_k'$, are isomorphic to $SL(2,\RR)\ltimes\RR^2$, then above estimates, together with Howe-Moore and Fact \ref{fact:3} imply that
\begin{align}\label{for:43}
 \norm{f}_{L_k',s}&\leq C_{s}\norm{\mathfrak{u}_{l,2}f}_{P_k', s+5}\leq C_{s}\norm{g}_{s+12},\qquad \forall\,s\geq 0.
\end{align}
Hence we get
\begin{align}\label{for:32}
   \max\{\norm{\mathfrak{u}_{2,k}^sf},\,\norm{X_{2,l}^sf}\}&\leq C_{s}\norm{g}_{s+12},\qquad \forall\,s\geq 0,
\end{align}
for any $3\leq k\neq m$. Note that we also have
\begin{align*}
  \mathfrak{u}_{l,2}(\mathfrak{u}_{1,m}f)=F.
\end{align*}
Then similar arguments shows that
\begin{align}\label{for:37}
   \max\{\norm{\mathfrak{u}_{k,2}^sf},\,\norm{X_{1,m}^sf}\}&\leq C_{s}\norm{g}_{s+12},\qquad \forall\,s\geq 0,
\end{align}
for any $3\leq k\neq l$.

Since $m$ and $l$ are chosen arbitrarily. Then the estimates follow immediately from \eqref{for:1138}, \eqref{for:32}, \eqref{for:37} and Theorem \ref{th:4}.

\subsection{Proof of Theorem \ref{th:101} for other cases} Since $\Phi$ is a reduced irreducible root system, there exists
a root $\psi\in \Phi$ such that $\psi-\phi$ belongs to $\Phi$. Set $\Psi=\{i\phi+j\psi\in\Phi\mid\,i,\,j\in\ZZ\}$. For any $\beta\in \Phi$, but $\beta\notin\Psi$, either
$\phi\pm\beta\notin\Phi$, or one of $\phi\pm\beta$ belongs to $\Phi$.

For the first case,  we see that $\mathfrak{u}_\phi$ and $\mathfrak{u}_\beta$ imbed in a Lie algebra isomorphic to $sl(2,\RR)\times\RR$. Then \eqref{for:13} of Lemma \ref{le:15} that for any $t\geq0$,
\begin{align}\label{for:54}
 \norm{f}_{U_\beta,t}\leq C_{t}\norm{g}_{t+2}.
\end{align}
For the second case,  we consider $\Psi_1=\{i\phi+j\psi+\ell\beta\in\Phi\mid\,i,\,j,\,\ell\in\ZZ\}$. Then $\Psi_1$ is a reduced irreducible root system of rank $3$. Denote by $G$ the closed subgroup
of $\GG$ with its Lie algebra generated by the root sub-subgroups $\mathfrak{u}_\varphi$, $\varphi\in\Psi_1$. Note that each $\mathfrak{u}_\varphi$, $\varphi\in\Psi$ is one-dimensional. Then $G$ is of type $A_3$. Results in Section \ref{sec:2} show that
\begin{align}\label{for:47}
 \norm{f}_{U_\beta,s}&\leq C_{s}\norm{g}_{s+12},\qquad \forall\,s\geq 0
\end{align}
and
\begin{align*}
 \norm{\mathcal{C}_1^mf}&\leq C_{m}\norm{g}_{m+12},\qquad \forall\,m\geq 0
\end{align*}
where $\mathcal{C}_1\in[\mathfrak{u}_\phi, \mathfrak{u}_{-\phi}]\cap\mathfrak{G}^1$.

For the Cartan subalgebra $\mathcal{C}$, we can find a basis $\{\mathcal{C}_i\}$ of $\mathcal{C}\cap\mathfrak{G}^1$, such that $\mathcal{C}_1$ is as described above and $[\mathcal{C}_i,\mathfrak{u}_\phi]=0$, $i\neq1$. Then
\eqref{for:13} of Lemma \ref{le:15} that for any $m\in\NN$,
\begin{align*}
 \norm{\mathcal{C}_i^mf}\leq C_{m}\norm{g}_{m+2},\qquad i\neq1.
\end{align*}
Then estimates follows immediately from above estimates,  \eqref{for:54}, \eqref{for:47} and Theorem \ref{th:4}.

\section{Proof of Theorem \ref{th:8}}\label{sec:22}
Note that $\mathfrak{u}_\phi$ and $\mathfrak{u}_\psi$ can imbed in $\mathfrak{sl}(2,\RR)\times\RR$. \eqref{for:120} of Lemma \ref{le:15} implies that there is a common solution $h\in\mathcal{H}$ to the cocycle equation $\mathcal{L}_\mathfrak{u}f=\mathcal{L}_\mathfrak{v}g$. Then it follows from  \eqref{for:33} of Theorem \ref{th:6} that $h\in \mathcal{H}^\infty$. Since $\mathcal{L}_\mathfrak{u}h=g$, it follows from Theorem \ref{th:101} that
\begin{align}\label{for:48}
 \norm{\nu^nh}\leq C_n\norm{g}_{n+12}
\end{align}
for any $\nu\in\mathfrak{u}_\mu\bigcap\mathfrak{G}^1$, where $\mu\neq -\phi$; and
\begin{align}\label{for:49}
 \norm{Y^nh}\leq C_n\norm{g}_{n+3}
\end{align}
for any $Y\in \mathcal{C}\bigcap\mathfrak{G}^1$.

On the other hand, we also have $\mathcal{L}_\mathfrak{v}h=f$. Then by Theorem \ref{th:101} again we have
\begin{align*}
 \norm{\nu^nf}\leq C_n\norm{f}_{n+12}
\end{align*}
for any $\nu\in\mathfrak{u}_\mu\bigcap\mathfrak{G}^1$, where $\mu\neq -\psi$; especially, we have
\begin{align}\label{for:50}
 \norm{\nu^nh}\leq C_n\norm{f}_{n+12}
\end{align}
for any $\nu\in\mathfrak{u}_\mu\bigcap\mathfrak{G}^1$, where $\mu=-\phi$.

Then the estimates follow immediately from \eqref{for:48}, \eqref{for:49}, \eqref{for:50} and Theorem \ref{th:4}.

\section{Proof of Theorem \ref{th:7}}\label{sec:21}
\subsection{Unitary representations of $SL(n, \R)$, $n \geq 3$}\label{sect:SLnR-Rep}
The detailed study of the representation $\text{Ind}_P^{SL(n,\RR)}( \lambda_{t}^{\pm})$ are given in \cite{W1}. In this part, we just list the results.
Let $g_{i,j}^t=\exp(t\mathfrak{u}_{i,j})$ and $h_i^t=\exp(tX_i)$, $t\in\RR$. The realization of the representation $\text{Ind}_P^{SL(n,\RR)}( \lambda_{t}^{\pm})$
on $L^2(\RR^{n-1},\,dx)$ can be formulated as follows:
\begin{align*}
&\text{Ind}_P^{SL(n,\RR)}( \lambda_{t}^{\pm})(h_{i}^s)f(x_1,\cdots,x_{n-1})\\
&=\left\{\begin{aligned} &e^{sn/2}e^{ts\sqrt{-1}}f(e^{2s}x_1,e^{s}x_{2}\cdots,e^{s}x_{n-1}),&\quad &i=1\\
&f(x_1,\cdots,e^{-s}x_{i-1},e^{s}x_{i}\cdots,x_{n-1}),&\quad& i\geq 2;
\end{aligned}
 \right.
\end{align*}
and has the following expressions
\begin{align*}
&\text{Ind}_P^{SL(n,\RR)}( \lambda_{t}^{\pm})(g_{i,j}^s)f(x_1,\cdots,x_{n-1})\\
    &=\left\{\begin{aligned} &\abs{1-x_{j-1}s}^{-n/2-t\sqrt{-1}}\varepsilon^{\pm}(1-x_{j-1}s)\\
    &\cdot f(\frac{x_1}{1-x_{j-1}s},\cdots,\frac{x_{n-1}}{1-x_{j-1}s}),&\quad &i=1,\,\,j\geq 2,\\
&f(x_1,\cdots,x_{i-1}-sx_{j-1},\cdots,x_{n-1}),&\quad &i\geq 2,\,\,j\neq 1,\\
&f(x_1,\cdots,x_{i-1}-s,\cdots,x_{n-1}),&\quad &i\geq 2,\,\,j= 1.
\end{aligned}
 \right.
\end{align*}
Since the one-parameter subgroups $g_{i,j}^t$ and $h_\ell^t$ generate $SL(n,\RR)$, the actions of these subgroups determine the group action of $SL(n,\RR)$. Computing derived representations, we get
\begin{align}\label{for:9}
X_i=&\left\{\begin{aligned} &(\frac{n}{2}+t\sqrt{-1})+2x_1\partial_{x_{1}}+\sum_{k=2}^{n-1}x_k\partial_{x_{k}},&\qquad \qquad &i=1,\\
&-x_{i-1}\partial_{x_{i-1}}+x_{i}\partial_{x_{i}},&\qquad \qquad &i\geq 2,
\end{aligned}
 \right.
\end{align}
and
\begin{align}\label{for:105}
\mathfrak{u}_{i,j}=&\left\{\begin{aligned} &(\frac{n}{2}+t\sqrt{-1})x_{j-1}+\sum_{k=1}^{n-1}x_{j-1}x_k\partial_{x_{k}},&\qquad \qquad &i=1,\,\,j\geq 2,\\
&-x_{j-1}\partial_{x_{i-1}},&\qquad \qquad &i\geq 2, \,\,j\neq 1,\\
&-\partial_{x_{i-1}},&\qquad \qquad &i\geq 2, \,\,j= 1.
\end{aligned}
 \right.
\end{align}
We are now in a position to proceed with the proof of Theorem \ref{th:7}.
 Noting that the Weyl group is the symmetric group $S_n$ which operates simply transitively on the set of Weyl chambers, we may assume that one element in the set $\{\mathfrak{u}_{i_k,j_k}$, $1\leq k\leq m\}$ is $\mathfrak{u}_{2,1}$. By assumption, the other ones either from the set $\{\mathfrak{u}_{2,j}:j\geq 3\}$ or from the set
$\{\mathfrak{u}_{j,1}:j\geq 3\}$.
\begin{case} The other elements are $\mathfrak{u}_{2,i_k}$, $i_k\geq 3$ and $2\leq k\leq m$.

If we take the Fourier transformation on $x_1$, i.e.,
\begin{align*}
  &\hat{f}(x_1,\cdots,x_{n-1})=\frac{1}{\sqrt{2\pi}}\int_{\RR}f(\xi_1,x_2,\cdots,x_{n-1})e^{-ix_1\xi_1}d\xi_1
\end{align*}
 for any $f\in L^2(\RR^{n-1},dx)$ we get the Fourier model:
\begin{align*}
\mathfrak{u}_{2,j}=&\left\{\begin{aligned}
&-x_{j-1}x_{1}\sqrt{-1},&\qquad \qquad &j\neq 1,\\
&-x_{1}\sqrt{-1},&\qquad \qquad &j= 1.
\end{aligned}
 \right.
\end{align*}
Let $h(x_1,\cdots,x_{n-1})=p(x_1)p(x_2)\cdots p(x_{n-1})$ with $p\in C_0^\infty[-1,1]$ and $p=1$ on $[-\frac{1}{2},\frac{1}{2}]$. Let $f_1=h$ and
\begin{align*}
  f_{k}&=h\cdot \frac{e^{-x_1x_{i_k-1}\sqrt{-1}}-1}{e^{-x_1\sqrt{-1}}-1}\\
  &=h\cdot x_{i_k-1}\cdot\frac{e^{-x_1x_{i_k-1}\sqrt{-1}}-1}{x_1x_{i_k-1}}\cdot\frac{x_1}{e^{-x_1\sqrt{-1}}-1},
\end{align*}
$2\leq k\leq m$. From relations in \eqref{for:9} and \eqref{for:105}, it is easy to check that $f_k$, $1\leq k\leq m$ are smooth vectors in $\text{Ind}_P^{SL(n,\RR)}( \lambda_{t}^{\pm})$. Using relations described above for the Fourier model we have
\begin{align*}
 \mathcal{L}_{\mathfrak{u}_{2,1}}f_{k}&=(e^{-x_1\sqrt{-1}}-1)f_{k}=(e^{-x_1x_{i_k-1}\sqrt{-1}}-1)f_1=\mathcal{L}_{\mathfrak{u}_{2,i_k}}f_1
\end{align*}
and
\begin{align*}
 \mathcal{L}_{\mathfrak{u}_{2,i_\ell}}f_{k}&=(e^{-x_1x_{i_\ell-1}\sqrt{-1}}-1)f_{k}=(e^{-x_1x_{i_k-1}\sqrt{-1}}-1)f_{\ell}
 =\mathcal{L}_{\mathfrak{u}_{2,i_k}}f_{\ell}
\end{align*}
where $i_k,\,i_\ell\geq 3$. Then $f_i$, $1\leq i\leq m$ satisfy all cocycle equations.

If $\mathcal{L}_{\mathfrak{u}_{2,1}}\omega=f_1$ where $\omega\in L^2(\RR^{n-1},dx)$, then $(e^{-x_1\sqrt{-1}}-1)\omega=h$. Then
\begin{align*}
\omega=\frac{p(x_1)}{e^{-x_1\sqrt{-1}}-1}\cdot p(x_2)\cdots p(x_{n-1})
\end{align*}
This implies that $\frac{p(x_1)}{e^{-x_1\sqrt{-1}}-1}\in L^2(\RR)$. Then we get a contradiction. Note that all the other equations $\mathcal{L}_{\mathfrak{u}_{2,i_k}}\omega=f_{k}$, $i_k\geq 3$ are equivalent to the earlier equation $(e^{-x_1\sqrt{-1}}-1)\omega=h$. Thus we proved the claim.
\end{case}
\begin{case}The other elements are $\mathfrak{u}_{i_k,1}$, $i_k\geq 3$ and $2\leq k\leq m$.

If we take the Fourier transformation, i.e.,
\begin{align}\label{for:52}
  &\hat{f}(x_1,\cdots,x_{n-1})\notag\\
  &=\frac{1}{(2\pi)^{\frac{n}{2}}}\int_{\RR}f(\xi_1,\xi_2,\cdots,\xi_{n-1})e^{-ix_1\xi_1}\cdots e^{-ix_{n-1}\xi_{n-1}}
  d\xi_1\cdots d\xi_{n-1}
\end{align}
 for any $f\in L^2(\RR^{n-1},dx)$ we get the Fourier model:
\begin{align}\label{for:51}
\mathfrak{u}_{i,1}=-x_{i-1}\sqrt{-1}, \qquad i\geq2.
\end{align}
We set $i_1=2$. By Theorem 2.5 in \cite{W1}, we can find smooth vectors $f_{j}$, $2\leq j\leq n-1$ and $j\neq i_2$ in $\text{Ind}_P^{SL(n,\RR)}( \lambda_{t}^{\pm})$ satisfying the following:
\begin{align*}
\mathfrak{u}_{i_\ell,1}f_j=\mathfrak{u}_{i_j,1}f_\ell,
\end{align*}
but non of the equations:
\begin{align*}
  \mathfrak{u}_{i_j,1}h=f_j
\end{align*}
has a solution $h\in L^2(\RR^{n-1})$.

By \eqref{for:51}, the above assumption implies that $f_j=f_1\cdot\frac{x_{i_j-1}}{x_1}$,
but $f_j\cdot\frac{1}{x_{i_j-1}}=\frac{f_1}{x_1}\notin L^2(\RR^{n-1})$. Suppose $r(x)\in C_0^\infty([-1,1])$ satisfying $r(x)=1$ if $x\in [-\frac{1}{2},\frac{1}{2}]$. Set $f_1'=f_1\cdot r(x_1)$ and
\begin{align*}
  f_j'&=f_1'\cdot\frac{e^{-x_{i_j-1}\sqrt{-1}}-1}{e^{-x_1\sqrt{-1}}-1}\\
  &=f_1'\cdot\frac{x_{i_j-1}}{x_1}\cdot\frac{e^{-x_{i_j-1}\sqrt{-1}}-1}{x_{i_j-1}}\cdot\frac{x_1}{e^{-x_1\sqrt{-1}}-1}\\
  &=f_j\cdot r(x_1)\cdot\frac{x_{i_j-1}}{x_1}\cdot\frac{e^{-x_{i_j-1}\sqrt{-1}}-1}{x_{i_j-1}}\cdot\frac{x_1}{e^{-x_1\sqrt{-1}}-1}
   \end{align*}
It is easy to check that the following fact: for any smooth vector $f$ in the Fourier model, $f\cdot r(x_1)$ is also smooth. This fact shows that $f_j'$, $1\leq j\leq n-1$ and $j\neq i_2$ are smooth vectors in $\text{Ind}_P^{SL(n,\RR)}( \lambda_{t}^{\pm})$. We also have
\begin{align*}
\mathcal{L}_{\mathfrak{u}_{i_\ell,1}}f'_j=\mathcal{L}_{\mathfrak{u}_{i_j,1}}f'_\ell,
\end{align*}
and
\begin{align*}
  &f'_j\cdot\frac{1}{e^{-x_{i_j-1}\sqrt{-1}}-1}= f_1'\cdot\frac{1}{x_1}\cdot\frac{x_1}{e^{-x_1\sqrt{-1}}-1}\notin L^2(\RR^{n-1}).
 \end{align*}
Hence we can prove the claim.

\end{case}
\section{Proof of Theorem \ref{th:10}}
The proof is standard and similar arguments appeared in \cite{Spatzier1}, \cite{Mieczkowski} and \cite{Ramirez}. Let $\beta$ be a cocycle over the $V$-action on $\GG/\Gamma$. Restricted to the $U$-action on $\GG/\Gamma$, $\beta$ is also a cocycle.

It follows from Theorem \ref{th:8} that there is a smooth transfer function $p$ that satisfies
\begin{align*}
    \beta(u,x)=p(u\cdot x)+c(u)-p(x)
\end{align*}
for any $u\in U$ and $x\in \GG/\Gamma$, where $c(u)=\int_{\GG/\Gamma}\beta(u,x)dx$ is a constant cocycle. For any $v\in V$, let
\begin{align*}
    \beta^*(v,x)=\beta(v,x)-p(v\cdot x)+p(x).
\end{align*}
Using the definition of cocycle, we see that $\beta^*$ is also a cocycle over $V$-action. Then
\begin{align*}
 \beta^*(v,x)&= \beta^*(uv,x)-\beta^*(u,v\cdot x)=\beta^*(vu,x)-\beta^*(u,v\cdot x)\\
 &=\beta^*(v,u\cdot x)+\beta^*(u,x)-\beta^*(u,v\cdot x)\\
 &=\beta^*(v,u\cdot x)
\end{align*}
is a $U$-invariant smooth function on $\GG/\Gamma$ for every $v\in V$. By Howe-moore, it is constant. Therefore, setting
$c'(v)=\beta(v,x)-p(v\cdot x)+p(x)$, we have shown that $p$ satisfies
\begin{align*}
\beta(v,x)-p(v\cdot x)+p(x)=c'(v)
\end{align*}
for all $v\in V$ and $x\in \GG/\Gamma$. It is clear that $c'=c$ on $U$.

\section{Proof of Theorems~\ref{main_thm2-lower}}


Let $n \geq 2$.  We will do all computations in irreducible, unitary models of $\SL(n, \R)$, which
have Hilbert space norm $L^2(\R^{n-1})$, where the formulas for the
hyperbolic and unipotent elements in $\sl(n, \R)$ are given in \eqref{for:9} and \eqref{for:105}.
For $n = 2$, these computations will take place
in the line model of the principal series.
We first prove a lower bound for the Sobolev norm of the
solution of the twisted equation
\begin{equation}\label{eq:twist-cohomological}
(\mathfrak{u}_{ij} + \lambda \sqrt{-1}) f = g\,,
\end{equation}
in irreducible, unitary representations of $SL(n, \RR)$,
where $i, j \in \N$, $i > j$ and $\lambda \in \R^*$.
In particular, we show there are non-tame Sobolev estimates for the
solution to the above equation.

\begin{theorem}\label{main_thm1}
Let $n \geq 2$, and let $i > j \geq 1$.
For any $s \geq 0$, for any $\sigma \in [0, s+1/2)$,
and for any $\lambda \in \R^*$, the following holds.
For any constant $C > 0$, there is a constant $\delta > 0$
such that for any $\vert t \vert > \delta$,
there is a smooth vector $g \in Ind_P^{SL(n, \RR)}(\lambda_t^\pm)$ with a smooth solution $f \in Ind_P^{SL(n, \RR)}(\lambda_t^\pm)$ to the equation
\eqref{eq:twist-cohomological}
such that
\[
\Vert (I - \mathfrak{u}_{j, i}^2)^{s/2}f \Vert > C \Vert g \Vert_{s+\sigma}\,.
\]
\end{theorem}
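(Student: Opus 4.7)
The plan is to prove the lower bound by an explicit construction in the line model of $\text{Ind}_P^{SL(n,\RR)}(\lambda_t^{\pm})$. Recall from formula \eqref{for:105} that in the realization on $L^2(\RR^{n-1})$, the lower-triangular generator $\mathfrak{u}_{i,j}$ with $i > j$ equals either $-\partial_{x_{i-1}}$ (when $j = 1$) or $-x_{j-1}\partial_{x_{i-1}}$ (when $j \geq 2$). Passing to the Fourier transform in $x_{i-1}$ turns $\mathfrak{u}_{i,j}$ into multiplication by a real symbol $-im(\xi_{i-1}, x_{j-1})$ (namely $-i\xi_{i-1}$ or $-ix_{j-1}\xi_{i-1}$), and the twisted equation \eqref{eq:twist-cohomological} becomes the algebraic relation
\[
\hat f \;=\; \frac{\hat g}{i(\lambda - m)}\,,
\]
which can be inverted explicitly as soon as $\hat g$ vanishes on the singular hypersurface $\{m = \lambda\}$.

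In these same Fourier variables, combining \eqref{for:9} and \eqref{for:105} one checks that the upper-triangular element $\mathfrak{u}_{j,i}$ becomes a differential operator whose leading-in-$t$ part is of first order and whose remainder is genuinely second order; for the prototypical case $j = 1$,
\[
\mathfrak{u}_{1,i} \;=\; -t\,\partial_{\xi_{i-1}} \;-\; i\xi_{i-1}\,\partial_{\xi_{i-1}}^{2} \;+\; (\text{terms of lower order in } t).
\]
This is the ``second-order differential operator'' picture announced in the introduction, and the competition between the $t$-scaled first-order piece $-t\partial_{\xi_{i-1}}$ and the $t$-independent second-order correction $-i\xi_{i-1}\partial_{\xi_{i-1}}^{2}$ is what ultimately drives the non-tame behaviour of the solution operator.

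With this dictionary in hand, I would exhibit a family of test vectors $\hat g_{t}$ depending on $t$ and on an auxiliary localisation scale $\varepsilon = \varepsilon(t)$, smoothly concentrated near the singular hypersurface and vanishing there to first order, so that $\hat f_{t} = i\hat g_{t}/(\lambda - m)$ is automatically smooth. The full Sobolev norm $\|g_{t}\|_{s+\sigma}$ is then controlled by the family of monomials $X_{1}^{a}\mathfrak{u}_{j,i}^{b}\mathfrak{u}_{i,j}^{c} g_{t}$ with $a+b+c \le s+\sigma$, while $\|(I-\mathfrak{u}_{j,i}^{2})^{s/2} f_{t}\|$ is dominated, for large $t$, by $\|\mathfrak{u}_{j,i}^{s} f_{t}\|$. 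Each of these quantities admits an explicit scaling in $t$ and $\varepsilon$, so that the proof reduces to choosing $\varepsilon$ so that the ratio
\[
\frac{\|(I-\mathfrak{u}_{j,i}^{2})^{s/2} f_{t}\|}{\|g_{t}\|_{s+\sigma}}
\]
exceeds any prescribed constant $C$ once $|t|$ is sufficiently large.

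The principal obstacle is the optimisation across the full range $\sigma < s + 1/2$. A single bump of fixed profile, rescaled by a power of $t$, exploits only one piece of $\mathfrak{u}_{j,i}$ at a time---either its first-order leading part or its second-order correction---and typically yields non-tameness only in the restricted range $\sigma < 1$ or $\sigma < 1/2$. To reach $\sigma$ close to the sharp value $s+1/2$, the construction has to balance the two contributions at a carefully chosen scale (so that the cross terms in the iterated operators $\mathfrak{u}_{j,i}^{s}$ and $\mathfrak{u}_{j,i}^{s+\sigma}$ contribute with precisely the right weights), and the threshold $s+1/2$ should emerge as the half-derivative Sobolev gap between the full norm $\|g\|_{s+\sigma}$ and the one-direction norm $(I-\mathfrak{u}_{j,i}^{2})^{s/2}$, which effectively measures regularity only across the codimension-one singular locus $\{m = \lambda\}$.
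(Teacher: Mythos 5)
Your setup is right: pass to the Fourier model in $x_{i-1}$, reduce the twisted equation to algebraic division by the symbol $x_{j-1}\omega-\lambda$, and note that $\mathfrak{u}_{j,i}$ becomes (up to first-order and lower terms) a second-order operator in the dual variable $\omega$. But the construction you propose—a bump $\hat g_t$ concentrated at an auxiliary scale $\varepsilon(t)$ near the singular hypersurface, with a balance ``to be chosen'' between the first- and second-order parts of $\mathfrak{u}_{j,i}$—is a genuinely different strategy from the paper's, and as stated it has a gap that prevents it from reaching the claimed range $\sigma<s+1/2$, or indeed any nontrivial $\sigma$. The difficulty is that the Sobolev norm $\Vert g\Vert_{s+\sigma}$ must be computed with respect to \emph{all} of a basis of $\sl(n,\RR)$, not only $X_1,\mathfrak{u}_{j,i},\mathfrak{u}_{i,j}$; in particular it contains $\hat{\mathfrak{u}}_{j,i}^\beta\hat g$ for $\beta\le s+\sigma$. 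Since $\hat{\mathfrak{u}}_{j,i}$ is second order, a generic bump of width $\varepsilon$ has $\Vert\hat{\mathfrak{u}}_{j,i}^\beta\hat g\Vert\sim\varepsilon^{-2\beta+1/2}$ and $\Vert\hat{\mathfrak{u}}_{j,i}^s\hat f\Vert\sim\varepsilon^{-2s-1/2}$, so the ratio is $\varepsilon^{2\sigma-1}$ and already the second-order terms in $\Vert g\Vert_{s+\sigma}$ wipe out the gain once $\sigma\geq 1/2$. No choice of $\varepsilon(t)$ fixes this by itself; some structural feature of $g$ must tame the action of $\hat{\mathfrak{u}}_{j,i}$ (and of the second-order part $\hat V_0$ of $\hat{\mathfrak{u}}_{1,i}$) on $g$.

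The missing idea is the specific oscillatory test function. The paper sets
$\hat G_1(x_{j-1},\omega)=\hat q(x_{j-1},\omega)\bigl(\omega^{\nu}-x_{j-1}^{-\nu}\bigr)$
with $\nu=t\sqrt{-1}$ and a fixed, $\nu$-independent bump $\hat q$; there is no auxiliary scale $\varepsilon$. This function is tailored so that the dangerous second-order operators produce only \emph{linear} growth in $|\nu|$: for the $\hat{\mathfrak{u}}_{1,i}$-piece one has the cancellation identity \eqref{eq:V_0-omeganu},
$\hat V_0\,\omega^{\nu+r}=-\sqrt{-1}\,(\nu+r)(r+1-\tfrac{n}{2})\,\omega^{\nu+r-1}$,
so $\hat V_0\omega^\nu=O(|\nu|)$ rather than $O(|\nu|^2)$; and for $\hat{\mathfrak{u}}_{j,i}=-\sqrt{-1}\,\partial_\omega\partial_{x_{j-1}}$ the mixed derivative annihilates $\omega^\nu-x_{j-1}^{-\nu}$ outright, leaving only terms with at most $|\nu|^{\beta}$ growth. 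These two facts give the upper bound $\Vert g\Vert_{s+\sigma}\lesssim|\nu|^{s+\sigma}$ (Lemma \ref{lemm:f-bound}). On the transfer side, the fundamental-theorem-of-calculus manipulation in Lemma \ref{lemm:PilG-F} rewrites $\hat{\mathfrak{u}}_{j,i}^\beta\hat f$ so that its leading term $T^{(j-1)}$ involves $2\beta+1$ \emph{ordinary} $\omega$-derivatives of $\hat G_1$, yielding $|\nu|^{2\beta+1}$ pointwise growth, and integrating over the set $I_\nu=[1,1+|\nu|^{-1}]$ of measure $|\nu|^{-1}$ produces the extra factor $|\nu|^{-1/2}$, hence the exponent $2s+1/2$ in Theorem \ref{thm:lower-bound-uf}. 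The non-tameness then follows by comparing $|\nu|^{2s+1/2}$ to $|\nu|^{s+\sigma}$. Without the cancellation property of $\omega^\nu$ and the doubling of the derivative order via the FTC trick, the claimed gap does not appear. I encourage you to look for a test function on which the second-order operators in your ``dictionary'' degenerate to first order, rather than a scale-concentrated bump.
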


As a consequence, we prove Theorem~\ref{main_thm2-lower},
namely, that non-tame Sobolev estimates
exist for the solution to the cohomological equation
\begin{equation}\label{eq:coeqn_general}
f \circ \exp(L \mathfrak{u}_{ij}) - f = g\,,
\end{equation}
where $L > 0$ and $\mathfrak{u}_{ij}$ is a unipotent vector of $\sl(n, \R)$.


\subsection{Proof of Theorem~\ref{main_thm1}}\label{sect:lower_setup}

Now fix $i, j$ as in Theorem~\ref{main_thm1}.
The estimate is based on an analysis with respect to the $x_{i - 1}$ variable.
For convenience, set
\[
a := i - 1\,, \ \ \ \nu := t \sqrt{-1}\,,
\]
and because we can take $\vert \nu \vert$ arbitrarily large, we assume
\[
\vert \nu \vert \geq 4\,.
\]
Also set
\[
n_{j-1} := \left\{\begin{array}{ll}
n - 2 & \text{ if } j = 1\,, \\
n - 3 & \text{ otherwise}\,.
\end{array}
\right.
\]

For any $(x_1, \cdots, x_{n-1}) \in \R^{n-1}$, define
\[
\begin{aligned}
& \mathbf{x} := (x_1, \cdots, x_{j-2}, x_j, \cdots x_{a-1}, x_{a+1}, \cdots, x_{n-1}) \in \R^{n_{j-1}}\,,\\
& f_{\mathbf{x}}(x_{j-1}, x_{a}) := f(x_1, \cdots, x_{n-1})\,.
\end{aligned}
\]
Notice that if $j = 1$, then $x_{j-1} = 1$, so
\[
\mathbf{x} := (x_1, \cdots x_{a-1}, x_{a+1}, \cdots, x_{n-1}) \in \R^{n_{j-1}}\,,
\]
and for convenience, we define
\[
f_{\mathbf{x}}(x_{j-1}, x_{a}) := f_{\mathbf{x}}(x_{a}) := f(x_1, \cdots, x_{n-1})\,.
\]

The Fourier transform
with respect to the $x_{a}$ variable is
\[
\hat f_{\mathbf x}(x_{j-1}, \omega)  = \int_\R f_{\mathbf x}(x_{j-1}, r) e^{-i r \omega} dr\,.
\]
The Fourier transform of the above vector fields \eqref{for:9} and \eqref{for:105} are
\begin{equation}\label{eq:Xhat}
\hat X_l = \left\{
\begin{array}{lll}
(\frac{n}{2}-2 + \nu) - 2\omega \partial \omega + \sum_{k = 2}^{n - 1} x_k \partial k \,,  & l = 1\,, a = 1\,, \\
 (\frac{n}{2} - 1 + \nu) + 2 x_1 \partial {x_1} - \omega \partial \omega +
 \sum_{
 \substack{
 2 \leq k \leq n - 1\\
 k \neq a
 }}
 x_k \partial {x_k}\,, & l = 1\,, a \neq 1 \,,  \\
1 + \omega \partial \omega + x_l \partial {x_l}\,,  & l \geq 2\,, a = l - 1 \,, \\
- (1 + x_{l-1} \partial {x_{l-1}} + \omega \partial \omega) \,,  & l \geq 2\,, a = l \,.
\end{array}
\right.
\end{equation}
For the unipotent vector fields $(\mathfrak{u}_{l, m})$, when $l = 1$, we have
\begin{equation}\label{eq:uhat1}
\begin{aligned}
& \hat{u}_{1, m} = \sqrt{-1} \\
& \times \left\{
\begin{array}{lll}
 [(\frac{n}{2} -2 + \nu) \partial \omega - \omega \partial^2 \omega] + \partial \omega \sum_{\substack{1 \leq k \leq n - 1 \\
 k \neq a}}  x_k \partial {x_k} & \text{ if } a = m - 1 \,,\\
 (\frac{n}{2} - 1 + \nu - \omega \partial \omega)x_{m - 1} + \sum_{
 \substack{
 1 \leq k \leq n - 1\\
 k \neq a
 }} x_{m - 1}  x_k \partial {x_k} & \text{ if } a \neq m - 1 \,.
\end{array}
\right.
\end{aligned}
\end{equation}
and when $l > 1$, we have
\begin{equation}\label{eq:uhat2}
\hat{\mathfrak{u}}_{l, m} = - \sqrt{-1} \times \left\{
\begin{array}{lll}
 \partial {\omega} \partial x_{l - 1} & \text{ if } a = m - 1\,, \\
  \omega x_{m- 1}& \text{ if } a = l - 1 \,, \\
  -\sqrt{-1} x_{m - 1} \partial {x_{l -1}} & \text{ otherwise}\,.
\end{array}
\right.
\end{equation}

Then in our model, the cohomological equation
\eqref{eq:twist-cohomological} is equivalently
\[
- \sqrt{-1} ( x_{j - 1} \omega - \lambda) \hat f_{\mathbf{x}}(x_{j-1}, \omega) = \hat g_{\mathbf{x}}(x_{j-1}, \omega)\,,
\]
which means
\begin{equation}\label{eq:coeqn-twist-gen_lambda}
  \hat f_{\mathbf{x}}(x_{j-1}, \omega) = \sqrt{-1} \frac{\hat g_{\mathbf{x}}(x_{j-1}, \omega)}{(x_{j - 1}\omega - \lambda)}\,.
\end{equation}
As in formula (36) of \cite{FFT}, we define $\hat g_{\mathbf{x}, \lambda}(x_{j-1}, \omega) := \hat g_{\mathbf{x}} (x_{j-1}, \lambda \omega)$ and study the following equivalent equation
for a solution $\hat f_{\mathbf{x}, \lambda}$ given by
\begin{align}
  \hat f_{\mathbf{x},\lambda}(x_{j-1}, \omega) & = \sqrt{-1} \frac{\hat g_{\mathbf{x},\lambda}(x_{j-1}, \omega)}{\lambda(x_{j - 1}\omega - 1)} \notag \\
 & = \frac{\sqrt{-1}}{\lambda x_{j-1}} \left(\frac{\hat g_{\mathbf{x},\lambda}(x_{j-1}, \omega)}{\omega - x_{j-1}^{-1}}\right)  \,. \label{eq:twist_eqn-lambda=1}
\end{align}
In what follows, we simplify notation and set
\begin{equation}\label{eq:f-flambda}
\hat g_{\mathbf{x}}:= \hat g_{\mathbf{x}, \lambda}\,, \quad \hat f_{\mathbf{x}} := \hat f_{\mathbf{x}, \lambda}\,.
\end{equation}

Define
\[
\begin{aligned}
I^{(j-1)} :=
\left\{
\begin{aligned}
& [\frac{4}{5}, \frac{5}{4}] & \text{ if } j = 1\,,\\
& [\frac{4}{5}, \frac{5}{4}]^2 & \text{ if } j > 1\,,
\end{aligned}
\right.  \quad \quad
I^{(j-1), e} :=
\left\{
\begin{aligned}
& [\frac{3}{4}, \frac{4}{3}] & \text{ if } j = 1\,,\\
& [\frac{3}{4}, \frac{4}{3}]^2 & \text{ if } j > 1\,.
\end{aligned}
\right.
\end{aligned}
\]
Recall that when $j = 1$, $x_{j-1} = 1$.
So for $j = 1$, it will be convenient to use the notation
\[
(x_{j-1}, \omega) \in I_\beta^{(0)} \text{ if and only if } \omega \in I_\beta^{(0)}\,.
\]

Now we will define the coboundary and transfer function
that will be used in the proof theorem.
Then let
\[
\hat g \in C_c^\infty([\frac{3}{4}, \frac{4}{3}]^{n-1})\,,
\]
and define
\begin{equation}\label{eq:fdef}
\hat g_{\mathbf{x}}(x_{j-1}, \omega) := \hat G_1 (x_{j-1}, \omega) G_2(\mathbf{x}) \,,
\end{equation}
where
\[
\begin{aligned}
& \hat G_1(x_{j-1}, \omega) = \hat q(x_{j-1}, \omega) (\omega^{\nu} - x_{j-1}^{-\nu})\,, \\
& q \in C_c^\infty(I_\beta^{(j-1), e})\,, \ \ q \equiv 1 \text{ on } I^{(j-1)}\,,
\end{aligned}
\]
and $G_2 := G_{2, j-1}$ satisfies
\[
G_2 \in C^\infty([\frac{3}{4}, \frac{4}{3}]^{n_{j-1}}) \,, \ \ \ G_2 \equiv 1 \text{ on }[\frac{4}{5}, \frac{5}{4}]^{n_{j-1}}\,.
\]

Notice that $g \in H^\infty$ and $\hat G_1(\omega^{-1}, \omega) = 0$.
Then by Theorem~\ref{th:6},
the equation \eqref{eq:twist-cohomological} has a smooth solution $g$,
which is given by \eqref{eq:twist_eqn-lambda=1}.  That is,
\[
\hat f_{\mathbf{x}} (x_{j-1}, \omega) = \hat F(x_{j-1}, \omega) G_2(\mathbf{x})\,,
\]
where
\begin{equation}\label{eq:Fdef}
\hat F(x_{j-1}, \omega)
:=  \frac{\sqrt{-1}}{\lambda x_{j-1}}
\left(
\frac{\hat G_1(x_{j-1}, \omega)}{\omega - x_{j-1}^{-1}}
\right)\,.
\end{equation}
\smallskip

Write
\[
\hat{\mathfrak{u}}_{1, i} = \hat V_0 + \hat V_c\,,
\]
where
\[
\begin{aligned}
&\hat V_0 = - \sqrt{-1} [(2 - \frac{n}{2} - \nu) \partial \omega + \omega \partial^2 \omega] \,, \\
&\hat V_c = \sqrt{-1} \sum_{k = 1}^{n - 1}  x_k \partial {x_k} \partial \omega\,.
\end{aligned}
\]
\smallskip

As a first step toward proving Theorem~\ref{main_thm1},
we decompose the operator $\hat{\mathfrak{u}}_{1, i}$
into partial derivatives and $\hat V_0$,
which compels us to use some notation.
For any $m \in \N\setminus\{0\}$
and for any integer vector $\boldsymbol{\alpha} = (\alpha_k)_{k = 1}^m \subset \N^m$,
define
\[
\vert \boldsymbol{\alpha} \vert := \sum_{k = 1}^m \alpha_k\,.
\]
For any $ l \leq \beta$,
consider the collection of integer vectors
\[
\mathcal Q^{(\beta-l)} := \left\{\boldsymbol{\alpha} = (\alpha_1, \cdots, \alpha_{a - 1}, \alpha_{a + 1}, \cdots, \alpha_{n - 1}) \in \N^{n - 2} : \vert \boldsymbol{\alpha}\vert = \beta - l\right\}\,.
\]
Furthermore, for any $ l \leq \beta$ and $j > 1$,
let $\boldsymbol{\alpha}_{\beta-l,j-1} \in \mathcal Q^{(\beta -l)}$
be given by
\[
\boldsymbol{\alpha}_{\beta-l,j-1} := \left\{
\begin{aligned}
& (\beta-l, \mathbf 0^{n-3}) & \text{ if } j = 2\,, \\
& (\mathbf 0^{j-2}, \beta-l, \mathbf 0^{n-j}) & \text{ if } j > 2\,,
\end{aligned}
\right.
\]
where for $i \in \N$, $\mathbf 0^i$ is a vector of $i$ 0's.
Also, for any $l \leq \beta$, define
\[
\Pi_\beta^{(l)}
\]
to be the sum of all products of $l$ operators $\hat V_0$
and $\beta - l$ operators $\partial \omega$.

\begin{lemma}\label{lemm:lower_twist}
For any $\beta \in \N$ and for any $l \leq \beta$,
there are $(c_{m, k}^{(\beta - l, \boldsymbol{\alpha})}) \subset \N \setminus\{0\}$ such that
\[
\hat{\mathfrak{u}}_{1, i}^\beta
=  \sum_{
\substack{ l \leq \beta \\
{\boldsymbol{\alpha} \in  \mathcal Q_{j-1}^{(\beta-l)}}}} (\sqrt{-1})^{\beta - l} \Pi_{\beta}^{(l)}
\prod_{\substack{1 \leq k \leq n - 1 \\ k \neq a}}
\sum_{m = 1}^{\alpha_k} c_{m, k}^{(\beta - l, \boldsymbol{\alpha})} x_k^m \partial^m x_k \,,
\]
where for $j > 1$, $c_{\beta-l, j-1}^{(\beta - l,\boldsymbol{\alpha}_{\beta-l,j-1})} = 1.$
\end{lemma}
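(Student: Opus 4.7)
The plan is to expand $\hat{\mathfrak{u}}_{1,i}^\beta = (\hat V_0 + \hat V_c)^\beta$ and exploit the commutation structure of the two summands. The key structural observation is the factorization $\hat V_c = \sqrt{-1}\, \partial \omega \cdot D$, where
\[
D := \sum_{k \neq a} x_k \partial_{x_k},
\]
together with the fact that $D$ acts only on the variables $x_k$ with $k \neq a$ and therefore commutes both with $\hat V_0$ (which acts only in $\omega$) and with the factor $\partial\omega$ inside $\hat V_c$. Consequently, in any ordered product of $\beta$ operators each equal to $\hat V_0$ or $\hat V_c$, all copies of $D$ can be gathered on the right without altering the $\omega$-part.

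First I would expand $(\hat V_0 + \hat V_c)^\beta$ into $2^\beta$ ordered monomials, grouped by the number $l$ of $\hat V_0$-factors appearing. For each such monomial with $\beta - l$ occurrences of $\hat V_c$, pulling all $D$'s to the right produces a scalar $(\sqrt{-1})^{\beta - l}$, an ordered word in $\hat V_0$ and $\partial\omega$ consisting of $l$ copies of $\hat V_0$ and $\beta - l$ copies of $\partial\omega$ in their original positions, and a factor $D^{\beta - l}$. Summing over the $\binom{\beta}{l}$ choices of positions of the $\hat V_0$'s reproduces exactly the operator $\Pi_\beta^{(l)}$ by its definition, yielding the intermediate identity
\[
\hat{\mathfrak{u}}_{1,i}^\beta = \sum_{l = 0}^\beta (\sqrt{-1})^{\beta - l}\, \Pi_\beta^{(l)}\, D^{\beta - l}.
\]

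Next I would expand $D^{\beta - l}$ via the multinomial theorem, noting that the operators $D_k := x_k \partial_{x_k}$ for distinct indices $k$ commute:
\[
D^{\beta - l} = \sum_{\boldsymbol{\alpha} \in \mathcal Q^{(\beta - l)}} \binom{\beta - l}{\boldsymbol{\alpha}} \prod_{k \neq a} D_k^{\alpha_k},
\]
and then apply the standard identity
\[
(x \partial_x)^n = \sum_{m = 1}^n S(n, m)\, x^m \partial_x^m \qquad (n \geq 1)
\]
(where $S(n,m)$ are the Stirling numbers of the second kind, with $S(n,n) = 1$) to each factor $D_k^{\alpha_k}$ with $\alpha_k \geq 1$, using the convention that a factor with $\alpha_k = 0$ contributes the identity. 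Distributing the multinomial coefficient and collecting the resulting positive-integer scalars into the constants $c_{m,k}^{(\beta - l,\boldsymbol{\alpha})}$ produces precisely the formula asserted in the lemma.

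Finally, for the leading-coefficient claim when $j > 1$, I would specialize to $\boldsymbol{\alpha} = \boldsymbol{\alpha}_{\beta - l,\, j - 1}$, whose mass is concentrated at index $j - 1$. Its multinomial coefficient is $\binom{\beta - l}{\beta - l, 0, \ldots, 0} = 1$, and the extremal Stirling coefficient $S(\beta - l, \beta - l) = 1$ supplies the coefficient of $x_{j-1}^{\beta - l} \partial_{x_{j-1}}^{\beta - l}$ within $D_{j-1}^{\beta - l}$; hence $c_{\beta - l,\, j - 1}^{(\beta - l,\, \boldsymbol{\alpha}_{\beta-l,\, j-1})} = 1$. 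The entire proof is a combinatorial bookkeeping exercise rather than an analytic one; the only real obstacle is keeping the indexing clean, in particular tracking how the multinomial and Stirling contributions are packaged into the single family of coefficients $c_{m,k}^{(\beta - l, \boldsymbol{\alpha})}$.
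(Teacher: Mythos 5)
Your proof is correct and follows essentially the same route as the paper: expand $(\hat V_0 + \hat V_c)^\beta$, gather the $x_k\partial_{x_k}$-factor $D^{\beta-l}$ on the right to isolate $(\sqrt{-1})^{\beta-l}\Pi_\beta^{(l)}D^{\beta-l}$, expand $D^{\beta-l}$ multinomially, and convert each $(x_k\partial_{x_k})^{\alpha_k}$ via the $x_k^m\partial_{x_k}^m$ expansion. Your version is somewhat more careful than the paper's — you explicitly justify pulling $D$ past $\hat V_0$ and $\partial\omega$ by the commutation relations, identify the coefficients as Stirling numbers of the second kind with $S(n,n)=1$, and correctly write the upper summation limit as $\alpha_k$ (the paper's displayed identity \eqref{eq:xpartialx} has an apparent off-by-one typo in that limit).
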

\begin{proof}
Write
\begin{equation}\label{eq:u_1i-beta}
\hat{\mathfrak{u}}_{1, i}^\beta = (\hat V_0 + \hat V_c)^\beta  = \sum_{l \leq \beta} \Pi_{\beta}^{(l)}\left(\sqrt{-1}\sum_{\substack{1 \leq k \leq n - 1 \\ k \neq a}}
x_k \partial x_k\right)^{\beta - l} \,.
\end{equation}

Then for any $\mathbf{x} \in [\frac{4}{5}, \frac{5}{4}]^{n_{j-1}}$,
there are coefficients $(c_{\boldsymbol{\alpha}}^{(\beta - l)})\subset \N \setminus\{0\}$
such that
\begin{align}
\left(\sum_{\substack{1 \leq k \leq n - 1 \\ k \neq a}}
x_k \partial x_k\right)^{\beta - l}
& =  \sum_{\boldsymbol{\alpha} \in \mathcal Q^{(\beta -l)}}
c_{\boldsymbol \alpha}^{(\beta - l)}
\prod_{\substack{1 \leq k \leq n - 1 \\ k \neq a}}
(x_k \partial {x_k})^{\alpha_k}  \,. \label{eq:decompose-sumxk:2}
\end{align}
For any $\alpha_k \in \mathbb N\setminus\{0\}$, by induction we get
 $(c_{m, k}^{(\alpha_k)}) \subset \N \setminus\{0\}$ such that
\begin{equation}\label{eq:xpartialx}
(x_k \partial x_k)^{\alpha_k} = \sum_{m = 1}^{\alpha_k - 1}c_{m,k}^{(\alpha_k)} x_k^m \partial^m x_k\,.
\end{equation}
Hence, there are positive integers $c_{m, k}^{(\beta - l,\boldsymbol{\alpha})} = c_{\boldsymbol \alpha}^{(\beta - l)}  c_{m,k}^{(\alpha_k)}$ such that
\[
\begin{aligned}
\eqref{eq:decompose-sumxk:2}
& = \sum_{\boldsymbol{\alpha} \in  \mathcal Q_{j-1}^{(\beta-l)}}
\prod_{\substack{1 \leq k \leq n - 1 \\ k \neq a}}
\sum_{m = 1}^{\alpha_k} c_{m, k}^{(\beta - l, \boldsymbol{\alpha})} x_k^m \partial^m x_k \,,
\end{aligned}
\]
which implies
\[
\eqref{eq:u_1i-beta} = \sum_{\substack{ l \leq \beta \\
\boldsymbol{\alpha} \in  \mathcal Q_{j-1}^{(\beta-l)}}}
\Pi_{\beta}^{(l)}(\sqrt{-1})^{\beta-l} \prod_{\substack{1 \leq k \leq n - 1 \\ k \neq a}}
\sum_{m = 1}^{\alpha_k} c_{m, k}^{(\beta - l, \boldsymbol{\alpha})} x_k^m \partial^m x_k\,.
\]
This proves the decomposition formula for $\hat u^\beta$.

It remains to prove $c_{\beta-l, j-1}^{(\beta - l,\boldsymbol{\alpha}_{\beta-l,j-1})} = 1$, when $j > 1$.
Then for any $\mathbf{x} \in [\frac{4}{5}, \frac{5}{4}]^{n_{j-1}}$,
there are coefficients $(c_{\boldsymbol{\alpha}}^{(\beta - l)})\subset \N \setminus\{0\}$
such that
\begin{align}
\left(\sum_{\substack{1 \leq k \leq n - 1 \\ k \neq a}}
x_k \partial x_k\right)^{\beta - l}
& =  (x_{j-1} \partial x_{j-1})^{\beta-l} \notag \\
& + \sum_{\boldsymbol{\alpha} \in \mathcal Q^{(\beta -l)} \setminus\{\boldsymbol\alpha_{\beta, a}\}}
c_{\boldsymbol \alpha}^{(\beta - l)}
\prod_{\substack{1 \leq k \leq n - 1 \\ k \neq a}}
(x_k \partial {x_k})^{\alpha_k}  \,, \label{eq:decompose-sumxk:1}
\end{align}
where, as in \eqref{eq:xpartialx},
 $(c_{m, j-1}^{(\alpha_k)}) \subset \N \setminus\{0\}$ are such that
\[
(x_{j-1} \partial x_{j-1})^{\beta-l} = x_{j-1}^{\beta-l} \partial^{\beta-l} x_{{j-1}} +  \sum_{m = 1}^{\beta-l - 1}c_{m,{j-1}}^{(\alpha_{j-1})} x_{j-1}^m \partial^m x_{j-1}\,.
\]
So
\[
\begin{aligned}
\eqref{eq:decompose-sumxk:1}
& = \sum_{\boldsymbol{\alpha} \in  \mathcal Q_{j-1}^{(\beta-l)}}
\prod_{\substack{1 \leq k \leq n - 1 \\ k \neq a}}
\sum_{m = 1}^{\alpha_k} c_{m, k}^{(\beta - l, \boldsymbol{\alpha})} x_k^m \partial^m x_k \,,
\end{aligned}
\]
where $c_{\beta-l, j-1}^{(\beta - l,\boldsymbol{\alpha}_{\beta-l,j-1})} = 1$.
The lemma follows from this.
\end{proof}

Next, we prove an upper bound for the Sobolev norm of $g$.
We will use the following important identity that shows that even though $\hat V_0$ is a second
order differential operator, $\vert \hat V_0 \omega^{\nu+r}\vert$ grows only linearly
in $\nu$ for any real number $r$.  Indeed, for any $r \in \R$,
\begin{equation}\label{eq:V_0-omeganu}
 \hat V_0 \omega^{\nu+r} = -\sqrt{-1} \ (\nu+r) (r + 1 -\frac{n}{2}) \omega^{\nu+r-1}\,.
 \end{equation}
\begin{lemma}\label{lemm:f-bound}
For any $s \geq 0$, there is a constant $C_{s, n}^{(0)} > 0$ such that
\[
\Vert g \Vert_{s} \leq C_{s, n}^{(0)} \vert \nu \vert^{s}\,.
\]
\end{lemma}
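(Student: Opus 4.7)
\textbf{Proof plan for Lemma \ref{lemm:f-bound}.}
The plan is to show that every basic differential operator appearing in the Sobolev norm, when applied to $\hat g_{\mathbf x}$, produces at most a single factor of $|\nu|$, so iterating at most $s$ times yields the bound $|\nu|^s$ (up to constants depending on $s$ and $n$). Using the elliptic regularity theorem \ref{th:4} together with a basis of $\mathfrak{sl}(n,\RR)$ consisting of the $X_l$'s and $\mathfrak{u}_{l,m}$'s, it suffices to estimate $\|Y_{j_1}\cdots Y_{j_m} g\|$ for $m\le s$ with each $Y_{j_k}$ in this basis. By Plancherel with respect to the $x_a$ variable, this reduces to estimating $\|\hat Y_{j_1}\cdots \hat Y_{j_m}\hat g_{\mathbf x}\|_{L^2}$, where the $\hat Y$'s are given by \eqref{eq:Xhat}, \eqref{eq:uhat1}, \eqref{eq:uhat2}.

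The function $\hat g_{\mathbf x}(x_{j-1},\omega) = \hat q(x_{j-1},\omega)(\omega^\nu - x_{j-1}^{-\nu})G_2(\mathbf x)$ is compactly supported in the set where $x_{j-1},\omega \in [3/4,4/3]$ and $\mathbf x \in [3/4,4/3]^{n_{j-1}}$, and because $\nu \in \sqrt{-1}\,\RR$, the factors $|\omega^\nu|,|x_{j-1}^{-\nu}|$ are identically $1$ there. The only $\nu$-dependence entering $\hat g_{\mathbf x}$ sits in $\omega^\nu$ and $x_{j-1}^{-\nu}$, so $\nu$ appears in the derivative only through the formulas
\begin{align*}
\partial_\omega (\omega^\nu) &= \nu \omega^{\nu-1}\,, \\
\partial_{x_{j-1}}(x_{j-1}^{-\nu}) &= -\nu x_{j-1}^{-\nu-1}\,.
\end{align*}
For each operator from \eqref{eq:Xhat}--\eqref{eq:uhat2} other than $\hat{\mathfrak{u}}_{1,i}$, a direct inspection shows the coefficients are either independent of $\nu$ or depend linearly on $\nu$, and the differential order is $1$; hence a single application acts on the $\omega^\nu$ (or $x_{j-1}^{-\nu}$) factor to produce at most one power of $\nu$ while leaving a smooth compactly supported residue.

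The delicate operator is $\hat{\mathfrak{u}}_{1,i}=\hat V_0 + \hat V_c$, since $\hat V_0$ is second order in $\partial_\omega$ with coefficient depending on $\nu$, which a priori could contribute $\nu^2$. Here the key algebraic identity \eqref{eq:V_0-omeganu} is decisive: it says $\hat V_0 \omega^{\nu+r}$ is only linear in $\nu$, thanks to a cancellation between the $(2-\tfrac{n}{2}-\nu)\partial_\omega$ and $\omega\partial_\omega^2$ terms acting on $\omega^{\nu+r}$. Combined with the Leibniz rule, repeated application of $\hat V_0$ against $\hat q(x_{j-1},\omega)\omega^\nu$ keeps producing only one extra power of $\nu$ per application, with the remaining factors being $\omega$-derivatives of $\hat q$, which are bounded smooth functions supported on a compact set. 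The same reasoning applies after using Lemma \ref{lemm:lower_twist} to decompose $\hat{\mathfrak{u}}_{1,i}^\beta$ into $\Pi_\beta^{(l)}$ times polynomial differential operators in the $x_k$'s (whose action produces only bounded factors on the compact support).

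Assembling these facts, after any composition $\hat Y_{j_1}\cdots \hat Y_{j_m}$ with $m\le s$ applied to $\hat g_{\mathbf x}$, we obtain a sum of terms of the form $|\nu|^{\le m}$ times a bounded, smooth, compactly supported function of $(x_{j-1},\omega,\mathbf x)$, with bounds independent of $\nu$. Integrating in $L^2$ over the compact support and summing finitely many terms produces a constant $C^{(0)}_{s,n}$ depending only on $s,n$ and on finitely many $C^k$ norms of $q$ and $G_2$, and yields $\|g\|_s \le C^{(0)}_{s,n}|\nu|^s$. The main bookkeeping obstacle is tracking all operator compositions while verifying no $|\nu|^2$ is ever produced per application; this is precisely what the identity \eqref{eq:V_0-omeganu} (and the analogous behavior of first-order operators) ensures.
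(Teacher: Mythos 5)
Your overall strategy --- reduce to iterated applications of basis vector fields via elliptic regularity, pass to the Fourier model, control $\hat V_0$ via the cancellation identity \eqref{eq:V_0-omeganu}, expand $\hat{\mathfrak{u}}_{1,i}^\beta$ via Lemma~\ref{lemm:lower_twist} and a Leibniz formula --- is indeed the paper's strategy, and it works. But your ``direct inspection'' step contains a genuine error that hides a needed additional argument. You assert that every operator from \eqref{eq:Xhat}--\eqref{eq:uhat2} other than $\hat{\mathfrak{u}}_{1,i}$ has differential order $1$. That is false for $\hat{\mathfrak{u}}_{j,i}$ when $j>1$: by the $a=m-1$ case of \eqref{eq:uhat2}, $\hat{\mathfrak{u}}_{j,i}=-\sqrt{-1}\,\partial_\omega\partial_{x_{j-1}}$ is a second-order operator in exactly the two variables $\omega$ and $x_{j-1}$ that carry all of the $\nu$-dependence of $\hat G_1 = \hat q(x_{j-1},\omega)(\omega^\nu - x_{j-1}^{-\nu})$.

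For a generic second-order operator in those variables, applying it $\beta_1$ times to $\hat G_1$ would extract up to $|\nu|^{2\beta_1}$, which would destroy the claimed bound. What actually saves the estimate for $\hat{\mathfrak{u}}_{j,i}^{\beta_1}$ is the vanishing $\partial_\omega^a\partial_{x_{j-1}}^b(\omega^\nu - x_{j-1}^{-\nu})=0$ whenever $a\ge 1$ \emph{and} $b\ge 1$: only pure $\omega$- or pure $x_{j-1}$-derivatives of that factor survive, and each can occur at most $\beta_1$ times. This structural observation is independent of \eqref{eq:V_0-omeganu} and is not a ``first-order'' phenomenon. This is why the paper separates out both $\hat{\mathfrak{u}}_{1,i}^{\beta_2}$ and $\hat{\mathfrak{u}}_{j,i}^{\beta_1}$ in \eqref{eq:g-B-u}, and why the exponent in \eqref{eq:partial-x-omega-V} takes the form $\max\{m_{j-1}+v_2,\ \alpha_1+\beta_1+\alpha_2\}$ rather than a sum --- that $\max$ is precisely the vanishing at work. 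Your plan needs this step spelled out; as written, the inspection claim would be falsified the moment you actually look at $\hat{\mathfrak{u}}_{j,i}$. (A smaller imprecision: $\hat V_0^{z_1}\hat q$ is not $\nu$-independent because the coefficient of $\partial_\omega$ in $\hat V_0$ carries a $\nu$; it is bounded by $|\nu|^{z_1}$ times a fixed compactly supported function, which still yields the right total count once one tracks $z_0+z_1+z_3\le\alpha_2$.)
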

\begin{proof}
Let $s \in \N$,
and recall that $g \in C_c^\infty([\frac{3}{4}, \frac{4}{3}]^{n-1})$, so it is
supported away from zero.
For each integer $\beta \leq s$, let $\mathcal B^{(\beta)}$ be the sum of all
products of $\beta$ vector fields in
$\{\hat X_j\}_{j = 1}^n \cup \{\hat{\mathfrak{u}}_{l, k}\}_{1 \leq l, k \leq n} \setminus\{\hat{\mathfrak{u}}_{1, i}, \hat{\mathfrak{u}}_{j, i} \}$.
Using the commutation relations and the triangle inequality,
we get a constant $C_{s,n} > 0$ such that
\begin{equation}\label{eq:g-B-u}
\Vert g \Vert_{s}  \leq C_{s,n} \sum_{\substack{\beta \leq s \\ \beta_1 + \beta_2 \leq \beta}}
\Vert \mathcal B^{(s-\beta)} \hat{\mathfrak{u}}_{j, i}^{\beta_1} \hat{\mathfrak{u}}_{1, i}^{\beta_2} \hat g\Vert\,.
\end{equation}
By Lemma~\ref{lemm:lower_twist} and the triangle inequality,
there is a constant $C_{s, n} > 0$ such that
\begin{align}
&\eqref{eq:g-B-u} \notag \\
& \leq C_{s,n} \sum_{\substack{\beta \leq s \\ \beta_1 + \beta_2 \leq \beta \\
l \leq \beta_2 }} \sum_{\substack{1 \leq k \leq n - 1 \\ k \neq a \\ \sum_{k \neq a} m_k = \beta_2-l}}
\Vert \mathcal B^{(s-\beta)} (\partial \omega \partial x_{j-1})^{\beta_1} \Pi_{\beta_2}^{(l)}
\prod_{(m_k)}
x_k^{m_k} \partial^{m_k} x_k \hat g\Vert \notag \\
& = C_{s,n} \sum_{\substack{\beta \leq s \\ \beta_1 + \beta_2 \leq \beta \\
l \leq \beta_2 }} \sum_{\substack{1 \leq k \leq n - 1 \\ k \neq a \\ \sum_{k \neq a} m_k = \beta_2-l}}
\Vert \mathcal B^{(s-\beta)} \prod_{(m_k)}
\left( x_k^{m_k} \partial^{m_k} x_k G_2 \right) \notag \\
& \times \left(\partial^{\beta_1}\omega\Pi_{\beta_2}^{(l)} \partial^{\beta_1} x_{j-1}(x_{j-1}^{m_{j-1}} \partial^{m_{j-1}} x_{j-1}) \hat G_1)\right)\Vert  \notag \\
& \leq C_{s, n} \sum_{\substack{\beta \leq s \\ \beta_1 + \beta_2 \leq \beta}}
\sum_{\substack{v_1 + v_2 = \beta_1 \\
l \leq \beta_2 }}
\sum_{\substack{1 \leq k \leq n - 1 \\ k \neq a \\ \sum_{k \neq a} m_k = \beta_2-l}}
\Vert \mathcal B^{(s-\beta)} \prod_{(m_k)}
\left( x_k^{m_k} \partial^{m_k} x_k G_2 \right) \notag \\
& \times \left(\partial^{\beta_1}\omega\Pi_{\beta_2}^{(l)} x_{j-1}^{m_{j-1}-v_1} \partial^{m_{j-1}+v_2} x_{j-1} \hat G_1)\right)\Vert\,,  \label{eq:f-est:1}
\end{align}
where in the last step, the term $x_{j-1}^{m_{j-1}-v_1}$ is bounded, because $x_{j-1} \in [\frac{3}{4}, \frac{4}{3}]$.

Because
\[
[\hat V_0\,, \partial \omega] = \sqrt{-1} \partial\omega^{2}\,,  \quad [\Pi_{\beta_2}^{(l)}, x_{j-1}^{m} \partial^{m} x_{j-1}] = 0
\]
we have
\begin{align}
\Vert \mathcal B^{(s-\beta)}  \prod_{(m_k)}&
\left( x_k^{m_k} \partial^{m_k} x_k G_2 \right) \left(\partial^{\beta_1}\omega\Pi_{\beta_2}^{(l)} x_{j-1}^{m_{j-1}-v_1} \partial^{m_{j-1}+v_2} x_{j-1} \hat G_1)\right)\Vert \notag \\
& \leq C_{s,n}
\sum_{\substack{\alpha_1 + \alpha_2\leq \beta_2 \\  \alpha_2 \leq l}}
\Vert \mathcal B^{(s-\beta)}  \prod_{(m_k)}
\left( x_k^{m_k} \partial^{m_k} x_k G_2 \right) \notag  \\
&\times \left(x_{j-1}^{m_{j-1}-v_1} \partial^{m_{j-1}+v_2} x_{j-1}  \partial^{\alpha_1+\beta_1}\omega
\hat V_0^{\alpha_2} \hat G_1\right) \Vert \,. \label{eq:F1-upper3}
\end{align}

Note that $\hat G_1$ is a product of two functions.
From formula~(39) and Lemma~3.7 of \cite{FFT}, there is a
Leibniz-type formula for the operator
$\hat V_0$.
Specifically, for $l = 1, 2$, there are universal coefficients
 $(b^{(\alpha_2)}_{qzkm})$ such that for any
 $x_{j-1}$ and any pair of functions
 $h_l := h_{l,x_{j-1}}$ in the variable $\omega$, we have
 \begin{equation}\label{eq:Leibnitz}
 \hat V_0^{\alpha_2}(h_1 h_2)  = \sum_{
\substack{
 z_0+z_1+ z_3\leq \alpha_2 \\
 z_2 \leq z_3
  }}
 b^{(\alpha_2)}_{z_0z_1z_2z_3} [\partial^{z_3}\omega \hat V_0^{z_0} h_1]
 [(\omega \partial\omega)^{z_2} \hat V_0^{z_1} h_2]\,.
\end{equation}
Then set
\[
h_1 := \omega^{\nu} - x_{j-1}^{-\nu}\,, \quad h_2 := \hat q\,.
\]
so
\[
\hat G_1 = h_{1} \cdot h_{2} \,.
\]

Notice that the term $(\omega \partial\omega)^{z_2} \hat V_0^{z_1} h_1$ is under control,
because $\hat q$ is smooth and compactly supported.
Moreover, by \eqref{eq:V_0-omeganu},
there is a complex polynomial $P$ of degree at most $z_3+z_0$ in $\nu$
such that
\[
\partial^{z_3}\omega \hat V_0^{z_0} h_1(x_{j-1}, \omega) = P(\nu) \omega^{\nu - (z_3+z_0)} - \delta_{z_3+z_0,  0} \,,
\]
where
\[
\delta_{z_3+z_0,0} :=
\left\{
\begin{aligned}
& 1 & \text{ if }z_1 + z_3 = 0\,, \\
& 0 & \text{ otherwise}\,.
\end{aligned}
\right.
\]
Similarly, for non-negative integers $(m_1, m_2) \neq (0, 0)$, notice that for any $r \in \N$,
\[
\begin{aligned}
\partial^{m_1} & x_{j-1} \partial^{m_2} \omega (\omega^{\nu - r} - x_{j- 1}^{-\nu})  \\
& :=
\left\{
\begin{aligned}
&\prod_{j = 0}^{m_2-1} (\nu - r-j) \omega^{\nu - r- m_2}  & \text{ if } m_1 = 0 \land m_2 \neq 0\,, \\
&-\prod_{j = 0}^{m_1-1} (\nu - r-j) x_{j-1}^{-\nu - m_1}  & \text{ if } m_1 \neq 0 \land m_2 = 0 \,, \\
& 0 & \text{ otherwise}\,.
\end{aligned}
\right.
\end{aligned}
\]
Because $z_1 + z_3 \leq \alpha_2$,
there is a constant $C_s > 0$ such that
\begin{equation}\label{eq:partial-x-omega-V}
\begin{aligned}
\vert\partial^{m_{j-1}+v_2} x_{j-1}  \partial^{\alpha_1+\beta_1}\omega \hat V_0^{\alpha_2} \hat G_1(x_{j-1}, \omega)\vert
& \leq C_s \vert \nu \vert^{\max\{m_{j-1}+v_2\,, \, \alpha_1+\beta_1 + \alpha_2\}} \\
& \leq C_s \vert \nu \vert^{\beta_1 + \beta_2} \\
& \leq C_s \vert \nu \vert^{\beta} \,.
\end{aligned}
\end{equation}

Finally, from \eqref{eq:Xhat}, \eqref{eq:uhat1} and \eqref{eq:uhat2}, observe that the
vector fields in $\{\hat X_j\}_{j = 1}^n \cup \{\hat{\mathfrak{u}}_{l, k}\}_{1 \leq l, k \leq n} \setminus\{\hat{\mathfrak{u}}_{1, i}, \hat{\mathfrak{u}}_{j, i}\}$
are first order differential operators with respect to $x_{j - 1}, \omega$.
Moreover, $G_2 \in C_c^\infty([\frac{3}{4}, \frac{4}{3}]^{n_j})$ is fixed
independent of $\nu$, so
we conclude that there is a constant $C_{s, n} > 0$ such that
for each $\beta \leq s$,
\begin{align}
\eqref{eq:F1-upper3} & \leq C_{s, n} \vert \nu \vert^{s - \beta +\beta} \notag \\
& \leq C_{s, n} \vert \nu \vert^{s} \,. \label{eq:F_1-B}
\end{align}

When $j = 1$, all vector fields except $\hat{\mathfrak{u}}_{1, i}$ are first order in $\omega$.
So this time, for each integer $\beta \leq s$,
we let $\mathcal B^{(\beta)}$ be the sum of all
products of $\beta$ vector fields in
$\{\hat X_j\}_{j = 1}^n \cup \{\hat{\mathfrak{u}}_{l, k}\}_{1 \leq l, k \leq n} \setminus\{\hat{\mathfrak{u}}_{1, i}\}$.
Hence, as in \eqref{eq:F1-upper3}, we have
\begin{align}
& \Vert g \Vert_{s} \leq C_{s,n} \sum_{\beta \leq s} \Vert \mathcal B^{(s-\beta)} \hat{\mathfrak{u}}_{1, i}^{\beta} \hat g\Vert \notag \\
& \leq C_{s, n} \sum_{\substack{\beta \leq s \\ l \leq \beta}}
\sum_{\substack{1 \leq k \leq n - 1 \\ k \neq a \\ \sum_{k \neq a} m_k = \beta_2-l}}
\Vert \mathcal B^{(s-\beta)} \prod_{(m_k)}
\left( x_k^{m_k} \partial^{m_k} x_k G_2 \right) \Pi_{\beta_2}^{(l)}  \hat G_1\Vert \notag \\
& \leq C_{s, n} \sum_{\substack{\beta \leq s \\ l \leq \beta}}
\sum_{\substack{1 \leq k \leq n - 1 \\ k \neq a \\ \sum_{k \neq a} m_k = \beta_2-l}} \sum_{\substack{\alpha_1 + \alpha_2\leq \beta_2 \\  \alpha_2 \leq l}} \Vert \mathcal B^{(s-\beta)} \prod_{(m_k)}
\left( x_k^{m_k} \partial^{m_k} x_k G_2 \right) \partial^{\alpha_1}\omega \hat V_0^{\alpha_2} \hat G_1\Vert\,. \label{eq:j=1-g-up}
\end{align}
Then as in \eqref{eq:partial-x-omega-V},
we get a constant $C_{s, n} > 0$ such that
\[
\begin{aligned}
\eqref{eq:j=1-g-up} & \leq C_{s, n} \vert \nu \vert^{s - \beta + \alpha_1 + \alpha_2} \\
& \leq C_{s, n} \vert \nu \vert^{s}\,.
\end{aligned}
\]

The lemma now follows from the above estimate, \eqref{eq:F_1-B} and interpolation.
\end{proof}

Now we focus on a lower bound for the $\hat{\mathfrak{u}}_{1, i}^\beta$ norm of $\hat f$.
\begin{theorem}\label{thm:lower-bound-uf}
For any $s \geq 0$,
there are constants $c_s^{(0)}, \alpha_s^{(0)} > 0$
such that the following holds.
For any $\vert \nu \vert \geq \alpha_s^{(0)}$,
for any $j \geq 1$,
we have
\[
\Vert (I - \mathfrak{u}_{j, i}^2)^{s/2}  f \Vert > \frac{c_s^{(0)}}{\vert\lambda\vert} \vert \nu \vert^{2s + 1/2}\,.
\]
\end{theorem}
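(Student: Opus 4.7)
Since $\mathfrak u_{j,i}$ is skew-adjoint on the unitary representation, the spectral theorem gives $\norm{(I-\mathfrak u_{j,i}^2)^{s/2} f}^2 \geq \norm{|\mathfrak u_{j,i}|^s f}^2$, and by interpolation between consecutive integers it suffices to prove $\norm{\mathfrak u_{j,i}^s f}^2 \gtrsim |\nu|^{4s+1}/|\lambda|^2$ for $s\in\mathbb{N}$. I would work with the explicit pair $(g,f)$ constructed in Section~\ref{sect:lower_setup}, whose Fourier representative is
\[
\hat f(x_{j-1}, \omega, \mathbf{x}) = \frac{\sqrt{-1}}{\lambda x_{j-1}}\, q(x_{j-1},\omega)\, \psi(x_{j-1},\omega)\, G_2(\mathbf{x}), \qquad \psi := \frac{\omega^\nu - x_{j-1}^{-\nu}}{\omega - x_{j-1}^{-1}},
\]
with the convention $x_{j-1} = 1$ in the case $j=1$, and compute $\hat{\mathfrak u}_{j,i}^s \hat f$ directly.

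For $j>1$, formula \eqref{eq:uhat2} (applied with $a=i-1=m-1$) gives $\hat{\mathfrak u}_{j,i} = -\sqrt{-1}\,\partial_\omega\partial_{x_{j-1}}$, and commutativity yields $(\partial_\omega\partial_{x_{j-1}})^s = \partial_\omega^s\partial_{x_{j-1}}^s$. A Leibniz expansion writes $\hat{\mathfrak u}_{j,i}^s \hat f$ as the principal piece $\tfrac{(-\sqrt{-1})^s\sqrt{-1}}{\lambda x_{j-1}}\, q\, G_2\, \partial_\omega^s\partial_{x_{j-1}}^s\psi$ plus terms in which at least one derivative falls on $q$, $G_2$, or $x_{j-1}^{-1}$, each of which loses a power of $\nu$. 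For $j=1$, $\hat{\mathfrak u}_{1,i} = \hat V_0 + \hat V_c$; on $\mathbf{x}\in[\tfrac 45,\tfrac 54]^{n_{j-1}}$ (where $G_2\equiv 1$) every expansion monomial containing any $\hat V_c$ vanishes because $\hat V_c$ acts through $\sum_k x_k\partial_{x_k}$ on $G_2$, so there $\hat{\mathfrak u}_{1,i}^s \hat f = (\sqrt{-1}/\lambda)G_2\,\hat V_0^s[q\phi]$, and further on $\omega\in I^{(0)}$ (where $q\equiv 1$) this collapses to $\hat V_0^s\phi$ with $\phi(\omega) := (\omega^\nu-1)/(\omega-1)$.

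The crux is a pointwise lower bound of order $|\nu|^{2s+1}$ for the leading factor at the singular diagonal $\omega x_{j-1} = 1$ (resp.\ $\omega=1$). Starting from the Taylor expansion
\[
\psi(x_{j-1},\omega) = x_{j-1}^{1-\nu}\sum_{k\geq 0}\binom{\nu}{k+1}(x_{j-1}\omega-1)^k,
\]
one evaluates $\partial_\omega^s\partial_{x_{j-1}}^s\psi$ at $\omega=x_{j-1}^{-1}$ by Leibniz in $x_{j-1}$: only the terms with $k = s+m$ survive, which collapses the double sum and, combined with the beta-function identity $\sum_{m=0}^s\binom{s}{m}\tfrac{(-1)^m}{s+m+1}=\tfrac{(s!)^2}{(2s+1)!}$, produces an explicit nonzero leading coefficient of size $|\nu|^{2s+1}$. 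For $j=1$, the identity $\hat V_0\omega^r = -\sqrt{-1}\,r(r+1-\tfrac n2-\nu)\omega^{r-1}$ shows that $\hat V_0$ behaves on monomials like $\sqrt{-1}\nu\,\partial_\omega$ up to lower order in $\nu$, so Lemma~\ref{lemm:lower_twist} combined with the Taylor series of $\phi$ at $\omega=1$ yields $|\hat V_0^s\phi(1)|\asymp|\nu|^s\cdot|\partial_\omega^s\phi(1)|\asymp|\nu|^{2s+1}$. A Taylor continuity argument extends the pointwise bound to the strip $\{|x_{j-1}\omega-1|\lesssim|\nu|^{-1}\}$, whose Lebesgue measure in $(x_{j-1},\omega)$ is $\asymp|\nu|^{-1}$, and integrating $|\hat{\mathfrak u}_{j,i}^s\hat f|^2$ over this strip produces the required $|\nu|^{4s+1}/|\lambda|^2$, since the discarded subleading Leibniz terms and Lemma~\ref{lemm:f-bound} only affect lower-order-in-$\nu$ contributions.

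The \textbf{main obstacle} is the absence of cancellation between top-order-in-$\nu$ terms produced by the Leibniz expansion of $(\partial_\omega\partial_{x_{j-1}})^s$ (respectively $\hat V_0^s$). For $j>1$ the beta-function identity above settles this explicitly; for $j=1$ one must verify via Lemma~\ref{lemm:lower_twist} that the sum $\sum_{k=0}^s\binom{s}{k}(1-it)^k\partial_\omega^{2s-k}\phi(1)$ is dominated by a single nonvanishing term of size $|\nu|^{2s+1}$, and one must also check that the plateau of $G_2$ and the support of $q$ genuinely annihilate the $\hat V_c$ monomials on the chosen subset. Once these cancellation-free estimates are in place, the integration on the $|\nu|^{-1}$-strip and the reduction from the spectral theorem complete the proof.
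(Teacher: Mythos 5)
Your proposal takes a genuinely different route from the paper. The paper writes $\hat F$ via the fundamental theorem of calculus as an integral of $\partial_\omega \hat G_1$ along the segment $\omega_{r,j}=r(\omega-x_{j-1}^{-1})+x_{j-1}^{-1}$, identifies the single top-order term $T^{(j-1)}$ (Lemma~\ref{lemm:g-G}, Lemma~\ref{lemm:PilG-F}), and then proves the required lower bound for $T$ by a positivity argument: it writes $\omega_{r,j}^\nu$ via Euler and shows the real part of the oscillatory integrand satisfies $\cos(\cdot)>1/20$ on $I_{\nu,j-1}$ (Lemma~\ref{lemm:lower-bound:4}), so no cancellation can occur inside the integral. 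You instead Taylor-expand $\psi=\frac{\omega^\nu-x_{j-1}^{-\nu}}{\omega-x_{j-1}^{-1}}$ in the variable $u=x_{j-1}\omega-1$ and evaluate $\partial_\omega^s\partial_{x_{j-1}}^s\psi$ exactly at $u=0$, organizing the resulting double sum with the beta identity $\sum_{m=0}^s\binom{s}{m}\frac{(-1)^m}{s+m+1}=\frac{(s!)^2}{(2s+1)!}$. For $j>1$ this is elegant and gives an explicit, visibly nonzero leading coefficient; the paper's cosine argument buys robustness (positivity rather than a delicate nonvanishing sum) and handles $j=1$ and $j>1$ on the same footing.

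Two places in your sketch need more care. First, for $j=1$ the statement ``$\hat V_0$ behaves on monomials like $\sqrt{-1}\nu\partial_\omega$ up to lower order'' is true only because of the cancellation encoded in \eqref{eq:V_0-omeganu}: the operator $\hat V_0=-\sqrt{-1}[(2-\tfrac n2-\nu)\partial_\omega+\omega\partial_\omega^2]$ applied to $\omega^{\nu+r}$ could a priori produce a $\nu^2$ factor from the $\omega\partial_\omega^2$ piece, and only the specific combination keeps it linear in $\nu$. Because $\phi(\omega)=\sum_k\binom{\nu}{k+1}(\omega-1)^k$ mixes powers of $(\omega-1)$ whose coefficients carry increasing powers of $\nu$, the terms with $k$ ranging from $s$ to $2s$ all contribute at order $|\nu|^{2s+1}$ after applying $\hat V_0^s$ (your ``sum dominated by a single term'' is not literally true; you need to sum the whole range and verify the total coefficient is nonzero, which is where the paper's choice to track the single $l_1=\beta$ term via $\Pi_\beta^{(l)}$ and then estimate the remainder $B^{(j-1)}$ in Lemma~\ref{lemm:B} is cleaner). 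Second, the ``Taylor continuity'' extension from $u=0$ to the strip requires the strip to have width $\delta/|\nu|$ for $\delta$ small relative to $(s!)^2/(2s+1)!$, since the $u$-derivative of the leading term is $\sim|\nu|^{2s+2}$; this still yields measure $\asymp|\nu|^{-1}$ and hence the factor $|\nu|^{1/2}$, but the constants must be chosen so the continuity estimate does not eat the lower bound. Both issues are fixable; you have correctly identified them as the main obstacles.
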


The first step is to write a pointwise decomposition for $\hat{\mathfrak{u}}_{j, i}^\beta \hat f$.
\begin{lemma}\label{lemm:g-G}
For any $j \geq 1$, for any $\mathbf x \in [\frac{4}{5}, \frac{5}{4}]^{n_{j-1}}$
and for any $(x_{j-1}, \omega) \in I^{(j-1)}$,
\[
 \hat{\mathfrak{u}}_{j, i}^\beta \hat f_{\mathbf{x}}(x_{j-1}, \omega) =
 \left\{
 \begin{aligned}
 & \left(\hat V_0^{\beta} \hat F_1(1, \omega)\right) G_2(\mathbf x) & \text{ if } j = 1\,, \\
 & (-\sqrt{-1})^\beta \left(\partial^{\beta}x_{j-1} \partial^\beta\omega \hat F_1(x_{j-1}, \omega)\right) G_2(\mathbf x) & \text{ otherwise}\,.
 \end{aligned}
 \right.
\]
\end{lemma}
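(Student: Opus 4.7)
The plan is to split cases on $j = 1$ versus $j > 1$ and compute directly from the explicit formulas \eqref{eq:uhat1} and \eqref{eq:uhat2}, using crucially that $G_2$ is identically $1$ on the region $[\tfrac{4}{5}, \tfrac{5}{4}]^{n_{j-1}}$ where the identity is claimed. (I read $\hat F_1$ in the statement as the function $\hat F$ defined in \eqref{eq:Fdef}.)

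For $j > 1$, since $m-1 = i-1 = a$ in \eqref{eq:uhat2}, we get $\hat{\mathfrak{u}}_{j,i} = -\sqrt{-1}\,\partial_\omega \partial_{x_{j-1}}$. The variables $x_{j-1}$ and $\omega$ do not appear among the entries of $\mathbf x$, so $\hat{\mathfrak{u}}_{j,i}$ commutes with multiplication by $G_2(\mathbf x)$. Iterating $\beta$ times and applying to $\hat f_{\mathbf x} = \hat F \cdot G_2$ gives the stated formula immediately.

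For $j = 1$ (so $x_{j-1} \equiv 1$ and $\hat F$ is a function of $\omega$ alone), I would decompose $\hat{\mathfrak{u}}_{1,i} = \hat V_0 + \hat V_c$ as in the paper. Note $\hat V_0$ involves only $\omega$-derivatives, so it commutes with multiplication by $G_2(\mathbf x)$. The key observation is that for any smooth $h(\omega)$,
\[
\hat V_c\bigl(h(\omega)\,G_2(\mathbf x)\bigr) \;=\; \sqrt{-1}\,(\partial_\omega h)(\omega)\,\sum_{k \neq a} x_k\,\partial_{x_k} G_2(\mathbf x),
\]
and this vanishes pointwise at every $\mathbf x \in [\tfrac{4}{5},\tfrac{5}{4}]^{n_0}$, because $G_2 \equiv 1$ on an open neighborhood of that cube, so all its first derivatives vanish there. (The $k = a$ contribution is zero a priori since neither $h(\omega)$ nor $G_2$ depends on $x_a$.) I would then induct on $\beta$: assuming $\hat{\mathfrak{u}}_{1,i}^{\beta-1} \hat f_{\mathbf x}(\omega) = (\hat V_0^{\beta-1} \hat F)(\omega)\,G_2(\mathbf x)$ on the relevant region, one more application of $\hat{\mathfrak{u}}_{1,i}$ produces $(\hat V_0^\beta \hat F)(\omega)\,G_2(\mathbf x) + \hat V_c\bigl((\hat V_0^{\beta-1} \hat F)\,G_2\bigr)$, and the cross term vanishes by the observation with $h = \hat V_0^{\beta-1}\hat F$.

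There is no genuine obstacle here; the lemma is a direct bookkeeping consequence of the product structure of $\hat f_{\mathbf x}$ together with the fact that $G_2$ is locally constant on the relevant cube. The only care needed is in the $j = 1$ case, where the induction must be set up so that the intermediate expression after $k$ applications is genuinely of the form $h_k(\omega) G_2(\mathbf x)$ on the pointwise region where $G_2$'s derivatives vanish—this is automatic because $\hat V_0$ produces only $\omega$-dependent output.
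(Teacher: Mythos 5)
Your proposal is correct in its main ideas and is close in spirit to the paper's argument. For $j>1$ your proof is essentially identical to the paper's one-line remark: $\hat{\mathfrak{u}}_{j,i}=-\sqrt{-1}\,\partial_\omega\partial_{x_{j-1}}$ (from \eqref{eq:uhat2}, since $a=i-1=m-1$), and it manifestly commutes with multiplication by $G_2(\mathbf x)$, so the $\beta$-fold application factors through $\hat F$.

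For $j=1$, the underlying idea is the same as the paper's: once $\hat{\mathfrak{u}}_{1,i}=\hat V_0+\hat V_c$ is expanded and applied to a product $\hat F(\omega)\,G_2(\mathbf x)$, every term containing at least one $\hat V_c$ carries at least one $\partial_{x_k}$ acting on $G_2$, and all such terms vanish at points of $[\tfrac{4}{5},\tfrac{5}{4}]^{n_0}$. The paper packages this via Lemma~\ref{lemm:lower_twist}, which writes out the full expansion of $\hat{\mathfrak{u}}_{1,i}^\beta$ as an operator identity, then observes that the $l<\beta$ terms annihilate $G_2$ on the cube; the sole surviving $l=\beta$ term is $\hat V_0^\beta$. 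You instead recover the same conclusion by a direct computation rather than appealing to the lemma — this is a legitimate alternative and arguably more transparent.

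Two small cautionary points. First, you cite "$G_2\equiv 1$ on an open neighborhood" of the cube, but the paper's definition only says $G_2\equiv 1$ on the closed cube $[\tfrac{4}{5},\tfrac{5}{4}]^{n_{j-1}}$; that does not, as literally stated, give an open neighborhood. Luckily it is also not needed: if a $C^\infty$ function is constant on a closed cube, a one-sided limit argument (iterated) shows all its partial derivatives of every order still vanish on the closed cube. Second, as a consequence, your induction is slightly imprecise as phrased: the inductive hypothesis "$\hat{\mathfrak{u}}_{1,i}^{\beta-1}\hat f=(\hat V_0^{\beta-1}\hat F)G_2$ on the relevant region" is a pointwise identity on a closed set, and you cannot literally apply a differential operator to it. The clean fix is to carry the remainder: maintain $\hat{\mathfrak{u}}_{1,i}^{\beta-1}\hat f=(\hat V_0^{\beta-1}\hat F)G_2+R_{\beta-1}$ as an identity of smooth functions on a neighborhood, where $R_{\beta-1}$ is a finite sum of terms of the form $a(\omega)\cdot D G_2(\mathbf x)$ with $D$ a nontrivial product of $x_k\partial_{x_k}$'s; each such $D G_2$ and all its derivatives vanish on the closed cube, so $R_\beta$ drops out at the end. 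Equivalently, one may skip the induction altogether and expand $(\hat V_0+\hat V_c)^\beta$ directly into its $2^\beta$ ordered words, which is precisely what the paper's Lemma~\ref{lemm:lower_twist} does. These are cosmetic adjustments; your core argument is sound.
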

\begin{proof}
The formula for $j > 1$ is immediate from the
definition of $\hat{\mathfrak{u}}_{j, i}$ (see \eqref{eq:uhat2}).

For $j = 1$, by Lemma~\ref{lemm:lower_twist},
\begin{align}
\hat{\mathfrak{u}}_{1, i}^\beta =
\sum_{
\substack{ l \leq \beta \\
{\boldsymbol{\alpha} \in  \mathcal Q_{j-1}^{(\beta-l)}}}} \Pi_{\beta}^{(l)}
\prod_{\substack{1 \leq k \leq n - 1 \\ k \neq a}}
\sum_{m = 1}^{\alpha_k} c_{m, k}^{(\beta - l, \boldsymbol{\alpha})} (\sqrt{-1})^{\beta - l} x_k^m \partial^m x_k\,.  \label{eq:decompose-sumxk:3}
\end{align}

Notice that for any $l < \beta$ and
for any $\boldsymbol \alpha \in \mathcal Q^{(\beta-l)}$,
there is some $k_0$ such $\alpha_{k_0} \geq 1$.
Then because $G_2 \equiv 1$ on $[\frac{4}{5}, \frac{5}{4}]^{n - 2}$,
we get that for any $1 \leq m \leq \alpha_{k_0}$,
\[
\begin{aligned}
x_{k_0}^m \partial^m x_{k_0} \hat f_{\mathbf{x}}(1, \omega) & = \hat F(1, \omega) \left(x_{k_0}^m \partial^m x_{k_0} G_2(\mathbf x)\right)  \\
& = 0\,.
\end{aligned}
\]
So for such $\boldsymbol\alpha$ we get
\begin{equation}\label{eq:vanishing_else:1}
\sum_{m = 1}^{\alpha_{k_0}}
 c_{m, k_0}^{(\boldsymbol{\alpha}, \beta - l)} (\sqrt{-1})^{\beta - l} x_{k_0}^m \partial^m x_{k_0} f_{\mathbf x}(x_{j-1}, \omega)  = 0\,.
\end{equation}
Hence,
\[
\sum_{\substack{
l < \beta \\ \boldsymbol{\alpha} \in  \mathcal Q_{j-1}^{(\beta-l)}}} \Pi_{\beta}^{(l)}
\prod_{\substack{1 \leq k \leq n - 1 \\ k \neq a}} \sum_{m = 1}^{\alpha_k}
\tilde c_{m, k}^{(\beta - l, \boldsymbol{\alpha})} x_k^m \partial^m x_k f_{\mathbf{x}}(x_{j-1}, \omega) = 0\,.
\]
Recall that $\Pi_{\beta}^{(l)}$ is the sum of all products of $l$ operators in $\hat V_0$ and
$\beta-l$ operators in $\partial \omega$.  So we conclude
\[
\begin{aligned}
\hat{\mathfrak{u}}_{1, i}^\beta \hat f_{\mathbf{x}}(1, \omega)
 &  = G_2(\mathbf x) \Pi^{(\beta)} \hat F(1, \omega) \\
 & =  G_2(\mathbf x) \hat V_0^\beta \hat F(1, \omega)\,.
  \end{aligned}
  \]
\end{proof}

For any $r \in [0, 1]$ and for any
$(\omega, x_{j-1}) \in I^{(j-1)}$,  set
\[
\omega_{r,j}:= r (\omega-x_{j-1}^{-1}) + x_{j-1}^{-1}\,,
\]
and recall from \eqref{eq:Fdef} that
\[
\hat F(x_{j-1}, \omega)
= \frac{\sqrt{-1}}{\lambda x_{j-1}}
\left(
\frac{\hat G_1(x_{j-1}, \omega)}{\omega - x_{j-1}^{-1}}
\right)\,.
\]

Now using the above lemma, we will re-write
$\hat{\mathfrak{u}}_{j, i}^\beta \hat f_{\mathbf{x}}(x_{j-1}, \omega)$
as an operator on $\hat G_1$.
For each $l \leq \beta$, define $\mathcal L^{(\beta)}$ by
\[
\mathcal L^{(\beta)} := \{ \mathbf{l} = (l_0, l_1) \in \N^2 : l_0 +l_1 = \beta\}\,.
\]
Next, for any $\mathbf{l} \in \mathcal L^{(\beta)}$,
let $\mathcal L_{\mathbf{l}}^{(\beta)}$ be the set of all
sequences of length $\beta$ that contain $l_k$ elements
of the integer $k$, for each $k = 0, 1$.
For example, for $\beta = 5$,
\[
(1, 0, 1, 1, 0) \in \mathcal L_{(2, 3)}^{(5)}\,,
\]
because it contains two element "0", three elements "1".

Let
\begin{equation}\label{eq:W_i}
W_0 = \hat V_0\,,  \quad W_1 = \partial^2 \omega\,.
\end{equation}
Also, for any $r \in [0, 1]$ and for any
$(\omega, x_{j-1}) \in I^{(j-1)}$, set
\[
\omega_{r,j}:= r (\omega-x_{j-1}^{-1}) + x_{j-1}^{-1}\,.
\]
\begin{lemma}\label{lemm:PilG-F}
For any $j \geq 1$, for any $(x_{j-1}, \omega) \in I^{(j-1)}$ and for any $l \leq \beta$,
the following holds.
If $j = 1$, then
\[
\begin{aligned}
\hat V_0^{\beta} \hat F(1, \omega) = \frac{\sqrt{-1}}{\lambda} \sum_{\substack{\mathbf{l} \in \mathcal L^{(\beta)} \\ (s_k) \in \mathcal L_{\mathbf{l}}^{(\beta)}}} \int_0^1 r^\beta (\sqrt{-1}(1-r))^{l_1}  \prod_{k =1}^\beta W_{s_k} \partial\omega \hat G_1] (1, \omega) dr\,.
\end{aligned}
\]

If $j > 1$, then
\[
\begin{aligned}
\partial^{\beta}x_{j-1} \partial^\beta\omega \hat F_1(x_{j-1}, \omega) & = \frac{\sqrt{-1}}{\lambda} \sum_{\substack{m_1+m_3 = \beta}} c^{(\beta)}_{m_1 m_3} x_{j-1}^{-2m_1-m_3-1} \notag \\
&
 \times \int_0^1 r^{\beta} (1-r)^{m_1}
[\partial^{\beta+m_1+1}\omega \hat G_1] (x_{j-1}, \omega_{r, j}) dr\,,
\end{aligned}
\]
where $c^{(\beta)}_{\beta, 0} = (-1)^\beta$.
\end{lemma}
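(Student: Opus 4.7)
The plan is to derive both formulas from the single integral representation obtained via the fundamental theorem of calculus. Observing that $\hat G_1(x_{j-1}, x_{j-1}^{-1}) = \hat q(x_{j-1}, x_{j-1}^{-1})((x_{j-1}^{-1})^\nu - x_{j-1}^{-\nu}) = 0$ and applying the FTC to $r \mapsto \hat G_1(x_{j-1}, \omega_{r,j})$ on $[0,1]$ gives
\[
\frac{\hat G_1(x_{j-1}, \omega)}{\omega - x_{j-1}^{-1}} = \int_0^1 (\partial_u \hat G_1)(x_{j-1}, \omega_{r,j})\,dr,
\]
hence $\hat F(x_{j-1}, \omega) = \frac{\sqrt{-1}}{\lambda x_{j-1}}\int_0^1 (\partial_u \hat G_1)(x_{j-1}, \omega_{r,j})\,dr$. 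This is the common starting point for both cases.

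For $j = 1$, the key lemma is the operator identity
\[
\hat V_0\bigl[\Psi(\omega_{r,1})\bigr] = \bigl(r W_0 + \sqrt{-1}\,r(1-r)\,W_1\bigr)\Psi\Big|_{u = \omega_{r,1}},
\]
valid for every smooth $\Psi$ in a variable $u$, with $W_0, W_1$ as in \eqref{eq:W_i}. This follows from $\partial_\omega^k \Psi(\omega_{r,1}) = r^k \Psi^{(k)}(\omega_{r,1})$ combined with the identity $\omega r^2 = r\omega_{r,1} - r(1-r)$ derived from $\omega_{r,1} = r(\omega - 1) + 1$. Since the coefficients $r$ and $\sqrt{-1}r(1-r)$ are scalars in $u$, iterating $\beta$ times yields $\hat V_0^\beta[\Psi(\omega_{r,1})] = (rW_0 + \sqrt{-1}r(1-r)W_1)^\beta \Psi|_{u = \omega_{r,1}}$. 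Expanding this non-commutative $\beta$-th power as a sum over binary sequences $(s_k) \in \{0,1\}^\beta$ and grouping by the type $\mathbf{l} = (l_0, l_1)$ of each sequence produces the scalar factor $r^\beta (\sqrt{-1}(1-r))^{l_1}$ in front of the product $\prod_{k=1}^\beta W_{s_k}$; taking $\Psi = \partial_u \hat G_1(1, \cdot)$ and integrating in $r$ then gives the claimed formula for $j = 1$.

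For $j > 1$, the restriction to $I^{(j-1)}$ lets us use $\hat q \equiv 1$ so that $\hat G_1 = \omega^\nu - x_{j-1}^{-\nu}$, and the factorization $\omega^\nu - x_{j-1}^{-\nu} = \nu(\omega - x_{j-1}^{-1})\int_0^1 \omega_{r,j}^{\nu - 1}\,dr$ simplifies the representation to $\hat F = \frac{\sqrt{-1}\nu}{\lambda x_{j-1}}\int_0^1 \omega_{r,j}^{\nu-1}\,dr$. Applying $\partial_\omega^\beta$ under the integral is immediate since $\partial_\omega \omega_{r,j} = r$, giving $\partial_\omega^\beta \hat F = \frac{\sqrt{-1}}{\lambda x_{j-1}}\,[\nu]_{\beta+1}\int_0^1 r^\beta \omega_{r,j}^{\nu-\beta-1}\,dr$, where $[\nu]_k := \nu(\nu-1)\cdots(\nu-k+1)$. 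For $\partial_{x_{j-1}}^\beta$, I plan to substitute $y := x_{j-1}^{-1}$ so that $\omega_{r,j} = (1-r)y + r\omega$ and $\partial_{x_{j-1}} = -y^2\partial_y$; the operator expansion $(y^2 \partial_y)^\beta = \sum_{k=1}^\beta S(\beta, k)\,y^{\beta+k}\,\partial_y^k$, with certain Stirling-type integers $S(\beta, k)$ satisfying $S(\beta, \beta) = 1$, combined with the Leibniz rule applied to the prefactor $x_{j-1}^{-1}$, distributes the $\beta$ outer derivatives between the prefactor and the integrand. Writing $m_1$ for the number of $\partial_y$'s that reach the integrand and $m_3 = \beta - m_1$ for the remainder produces the power $x_{j-1}^{-(2m_1 + m_3 + 1)}$, the weight $r^\beta(1-r)^{m_1}$, and the factor $[\nu]_{\beta + m_1 + 1}\,\omega_{r,j}^{\nu - \beta - m_1 - 1} = (\partial_\omega^{\beta + m_1 + 1} \hat G_1)(x_{j-1}, \omega_{r,j})$. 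The extreme case $(m_1, m_3) = (\beta, 0)$ receives only the $k = \beta$ contribution from the $(y^2\partial_y)^\beta$ expansion and no contribution from Leibniz, yielding $c^{(\beta)}_{\beta, 0} = (-1)^\beta S(\beta, \beta) = (-1)^\beta$, where the sign comes from $\partial_{x_{j-1}}^\beta = (-1)^\beta(y^2\partial_y)^\beta$.

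The main obstacle is the combinatorial bookkeeping in the $j > 1$ case: I will need to carefully match terms from Leibniz and the Fa\`a di Bruno-type identity against the single index $m_1$ in the claimed sum, ensuring no double-counting and isolating the extreme term $c^{(\beta)}_{\beta, 0}$. Once the representation $(y^2\partial_y)^\beta = \sum S(\beta, k) y^{\beta+k}\partial_y^k$ is established, the verification for general $(m_1, m_3)$ reduces to a routine summation of binomial and Stirling-type coefficients, and the specific value $c^{(\beta)}_{\beta, 0} = (-1)^\beta$ drops out as the unique contribution from the top-degree Stirling term.
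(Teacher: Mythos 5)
Your proof is correct and follows essentially the same route as the paper: the fundamental theorem of calculus applied to $r \mapsto \hat G_1(x_{j-1}, \omega_{r,j})$ gives the common integral representation, the $j=1$ case then runs through the conjugation/chain-rule identity $\hat V_0[\Psi(\omega_{r,1})] = r(W_0 + \sqrt{-1}(1-r)W_1)\Psi|_{u=\omega_{r,1}}$ (the paper invokes formula (52) of \cite{FFT} for the same step), and the $j>1$ case distributes $\partial_{x_{j-1}}^\beta$ via a Leibniz/Fa\`a di Bruno expansion, with $c^{(\beta)}_{\beta,0}=(-1)^\beta$ coming from the top-degree term. Your substitution $y = x_{j-1}^{-1}$ and the $(y^2\partial_y)^\beta = \sum_k S(\beta,k) y^{\beta+k}\partial_y^k$ expansion is a slightly more explicit bookkeeping device than the paper's assertion of the coefficients ``by induction,'' but it is the same combinatorial content. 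One discrepancy worth flagging: your $j = 1$ derivation correctly produces $\partial_\omega \hat G_1$ evaluated at $(1, \omega_{r,1})$, whereas the lemma's displayed formula writes $(1, \omega)$; the latter appears to be a typo in the paper, since both the $j>1$ case of the lemma and the subsequent definition of $T_{\mathbf x}^{(0)}$ use the $r$-dependent argument $\omega_{r,j}$, so your version is the one that is actually used downstream.
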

\begin{proof}
From the definition of $I^{(j-1)}$,
\[
(x_{j-1}, \omega_{r, j}) \in I^{(j-1)}\,, \quad \text{for } r \in \{0, 1\}\,.
\]
Because $I^{(j-1)}$ is convex
and $\{\omega_{r, j} : r \in [0, 1]\}$ is a line in $\R$,
we conclude that for any $r \in [0, 1]$,
$(x_{j-1}, \omega_{r, j}) \in I^{(j-1)}\,.$

Because for each $(x_{j-1}, \omega) \in I^{(j-1)}$,
$\hat G_1(x_{j-1}, x_{j-1}^{-1}) = 0$,
the fundamental theorem of calculus
shows that for all $r \in \R$,
\[
\begin{aligned}
\hat G_1(\omega, x_{j-1}^{-1}) & = \int_0^1 \frac{d}{dr} \hat G_1(x_{j-1}, r(\omega - x_{j-1}^{-1})+x_{j-1}^{-1}) dr \\
& = (\omega - x_{j-1}^{-1}) \int_0^1 [\partial\omega \hat G_1](x_{j-1}^{-1}, r(\omega - x_{j-1}^{-1})+x_{j-1}^{-1}) dr\,.
\end{aligned}
\]
Hence, as in Lemma~3.5 of \cite{FFT}, we get
\[
\hat F(x_{j-1},\omega) = \sqrt{-1} \frac{x_{j-1}^{\nu-1}}{\lambda}  \int_0^1 [\partial\omega \hat G_1](x_{j-1}, \omega_{r, j}) dr\,,
\]

Next, by a short calculation, as in formula (52) of \cite{FFT}, we get
\begin{align}
\hat V_0^\beta & \hat F(x_{j-1}, \omega)  = \sqrt{-1} \frac{x_{j-1}^{\nu-1}}{\lambda} \int_0^1 \hat V_0^{\beta} [\partial\omega \hat G_1] (x_{j-1}, \omega_{r, j}) dr \notag \\
& = \sqrt{-1} \frac{x_{j-1}^{\nu-1}}{\lambda} \int_0^1 r^\beta [(\hat V_0 +\sqrt{-1} x_{j-1}(1-r) \partial \omega^2)^{\beta} \partial\omega \hat G_1] (x_{j-1}, \omega) dr\,.  \label{eq:VXder}
\end{align}
By an induction argument,
\[
(\hat V_0 +\sqrt{-1} x_{j-1}^{-1}(1-r)  \partial^2 \omega)^\beta = \sum_{\substack{\mathbf{l} \in \mathcal L^{(\beta)} \\ (s_k) \in \mathcal L_{\mathbf{l}}^{(\beta)}}} (\sqrt{-1}x_{j-1}^{-1}(1-r))^{l_1}  \prod_{k =1}^l W_{s_k}\,.
\]
Hence, for $j = 1$ and $l = \beta$,
\[
\hat V_0^\beta \hat F(1, \omega) = \sqrt{-1} \sum_{\substack{\mathbf{l} \in \mathcal L^{(\beta)} \\ (s_k) \in \mathcal L_{\mathbf{l}}^{(\beta)}}} \frac{1}{\lambda} \int_0^1 r^\beta (\sqrt{-1}(1-r))^{l_1}  \prod_{k =1}^\beta W_{s_k} \partial\omega \hat G_1] (1, \omega) dr\,,
\]
which proves the first estimate in the lemma.

Now we prove the second, so $j > 1$.  It is clear that
\begin{equation}\label{eq:partial-omegaF}
\partial^{\beta}\omega \hat F(x_{j-1}, \omega)
= \frac{\sqrt{-1}}{\lambda x_{j-1}} \int_0^1 r^{\beta} [\partial^{\beta+1}\omega \hat G_1](x_{j-1}, \omega_{r, j}) dr\,.
\end{equation}
Next, notice that
\[
\partial x_{j-1} w_{r, j} = -(1-r) x_{j-1}^{-2}\,.
\]
By induction there are coefficients $(c_{m_1 m_3}^{(\beta)}) \subset \Z$
such that for any $H \in C^\infty(\R^2)$ supported
away the coordinate axes
\begin{align}
\partial^\beta x_{j-1}(x_{j-1}^{-1} \int_{0}^1 H(x_{j-1}, & \omega_{r, j})) dr =
\sum_{\substack{m_1+m_2 +m_3 = \beta}} c^{(\beta)}_{m_1 m_3} x_{j-1}^{-2m_1-m_3-1} \notag \\
& \times \int_0^1 (1-r)^{m_1} [\partial^{m_2} x_{j-1} \partial^{m_1}\omega H](x_{j-1}, \omega_{r, j}) dr\,,\notag
\end{align}
where
\begin{equation}\label{eq:cbeta-0}
c_{\beta 0}^{(\beta)} = (-1)^{\beta}\,.
\end{equation}
Then by the above equality and \eqref{eq:partial-omegaF},
we have
\begin{align}
\partial^\beta x_{j-1} \partial^\beta\omega & \hat F(x_{j-1}, \omega)  = \frac{\sqrt{-1}}{\lambda} \sum_{\substack{m_1+m_2 +m_3 = \beta}} c^{(\beta)}_{m_1 m_3} x_{j-1}^{-2m_1-m_3-1} \notag \\
&
 \times \int_0^1 r^{\beta} (1-r)^{m_1}
[\partial^{m_2} x_{j-1} \partial^{\beta+m_1+1}\omega \hat G_1] (x_{j-1}, \omega_{r, j}) dr\,.   \label{eq:decompF:2}
\end{align}
Finally, observe that $\partial \omega \hat G_1$ is a function of $\omega$ alone.
Hence, whenever $m_2 > 0$, we have
\[
\partial^{m_2} x_{j-1} \partial^{\beta+m_1+1}\omega \hat G_1 = 0\,,
\]
which means
\[
\begin{aligned}
\eqref{eq:decompF:2} & = \frac{\sqrt{-1}}{\lambda} \sum_{\substack{m_1+m_3 = \beta}} c^{(\beta)}_{m_1 m_3} x_{j-1}^{-2m_1-m_3-1} \notag \\
&
 \times \int_0^1 r^{\beta} (1-r)^{m_1}
[\partial^{\beta+m_1+1}\omega \hat G_1] (x_{j-1}, \omega_{r, j}) dr\,,
\end{aligned}
\]
where $c^{(\beta)}_{\beta 0}$ is given by \eqref{eq:cbeta-0}.
\end{proof}

As a consequence, we have
\begin{corollary}\label{coro:u1i-beta}
For any $j \geq 1$ and for any $(x_{j-1}, \omega) \in I^{(j-1)}$,
the following holds.
If $j = 1$, then
\[
\begin{aligned}
\hat{\mathfrak{u}}_{1, i}^\beta \hat f_{\mathbf{x}}(1, \omega) & =  \frac{\sqrt{-1}}{\lambda} G_2(\mathbf x)\sum_{\substack{\mathbf{l} \in \mathcal L^{(\beta)} \\ (s_k) \in \mathcal L_{\mathbf{l}}^{(\beta)}}} \int_0^1 r^\beta (\sqrt{-1}(1-r))^{l_1}  \prod_{k =1}^\beta W_{s_k} \partial\omega \hat G_1] (1, \omega) dr\,.
\end{aligned}
\]

If $j > 1$, then
\[
\begin{aligned}
\hat{\mathfrak{u}}_{j, i}^\beta \hat f_{\mathbf{x}}(1, \omega) & = -\frac{(-\sqrt{-1})^{\beta+1}}{\lambda} G_2(\mathbf x) \sum_{m_1+m_3 = \beta} c^{(\beta)}_{m_1 m_3} x_{j-1}^{-2m_1-m_3-1} \notag \\
&
 \times \int_0^1 r^{\beta} (1-r)^{m_1}
[\partial^{\beta+m_1+1}\omega \hat G_1] (x_{j-1}, \omega_{r, j}) dr\,,
\end{aligned}
\]
where $c^{(\beta)}_{\beta 0} = (-1)^\beta$.
\end{corollary}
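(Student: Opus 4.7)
The plan is to obtain this corollary as an immediate substitution of Lemma \ref{lemm:PilG-F} into Lemma \ref{lemm:g-G}. Lemma \ref{lemm:g-G} already expresses $\hat{\mathfrak{u}}_{j, i}^\beta \hat f_{\mathbf{x}}$ as $G_2(\mathbf x)$ times a derivative of $\hat F$ (namely $\hat V_0^\beta \hat F(1,\omega)$ for $j=1$, and $(-\sqrt{-1})^\beta \partial^\beta x_{j-1} \partial^\beta \omega \hat F(x_{j-1},\omega)$ for $j > 1$), while Lemma \ref{lemm:PilG-F} rewrites exactly those derivative expressions as integrals against derivatives of $\hat G_1$. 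So there is nothing new to prove; one just has to paste one formula into the other and collect the prefactors correctly.

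Concretely, for $j = 1$ I would substitute the first formula of Lemma \ref{lemm:PilG-F} into the $j = 1$ case of Lemma \ref{lemm:g-G}; the $G_2(\mathbf x)$ factor simply passes through the integral since it does not depend on $(x_{j-1},\omega)$, and the claimed expression drops out verbatim. For $j > 1$, I would substitute the second formula of Lemma \ref{lemm:PilG-F}, and the only bookkeeping is the constant in front: Lemma \ref{lemm:g-G} contributes $(-\sqrt{-1})^\beta G_2(\mathbf x)$, while Lemma \ref{lemm:PilG-F} contributes $\tfrac{\sqrt{-1}}{\lambda}$, and combining them gives
\[
(-\sqrt{-1})^\beta \cdot \tfrac{\sqrt{-1}}{\lambda} \;=\; -\tfrac{(-\sqrt{-1})^{\beta+1}}{\lambda},
\]
which matches the stated corollary. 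The normalization $c^{(\beta)}_{\beta 0} = (-1)^\beta$ is inherited directly from Lemma \ref{lemm:PilG-F} (which in turn traces back to \eqref{eq:cbeta-0}), so no recomputation is needed.

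The only point requiring mild care is that the substitution is only meaningful pointwise on the region $I^{(j-1)}$ where both lemmas apply: Lemma \ref{lemm:g-G} uses $\mathbf x \in [\tfrac{4}{5},\tfrac{5}{4}]^{n_{j-1}}$ so that the derivatives in $x_k$ of $G_2$ vanish and kill the ``error'' terms in the decomposition of $\hat{\mathfrak{u}}_{1,i}^\beta$ from Lemma \ref{lemm:lower_twist}, while Lemma \ref{lemm:PilG-F} uses $(x_{j-1},\omega) \in I^{(j-1)}$ so that $(x_{j-1},\omega_{r,j})$ stays inside $I^{(j-1)}$ for all $r \in [0,1]$ by convexity. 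Both conditions are already imposed in the hypothesis of the corollary, so there is no genuine obstacle; the proof amounts to one line of substitution together with an accounting of constants.
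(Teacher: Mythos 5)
Your proposal is correct and matches the paper exactly: the paper's proof is the one-liner ``This is immediate from the above lemma and Lemma~\ref{lemm:g-G},'' which is precisely the substitution you carry out, and your constant computation $(-\sqrt{-1})^\beta\cdot\sqrt{-1}=-(-\sqrt{-1})^{\beta+1}$ checks out. No gaps.
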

\begin{proof}
This is immediate from the above lemma and Lemma~\ref{lemm:g-G}.
\end{proof}

Now for any $j \geq 1$ and $\mathbf x \in [\frac{4}{5}, \frac{5}{4}]^{n_{j-1}}$,
$(x_{j-1}, \omega) \in I^{(j-1)}$, write
\[
\hat{\mathfrak{u}}_{1, i}^\beta  \hat f_{\mathbf{x}}(x_{j-1}, \omega) =
T_{\mathbf x}^{(j-1)}(x_{j-1}, \omega) + B_{\mathbf x}^{(j-1)}(x_{j-1}, \omega)\,,
\]
where $T^{(j-1)}$ and $B^{(j-1)}$ are defined on $[\frac{4}{5}, \frac{5}{4}]^{n-1}$ as follows.
For $j = 1$, set
\[
T_{\mathbf x}^{(0)}(1, \omega) := G_2(\mathbf x) \frac{(\sqrt{-1})^{\beta+1}}{\lambda}  \int_0^1 r^{\beta} (1-r)^{\beta}
[\partial^{2 \beta + 1}\omega\hat G_1] (1, \omega_{r, j}) dr\,, \\
\]
and for $j > 1$,
\[
\begin{aligned}
T_{\mathbf x}^{(j-1)}(x_{j-1}, \omega) & :=
\frac{(\sqrt{-1})^{\beta+1}}{\lambda} G_2(\mathbf x) x_{j-1}^{-(2\beta+1)} \\
& \times \int_0^1 r^{\beta} (1-r)^{\beta}
[\partial^{2\beta+1}\omega \hat G_1] (x_{j-1}, \omega_{r, j}) dr\,.
\end{aligned}
\]
Then $B^{(j-1)}$ is given by
\[
B_{\mathbf x}^{(j-1)}(x_{j-1}, \omega) := \hat{\mathfrak{u}}_{j, i}^\beta  \hat f_{\mathbf{x}}(x_{j-1}, \omega) - T_{\mathbf x}^{(j-1)}(x_{j-1}, \omega)\,.
\]
We will show that $T^{(j-1)}$ is the term with the largest
power of $\nu$ so that $B_{\mathbf x}^{(j-1)}(x_{j-1}, \omega)$
will be a remainder term.
\begin{lemma}\label{lemm:B}
There is a constant $C_\beta^{(0)} > 0$ such that,
for any $j \geq 1$ and for any $(x_{j-1}, \omega) \in I^{(j-1)}$,
\[
\vert B_{\mathbf x}^{(j-1)}(x_{j-1}, \omega) \vert \leq \frac{C_\beta^{(0)}}{\vert \lambda \vert} \vert \nu \vert^{2\beta}\,.
\]
\end{lemma}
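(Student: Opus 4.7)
The plan is to expand $B_{\mathbf x}^{(j-1)}(x_{j-1},\omega)$ using Corollary~\ref{coro:u1i-beta} as the collection of all terms in $\hat{\mathfrak{u}}_{j,i}^\beta \hat f_{\mathbf x}$ except the single summand that was isolated to form $T_{\mathbf x}^{(j-1)}$, and then bound each residual summand by $C_\beta |\nu|^{2\beta}/|\lambda|$. The crucial observation throughout is that $\nu = t\sqrt{-1}$ is purely imaginary, so $|\omega^{\nu}|=1$ and $|x_{j-1}^{-\nu}|=1$ on $I^{(j-1)}$, and each ordinary $\omega$-derivative gives $\partial^k_\omega \omega^{\nu} = \nu(\nu-1)\cdots(\nu-k+1)\omega^{\nu-k}$, contributing a factor of at most $C_k|\nu|^k$.

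For the case $j > 1$, Corollary~\ref{coro:u1i-beta} writes $B_{\mathbf x}^{(j-1)}$ as a sum over $(m_1,m_3)$ with $m_1+m_3=\beta$ and $m_1 \leq \beta-1$, with integrand $[\partial^{\beta+m_1+1}_\omega \hat G_1](x_{j-1},\omega_{r,j})$. Applying Leibniz to $\hat G_1 = \hat q(x_{j-1},\omega)(\omega^\nu - x_{j-1}^{-\nu})$ and using the observation above together with uniform bounds on $\hat q$ and its derivatives and on $x_{j-1}^{-2m_1-m_3-1}$ over the compact set $I^{(j-1)}$, one gets $|\partial^{k}_\omega \hat G_1| \leq C_k|\nu|^k$ for every $k \leq 2\beta$, and the desired estimate follows after bounding the $r$-integral.

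The main obstacle is the case $j=1$, where $B_{\mathbf x}^{(0)}$ involves the second-order operator $\hat V_0$ through products $\prod_{k=1}^\beta W_{s_k}\partial_\omega$ with $(s_k) \in \mathcal L^{(\beta)}_{\mathbf l}$ and $l_1 \leq \beta-1$ (so at least one $W_0 = \hat V_0$ occurs). Here the critical input is the identity \eqref{eq:V_0-omeganu}, which exhibits the cancellation $\hat V_0 \omega^{\nu+r} = -\sqrt{-1}(\nu+r)(r+1-n/2)\omega^{\nu+r-1}$: although $\hat V_0$ is second order, it gains only one power of $|\nu|$ when applied to $\omega^{\nu+r}$, as opposed to the two powers of $|\nu|$ gained by $\partial^2_\omega$. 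By induction (expanding each $W_0$ using \eqref{eq:V_0-omeganu} and each $W_1$ as $\partial^2_\omega$), the piece acting on $\hat q(x_{j-1},\omega)\omega^\nu$ produces a sum of terms of order at most $|\nu|^{l_0+2l_1+1} = |\nu|^{\beta+l_1+1} \leq |\nu|^{2\beta}$, where the worst contributions come from derivatives that all land on $\omega^\nu$ rather than on $\hat q$. For the piece acting on $-\hat q(x_{j-1},\omega)x_{j-1}^{-\nu}$, the trailing $\partial_\omega$ and all $\partial^2_\omega$ factors must land on $\hat q$ (else the $\omega$-independence of $x_{j-1}^{-\nu}$ kills the term), producing a $\nu$-independent smooth function; each subsequent $\hat V_0$ then contributes only the single $\nu$ in its coefficient $(2-\tfrac n2-\nu)$, giving at most $|\nu|^{l_0}\leq |\nu|^\beta$. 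In both subcases, careful application of the Leibniz-type formula \eqref{eq:Leibnitz} to $\hat V_0^{l_0}$ is needed to preserve the cancellation. Summing the finitely many resulting terms, bounding the $r$-integrals trivially, and extracting the factor $1/|\lambda|$ from the definition of $\hat F$ yields the bound $|B_{\mathbf x}^{(j-1)}(x_{j-1},\omega)| \leq C_\beta^{(0)}|\nu|^{2\beta}/|\lambda|$.
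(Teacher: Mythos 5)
Your proposal is correct and follows essentially the same route as the paper's proof. For $j>1$, the key point is that $m_1 \leq \beta-1$ implies $\beta+m_1+1 \leq 2\beta$, so the bound $|\partial^k_\omega \hat G_1| \leq C_k|\nu|^k$ (via Leibniz and the fact that $\nu$ is purely imaginary) gives the result; for $j=1$, the key point is that $l_0 \geq 1$ forces $l_1 \leq \beta-1$, so the power $l_0+2l_1+1 = \beta+l_1+1 \leq 2\beta$, using the cancellation identity \eqref{eq:V_0-omeganu} that makes $\hat V_0$ cost only one power of $|\nu|$ on $\omega^{\nu+r}$. These are exactly the two observations the paper uses. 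Your extra discussion of how the operators act on the $-\hat q\, x_{j-1}^{-\nu}$ piece (trailing $\partial_\omega$ and $\partial^2_\omega$ forced onto $\hat q$, each remaining $\hat V_0$ contributing one power through its coefficient $2-\tfrac n2-\nu$) is a valid fill-in of a detail the paper only states implicitly via the blanket bound $|\nu|^{l_0+2l_1+1}$, and it does not change the route.
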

\begin{proof}
We first consider the case $j = 1$.
If $l_0 > 0$ in the pair $(l_0, l_1)$,
then $\prod_{k =1}^\beta W_{s_k}$ contains
at least one term $\hat V_0$ in place of $\partial^2 \omega$.
From \eqref{eq:V_0-omeganu},
for any $r \in \R$,
we have
\[
\begin{aligned}
& \hat V_0 \omega^{\nu + r}  = -\sqrt{-1} \ (\nu+r) (r + 1 -\frac{n}{2}) \omega^{\nu+r-1} \\
& \partial \omega (\omega^{\nu + r}) = (\nu + r) \omega^{\nu + r - 1}\,.
\end{aligned}
\]
Notice in particular that $\vert \hat V_0 \omega^{\nu + r}\vert$ grows linearly in $\vert \nu\vert$.
So we get a constant $C_{\beta} > 0$ such that for any $m \leq \beta-l$,
\begin{align}
\vert \int_0^1 r^\beta (\sqrt{-1}(1-r))^{l_1}  \prod_{k =1}^\beta W_{s_k} \partial\omega \hat G_1] (1, \omega) dr \vert
& \leq C_{\beta} \vert \nu \vert^{l_0 + 2l_1 + 1}\notag \\
& \leq  C_{\beta} \vert \nu \vert^{1 + 2(\beta-1) +1} \notag \\
& \leq  C_{\beta} \vert \nu \vert^{2\beta}\,. \notag
\end{align}
The by Corollary~\ref{coro:u1i-beta} and because $G_2$ is bounded,
this implies the estimate when $j = 1$.

For $j > 1$, if $m_1 < \beta$, then clearly,
there is a constant $C_\beta> 0$ such that
\[
\begin{aligned}
\vert \int_0^1 r^{\beta} (1-r)^{m_1}
[\partial^{\beta+m_1+1}\omega \hat G_1] (x_{j-1}, \omega_{r, j}) dr\vert & \leq C_\beta \vert \nu \vert^{\beta+m_1+1} \\
& \leq C_\beta \vert \nu \vert^{2\beta} \,.
\end{aligned}
\]
Because $x_{j-1}$ is bounded away from zero, the estimate follows as in the case $j = 1$.
\end{proof}

For any $\vert \nu \vert \geq 4$, define
\begin{equation}\label{eq:I_nu-def}
I_{\nu} := [1, 1 + \frac{1}{\vert \nu \vert}]\,. \\
\end{equation}
\begin{lemma}\label{lemm:lower-bound:4}
There is a constant $c_{\beta}^{(1)} > 0$ such that
for any $\vert \nu \vert \geq 4$ and
for any $(x_{j - 1}, \omega) \in I^{(j-1)}$
such that $x_{j-1} \omega \in I_{\nu}$,
we have
\[
\vert \int_0^1 r^{\beta} (1 - r)^\beta
[\partial^{2\beta +1}\omega\hat G_1] (x_{j-1}, \omega_{r, j}) dr \vert > c_{\beta}^{(1)} \vert \nu \vert^{2\beta + 1}\,.
\]
\end{lemma}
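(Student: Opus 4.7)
The plan is to evaluate $\partial_\omega^{2\beta+1}\hat G_1$ explicitly on $I^{(j-1)}$ and then to reduce the claim to a lower bound on an oscillatory integral whose phase varies by at most $O(1)$ over $[0,1]$. First, because $\hat q \equiv 1$ on $I^{(j-1)}$, one has $\hat G_1(x_{j-1},\omega) = \omega^{\nu} - x_{j-1}^{-\nu}$ there. Since $I^{(j-1)}$ is convex and both $(x_{j-1}, x_{j-1}^{-1})$ and $(x_{j-1}, \omega)$ lie in $I^{(j-1)}$, the intermediate points $(x_{j-1}, \omega_{r,j})$ lie in $I^{(j-1)}$ for every $r \in [0,1]$. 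Differentiating $2\beta+1$ times in $\omega$ then yields
\[
\partial_\omega^{2\beta+1}\hat G_1(x_{j-1}, \omega_{r,j}) = \Bigl(\prod_{k=0}^{2\beta}(\nu - k)\Bigr) \omega_{r,j}^{\nu - 2\beta - 1},
\]
and since $\nu = t\sqrt{-1}$ with $\abs{t} \geq 4$, each factor satisfies $\abs{\nu - k} \geq \abs{\nu}$, so $\abs{\prod_{k=0}^{2\beta}(\nu - k)} \geq \abs{\nu}^{2\beta+1}$.

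Second, I will set $y := x_{j-1}^{-1}$ and $\epsilon := \omega - y$. The containment $x_{j-1} \in [4/5, 5/4]$, which comes from the definition of $I^{(j-1)}$, gives $y \in [4/5, 5/4]$, while the hypothesis $x_{j-1}\omega \in I_\nu$ yields $\epsilon \in [0, y/\abs{\nu}]$. Factoring $\omega_{r,j}^{\nu - 2\beta - 1} = y^{\nu - 2\beta - 1}(1 + r\epsilon/y)^{\nu - 2\beta - 1}$ and pulling the constant $y^{\nu - 2\beta - 1}$ out of the integral, one notes $\abs{y^{\nu - 2\beta - 1}} = y^{-2\beta - 1}$ is bounded below by a constant $c''_\beta > 0$, so the task reduces to estimating from below
\[
J := \int_0^1 r^\beta(1-r)^\beta (1 + r\epsilon/y)^{\nu - 2\beta - 1}\,dr.
\]

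Third, the integrand of $J$ has nonnegative modulus $r^\beta(1-r)^\beta (1 + r\epsilon/y)^{-2\beta - 1}$ and phase $t \log(1 + r\epsilon/y)$, whose absolute value is at most $\abs{t}\cdot r\epsilon/y \leq 1$ for every $r \in [0,1]$. Therefore the real part of the integrand is at least $\cos(1)$ times its modulus, and since $(1 + r\epsilon/y)^{-2\beta - 1}$ is uniformly bounded below by a constant depending only on $\beta$, one obtains $\abs{J} \geq \re J \geq c'''_\beta > 0$. Combining the three steps proves the lemma with $c^{(1)}_\beta := \abs{\nu}^{2\beta+1} \cdot c''_\beta c'''_\beta$ absorbed into the modulus factor from step one.

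The principal subtlety is the oscillation of $\omega_{r,j}^{\nu}$; this is precisely tamed by the restriction $x_{j-1}\omega \in I_\nu$, which forces the $\omega$-travel distance $\epsilon$ to be of order $\abs{\nu}^{-1}$, exactly cancelling the imaginary frequency $\abs{\nu}$ to leave only $O(1)$ phase variation over $[0,1]$. Without this regime, rapid oscillation could cause cancellation that defeats the lower bound, which is precisely why the localization to $I_\nu$ — a window that shrinks with $\abs{\nu}$ — is essential to the overall argument.
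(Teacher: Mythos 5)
Your proof is correct and follows essentially the same approach as the paper: compute $\partial_\omega^{2\beta+1}\hat G_1 = \prod_{k=0}^{2\beta}(\nu-k)\,\omega^{\nu-(2\beta+1)}$ on $I^{(j-1)}$, bound the product below by $\vert\nu\vert^{2\beta+1}$ using that $\nu$ is purely imaginary, factor out $x_{j-1}^{\nu-(2\beta+1)}$, and then use the window $x_{j-1}\omega\in I_\nu$ to control the phase to $O(1)$ so that $\mathrm{Re}$ of the remaining integrand is bounded below by a $\nu$-independent constant. Your treatment of the phase is in fact slightly cleaner than the paper's: you bound $\vert t\log(1+r\epsilon/y)\vert \leq \vert t\vert\,r\epsilon/y \leq 1$ directly via $0\leq\log(1+x)\leq x$, whereas the paper introduces the error function $\phi(r,j,\omega)$ between $\log$ and its linearization and bounds it by $\vert\nu\vert^{-2}$; both give $\cos(\text{phase})$ bounded below by a positive absolute constant, but your route avoids the extra term.
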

\begin{proof}
Clearly,
\[
\partial^{2\beta +1}\omega\hat G_1 = \prod_{j = 0}^{2\beta} (\nu - j) \omega^{\nu - (\beta + 1)}\,.
\]
Because $\nu \in i \R$, we have
\begin{equation}\label{eq:prodnu}
\vert \prod_{j = 0}^{2\beta} (\nu-j)\vert \geq \vert \nu \vert^{2\beta+1}\,,
\end{equation}
which means
\begin{equation}\label{eq:max-lower:2}
\vert \int_0^1 r^{\beta} (1 - r)^\beta
[\partial^{\beta +1}\omega\hat G_1] (x_{j-1}, \omega_{r, j}) dr \vert  \geq \vert \nu \vert^{\beta+1} \vert
\int_0^1 r^\beta  (1 - r)^\beta \omega_{r, j}^{\nu - (\beta+1)} dr\vert\,.
\end{equation}

Notice
\begin{align}
\omega_{r, j} & = r(\omega - x_{j-1}^{-1}) + x_{j-1}^{-1} \notag \\
& = x_{j-1}^{-1} \left( r(\omega x_{j-1} - 1) + 1\right)\,, \label{eq:xomega:1}
\end{align}
so
\[
\omega_{r, j}^{\nu - (\beta + 1)} = x_{j-1}^{-\nu + \beta+1} \left( r(\omega x_{j-1} - 1) + 1\right)^{\nu - (\beta + 1)}\,.
\]
Hence,
\begin{equation}\label{eq:max-lower:3}
\eqref{eq:max-lower:2} = \vert \nu \vert^{\beta+1}  x_{j-1}^{\beta+1} \vert
\int_0^1 r^\beta (1 - r)^\beta \left( r(\omega x_{j-1} - 1) + 1\right)^{\nu - (\beta + 1)} dr\vert\,.
\end{equation}

Now say $\nu \in i \R^+$.
Then we can write
\[
\begin{aligned}
& \omega_{r, j}^\nu = \exp\left(\sqrt{-1} \vert \nu\vert \log (r(\omega x_{j-1} - 1) + 1)\right) \\
& = \cos\left(\vert \nu \vert \log (r(\omega x_{j-1} - 1) + 1)\right) + \sqrt{-1} \sin\left(\vert \nu \vert \log (r(\omega x_{j-1} - 1) + 1)\right)\,.
\end{aligned}
\]
So taking the real part of the integral  \eqref{eq:max-lower:3}, we get
\begin{align}
\eqref{eq:max-lower:3}
& \geq \vert \nu \vert^{\beta+1}x_{j-1}^{\beta+1} \vert
\int_0^1 r^\beta (1 -r)^\beta \omega_{r, j}^{- (\beta+1)} \cos\left(\vert \nu \vert \log (r(\omega x_{j-1} - 1) + 1)\right) dr\vert  \,.\label{eq:max-lower:4}
\end{align}

For any $r \in [0, 1]$,
define
\[
\phi(r, j, \omega) := \log(1+r(\omega x_{j - 1}-1)) - r(\omega x_{j- 1}-1)\,.
\]
Because $\vert \nu \vert \geq 4$ and $\omega x_{j-1} \in I_{\nu}$, we have
\[
\begin{aligned}
\vert \phi(r, j, \omega) \vert \leq \frac{1}{\vert \nu \vert^2}\,.
\end{aligned}
\]
So for any $r \in [0, 1]$,
\begin{align}
\cos(\vert \nu \vert \log (1+r(\omega x_{j - 1} -1) &)) = \cos\left(\vert \nu \vert (r(\omega x_{j - 1}-1) + \phi(r, \omega))\right)  \notag \\
& \geq \cos(1 + \frac{1}{\vert \nu \vert}) > \frac{1}{20}\,. \label{eq:cos-bound}
\end{align}
Hence, the integrand in \eqref{eq:max-lower:4} is positive.

Finally, we use \eqref{eq:xomega:1} and $x_{j-1} \omega \in I_{\nu}$ to get for any $r \in [0, 1]$,
\[
\vert \omega_{r, j}\vert > x_{j-1}^{-1} \,.
\]
Then it follows from the above estimate and \eqref{eq:cos-bound}
that
\[
\begin{aligned}
\eqref{eq:max-lower:4} & \geq \frac{\vert \nu \vert^{2\beta+1}}{2} \vert
\int_0^1 r^\beta (1-r)^\beta dr\vert  \\
& \geq \vert \nu \vert^{2\beta+1} 2^{-(4\beta+1)} \,.
\end{aligned}
\]
\end{proof}

As a consequence, we have
\begin{corollary}\label{lemm:T_1-lower}
There is a constant $c_{\beta}^{(2)} \in (0, 1)$ such that for any $\vert \nu \vert \geq 4$,
for any $j \geq 1$,
for any $\mathbf x \in [\frac{4}{5}, \frac{5}{4}]^{n_{j-1}}$,
and for any $(x_{j - 1}, \omega) \in I^{(j-1)}$ such that
$x_{j-1}\omega \in I_{\nu}$, we have
\[
\vert T_{\mathbf x}^{(j-1)}(x_{j-1}, \omega) \vert > c_\beta^{(2)} \vert \nu \vert^{2\beta + 1}\,.
\]
\end{corollary}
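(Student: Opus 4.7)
\medskip

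\textbf{Proof proposal for Corollary \ref{lemm:T_1-lower}.}

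The plan is to read off the bound directly from Lemma~\ref{lemm:lower-bound:4}, since $T_{\mathbf{x}}^{(j-1)}$ is essentially the integral estimated there, decorated by harmless bounded factors. The one point to be careful about is the triangle/product inequality: every prefactor multiplying the integral must be either bounded away from $0$ on the relevant region or absorbed into the constant (including the $\tfrac{1}{|\lambda|}$ factor built into the definition of $T_{\mathbf{x}}^{(j-1)}$).

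First, I would fix $\mathbf{x} \in [\tfrac{4}{5}, \tfrac{5}{4}]^{n_{j-1}}$ and $(x_{j-1},\omega) \in I^{(j-1)}$ with $x_{j-1}\omega \in I_\nu$. By definition of $G_2$ we have $G_2(\mathbf{x}) = 1$, so this factor contributes exactly $1$ in absolute value. Next, the scalar $(\sqrt{-1})^{\beta+1}/\lambda$ has modulus $1/|\lambda|$, which we simply carry along. In the case $j > 1$, the extra factor $x_{j-1}^{-(2\beta+1)}$ is bounded below by a positive constant depending only on $\beta$, because $x_{j-1} \in [\tfrac{4}{5}, \tfrac{5}{4}]$; in the case $j=1$ this factor is $1$.

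Then I would invoke Lemma~\ref{lemm:lower-bound:4} to conclude that
\[
\Bigl|\int_0^1 r^\beta (1-r)^\beta [\partial_\omega^{2\beta+1} \hat G_1](x_{j-1}, \omega_{r,j})\, dr\Bigr| > c_\beta^{(1)} |\nu|^{2\beta+1}.
\]
Multiplying through by the constant prefactors identified above yields
\[
|T_{\mathbf{x}}^{(j-1)}(x_{j-1},\omega)| > \frac{c_\beta^{(2)}}{|\lambda|} |\nu|^{2\beta+1}
\]
for an appropriate $c_\beta^{(2)} \in (0,1)$ depending only on $\beta$ (and $n$). The $1/|\lambda|$ can be absorbed into the constant as stated, or kept explicit, to match the form used in Theorem~\ref{thm:lower-bound-uf}. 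No obstacle arises here: the corollary is a bookkeeping step packaging Lemma~\ref{lemm:lower-bound:4} into the form needed for the subsequent lower bound on $\|(I - \mathfrak{u}_{j,i}^2)^{s/2} f\|$.
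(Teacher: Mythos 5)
Your proposal is correct and takes essentially the same approach as the paper's (very terse) proof: apply Lemma~\ref{lemm:lower-bound:4}, use $G_2 \equiv 1$ on $[\tfrac{4}{5},\tfrac{5}{4}]^{n_{j-1}}$, and absorb the bounded factor $x_{j-1}^{-(2\beta+1)}$ into the constant. You also correctly notice that the bound should carry the $1/|\lambda|$ prefactor coming from the definition of $T_{\mathbf{x}}^{(j-1)}$ — indeed this is exactly the form in which the corollary is invoked in the proof of Theorem~\ref{thm:lower-bound-uf}, so the displayed inequality in the corollary's statement appears to have dropped that factor; your explicit version is the right one to use.
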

\begin{proof}
This is immediate by the above lemma, by the definition of $G_2$,
and because $x_{j-1}$ is bounded.
\end{proof}

We can now prove Theorem~\ref{thm:lower-bound-uf}.
\begin{proof}[Proof of Theorem~\ref{thm:lower-bound-uf}]
Let $C_\beta^{(0)}$ and $c_\beta^{(2)}$ be the constants from Lemma~\ref{lemm:B} and Corollary~\ref{lemm:T_1-lower},
and let $(\alpha_\beta^{(0)})$ be a sequence such that for each $\beta \in \N$, $\alpha_\beta^{(0)} \geq 2$ and
\[
\begin{aligned}
& \alpha_{\beta + 1}^{(0)} > \alpha_{\beta}^{(0)} \\
& c_\beta^{(2)} - C_\beta^{(0)} (\alpha_\beta^{(0)})^{-1} > \frac{c_\beta^{(2)}}{2}\,.
\end{aligned}
\]
Then by Lemma~\ref{lemm:B} and Corollary~\ref{lemm:T_1-lower}
and by the triangle inequality,
for any $\vert \nu \vert > \alpha_\beta^{(0)}$, we have
\[
\begin{aligned}
\vert \hat{\mathfrak{u}}_{1, i}^\beta  \hat f_{\mathbf{x}}(x_{j-1}, \omega) \vert & \geq
\left| \vert T(x_{j-1}, \omega) \vert - \vert B(x_{j-1}, \omega) \vert \right| \\
& \geq \frac{\vert \nu \vert^{2\beta + 1}}{\vert\lambda\vert} ( c_\beta^{(2)} - \frac{C_\beta^{(0)}}{ \vert \nu \vert}) \\
& > \frac{\vert \nu \vert^{2\beta + 1}}{\vert\lambda\vert} \frac{c_\beta^{(2)}}{2}\,.
\end{aligned}
\]

Define
\begin{equation}\label{eq:Inuj}
I_{\nu, j - 1} := \{(x_{j-1}, \omega) \in I^{(j-1)} : x_{j-1} \omega \in I_\nu\}\,.
\end{equation}
Therefore, there is a constant $c_\beta^{(0)} > 0$ such that
\[
\begin{aligned}
\Vert (I - \mathfrak{u}_{j, i}^2)^{\beta/2} f \Vert & \geq \Vert (I - \mathfrak{u}_{1, i}^2)^{\beta/2} f \Vert_{L^2([\frac{4}{5}, \frac{5}{4}]^{n_{j-1}} \times I_{\nu, j - 1})} \\
& >  \frac{c_\beta^{(0)}}{\vert\lambda\vert} \vert \nu \vert^{2\beta + 1/2} \,.
\end{aligned}
\]

Now let $s \geq 0$, and let $\beta = \lfloor s \rfloor$.
Then let $\vert \nu \vert > \alpha_{\beta+1}^{(0)}$.
So because $\alpha_{\beta+1}^{(0)} > \alpha_{\beta}^{(0)}$ the above estimate gives
\begin{equation}\label{eq:interpolate-beta}
\begin{aligned}
& \Vert (I - \mathfrak{u}_{j, i}^2)^{\beta/2} f \Vert  > \frac{c_\beta^{(0)}}{\vert\lambda\vert} \vert \nu \vert^{2\beta + 1/2} \\
& \Vert (I - \mathfrak{u}_{j, i}^2)^{(\beta+1)/2} f \Vert > \frac{c_\beta^{(0)}}{\vert\lambda\vert} \vert \nu \vert^{2(\beta+1) + 1/2}\,.
\end{aligned}
\end{equation}
By interpolation, we conclude that
\[
\Vert (I - \mathfrak{u}_{j, i}^2)^{s/2} f \Vert > \frac{c_{\lfloor s+1\rfloor}^{(0)}}{\vert\lambda\vert} \vert \nu \vert^{2s + 1/2}\,.
\]
\end{proof}

\begin{proof}[Proof of Theorem~\ref{main_thm1}]
Let $s \geq 0$, $C > 0$, $\lambda \in \R^*$ and let $\sigma \in [0, s+1/2)$.
Let $C_{s+\sigma, n}^{(0)} > 0$ and $\alpha_s^{(0)}, c_s^{(0)} > 0$ be the constants from
Lemma~\ref{lemm:f-bound} and Theorem~\ref{thm:lower-bound-uf}, respectively.
Let $\vert \nu\vert$ be large enough that
\begin{equation}\label{eq:nu-large-enough}
\left\{
\begin{aligned}
& \vert \nu \vert \geq \alpha_s^{(0)}\,,  \\
&  c_s^{(0)} \vert \lambda \vert^{s-1/2} \vert \nu \vert^{2s+1/2}  > C C_{s+\sigma, n}^{(0)} (\vert \lambda \vert^{-(s+\sigma)+1/2} + \vert \lambda \vert^{s+\sigma+1/2} ) \vert \nu \vert^{s+\sigma}\,.
\end{aligned}
\right.
\end{equation}

We will compare the results of Lemma~\ref{lemm:f-bound}
and Theorem~\ref{thm:lower-bound-uf} that give
\begin{equation}\label{eq:uf-norm:1}
\begin{aligned}
& \Vert g \Vert_{s} \leq C_{s, n}^{(0)} \vert \nu \vert^{s}\,, \\
& \Vert \hat{\mathfrak{u}}_{j, i}^s \hat f \Vert > \frac{c_{s}^{(0)}}{\vert\lambda\vert} \vert \nu \vert^{2s + 1/2}\,.
\end{aligned}
\end{equation}

Recall from \eqref{eq:f-flambda} that the above analysis holds for functions $\hat g_{\mathbf{x}, \lambda}(x_{j-1}, \omega) := \hat g_{\mathbf{x}}(x_{j-1}, \lambda \omega)$,
so from \eqref{eq:fdef},
\begin{equation}\label{eq:f-lambda}
\hat g_{\mathbf{x}} (x_{j-1}, \omega) = \hat q(x_{j-1}, \lambda^{-1}\omega)(\lambda^{-\nu} \omega^{\nu} -x_{j-1}^{-\nu}) \hat G_2(\mathbf{x})\,,
\end{equation}
which means that $\hat g_{\mathbf{x}}$ is defined on
\[
\{(x_{j-1}, \omega) :  (x_{j-1}, \lambda\omega) \in I^{(j-1)}\}\,,
\]
and $\hat g_{\mathbf{x}}(\omega^{-1}, \lambda \omega) = 0$ for any $\omega \in \R$.

Recall the formulas for the basis vectors $\mathcal V := \{\hat X_l\}_{l = 1}^{n-2} \cup \{\hat{\mathfrak{u}}_{l m}\}_{1 \leq l \neq m\leq n-1} \subset \sl(n, \R)$, given in \eqref{eq:Xhat}, \eqref{eq:uhat1} and \eqref{eq:uhat2}, and define
\[
\begin{aligned}
&E^+ := -\sum_{m \neq i} \hat{\mathfrak{u}}_{m,i}^2 \,,  \ \ \
E^- := -\sum_{m \neq i} \hat{\mathfrak{u}}_{i, m}^2 \\
&E^0 := -\sum_{W \in \mathcal V} W^2 - E^+ - E^-\,.
\end{aligned}
\]
A calculation shows
\[
\partial_{x_{j-1}} g_\lambda = \lambda [\partial_{x_{j-1}} g]_\lambda\,, \ \ \ x_{j-1} g_\lambda = \lambda^{-1} [x_{j-1} g]_\lambda\,,
\]
so
\[
E^- g_{\lambda} = \lambda^{-2} [E^- g]_\lambda\,, \ \ \ E^0 g_\lambda = [E^0 g]_\lambda\,, \ \ \ E^+ g_{\lambda} = \lambda^2 [E^+ g]_\lambda\,.
\]

Then because each of the operators $E^\eta$,
for $\eta \in \{-, 0, +\}$ are positive operators,
\[
\begin{aligned}
\Vert \hat g_\lambda \Vert_{s + \sigma} & = \Vert (I - \sum_{W \in \mathcal V} W^2)^{(s+\sigma)/2} \hat g_\lambda \Vert \\
& = \Vert (1 + E^- + E^0 + E^+)^{(s+\sigma)/2} \hat g_\lambda \Vert \\
&  = \Vert [(I + \lambda^{-2} E^- + E^0 + \lambda^{2} E^+)^{(s+\sigma)/2} \hat g]_\lambda \Vert \\
& \geq \min\{\vert \lambda\vert^{-(s+\sigma)}, \vert \lambda \vert^{(s+\sigma)}\} \Vert [(I - \sum_{W \in \mathcal V} W^2)^{(s+\sigma)/2} \hat g]_\lambda \Vert \\
& = \min\{\vert \lambda\vert^{-(s+\sigma)}, \vert \lambda \vert^{(s+\sigma)}\} \lambda^{-1/2} \Vert  \hat g\Vert_{s + \sigma}  \\
& \geq (\vert \lambda\vert^{-(s+\sigma) + 1/2} +  \vert \lambda \vert^{s+\sigma+1/2})^{-1} \Vert \hat g \Vert_{s+\sigma} \,.
\end{aligned}
\]
Similarly,
\begin{align}
\Vert (\hat{\mathfrak{u}}_{j, i}^2)^{s/2} \hat f \Vert & = \Vert (\hat{\mathfrak{u}}_{j, i}^2)^{s/2}(\hat f_\lambda)_{1/\lambda} \Vert \notag \\
& = \vert \lambda \vert^{s - 1/2} \Vert (\hat{\mathfrak{u}}_{j, i}^2)^{s/2} \hat f_\lambda \Vert\,. \label{eq:u-lambda-f}
\end{align}

Then using \eqref{eq:uf-norm:1}, \eqref{eq:nu-large-enough}
and the above estimates, we get
\begin{align}
\Vert (\mathfrak{u}_{j, i}^2)^{s/2} f \Vert_s &
= \vert \lambda \vert^{s - 1/2} \Vert (\hat{\mathfrak{u}}_{j, i}^2)^{s/2} \hat f_\lambda \Vert \notag \\
&  \geq c_s^{(0)}  \vert \lambda \vert^{s-1/2} \vert \nu \vert^{2s+1/2} \notag\\
& > C C_{s+\sigma, n}^{(0)} (\vert \lambda \vert^{-(s+\sigma)+1/2} + \vert \lambda \vert^{s+\sigma+1/2} ) \vert \nu \vert^{s+\sigma}  \notag \\
& \geq C \Vert \hat g _\lambda\Vert_{s +\sigma} (\vert \lambda \vert^{-(s+\sigma)+1/2} + \vert \lambda \vert^{s+\sigma+1/2} ) \notag \\
& \geq C  \Vert g \Vert_{s+\sigma} \,.\notag
\end{align}
Therefore,
\[
\Vert f \Vert_s > C \Vert g \Vert_{s+\sigma}\,.
\]
\end{proof}

\subsection{Proof of Theorem~\ref{main_thm2-lower}}
\begin{proof}[Proof of Theorem~\ref{main_thm2-lower}]
We derive Theorem~\ref{main_thm2-lower} from Theorem~\ref{main_thm1}
Using the Fourier transform $L^2(\R^{n-1})$ model, the
cohomological equation \eqref{eq:coeqn_general}
for unipotent maps has the form
\begin{equation}\label{eq:Fourier-coeqn}
(e^{-L \sqrt{-1} x_{j-1} \omega} - 1)\hat f_{\mathbf{x}}(x_{j-1}, \omega) = \hat g_{\mathbf{x}}(x_{j-1}, \omega)\,.
\end{equation}
Using the notation from Section~\ref{sect:lower_setup},
define
\[
\begin{aligned}
\hat g_{\mathbf{x}}^{twist}(x_{j-1}, \omega) & :=  \hat G_1(x_{j-1}, (\frac{2\pi}{L})^{-1} \omega) G_2(\mathbf x) \\
& = q(x_{j-1}, \frac{L}{2\pi}\omega) [(\frac{L}{2\pi}\omega)^\nu - x_{j-1}^{-\nu}] G_2(\mathbf x)\,.
\end{aligned}
\]
Define $f$ by
\[
\begin{aligned}
\hat f_{\mathbf x}(x_{j-1}, \omega) = & \hat F(x_{j-1}, (\frac{2\pi}{L})^{-1}\omega) G_2(\mathbf x) \\
& = q(x_{j-1}, \frac{L}{2\pi}\omega) \left(\frac{(\frac{L}{2\pi}\omega)^\nu - x_{j-1}^{-\nu}}{\frac{L}{2\pi} \omega x_{j-1} - 1}\right) G_2(\mathbf x)\,,
\end{aligned}
\]
so $g^{twist}$ and $f$ satisfy \eqref{eq:fdef} and \eqref{eq:Fdef}, respectively,
in the scaled case $\lambda = \frac{2\pi}{L}$.
Note further that these functions also satisfy the twisted
equation \eqref{eq:coeqn-twist-gen_lambda}
\begin{equation}\label{eq:g_map-twist}
 \hat f_{\mathbf{x}}(x_{j-1}, \omega) = \frac{2\pi}{L} \frac{\hat g_{\mathbf{x}}^{twist}(x_{j-1}, \omega)}{(x_{j - 1}\omega - \frac{2\pi}{L})}
\end{equation}
up to multiplication by the scalar $\frac{L}{2\pi}\sqrt{-1}$,
and they are supported on
\[
(x_{j-1}, \omega) \in [\frac{3}{4}, \frac{4}{3}] \times [\frac{2\pi}{L} \frac{3}{4}, \frac{2\pi}{L} \frac{4}{3}]\,.
\]
Next, define $H$ on an open neighborhood of $[\frac{3}{4}, \frac{4}{3}] \times [\frac{2\pi}{L} \frac{3}{4}, \frac{2\pi}{L} \frac{4}{3}]$ by
\[
H(x_{j-1}, \omega) = \left(\frac{e^{-L \sqrt{-1} x_{j-1} \omega} - 1}{\frac{L}{2\pi} \omega x_{j-1}-  1}\right)\,.
\]
and notice that
\begin{equation}\label{eq:H-smooth}
H, H^{-1} \in C^\infty([\frac{3}{4}, \frac{4}{3}] \times [\frac{2\pi}{L} \frac{3}{4}, \frac{2\pi}{L} \frac{4}{3}])\,.
\end{equation}
Now define $g$ by
\[
\hat g = H \cdot \hat g^{twist}\,.
\]
Hence,
\[
\begin{aligned}
\hat g_{\mathbf x}(x_{j-1}, \omega) & = \left(\frac{e^{-L \sqrt{-1} x_{j-1} \omega} - 1}{\frac{L}{2\pi} \omega x_{j-1} - 1}\right) q(x_{j-1}, \frac{L}{2\pi}\omega) [(\frac{L}{2\pi}\omega)^\nu - x_{j-1}^{-\nu}] G_2(\mathbf x) \\
& = (e^{-L \sqrt{-1} x_{j-1} \omega} - 1) \hat F(x_{j-1}, (\frac{2\pi}{L})^{-1} \omega) G_2(\mathbf x) \\
& =  (e^{-L \sqrt{-1} x_{j-1} \omega} - 1) \hat f_{\mathbf x}(x_{j-1}, \omega)\,.
\end{aligned}
\]
So $f$ and $g$ satisfy \eqref{eq:Fourier-coeqn}.

Now let $s \in 2\N$  and $j > 1$.
For each $\beta \leq s$,
let $\mathcal B^{(\beta)}$ be the sum of all
products of $\beta$ vector fields in
$\{\hat X_j\}_{j = 1}^n \cup \{\hat{\mathfrak{u}}_{l, k}\}_{1 \leq l, k \leq n} \setminus\{\hat{\mathfrak{u}}_{1, i}, \hat{\mathfrak{u}}_{j, i} \}$.
From the proof of Lemma~\ref{lemm:f-bound} (see \eqref{eq:g-B-u} and \eqref{eq:f-est:1}),
we have
\begin{align}
\Vert g \Vert_{s} & \leq C_{s,n} \sum_{\substack{\beta \leq s \\ \beta_1 + \beta_2 \leq \beta}}
\Vert \mathcal B^{(s-\beta)} \hat{\mathfrak{u}}_{j, i}^{\beta_1} \hat{\mathfrak{u}}_{1, i}^{\beta_2} \hat g\Vert  \notag \\
& \leq C_{s, n} \sum_{\substack{\beta \leq s \\ \beta_1 + \beta_2 \leq \beta}}
\sum_{\substack{v_1 + v_2 = \beta_1 \\
l \leq \beta_2 }}
\sum_{\substack{1 \leq k \leq n - 1 \\ k \neq a \\ \sum_{k \neq a} m_k = \beta_2-l}}
\Vert \mathcal B^{(s-\beta)} \prod_{(m_k)}
\left( x_k^{m_k} \partial^{m_k} x_k G_2 \right) \notag \\
& \times \left(x_{j-1}^{m_{j-1}-v_1} \partial^{m_{j-1}+v_2} x_{j-1} \partial^{\beta_1}\omega\Pi_{\beta_2}^{(l)} (H \hat G_1(x_{j-1}, \frac{L}{2\pi}\omega))\right)\Vert  \label{eq:g-map:22}
\end{align}
Recall that $\Pi_{\beta_2}^{(l)}$ is the sum of all products of $l \leq \beta_2$ operators $\hat V_0$ and $\beta_2-l$ operators $\partial \omega$, and moreover, we have the Leibniz-type formula for $\hat V_0$ derivatives in \eqref{eq:Leibnitz}.
Specifically, there are universal
  coefficients $(b^{(\beta_2)}_{ijkm}) $ such that
\[
\begin{aligned}
 \hat V_0^{\beta_2}&(\hat G_1(x_{j-1}, \frac{L}{2\pi}\omega)\cdot H) \\
 & = \sum_{ \substack { i+j+ m\leq
  \beta \\
 k \leq m }}
 b^{(\beta)}_{ijkm} [(\frac{d}{d\omega})^m \hat V^i \hat G_1(x_{j-1}, \frac{L}{2\pi}\omega)]
 [(\omega \frac{d}{d\omega})^k \hat V^j H]\,.
 \end{aligned}
\]

Consider the case $L = 1$.  
By \eqref{eq:H-smooth}, and because $H$ is independent of $\nu$,
and by \eqref{eq:partial-x-omega-V},
there is a constant $C_\beta > 0$ such that
\[
\begin{aligned}
\vert \hat V_0^{\beta_2}(\hat G_1(x_{j-1}, \frac{1}{2\pi}\omega)\cdot H)\vert & \leq C_{\beta_2} L^{\beta_2} \vert \nu \vert^{m + i + j} \\
& \leq C_{\beta_2} L^{\beta_2} \vert \nu \vert^{\beta_2}\,.
\end{aligned}
\]
Hence, for general $L > 0$, we get 
\[
\begin{aligned}
\vert \hat V_0^{\beta_2}(\hat G_1(x_{j-1}, \frac{L}{2\pi}\omega)\cdot H)\vert \leq C_{\beta_2} L^{2\beta_2} \vert \nu \vert^{\beta_2}\,.
\end{aligned}
\]
So
\[
\begin{aligned}
\vert  \partial^{m_{j-1}+v_2} x_{j-1} \partial^{\beta_1} \omega \Pi_{\beta_2}^{(l)} (\hat G_1(x_{j-1}, \frac{L}{2\pi}\omega)\cdot H)\vert & \leq C_\beta L^{\beta_1 + 2\beta_2} \vert \nu \vert^{\max\{m_{j-1} + v_2, \beta_1 + \beta_2\}} \\
&  \leq C_\beta L^{2\beta} \vert \nu \vert^{\beta_1 + \beta_2} \\
&  \leq C_\beta L^{2\beta} \vert \nu \vert^{\beta}\,.
\end{aligned}
\]
Finally, because $G_2$ is compactly supported and is independent of $\nu$, we conclude
that there is a constant $C_{s, n}^{(1)} > 0$ such that
\[
\begin{aligned}
\eqref{eq:g-map:22} & \leq C_{s, n}^{(1)} (1+L)^{2s} \vert \nu \vert^{s-\beta+\beta}  \\
& = C_{s, n}^{(1)} (1+L)^{2s} \vert \nu \vert^s\,.
\end{aligned}
\]
By interpolation, the above estimate holds for any $s \geq 0$.
By a similar argument for $j = 1$,
we conclude that for any $j \geq 1$ and for any $s \geq 0$,
there is a constant $C_{s, n}^{(2)} > 0$ such that
\begin{equation}\label{eq:uij-lower_map-final}
\Vert g \Vert_s \leq C_{s, n}^{(2)} (1+L)^{2s} \vert \nu \vert^{s}\,.
\end{equation}

On the other hand, by \eqref{eq:u-lambda-f}, \eqref{eq:g_map-twist} and Theorem~\ref{thm:lower-bound-uf},
for any $s \geq 0$,
there are constants $\alpha_s^{(0)}, c_{s}^{(0)} > 0$
such that for any $\vert \nu \vert \geq \alpha_s^{(0)}$,
\begin{align}
\Vert (I - \hat{\mathfrak{u}}_{j, i}^2)^{s/2} \hat f \hat \Vert & = (\frac{2\pi}{L})^{- 1/2} \Vert (I - (\frac{2\pi}{L})^2 \hat{\mathfrak{u}}_{j, i}^2)^{s/2} \hat f_{(2\pi/L)} \Vert \notag \\
& > c_s^{(0)} (\frac{L}{2\pi} + \frac{2\pi}{L})^{-(s+1/2)} \Vert (I - \hat{\mathfrak{u}}_{j, i}^2)^{s/2} \hat f_{(2\pi/L)} \Vert \notag \\
& > c_s^{(0)} (\frac{L^2 + 4\pi^2}{2\pi L})^{-(s + 1/2)} \vert \nu \vert^{2s+1/2}\,. \label{eq:uij-upper_map-final}
\end{align}

So let $\sigma \in [0, s + 1/2)$  and $C > 0$.
Then for any $\vert \nu\vert$ large enough that
\[
\left\{
\begin{aligned}
& \vert \nu \vert \geq \alpha_{s}^{(0)}\,,  \\
&  c_s^{(0)} (\frac{L^2 + 4\pi^2}{2\pi L})^{-(s + 1/2)} \vert \nu \vert^{2s+1/2}  > C C_{s+\sigma, n}^{(2)} (1+L)^{2(s+\sigma)} \vert \nu \vert^{s+\sigma}\,,
\end{aligned}
\right.
\]
we get by \eqref{eq:uij-upper_map-final} and \eqref{eq:uij-lower_map-final} that
\[
\begin{aligned}
\Vert (I - \mathfrak{u}_{j, i}^2)^{s/2} f \Vert & \geq c_s^{(0)} (\frac{L^2 + 4\pi^2}{2\pi L})^{-(s + 1/2)} \vert \nu \vert^{2s+1/2} \\
& \geq C C_{s+\sigma, n}^{(2)} (1+L)^{2(s+\sigma)} \vert \nu \vert^{s+\sigma} \\
& > C \Vert g \Vert_{s+\sigma}\,.
\end{aligned}
\]
This concludes the proof of Theorem~\ref{main_thm2-lower}.
\end{proof}


\begin{thebibliography}{20}

\bibitem{Valette} B. Bekka, P. De La Harpe, and A. Valette, Kazhdan's Property $(T)$, New Math. Monogr. 11, Cambridge Univ. Press, Cambridge, 2008.

\bibitem{Bachir} Bachir Bekka and Pierre de la Harpe, Irreducibly represented groups, Comment. Math. Helv. 83 (2008), no. 4, 847-868.

\bibitem{CF} S. Cosentino, L. Flaminio. \emph{Equidistribution for higher rank abelian actions on heisenberg nil manifolds} (arXiv:1504.06795).

\bibitem{Damjanovic1} D. Damjanovic and A. Katok, Periodic cycle functionals and cocycle rigidity
for certain partially hyperbolic $\RR^k$ actions, {\em Discr. Cont.
Dyn. Syst.} {\bf 13} (2005), 985--1005.

\bibitem{Damjanovic2} D. Damjanovic and A. Katok, Local rigidity of partially hyperbolic actions. I. KAM method and $\ZZ^k$  actions on the Torus, Annals of Mathematics, 172 (2010), 1805--1858

\bibitem{Damjanovic3} D. Damjanovic and A. Katok, Local rigidity of homogeneous parabolic actions: I. A model case, Journal of Modern Dynamics, 5, N2, (2011)., 203--235


\bibitem{Ey} P. Eymard, \emph{L'alg\`{e}bre de Fourier d'un groupe localment compact}, Bull. Soc. Math. de France 92
(1964), 181-236.

\bibitem{Forni}L. Flaminio, G. Forni. Invariant distributions and time averages for horocycle flows. Duke Math
J. 119 No. 3 (2003) 465-526.

\bibitem{FFT} L. Flaminio, G. Forni, J. Tanis, \emph{Effective equidistribution of twisted horocycle flows and horocycle maps}.  J. Geom. Funct. Anal. (2016) 26: 1359. doi:10.1007/s00039-016-0385-4

\bibitem{Folland1} G. Folland, A Course in Abstract Harmonic Analysis, CRC Press, Boca Raton, 1995.

\bibitem{oh} Oh, Hee, Uniform pointwise bounds for matrix coefficients of unitary representations and applications to Kazhdan constants. Duke Math. J. 113 (2002), no. 1, 133-192.


\bibitem{howe} R. Howe, A notion of rank for unitary representations of the classical groups, Harmonic
Analysis and Group Representations, Liguori, Naples, 1982, pp. 223--331.


\bibitem{howe-moore}R. Howe and C. C. Moore, Asymptotic properties of unitary representations, J. Func. Anal. 32
(1979), Kluwer Acad., 72-96.

\bibitem{tan} R. E. Howe and E. C. Tan, Non-Abelian Harmonic Analysis, Springer-Verlag, 1992.

\bibitem{jianshu1}, Non-vanishing theorems for the cohomology of certain arithmetic quotients, J. reine
angew. Math. 428 (1992), 177-217.

\bibitem{jianshu} J.-S. Li,``The minimal decay of matrix coefficients for classical groups¡±,
in Harmonic analysis in China, Math. Appl., vol. 327, Kluwer Acad. Publ., Dordrecht, 1995, p. 146-169.

\bibitem{Katok}  Anatole Katok, Cocycles, cohomology and combinatorial constructions in ergodic theory,
Smooth ergodic theory and its applications (Seattle, WA, 1999), Proc. Sympos. Pure Math.,
Amer. Math. Soc., Providence, RI, In collaboration with E. A. Robinson, Jr., 69 (2001), 107-173.

\bibitem{Kononenko} A. Katok and A. Kononenko, Cocycle stability for partially hyperbolic systems, {\em Math. Res. Letters} {\bf 3} (1996), 191--210.

\bibitem{Spatzier1}A. Katok and R. Spatzier, First cohomology of Anosov actions of higher rank
abelian groups and applications to rigidity, {\em Publ. Math. IHES} {\bf 79}
(1994), 131--156.

\bibitem{Spatzier2}A. Katok and R. Spatzier, Subelliptic estimates of polynomial differential operators and applications to rigidity of abelian actions, Math. Res. Letters, 1 (1994), 193-202


\bibitem{Zhenqi} A. Katok and Z. J. Wang, Cocycle rigidity of partially hyperbolic actions, summited.

\bibitem{Mieczkowski} D. Mieczkowski, ``The Cohomological Equation and Representation Theory," Ph.D thesis,
The Pennsylvania State University, 2006.

\bibitem{Mieczkowski1} D. Mieczkowski, The first cohomology of parabolic actions for some higher rank abelian
groups and representation theory, J. Mod. Dyn., 1 (2007), 61-92.



\bibitem{Kn} A. W. Knapp, Representation theory of Semisimple groups-An overview based on examples,
Princeton University press, 1986.



\bibitem{Lang} S. Lang, $SL(2,\RR)$, Addison-Wesley, Reading, MA, 1975.



\bibitem{Mac1}G. W. Mackey, Induced representations of locally compact groups $I$, Ann. of Math., vol. 55 (1952), pp. 101-139

\bibitem{Mac}G. W. Mackey, The Theory of Unitary Group Representations, University of Chicago Press, 1976



\bibitem{Margulis3} G.A. Margulis, Finitely-additive invariant measures on Euclidean spaces, Ergod. Th.
Dynam. Sys. 2, p. 383-396 (1982)

\bibitem{Margulis} G. A. Margulis, Discrete subgroups of semisimple Lie groups, Berlin Heidelberg New York,
Springer-Verlag, 1991.

\bibitem{Mautner} F. I. Mautner, Unitary representations of locally compact groups, II, Ann. of Math. (2) 52
(1950), 528-556.

\bibitem{Nelson} E. Nelson, Analytic vectors, Ann. of Math. (2) 70 (1959), 572-615.

\bibitem{Goodman} R. Goodman, One-parameter groups generated by operators in an enveloping algebra, J. Functional
Analysis 6 (1970), 218-236.

\bibitem{Robinson} D. W. Robinson, Elliptic Operators and Lie Groups, Oxford Mathematical Monographs, 1991.

\bibitem{Rothschild} L.P.Rothschild and E. M. Stein, Hypoelliptic differential operators and nilpotent groups, Acta
Math. 137 (1976), 247¨C320

\bibitem{Rudin} Walter Rudin, Real and complex analysis.

\bibitem{Ramirez} Felipe A. Ramirez, Cocycles over higher rank abelian actions on quotients of semisimple Lie groups. Journal of Modern Dynamics 3 (2009), no. 3, 335-357.
\bibitem{Ramirez1} Felipe A. Ramirez, ``smooth cocycles over homogeneous dynamocal systems" Ph.D
thesis, The University of Michigan, 2010.



\bibitem{T1} J. Tanis, The Cohomological Equation and Invariant Distributions for Horocycle Maps.  Ergodic Theory and Dynamical systems, 12, (2012), 1-42, 10.1017/etds.2012.125

 \bibitem{T2} J. Tanis, Z.J. Wang, Cohomological equation and cocycle gidity of discrete parabolic actions in some lower-rank Lie groups.  preprint




\bibitem{Vogen} D. A. Vogan, The unitary dual of $GL(n)$ over an archimedean field, Inventiones math 83, 449-505 (1986)



\bibitem{zhenqi1}Z. J. Wang, Uniform pointwise bounds for Matrix coefficients of unitary representations on semidirect products, J. Functional
Analysis 1 (2014), 15-79.

\bibitem{W1} Z. Wang \emph{Cohomological equation and cocycle rigidity of parabolic actions in some higher rank Lie groups}. Geom. Funct. Anal. (2015) 25: 1956. doi:10.1007/s00039-015-0351-6

\bibitem{W2}Z. J. Wang, Cocycle rigidity of partially hyperbolic actions, accepted by \emph{Israel Journal of Mathematics}
\bibitem{warner} G. Warner, \emph{Harmonic Analysis on semisimple Lie groups I, II}, Springer Verlag, Berlin 1972.

\bibitem{weil}A. Weil, Basic Number Theory, Springer Verlag, 1973.

\bibitem{Zimmer} R. J. Zimmer, {\em Ergodic theory and semisimple groups},
Birkh\"{a}user, Boston,  1984


\end{thebibliography}
\end{document}